\newcommand{\raju}[1]{\color{magenta}\textbf{{ (Raju: #1) }}\color{black}}
\title[ ]{Periodic de Rham bundles over curves}
\author[Raju Krishnamoorthy]{Raju Krishnamoorthy}
\author[Mao Sheng]{Mao Sheng}
\email{krishnamoorthy@uni-wuppertal.de}
\address{Department of Mathematics, Bergische Universit\"at Wuppertal, Fakult\"at Mathematik und Naturwissenschaften, Gau{\ss}  stra{\ss}e 20}
\email{msheng@ustc.edu.cn}
\address{School of Mathematical Sciences, University of Science and Technology of China, Hefei, 230026, China}
\begin{document}

	
	\theoremstyle{plain}
	
	\newtheorem{thm}{Theorem}[section]
	
	\newtheorem{theorem}[thm]{Theorem}
	
	\newtheorem{lemma}[thm]{Lemma}
	
	\newtheorem{sublemma}[thm]{Sublemma}
	
	\newtheorem{semisimple lemma}[thm]{Semisimplicity lemma}
	
	\newtheorem{periodic lemma}[thm]{Periodicity lemma}
	
	\newtheorem{corollary}[thm]{Corollary}
	
	\newtheorem{proposition}[thm]{Proposition}
	
	\newtheorem{observation}[thm]{Observation}
	
	\newtheorem{addendum}[thm]{Addendum}
	
	\newtheorem{variant}[thm]{Variant}
	
	
	\theoremstyle{definition}
	\newtheorem{setup}[thm]{Setup}
	
	\newtheorem{construction}[thm]{Construction}
	
	\newtheorem{perspective}[thm]{Perspective}
	
	\newtheorem{notations}[thm]{Notation}
	
	\newtheorem{notation}[thm]{Notation}
	
	\newtheorem{question}[thm]{Question}
	
	\newtheorem{problem}[thm]{Problem}
	
	\newtheorem{remark}[thm]{Remark}
	
	\newtheorem{remarks}[thm]{Remarks}
	
	\newtheorem{definition}[thm]{Definition}

	\newtheorem{claim}[thm]{Claim}
	
	\newtheorem{assumption}[thm]{Assumption}
	
	\newtheorem{assumptions}[thm]{Assumptions}
	
	\newtheorem{properties}[thm]{Properties}
	
	\newtheorem{example}[thm]{Example}
	
	\newtheorem{conjecture}[thm]{Conjecture}
	
	\numberwithin{equation}{thm}

	
	\newcommand{\pP}{{\mathfrak p}}
	
	\newcommand{\pH}{{\mathfrak h}}
	
	\newcommand{\pX}{{\mathfrak X}}
	
	\newcommand{\pY}{{\mathfrak Y}}
	
	\newcommand{\sA}{{\mathcal A}}
	
	\newcommand{\sB}{{\mathcal B}}
	
	\newcommand{\sC}{{\mathcal C}}
	
	\newcommand{\sD}{{\mathcal D}}
	
	\newcommand{\sE}{{\mathcal E}}
	
	\newcommand{\sF}{{\mathcal F}}
	
	\newcommand{\sG}{{\mathcal G}}
	
	\newcommand{\sH}{{\mathcal H}}
	
	\newcommand{\sI}{{\mathcal I}}
	
	\newcommand{\sJ}{{\mathcal J}}
	
	\newcommand{\sK}{{\mathcal K}}
	
	\newcommand{\sL}{{\mathcal L}}
	
	\newcommand{\sM}{{\mathcal M}}
	
	\newcommand{\sN}{{\mathcal N}}
	
	\newcommand{\sO}{{\mathcal O}}
	
	\newcommand{\sP}{{\mathcal P}}
	
	\newcommand{\sQ}{{\mathcal Q}}
	
	\newcommand{\sR}{{\mathcal R}}
	
	\newcommand{\sS}{{\mathcal S}}
	
	\newcommand{\sT}{{\mathcal T}}
	
	\newcommand{\sU}{{\mathcal U}}
	
	\newcommand{\sV}{{\mathcal V}}
	
	\newcommand{\sW}{{\mathcal W}}
	
	\newcommand{\sX}{{\mathcal X}}
	
	\newcommand{\sY}{{\mathcal Y}}
	
	\newcommand{\sZ}{{\mathcal Z}}
	
	\newcommand{\sa}{{\bf a}}

	\newcommand{\scrA}{{\mathscr A}}
	
	\newcommand{\scrD}{{\mathscr D}}
	
	\newcommand{\scrS}{{\mathscr S}}
	
	\newcommand{\scrU}{{\mathscr U}}
	
	\newcommand{\scrL}{{\mathscr L}}
	
	\newcommand{\scrM}{{\mathscr M}}
	
	\newcommand{\scrX}{{\mathscr X}}
	
	\newcommand{\scrE}{{\mathscr E}}
	
	\newcommand{\scrY}{{\mathscr Y}}
	
	\newcommand{\scrV}{{\mathscr V}}
	
	\newcommand{\scrF}{{\mathscr F}}
	
	
	\newcommand{\A}{{\mathbb A}}
	
	\newcommand{\B}{{\mathbb B}}
	
	\newcommand{\C}{{\mathbb C}}
	
	\newcommand{\D}{{\mathbb D}}
	
	\newcommand{\E}{{\mathbb E}}
	
	\newcommand{\F}{{\mathbb F}}
	
	\newcommand{\G}{{\mathbb G}}
	
	\newcommand{\HH}{{\mathbb H}}
	
	\newcommand{\I}{{\mathbb I}}
	
	\newcommand{\J}{{\mathbb J}}
	
	\renewcommand{\L}{{\mathbb L}}
	
	\newcommand{\K}{{\mathbb K}}
	
	\newcommand{\M}{{\mathbb M}}
	
	\newcommand{\N}{{\mathbb N}}
	
	\renewcommand{\P}{{\mathbb P}}
	
	\newcommand{\Q}{{\mathbb Q}}
	
	\newcommand{\R}{{\mathbb R}}
	
	\newcommand{\SSS}{{\mathbb S}}
	
	\newcommand{\T}{{\mathbb T}}
	
	\newcommand{\U}{{\mathbb U}}
	
	\newcommand{\V}{{\mathbb V}}
	
	\newcommand{\W}{{\mathbb W}}
	
	\newcommand{\X}{{\mathbb X}}
	
	\newcommand{\Y}{{\mathbb Y}}
	
	\newcommand{\Z}{{\mathbb Z}}

	\newcommand{\id}{{\rm id}}
	
	\newcommand{\rank}{{\rm rank}}
	
	\newcommand{\pdeg}{{\rm pardeg}}
	
	\newcommand{\END}{{\mathbb E}{\rm nd}}
	
	\newcommand{\End}{{\rm End}}
	
	\newcommand{\Hom}{{\rm Hom}}
	
	\newcommand{\Hg}{{\rm Hg}}
	
	\newcommand{\tr}{{\rm tr}}
	
	\newcommand{\Sl}{{\rm Sl}}
	
	\newcommand{\Gl}{{\rm Gl}}
	
	\newcommand{\Gr}{{\rm Gr}}
	
	\newcommand{\Cor}{{\rm Cor}}
	
	\newcommand{\Res}{\mathrm{Res}}
	
	\newcommand{\HIG}{\mathrm{HIG}}
	
	\newcommand{\PHIG}{\mathrm{PHIG}}
	
	\newcommand{\MHG}{\mathrm{MHG}}
	
	\newcommand{\PHG}{\mathrm{PHG}}
	
	\newcommand{\ppar}{\mathrm{par}}
	
	\newcommand{\PaHG}{\mathrm{HG}^{\text{par}}}
	
	\newcommand{\PPHG}{\mathrm{PHG}^{\text{par}}}
	
	\newcommand{\LPHG}{\mathrm{PHG}^{\text{triv. par}}}
	
	\newcommand{\MIC}{\mathrm{MIC}}
	
	\newcommand{\DR}{\mathrm{DR}}
	
	\newcommand{\MDR}{\mathrm{MDR}}
	
	\newcommand{\PDR}{\mathrm{PDR}}
	
	\newcommand{\PaDR}{\mathrm{DR}^{\text{par}}}
	
	\newcommand{\PPDR}{\mathrm{PDR}^{\text{par}}}
	
	\newcommand{\LPDR}{\mathrm{PDR}^{\text{triv. par}}}
	
	\newcommand{\HDF}{\mathrm{HDF}}
	
	\newcommand{\DHF}{\mathrm{DHF}}
	
	\newcommand{\Spec}{\mathrm{Spec\ }}
	
	\newcommand{\Char}{{\rm char}}
	
	
	\thanks{This research work is supported by the National Key R and D Program of China 2020YFA0713100, CAS Project for Young Scientists in Basic Research Grant No. YSBR-032, National Natural Science Foundation of China (Grant No. 11721101), the Fundamental Research Funds for the Central Universities (No. WK3470000018) and Innovation Program for Quantum Science and Technology (2021ZD0302904).}
	
	\begin{abstract} In this article, we introduce the notion of periodic de Rham bundles over smooth complex curves. We prove that motivic de Rham bundles over smooth complex curves are periodic. We conjecture that irreducible periodic de Rham bundles over smooth complex curves are motivic. We show that the conjecture holds for rank one objects and rigid objects. 
	\end{abstract}
	\maketitle
	\tableofcontents
	\section{Introduction}
	The periods associated to a family of algebraic varieties satisfy a linear differential equation, a so-called \emph{Picard-Fuchs equation}. It is a fundamental problem in mathematics to characterize Picard-Fuchs equations among all linear differential equations. C. Simpson has proposed two conjectures, which aim  to provide sufficient conditions for a linear differential equation to be Picard-Fuchs: the \emph{standard conjecture} \cite[p. 372]{Sim90}, which is based on transcendence theory, and a \emph{motivicity conjecture} for rigid local systems based on his work in nonabelian Hodge theory \cite[p. 9]{S92}.
	Our investigations have been guided by these conjectures. In this article, we propose a new such characterization, which is measured by the dynamical properties of a twisted Frobenius action on a linear differential equation modulo almost all primes. The key new ingredient comes from the nonabelian Hodge theory in positive characteristic, due to Ogus-Vologodsky \cite{OV}.  We shall extend the theory to the parabolic setting. 
	
	Let us start with the basic setup. Let $C$ be a connected smooth projective curve over $\C$ and $U\subset C$ a nonempty Zariski open subset. In this article, a \emph{Gau{\ss}-Manin system} over $U$ means the following: Let $f\colon X\rightarrow U$ be a \emph{smooth projective} morphism of relative dimension $d\geq 0$. For each degree $0\leq i\leq 2d$, one may associate the following objects:
	\begin{itemize}
		\item [(i)] $V:=H^{i}_{dR}(X|U)$, which is the $i$-th hypercohomology of the relative de Rham complex of $f$;
		\item [(ii)] $\nabla:=\nabla^{GM}: V\to V\otimes\Omega_U$, the Gau{\ss}-Manin connection; and
		\item [(iii)] $Fil:=F_{hod}$, the Hodge filtration on $V$. 
	\end{itemize}
	A \emph{Gau{\ss}-Manin system }is the triple $(H^{i}_{dR}(X|U),\nabla^{GM},F_{hod})$ for some $f$ and some $i$. It is a standard fact in Hodge theory that (a) $V$ is a vector bundle over $U$, (b) $\nabla$ is an integrable connection (since $U$ is one-dimensional, this condition becomes vacuous here), (c) $Fil=Fil^{\bullet}$ is a finite decreasing effective filtration of subbundles and satisfies Griffiths transversality
	$$
	\nabla: Fil^l\to Fil^{l-1}\otimes \Omega_U.
	$$ 
	We call such a filtration a \emph{Hodge} filtration. These properties are purely algebraic. We make them into a terminology as follows.
	\begin{definition}
		A triple $(V,\nabla,Fil)$ over $U$ satisfying the above properties (a)-(c) is called a \emph{de Rham bundle}. A de Rham bundle is said to be \emph{motivic} if over the generic point $\xi$ of $U$, it is isomorphic to a subsystem of a Gau{\ss}-Manin system. 
	\end{definition}
	A further explanation is useful: a subsystem of a Gau{\ss}-Manin system over $\xi$ is nothing but a $k(C)$-subspace of $H^{i}_{dR}(X|U)\otimes \sO_{\xi}$ invariant under the $\nabla^{GM}\otimes \sO_{\xi}$ action, which is equipped with the induced filtration from $F_{hod}\otimes \sO_{\xi}$. 
	
	Our goal is to characterize motivic de Rham bundles among all de Rham bundles. It bears a natural relation with the original problem. A linear differential equation over $U$ is nothing but a holomorphic connection $\nabla$ on a holomorphic vector bundle $V$ over $U$. If it is Picard-Fuchs, then $\nabla$ has regular singularities at the infinity and the local monodromies are quasi-unipotent.  By Deligne's canonical extension \cite{Del70}, $(V,\nabla)$ is in fact algebraic, so one may in fact start with with an algebraic connection. Furthermore, we shall see that there is a natural Hodge filtration on $V$ which is often \emph{the} right one if $(V,\nabla)$ is Picard-Fuchs, a phenomenon which seems to have not been observed before (see Remark \ref{simpson filtration}). Now let us introduce the key notion of the article.  
	
	Let $\sU$ be a \emph{spreading-out} of $U$ over $\Z$, which means the following: $\sU$ is a smooth scheme over an affine scheme $S=\Spec(A)$ with $A\subset \C$ finitely generated $\Z$-subalgebra, together with an isomorphism $\alpha:\sU\times_A\C\cong U$. 
	Spreading out is a useful technique for algebro-geometric objects of finite presentation \cite{EGA IV}; and one of the main utilities is that it allows us to speak of reduction modulo $p$. Now let $(V,\nabla,Fil)$ be a de Rham bundle over $U$. A spreading-out of the quadruple $(U,V,\nabla,Fil)$ is a spreading-out $\sU$ of $U$ together with a triple $(\sV,\nabla,\mathcal Fil)$ over $\sU$, where $\sV$ is a vector bundle over $\sU$ (=locally free $\sO_{\sU}$-module of finite rank), $\nabla$ is an integrable $S$-relative connection 
	$$
	\nabla: \sV\to \sV\otimes \Omega_{\sU/S},
	$$ 
	and $\mathcal Fil$ is a (Griffiths transverse) Hodge filtration on $\sV$, together with an isomorphism, compatible with $\alpha$, 
	$$
	\beta: (\sV,\nabla,\mathcal Fil)\times_A\C\cong (V,\nabla,Fil).
	$$
	\begin{definition}\label{definition of motivic and periodic objects}
		Let $U$ be a complex smooth curve. A de Rham bundle $(V,\nabla,Fil)$ over $U$ said to be \emph{periodic} if there exists
		\begin{itemize}
			\item a positive integer $f$;
			\item a spreading-out  $(\sU,\sV,\nabla,\mathcal Fil)$ of $(U,V,\nabla,Fil)$ defined over $S=\Spec A$, where $A\subset \C$ is finitely generated over $\mathbb{Z}$; and
			\item a proper closed subscheme $Z\subset S$, 
		\end{itemize}
		such that the reduction $(\sV,\nabla,\mathcal Fil)\times_Ss$ for \emph{any} geometric point $s\in S-Z$ is periodic of period no greater than $f$ with respect to \emph{any} $W_2(k(s))$-lifting $\tilde s\subset S$.
	\end{definition}
	
	We emphasize that the above definition is independent of the choice of spreading-out. The periodicity of a de Rham bundle in positive characteristic refers to \cite[Definition 4.1]{SZ20}, whose Higgs analogue was introduced earlier in the work \cite{LSZ}. We shall give a further study of periodic de Rham bundles in positive characteristic in \S3. Our main result of the article is the following \emph{de Rham periodicity theorem}.
	\begin{theorem}\label{main result}
		Let $U$ be a smooth complex curve. Any motivic de Rham bundle over $U$ is periodic.
	\end{theorem}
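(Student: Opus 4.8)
The plan is to reduce the statement to the fundamental fact that Gauß--Manin systems in positive characteristic are periodic, via the Katz--Oda description of $\nabla^{GM}$ and the compatibility of the Cartier operator with the Hodge filtration of a smooth projective morphism. Since periodicity is invariant under spreading-out and under passing to the generic point of $U$ (as it only concerns reduction mod $p$ of \emph{some} spreading-out), I may assume from the outset that the given de Rham bundle $(V,\nabla,Fil)$ is, over $\xi$, a $\nabla^{GM}$-invariant subsystem of $(H^i_{dR}(X|U),\nabla^{GM},F_{hod})$ carrying the induced filtration. First I would choose a single spreading-out over $S=\Spec A$ ($A\subset\C$ finitely generated over $\Z$) that simultaneously spreads out the smooth projective family $f\colon \mathcal X\to\mathcal U$, the de Rham cohomology bundle $\mathcal H^i_{dR}(\mathcal X|\mathcal U)$ with its Gauß--Manin connection and Hodge filtration, and the sub-triple $(\mathcal V,\nabla,\mathcal Fil)$ as an $\sO_{\mathcal U}$-direct-summand (after shrinking $S$) compatible with all this structure. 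Shrinking $S$ further, I may assume $\mathcal U\to S$ is smooth, $f$ is smooth projective, and the relative Hodge-to-de Rham spectral sequence degenerates fibrewise.

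Next I would invoke the main local statement: for a smooth projective morphism in characteristic $p$ with a $W_2$-lifting of the base, the inverse Cartier transform $C^{-1}$ of Ogus--Vologodsky (in its parabolic extension developed in \S2 of this paper) sends the Higgs bundle $(\bigoplus \mathrm{gr}_{Fil}, \theta)$ — where $\theta$ is the Kodaira--Spencer/cup-product map — back to the de Rham bundle $(H^i_{dR},\nabla^{GM})$, with the grading of the Hodge filtration matching the conjugate filtration up to a Frobenius twist; this is precisely the content of \cite{SZ20} (resp. \cite{LSZ} on the Higgs side) and shows that each reduction $(\mathcal H^i_{dR}(\mathcal X|\mathcal U),\nabla^{GM},F_{hod})\times_S s$ is periodic, of period $1$, with respect to any $W_2(k(s))$-lift $\tilde s\subset S$, for all $s$ outside a proper closed $Z_0\subset S$. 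The period bound here is uniform ($f=1$) and does not depend on $s$ or the lift.

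It then remains to propagate periodicity from the ambient Gauß--Manin system to the sub-object $(\mathcal V,\nabla,\mathcal Fil)$. The point is that the Higgs field on the ambient graded module preserves the sub-Higgs bundle $\mathrm{gr}_{\mathcal Fil}\mathcal V$ (Griffiths transversality for the induced filtration), and $C^{-1}$ is functorial for sub-objects preserved by the Higgs field; hence the one-step Frobenius-twisted Higgs-de Rham flow restricts from the ambient object to $(\mathcal V,\nabla,\mathcal Fil)\times_S s$, which therefore is itself periodic of period dividing that of the ambient object — so again period $\le 1$ — with respect to any $W_2(k(s))$-lift, for $s$ outside a possibly larger but still proper closed subscheme $Z\subset S$ (enlarging $Z_0$ to also handle the loci where the inclusion fails to reduce to a sub-bundle or where the relevant filtrations fail to have torsion-free graded pieces). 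By Definition \ref{definition of motivic and periodic objects}, this exhibits $(V,\nabla,Fil)$ as periodic, with $f=1$.

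The main obstacle I anticipate is purely at the level of the parabolic formalism: Gauß--Manin systems are only \emph{logarithmic} de Rham bundles on $C$ (they have regular singularities along $C\setminus U$), so the clean Ogus--Vologodsky correspondence over the affine curve $U$ does not see the behavior at the boundary, and controlling the period requires the parabolic inverse Cartier transform and the parabolic Higgs--de Rham flow of \S2–\S3 — in particular one must check that the parabolic structure induced on the sub-object $\mathcal V$ is again one to which the periodic flow applies, and that the period-$1$ statement survives with the induced (possibly non-trivial) parabolic weights. Verifying that the sub-object inherits a well-defined parabolic filtration stable under the Higgs field, and that $C^{-1}$ in the parabolic setting is exact enough to restrict to it, is where the real work lies; everything else is spreading-out bookkeeping.
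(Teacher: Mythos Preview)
Your proposal contains a genuine gap at the crucial step. You write that ``the one-step Frobenius-twisted Higgs--de Rham flow restricts from the ambient object to $(\mathcal V,\nabla,\mathcal Fil)\times_S s$,'' but this does not follow from functoriality of $C^{-1}$ alone. Functoriality gives you that $C^{-1}(\Gr_{\mathcal Fil}\mathcal V_s)$ sits as a flat subbundle of $C^{-1}(\Gr_{F_{hod}}\mathcal H_{dR,s})$; what it does \emph{not} give is that the periodicity isomorphism $\psi\colon C^{-1}\circ\Gr\,\mathcal H_{dR,s}\xrightarrow{\sim}\mathcal H_{dR,s}$ carries this subbundle back to $\mathcal V_s$. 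The paper flags exactly this obstruction at the start of \S\ref{section:proof}: given an isomorphism $\alpha\colon A\oplus B\cong A\oplus B$, the image $\alpha(A)$ need be isomorphic to neither $A$ nor $B$. The crystalline Frobenius furnishing $\psi$ is natural for the whole Gau{\ss}--Manin system, but an arbitrary $\nabla^{GM}$-invariant subspace defined over $\C$ has no reason to be Frobenius-stable after reduction.

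The paper's route around this is not the parabolic bookkeeping you anticipate, but rather a stability argument: pass to the compactification $C$ and the canonical parabolic extension (Lemma \ref{canonical parabolic extension}); use Deligne's semisimplicity for $\C$-VHS to decompose $\bar H_{dR}$ into irreducible parabolic de Rham summands (Lemma \ref{complete reducibility}); then show via the Simpson correspondence and the Hodge metric that the associated graded parabolic Higgs bundles of these summands are parabolic \emph{stable} (Lemma \ref{polystablility}). Stability forces the flow operator $\Gr\circ C^{-1}$ to permute the finite set of isomorphism classes of stable factors, yielding periodicity for each factor (Theorem \ref{periodicity of stable factors of Kodaria-Spencer system}, Corollary \ref{geometric is periodic}) --- with period generally larger than $1$. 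Incidentally, your period-$1$ claim already fails once the parabolic weights are nontrivial: by Proposition \ref{change of par weights under inverse Cartier}, $C^{-1}$ multiplies the weights by $p$ modulo $N$, so the parabolic Gau{\ss}--Manin system cannot be one-periodic unless the local monodromies are unipotent.
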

  To appreciate the theorem, we point out that periodicity is a natural generalization of  ``being torsion'' in rank one case to an arbitrary rank. Theorem \ref{main result} has several immediate arithmetic consequences, which we choose to explore in a separate work. 
We conjecture that the converse statement of Theorem \ref{main result} also holds.
	\begin{conjecture}[Periodic de Rham conjecture]\label{motivic conjecture}
		Let $U$ be a smooth complex curve. Then any irreducible periodic de Rham bundle over $U$ is motivic.
	\end{conjecture}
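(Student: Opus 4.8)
The plan is to prove Conjecture \ref{motivic conjecture} in the two cases announced in the abstract — rank one objects and rigid objects — and to explain why the general assertion lies deeper.

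\textbf{Rank one.} First I would observe that in rank one, periodicity of a de Rham line bundle $(L,\nabla,Fil)$ is equivalent to $(L,\nabla)$ being torsion, i.e. $(L,\nabla)^{\otimes N}\cong(\sO_U,d)$ for some $N\geq 1$; the Hodge filtration carries no extra information, being forced to be the trivial filtration up to a Tate twist. Indeed, periodicity of period $\leq f$ modulo almost all $p$ bounds the order of the class of $L$ in $\mathrm{Pic}^0(\sU_s/s)$ uniformly in $s$, while flatness of $\nabla$ forces $c_1(L)=0$; a standard spreading-out and specialization argument for the relative Picard scheme then shows $(L,\nabla)$ itself is torsion. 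Consequently its monodromy character $\chi\colon\pi_1(U)\to\C^{\times}$ has finite order; realizing $\chi$ as a summand of $f_*\C$ for the associated cyclic cover $f\colon U'\to U$, and matching filtrations (both sides being Tate-twisted trivial filtrations), exhibits $(L,\nabla,Fil)$ as a subsystem of a Gauß--Manin system. So the rank one case reduces to a torsion-point argument on the Jacobian.

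\textbf{Rigid.} Here I would exploit that a periodic de Rham bundle acquires, modulo almost all $p$ and after choosing a $W_2$-lifting, a fixed point of the twisted Frobenius; through the parabolic nonabelian Hodge correspondence of Ogus--Vologodsky type developed in this paper, this fixed point is precisely a crystalline (indeed $F$-divided type) structure on the reduction, so that the associated local system fits into a compatible system of lisse sheaves whose members are ``of geometric origin in every residue characteristic''. Combined with rigidity, this is the setting of Esnault--Groechenig: a cohomologically rigid local system with quasi-unipotent local monodromies is integral, defined over the ring of integers of a number field, with crystalline companions at all primes. The key remaining point, and the crux of this case, is to promote such an everywhere-crystalline rigid object to an actual summand of a Gauß--Manin system. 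For this I would use the periodicity a second time: the grading produced by the twisted Frobenius fixed point is exactly a Hodge grading, so the associated Higgs bundle is a system of Hodge bundles and the period map lands in a Hodge-theoretic moduli space; one then invokes the known instances of Simpson's motivicity conjecture for rigid local systems — the Corlette--Simpson classification in rank two and the cases reachable through Katz's middle-convolution machinery and accessory-parameter arguments — to identify the resulting polarized $\C$-VHS with one cut out of a family of varieties.

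\textbf{The general case and the main obstacle.} In full generality the conjecture is out of reach with present methods: passing from a purely characteristic-$p$ dynamical condition (periodicity of the twisted Frobenius) to a characteristic-$0$ geometric origin is a nonabelian and motivic strengthening of the Grothendieck--Katz $p$-curvature conjecture, and it contains Simpson's motivicity conjecture as a special case. The hardest step is the arithmetic-to-geometric passage: even granting that periodicity yields a compatible system of $\ell$-adic sheaves geometric in every residue characteristic, there is no general mechanism over a complex base for constructing a family of varieties realizing them. Over a curve over $\F_q$ one could hope to appeal to $\ell$-independence together with automorphic input in the style of Lafforgue; over $\C$ the only available leverage is Higgs moduli, period domains, and — in the rigid situation — Katz's algorithmic description, which is precisely why I expect to be forced to restrict to rank one and to rigid objects.
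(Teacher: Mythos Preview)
You correctly recognize that this is a conjecture and that only the rank-one and rigid cases are established. Your arguments for both cases, however, diverge from the paper's and have real gaps.

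\textbf{Rank one.} Your Picard-scheme argument does not work as stated. Periodicity of period $f$ in characteristic $p$ gives that the order of $L_s$ in $\mathrm{Pic}^0$ divides $p^f-1$, which is \emph{not} uniform in $s$; worse, over $\bar\F_p$ every degree-zero line bundle on a projective curve is already torsion, so torsion of the reductions alone is vacuous. A route of this shape can be made to work, but it requires Pink's theorem on the order of reduction together with Masser's specialization theorem---not a ``standard'' argument---and even then it controls only $L$, not $\nabla$. The paper's route is shorter and addresses the connection directly: an object of $\MIC_{p-1}$ has nilpotent $p$-curvature, which in rank one means \emph{zero} $p$-curvature; hence periodicity forces the $p$-curvature of $(L,\nabla)$ to vanish for almost all $p$, and the rank-one case of the Grothendieck--Katz conjecture (Chudnovsky--Chudnovsky, Andr\'e) then shows $(L,\nabla)$ is torsion. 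This is Proposition~\ref{rank one case}.

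\textbf{Rigid.} Your detour through Esnault--Groechenig and crystalline companions is unnecessary and does not land where you need it. In the paper, ``rigid'' means Katz's \emph{weakly physically semi-rigid}, and Katz's classification in \cite{Kat96} yields motivicity directly: in genus $0$ an irreducible WPSR local system arises, via middle convolution, as a summand of the image of a map between Gau{\ss}--Manin systems of an explicit affine hypersurface family; in genus $1$ it is, up to a torsion twist, the pushforward of a torsion rank-one object along an isogeny. This is Proposition~\ref{motivicity of rigids}. The work of Esnault--Groechenig appears in the paper only as inspiration for the \emph{converse} statement---that rigid objects are periodic (Proposition~\ref{irreducible WPSR is periodic})---not as a tool for extracting motivicity from periodicity. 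Your final appeal to ``Corlette--Simpson in rank two and cases reachable through middle convolution'' is also less than what is needed: Katz's classification covers \emph{all} irreducible WPSR local systems, not just a subclass.
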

	The irreducible condition in the conjecture cannot be dropped. One may show that there exist infinitely many nontrivial extensions of $(\sO_{\P^1},d)$ by $(\sO_{\P^1},d)$ over $\P^1$ with three punctures which are periodic. By the solution of Grothendieck-Katz $p$-curvature conjecture for rank one objects, due to Chudnovsky-Chudnovsky and Andr\'{e} (see e.g. \cite[\S2.4]{Bost}), we obtain a classification of rank one periodic objects.
\begin{proposition}[Proposition \ref{rank one case}]\label{rank one}
	A rank one de Rham bundle $(L,\nabla,Fil)$ is periodic if and only if there exists a positive integer $m$ such that $(L,\nabla)^{\otimes m}$ is trivial. 
\end{proposition}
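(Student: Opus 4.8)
The plan is to prove both implications directly, first noting that the filtration plays no essential role. Since $L$ has rank one, every Griffiths transverse filtration on $L$ jumps in a single degree, so the associated graded Higgs bundle $\mathrm{Gr}_{Fil}(L,\nabla)$ equals $(L,0)$ with vanishing Higgs field, and a Tate twist affects neither the statement ``$(L,\nabla)^{\otimes m}$ is trivial'' nor, as one checks against the definition of the Higgs-de Rham flow, periodicity. So it is enough to treat $(L,\nabla)$ with trivial filtration.

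For sufficiency, suppose $(L,\nabla)^{\otimes m}\cong(\sO_U,d)$. Then the rank one local system $\ker\nabla^{\mathrm{an}}$ has finite monodromy of order dividing $m$; let $\pi\colon U'\to U$ be the corresponding connected finite \'etale Galois cover trivializing it. As $\pi$ is a smooth projective morphism of relative dimension $0$, the triple $(H^0_{dR}(U'|U),\nabla^{GM},F_{hod})=(\pi_*\sO_{U'},\nabla^{GM},\text{triv.})$ is a Gau{\ss}-Manin system, and over the generic point $\xi$ it decomposes as the direct sum of the rank one de Rham bundles attached to the characters of $\mathrm{Gal}(U'/U)$, one of which is $(L,\nabla)$; replacing $U'$ by $U'\times\P^n$ accounts for Tate twists. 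Hence $(L,\nabla,Fil)$ is motivic, so it is periodic by Theorem \ref{main result}.

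For necessity, suppose $(L,\nabla,Fil)$ is periodic, with a spreading-out $(\sU,\sV,\nabla,\mathcal Fil)$ over $S=\Spec A$ and $Z\subsetneq S$ as in Definition \ref{definition of motivic and periodic objects}. I claim that for each geometric point $s\in S-Z$ the $p$-curvature of $\nabla_s$ vanishes. Indeed, the associated graded Higgs bundle of $(\sL_s,\nabla_s,\mathcal Fil_s)$ is $(\sL_s,0)$; applying the inverse Cartier transform with respect to a $W_2$-lift gives the de Rham bundle $C^{-1}(\sL_s,0)=(F^*\sL_s,\nabla_{\mathrm{can}})$, whose $p$-curvature is zero and whose associated graded Higgs bundle is again of the form $(F^*\sL_s,0)$; iterating, every de Rham term of the Higgs-de Rham flow has vanishing $p$-curvature, and the periodicity isomorphism identifies $(\sL_s,\nabla_s)$, up to a Frobenius twist of the base, with such a term. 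Since the image of $S-Z$ in $\Spec\Z$ omits only finitely many primes, $(L,\nabla)$ --- regarded over the fraction field $K\subset\C$ of $A$, which is finitely generated over $\Q$ --- has vanishing $p$-curvature after reduction modulo almost every prime of $K$. By the solution of the Grothendieck-Katz $p$-curvature conjecture for rank one connections (Chudnovsky-Chudnovsky and Andr\'e; see \cite[\S2.4]{Bost}), $(L,\nabla)$ has finite monodromy, and if $m$ denotes the order of its monodromy character then $(L,\nabla)^{\otimes m}\cong(\sO_U,d)$. The delicate point is precisely this last reduction to the Grothendieck-Katz setting: one must know that in positive characteristic the Higgs-de Rham flow of a rank one object genuinely collapses to iterated Frobenius pullback, which depends on the behaviour of the Hodge filtration produced by $C^{-1}$ on a Higgs bundle with zero Higgs field; this is part of the positive-characteristic study carried out in \S3.
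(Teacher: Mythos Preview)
Your proof is correct and follows the same route as the paper: for sufficiency, torsion $\Rightarrow$ motivic (via the finite \'etale cover) $\Rightarrow$ periodic by Theorem~\ref{main result}; for necessity, periodicity forces vanishing $p$-curvature at almost all primes, and then Chudnovsky--Chudnovsky/Andr\'e applies.

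The paper's necessity argument is cleaner than yours, however. It simply observes that any periodic de Rham bundle in characteristic $p$ has \emph{nilpotent} $p$-curvature --- this is immediate from the periodicity isomorphism $\psi\colon (V_f,\nabla_f)\cong (V_0,\nabla_0)$, since $(V_f,\nabla_f)$ lies in the image of $C^{-1}$ and hence in $\MIC^{lf}_{p-1}$ --- and then notes that in rank one nilpotent means zero. You instead compute the flow explicitly term by term; this is fine but unnecessary, and it introduces two infelicities. First, the phrase ``up to a Frobenius twist of the base'' is misleading: the periodicity isomorphism is a direct isomorphism of flat bundles on $\sC_s$, with no base twist involved. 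Second, $C^{-1}$ does not ``produce'' a Hodge filtration; the filtrations $Fil_i$ are additional data in the flow, and the point (which you use implicitly) is that in rank one every Hodge filtration is trivial up to shift, so the flow is forced. Once you streamline these, your argument collapses to the paper's one-line observation.
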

It follows from Proposition \ref{rank one} that, the conjecture holds for rank one objects. We explain that the conjecture also holds true for \emph{rigid objects}.  This arises from the study of rigid local systems by N. Katz \cite{Kat96}. Set $D=C-U=\{P_1,\cdots,P_l\}$ to be the set of punctures. Let $(V,\nabla)$ be an  algebraic connection over $U$. Fix a set of Jordan normal forms $J:=\{J_1,\cdots,J_l\}$. If the set of residues 
	$$
	\Res(\nabla)=(\Res_{P_1}\nabla,\cdots,\Res_{P_l}\nabla)
	$$ 
	is \emph{component-wise} conjugate to $J$, then we write $\Res(\nabla)\cong J$. Notice that if the set of residues of $\nabla$ conjugates to $J$, so does  the set of residues of the tensor product connection $(V,\nabla)\otimes (L,\nabla)|_{U}$ for any rank one connection $(L,\nabla)$ over $C$. Because of this, an algebraic connection $(W,\nabla)$ is said to be \emph{twisted isomorphic} to $(V,\nabla)$ if there exists some $(L,\nabla)$ over $C$ such that $(W,\nabla)\cong (V,\nabla)\otimes (L,\nabla)|_U$. We denote this relation by 
	$$
	(W,\nabla)\thicksim (V,\nabla).
	$$
	Consider the following twisted de Rham moduli space
	$$
	M^{r,J}_{dR}(U)_{twisted}:=\{(V,\nabla)| \rank(V)=r,\ \Res(\nabla)\cong J\}/\thicksim.
	$$
	The next definition is a reformulation of Katz's notion of weakly physically semi-rigid local systems \cite[\S1.2]{Kat96} via Riemann-Hilbert correspondence. 
	\begin{definition}\label{WPSR connection}
		Let $(V,\nabla)$ be an algebraic connection over $U$. It is said to be \emph{weakly physically semi-rigid} of type $J$, if the corresponding twisted de Rham moduli space consists of finitely many points.   
	\end{definition}
	
	We have the following result. 
	\begin{proposition}[Propositions \ref{irreducible WPSR is periodic}-\ref{motivicity of rigids}]
		Let $\{J_i\}_{1\leq i\leq n}$ be a set of Jordan normal forms whose eigenvalues are roots of unity. Let $(V,\nabla)$ be an irreducible weakly physically semi-rigid connection of type $J$ and \emph{torsion} determinant over $U$. The following statements hold:
		\begin{itemize}
			\item [(i)] The de Rham bundle $(V,\nabla,F_S)$, where $F_S$ is the restriction of the \emph{Simpson} filtration on the canonical parabolic extension of $(V,\nabla)$ over $C$, is periodic;
			\item [(ii)](Katz) $(V,\nabla,F_S)$ is motivic. 
		\end{itemize}
		Consequently, Conjecture \ref{motivic conjecture} holds for $(V,\nabla,F_S)$. 
	\end{proposition}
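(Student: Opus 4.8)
The plan is to prove the two statements by quite different means: (i) by a Frobenius-dynamics argument on a finite moduli space of rigid de Rham bundles, and (ii) by invoking Katz's work on rigid local systems; the concluding sentence is then immediate. (Incidentally, (i) also follows a posteriori from (ii) together with Theorem \ref{main result}, but the interest of (i) is to exhibit the periodicity directly.)

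For (ii): through the Riemann--Hilbert correspondence, weak physical semi-rigidity of $(V,\nabla)$ of type $J$ says that the associated local system $\V$ is an irreducible rigid local system on $U$ whose local monodromies have eigenvalues roots of unity. Katz's theory of rigid local systems exhibits such a $\V$ as being of geometric origin, produced by iterated middle convolution and tensoring with rank one systems (each of which preserves geometric origin) out of a Kummer-type object; hence $(V,\nabla)$ over the generic point $\xi$ of $U$ is a subquotient --- so, on an irreducible object, a subsystem --- of a Gau{\ss}--Manin system. It remains to identify the filtration induced on it from the ambient Hodge filtration with $F_S$ up to a Tate twist: both are Griffiths-transverse filtrations on the canonical parabolic extension whose associated graded parabolic Higgs bundle is semistable of parabolic degree zero (the torsion-determinant hypothesis forcing degree zero), and on an irreducible object such a filtration is unique up to an overall shift. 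Thus $(V,\nabla,F_S)$ is motivic.

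For (i): spread out $(U,C,D,V,\nabla,F_S)$, together with the canonical parabolic extension, over $S=\Spec A$ with $A\subset\C$ a finitely generated $\Z$-subalgebra. Weak physical semi-rigidity says the twisted de Rham moduli space is a finite set of points, and fixing the torsion determinant of $(V,\nabla)$ then cuts the untwisted moduli down to a finite set $\mathcal M_\C$, because inside each twist-orbit the rank one connections on $C$ realizing a prescribed $r$-th power form a torsor under the finite group of $r$-torsion rank one connections. Shrinking $S$ (enlarging $Z$), this finiteness and a uniform bound on the cardinality descend to the reduction modulo every geometric point $s\in S-Z$. Since the residue eigenvalues are roots of unity, the inverse Cartier transform $C^{-1}_{\tilde s}$ of Ogus--Vologodsky --- which I will set up in the logarithmic/parabolic setting, legitimate for $p$ large since then the nilpotent level is less than $p$ --- alters the Jordan type only through the Frobenius $\zeta\mapsto\zeta^p$ on roots of unity, up to the integral renormalization built into the canonical parabolic extension, so only finitely many Jordan types appear along the flow; enlarging $J$ to their union gives a finite moduli space $\mathcal M$ over $\overline{k(s)}$, of cardinality bounded independently of $s$, stable under the operations in play. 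The flow map $\psi$ sends a filtered parabolic de Rham bundle to $C^{-1}_{\tilde s}$ of its associated graded Higgs bundle, equipped with the Hodge filtration that $C^{-1}_{\tilde s}$ canonically produces; it is a self-map of $\mathcal M$, and the reduction of $(V,\nabla,F_S)$ at $s$ lies in $\mathcal M$. A self-map of a finite set is eventually periodic; to upgrade this to genuine periodicity of our object, I would use that on the parabolic-degree-zero semistable locus --- where $F_S$ lives, and which $C^{-1}_{\tilde s}$ preserves by the semistability-preservation of the Higgs--de Rham flow --- grading and $C^{-1}_{\tilde s}$ are mutually inverse, so $\psi$ restricts there to a bijection and every object it meets is periodic of period dividing $|\mathcal M|!$. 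Taking $f$ equal to this bound, independent of $s\in S-Z$ and of the $W_2(k(s))$-lifting $\tilde s$ by the boundedness in the spreading-out, yields exactly the periodicity of $(V,\nabla,F_S)$ required by Definition \ref{definition of motivic and periodic objects}.

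The main obstacle is the positive-characteristic parabolic nonabelian Hodge theory on which the whole argument rests: constructing $C^{-1}_{\tilde s}$ and the Higgs--de Rham flow in the parabolic/logarithmic situation, tracking residues and the torsion-determinant constraint along the flow, controlling the nilpotent level, and --- the crux --- establishing the precise sense in which the flow is a bijection on the relevant finite set, which is what turns eventual periodicity into genuine periodicity with a bound uniform over $S-Z$. Granting this theory (developed in \S3), statements (i) and (ii) hold; since they assert exactly that $(V,\nabla,F_S)$ is periodic and motivic, Conjecture \ref{motivic conjecture} holds for $(V,\nabla,F_S)$.
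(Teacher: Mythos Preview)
Your proof of (i) has a genuine gap at the step where you upgrade eventual periodicity to periodicity. The claim that ``grading and $C^{-1}_{\tilde s}$ are mutually inverse'' on the semistable locus is false: the inverse of $C^{-1}$ is the Cartier transform $C$, not $\Gr$. The grading functor forgets the filtration, and there is no general mechanism by which $C^{-1}\circ\Gr$ is injective on any natural locus. (Relatedly, $C^{-1}$ does not ``canonically produce'' a Hodge filtration; the filtration at each stage of the flow is the Simpson filtration, a separate construction requiring stability.) What the paper actually uses to get surjectivity of the flow on the finite set is Proposition~\ref{rigidity lemma}: the parabolic Cartier and inverse Cartier transforms both preserve \emph{rigidity}. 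This is not formal, because in a deformation of $(E,\theta)$ the Higgs field may leave the nilpotent cone, so Ogus--Vologodsky alone does not suffice; one needs the Chen--Zhu extension of the correspondence over an \'etale neighborhood of the origin in the Hitchin base. Granting this, one reverses the flow: apply $C$ to a rigid parabolic connection in characteristic $p$ to get a rigid Higgs bundle in characteristic $p$, which (by constancy of the number of components of the finite moduli) is the reduction of a rigid parabolic Higgs bundle over $\C$, which by the Simpson correspondence is the graded of a rigid parabolic connection over $\C$. This exhibits a preimage, hence surjectivity, hence bijectivity on the finite set. The determinant is tracked through the cycle $\scrL_{dR}\mapsto\scrL_{dR}^{\otimes p}$ among the finitely many $\scrL_{dR,i}$, and one composes $\phi(m)$ steps to land back in the moduli with the original determinant before invoking finiteness.

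For (ii), your appeal to middle convolution is too coarse: that algorithm is specific to $\P^1$ and to cohomological rigidity, whereas WPSR is a weaker notion and, by Katz's classification, forces only $g(C)\in\{0,1\}$. The paper treats the two genera separately: in genus $0$ it cites Katz's hypersurface construction, while in genus $1$ it reduces (after a torsion twist) to a single puncture and uses Katz's explicit description of WPSR local systems on an elliptic curve minus a point as pushforwards of torsion rank-one objects along an isogeny.
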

	By Theorem \ref{main result}, (ii) implies (i), and (ii) follows from a classification result of Katz \cite[Ch. 1, 8]{Kat96}. Inspired by the recent remarkable work of Esnault-Gr\"ochenig \cite{EG18} on the motivicity conjecture of Simpson, we shall provide a proof of (i) that is independent from the classification result. 

In Conjecture \ref{motivic conjecture}, when $U$ is defined over $\bar \Q$, one may view it as a de Rham to Higgs version of the standard conjecture \footnote{On the other hand, one may try to extend the standard conjecture to an arbitrary $U$.}. Moreover,  in the case of curve, one often has non-rigid irreducible connections. Hence Conjecture \ref{motivic conjecture} can be also viewed as a natural generalization of the motivicity conjecture for rigid connections. After completing the first version of this article, H. Esnault informed us the following \emph{nilpotency conjecture}:
\begin{conjecture}(\cite[Ch.V, Appendix]{Andre})\footnote{In loc. cit., Andr\'e considers only when $U$ is defined over $\bar \Q$, and assumes the nilpotency of $p$-curvature only for \emph{density one} primes (so it is even stronger than the global nilpotency condition).}
Let $(V,\nabla)$ be an irreducible algebraic connection over $U$. Suppose there is a spreading-out $(\sU,\sV, \nabla)$ of $(U,V,\nabla)$ over $S$ such that $(\sV,\nabla)$ is globally nilpotent \cite{Kat70}. Then $(V,\nabla)$ is motivic.  
\end{conjecture}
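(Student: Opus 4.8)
The plan is to factor André's conjecture through the framework developed above, reducing it to the Periodic de Rham conjecture (Conjecture \ref{motivic conjecture}). Since a periodic de Rham bundle has nilpotent $p$-curvature for almost all $p$ (periodicity runs through Higgs bundles with nilpotent Higgs field and the Ogus--Vologodsky correspondence, which forces nilpotent $p$-curvature), one already has the implication periodic $\Rightarrow$ globally nilpotent; the content of the reduction is to boost the weaker hypothesis of global nilpotency back up to periodicity. Concretely, I would try to prove that an irreducible, globally nilpotent connection $(V,\nabla)$, once equipped with a canonical Hodge filtration, is \emph{periodic} in the sense of Definition \ref{definition of motivic and periodic objects}; granting this, Conjecture \ref{motivic conjecture} immediately yields that $(V,\nabla)$ is motivic. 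The problem thus splits into producing the filtration and upgrading nilpotency to periodicity.

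For the filtration, I would invoke nonabelian Hodge theory over $C$. As $(V,\nabla)$ is irreducible, its canonical parabolic extension corresponds under the tame Corlette--Simpson correspondence to a stable parabolic Higgs bundle, and the natural candidate filtration is the Simpson filtration $F_S$, exactly as in the weakly physically semi-rigid case recorded above. The subtlety is that $F_S$ genuinely exists only when $(V,\nabla)$ underlies a polarized complex variation of Hodge structure, equivalently when the corresponding point of the Higgs moduli is fixed by the $\G_m$-action. One therefore has to extract this $\G_m$-fixedness from global nilpotency of the $p$-curvature, which is itself a deep arithmetic-to-Hodge input and is, in effect, one half of the conjecture. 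Assuming it, $(V,\nabla,F_S)$ becomes a de Rham bundle to which Definition \ref{definition of motivic and periodic objects} applies.

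For periodicity, I would spread out $(U,V,\nabla,F_S)$ over $S=\Spec A$ and argue modulo $p$. For almost all geometric points $s\in S$, global nilpotency gives nilpotent $p$-curvature of bounded level, so the inverse Cartier transform of Ogus--Vologodsky applies and the Higgs--de Rham flow attached to $(\sV,\nabla,F_S)\times_S s$ is well defined. The substance is to show that this flow admits a periodic orbit of period bounded independently of $s$, with the reductions of $F_S$ furnishing that orbit. Here I would attempt to manufacture a Frobenius structure: nilpotent $p$-curvature should let one view the reduction as an $F$-isocrystal-type object, and the Frobenius pullback should provide the self-map realizing periodicity, with uniform boundedness of the period following from finiteness of the relevant moduli over $S$.

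The main obstacle is precisely this last step. Nilpotency of the $p$-curvature guarantees only that the Higgs--de Rham flow is \emph{defined}; periodicity is a far stronger finiteness statement encoding a genuine crystalline Frobenius structure, and I do not expect it to follow from nilpotency alone without essentially a refined form of the Grothendieck--Katz $p$-curvature conjecture. Consequently this strategy establishes ``globally nilpotent $+$ irreducible $\Rightarrow$ periodic'' only conditionally, after which Conjecture \ref{motivic conjecture} closes the argument — which is consistent with André's conjecture being open in general. In the rigid case, however, the method of Esnault--Gr\"ochenig supplies the missing periodicity input directly, in harmony with the unconditional motivicity already recorded above for weakly physically semi-rigid connections.
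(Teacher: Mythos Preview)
The statement you are addressing is a \emph{conjecture}, and the paper does not attempt to prove it. The only comment the paper makes is the one-line observation that ``since periodicity implies global nilpotency, the previous conjecture implies ours,'' i.e., Andr\'e's conjecture is \emph{stronger} than Conjecture \ref{motivic conjecture}. There is therefore no proof in the paper to compare your proposal against.

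As for the strategy itself: your plan is to deduce Andr\'e's conjecture from Conjecture \ref{motivic conjecture} by first proving ``globally nilpotent $\Rightarrow$ periodic.'' But the paper's observation already tells you that the implication runs the other way: periodic $\Rightarrow$ globally nilpotent is the easy direction, so Andr\'e's hypothesis is genuinely weaker than periodicity. Upgrading global nilpotency to periodicity is therefore at least as hard as Andr\'e's conjecture itself (and plausibly harder, since you would be proving Andr\'e $\Leftrightarrow$ Conjecture \ref{motivic conjecture}). You acknowledge this obstacle honestly in your final paragraph, but that means your proposal is not a reduction at all---it replaces one open problem by a stronger one. In particular, the step ``extract $\G_m$-fixedness from global nilpotency'' and the step ``manufacture a Frobenius structure giving bounded periodicity from nilpotent $p$-curvature alone'' are each, as you note, essentially equivalent to deep open conjectures (a Hodge-theoretic strengthening of Grothendieck--Katz), so nothing has been gained.
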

Since periodicity implies global nilpotency, the previous conjecture implies ours. Analogous to the fact that algebraic connections over $U$ whose spreading-outs are globally nilpotent form a neutral Tannakian category over $\C$, we prove in Proposition \ref{Tannakian cat} that the category of periodic de Rham bundles over $U$ also forms a neutral Tannakian category. Matching these two associated Tannaka groups is an interesting question. To each irreducible periodic de Rham bundle over $U$ (which is assumed to be defined over $\bar \Q$), we associate an essentially unique family of representations of the topological fundamental groups of $U'$, a finite \'etale cover of some nonempty open subset of $U$ (see the discussion after Conjecture \ref{extension conjecture}). To understand better this family of representations should be an important question. On the other hand, we also propose the periodic Higgs conjecture in \cite{KS}, the Higgs companion of Conjecture \ref{motivic conjecture}. The n\"{a}ive Higgs analogue of the nilpotency conjecture is however, simply false.   	

The notion of a periodic parabolic Higgs-de Rham flow (Definition 4.1) generalizes the notion of a periodic Higgs-de Rham flow of Lan-Sheng-Zuo \cite{LSZ} in a natural way. Some new phenomenon arises in the parabolic setting. However, its appearance in the current work is actually a \emph{necessity}. Indeed, in order to get the periodicity for a motivic de Rham bundle over $U$, we have to establish first the periodicity for its \emph{canonical parabolic extension} over $C$, the compactification of $U$, and then the periodicity carries over to $U$. The paper is structured as follows.
	\begin{itemize}
		\item In Section \ref{section:parabolic}, we generalize the Cartier/inverse Cartier transform of Ogus-Vologodsky \cite{OV} to the parabolic setting. This is achieved via a Biswas-Iyer-Simpson correspondence for $\lambda$-connections over an algebraically closed field, that is a generalization of the work \cite{Biswas} of Biswas and the work \cite{IS} of Iyer-Simpson.
		\item In Section 3, we introduce the notion of parabolic de Rham bundles.
		\item In Section \ref{section:periodic}, we introduce the notion of periodic (parabolic) de Rham bundles.
		\item In Section \ref{section:proof}, we prove our main result, Theorem \ref{main result}.
		\item In Section \ref{section:conjecture}, we discuss the periodic de Rham conjecture. 
	\end{itemize}    
	
We list some notations which are used throughout the article.

{\bf Notations.}
\begin{itemize}
	\item $k$ is an algebraically closed field. 
	\item $N$ is a positive integer, which is coprime to $p$ when $\Char(k)=p>0$.
	\item $G$ is the constant group scheme $\Z/N$ over $k$. 
	\item $\sigma$ is a generator of $G(k)$.
	\item $\zeta$ is a primitive $N$-th root of unity.
	\item $C$ is a connected smooth curve over $k$.
	\item $D=\sum D_i\subset C$ a reduced effective divisor with irreducible components $D_i$s. 
	\item $U=C-D$, the complement of $D$.
	\item $C_{\log}=(C,D)$, the logarithmic curve with the logarithmic structure determined by $D$.
	\item $\tilde C_{\log}=(\tilde C,\tilde D)$ in the case $\Char(k)=p$, a $W_2(k)$-lifting of $C_{\log}$.  
	\item $S$, a locally noetherian scheme. It may be regarded as a log scheme with the trivial logarithmic structure. 
	\item $Z$, a proper closed subscheme of $S$.
	\item $\sC$, a smooth curve over $S$, i.e. $\sC\to S$ a smooth morphism of relative dimension one.
	\item $\sD=\sum_i \sD_i$, an $S$-relative simple normal crossing divisor in $\sC$, with irreducible components $\sD_i$s.
	\item $\sC_{\log}=(\sC,\sD)$ the log scheme whose logarithmic structure is determined by $\sD$.  
\end{itemize}

	{\bf Acknowledgement.} The idea of considering Higgs bundles over $\C$ whose mod $p$ reductions are periodic with bounded period for almost all $p$s comes from a conversation of the second named author with Kang Zuo during the Pisa conference 'Fundamental Group in Arithmetic and Algebraic Geometry' in December 2013. The current work and \cite{KS} began during the academic visit of the first named author to School of Mathematical Sciences, USTC in March, 2018 and continued in March, 2019. He would like to thank the hospitality of the local institute. The second named author benefited from various discussions with Jianping Wang. In particular, he pointed out to us that the condition $N\leq p-1$ in the first version of Proposition \ref{Biswas correspondence for Higgs/flat bundles in positive char} can be relaxed to $(N,p)=1$. Also, Proposition \ref{change of par weights under inverse Cartier}, which was absent in the first version, is due to his observation. During the writing of the manuscript, the second named author has received warm encouragements from Luc Illusie, Carlos Simpson, Arthur Ogus, Shing-Tung Yau and Kang Zuo. We thank them heartily.

	\section{Nonabelian Hodge theorem in positive characteristic: the parabolic setting}\label{section:parabolic}
	Let us fix some notations for this section.
	\begin{notation}\label{basic notation} In this section, $\Char(k)=p$.   
		\begin{itemize}
			\item$\HIG^{lf}_{p-1}(C_{\log}/k)$ is the category of Higgs bundles over $C_{\log}$ which are nilpotent of exponent $\leq p-1$.  
			\item $\MIC^{lf}_{p-1}(C_{\log}/k)$ is the category of flat bundles over $C_{\log}$ whose $p$-curvatures and residues along all components of $D$ are nilpotent of exponent $\leq p-1$.
		\end{itemize}
	\end{notation}
	
	Here, the superscript $lf$ refers to the fact that we only consider vector bundles and not more general coherent sheaves. Recall the nonabelian Hodge theorem in characteristic $p$:
	
	\begin{theorem}[Ogus-Vologodsky \cite{OV}, Schepler \cite{S}]\label{OV correspondence}
		
		Notation as in \ref{basic notation}. Then there is an equivalence of categories
		
		$$
		\HIG^{lf}_{p-1}(C_{\log}/k)\xrightleftharpoons[\ C\ ]{\ C^{-1}\ }\MIC^{lf}_{p-1}(C_{\log}/k),
		$$
		
	\end{theorem}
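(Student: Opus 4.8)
The plan is to recall, and where convenient reconstruct, the argument of Ogus--Vologodsky and Schepler: build the Cartier and inverse Cartier functors \'etale-locally out of a lift of the relative Frobenius supplied by $\tilde C_{\log}$, check that they land in the correct subcategories with the correct nilpotence exponents, and glue. First I would record two simplifications peculiar to the present situation: since $C$ is a curve, integrability of (log) connections is vacuous, and ``nilpotent of exponent $\leq p-1$'' simply means that the Higgs field $\theta$, the $p$-curvature $\psi(\nabla)$, and each residue $\Res_{D_i}\nabla$ or $\Res_{D_i}\theta$ has $(p-1)$-st power zero. Fix a finite affine open cover $\{V_\alpha\}$ of $C$ such that $\tilde C_{\log}|_{V_\alpha}$ admits a lift $\tilde F_\alpha\colon \tilde V_\alpha\to \tilde V_\alpha'$ of the relative Frobenius $F\colon C\to C'$ ($C'$ the Frobenius twist of $C_{\log}$) respecting the log structures; such lifts exist because each $\tilde V_\alpha$ is smooth affine over $W_2(k)$, and near a point of $D$ cut out by a coordinate $x$ one may take $\tilde F_\alpha^\ast \tilde x' = \tilde x^{\,p}$.

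\textbf{Local (inverse) Cartier transform.} On $V_\alpha$ the Frobenius lift produces the fundamental one-form $\eta_\alpha := \tfrac1p\, d\tilde F_\alpha \colon F^\ast\Omega^1_{C'_{\log}} \to \Omega^1_{C_{\log}}$; with the above choice of lift this is an isomorphism \emph{including along $D$}, since $\tfrac1p\, d(\tilde x^{\,p})/\tilde x^{\,p} \equiv d\tilde x/\tilde x$. Given a Higgs bundle $(E',\theta')$ in $\HIG^{lf}_{p-1}$, put $V := F^\ast E'$ and
\[
  \nabla_\alpha := \nabla_{\mathrm{can}} + (\mathrm{id}_{E'}\otimes \eta_\alpha)\circ F^\ast\theta',
\]
where $\nabla_{\mathrm{can}}$ is the canonical connection on $F^\ast E'$ (horizontal sections $F^{-1}E'$; vanishing $p$-curvature). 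A direct computation shows $\nabla_\alpha$ is a log connection whose $p$-curvature is $F^\ast\theta'$ under the canonical identification of its target with $F^\ast\Omega^1_{C'_{\log}}$, and whose residue along $D_i$ is $F^\ast$ of the residue of $\theta'$; both are therefore nilpotent of exponent $\leq p-1$, so $(V,\nabla_\alpha)\in\MIC^{lf}_{p-1}$. A symmetric recipe, Cartier descent of the canonical part of a flat bundle combined with contraction of its $p$-curvature against $\eta_\alpha^{-1}$, yields local functors $C_\alpha$ the other way, quasi-inverse to $C^{-1}_\alpha$ on $V_\alpha$.

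\textbf{Gluing.} On $V_\alpha\cap V_\beta$ the two lifts $\tilde F_\alpha,\tilde F_\beta$ reduce to the same $F$, and $h_{\alpha\beta} := \tfrac1p(\tilde F_\alpha^\ast - \tilde F_\beta^\ast)$ descends to an $\sO_C$-linear map $F^\ast\Omega^1_{C'_{\log}}\to\sO_C$. Contracting $F^\ast\theta'$ against $h_{\alpha\beta}$ gives an endomorphism $N_{\alpha\beta}$ of $F^\ast E'$, nilpotent of exponent $\leq p-1$, so that
\[
  \Phi_{\alpha\beta} := \sum_{i=0}^{p-2} \frac{1}{i!}\, N_{\alpha\beta}^{\,i}
\]
is a well-defined isomorphism $C^{-1}_\alpha(E',\theta')\xrightarrow{\ \sim\ } C^{-1}_\beta(E',\theta')$ (no denominator is divisible by $p$). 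From $h_{\alpha\gamma} = h_{\alpha\beta} + h_{\beta\gamma}$ on triple overlaps and the commutativity of the operators $N_{\bullet\bullet}$ (all built from the single section $\theta'$) one checks the cocycle identity $\Phi_{\beta\gamma}\circ\Phi_{\alpha\beta} = \Phi_{\alpha\gamma}$. Hence the $C^{-1}_\alpha$ patch to a global functor $C^{-1}\colon \HIG^{lf}_{p-1}(C_{\log}/k)\to\MIC^{lf}_{p-1}(C_{\log}/k)$, the $C_\alpha$ patch to $C$, and the natural isomorphisms $C\circ C^{-1}\cong\mathrm{id}$ and $C^{-1}\circ C\cong\mathrm{id}$, being local, are verified on each $V_\alpha$.

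\textbf{Main obstacle.} The genuine content, and the precise place where both the $W_2$-lift $\tilde C_{\log}$ and the bound ``exponent $\leq p-1$'' are used, is the independence of $C^{-1}$ from the auxiliary local Frobenius lifts: one must know the transition isomorphisms $\Phi_{\alpha\beta}$ exist at all (the truncated exponential is integral exactly because the relevant endomorphism is nilpotent of order $<p$) and satisfy the cocycle condition. In the logarithmic setting there is the additional subtlety, which is Schepler's contribution, of checking that $\eta_\alpha$ remains an isomorphism along $D$ and that residues transform as claimed, which forces the nilpotence hypothesis on the residues. A cleaner structural alternative, which I would mention but not carry out, is the Azumaya-algebra route of Bezrukavnikov--Mirkovi\'c--Rumynin and Ogus--Vologodsky: the Frobenius pushforward of the sheaf of log differential operators on $C_{\log}$ is an Azumaya algebra over a divided-power neighborhood of the zero section of the (log) cotangent bundle of $C'_{\log}$, the lift $\tilde C_{\log}$ splits it, and the splitting module induces the asserted equivalence, with ``exponent $\leq p-1$'' pinning down the relevant neighborhood.
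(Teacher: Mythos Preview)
Your sketch is correct and is precisely the ``exponential twisting'' reconstruction of the Cartier/inverse Cartier transform. Note, however, that the paper does not give its own proof of this theorem: it is quoted as a result of Ogus--Vologodsky \cite{OV} and Schepler \cite{S}, and immediately afterwards the paper remarks that ``there is an elementary reconstruction of the inverse Cartier transform/Cartier transform via `exponential twisting', see \cite{LSZ0}, and \cite[\S 6]{LSYZ} for the logarithmic case.'' What you have written is exactly that elementary reconstruction, specialized to curves, so there is nothing to compare---your approach \emph{is} the approach the paper alludes to (and later relies on, e.g.\ in the proof of Lemma \ref{inverse Cartier commutes with tensor product}).

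One minor remark on bookkeeping: depending on whether ``nilpotent of exponent $\leq p-1$'' is read as $\theta^{p-1}=0$ or as $\theta^{p}=0$ (both conventions appear in the literature), the truncated exponential should run to $i=p-2$ or $i=p-1$ respectively; in either case all factorials $i!$ with $i\leq p-1$ are units in $k$, so the argument goes through. Your identification of the ``main obstacle'' is apt: the integrality of the truncated exponential and the cocycle identity for the $\Phi_{\alpha\beta}$ are exactly where the nilpotence bound and the $W_2$-lift enter, and Schepler's logarithmic refinement is needed to handle the residues along $D$.
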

	
	The functors $C$ and $C^{-1}$ depend on the choice of $\tilde C_{\log}$, and $C^{-1}$ (resp. $C$) is called the ``inverse Cartier transform" (resp. ``Cartier transform"). There is an elementary reconstruction of the inverse Cartier transform/Cartier transform via ``exponential twisting", see \cite{LSZ0}, and \cite[\S 6]{LSYZ} for the logarithmic case. The aim of this section is to generalize the construction of Cartier/inverse Cartier transform for certain parabolic flat/Higgs bundles in positive characteristic. 
	
	We start with the introduction of parabolic $\lambda$-connections over a more general base scheme.  Let $\sC/S$ be a smooth curve, and $\sD=\sum_i\sD_i\subset \sC$ an $S$-relative simple normal crossings divisor.  Let $j:\sU=\sC-\sD\hookrightarrow \sC$ be the natural inclusion.
	\begin{definition}\label{parabolic sheaf}
		A parabolic sheaf on $\sC_{\log}/S$ is a collection of relatively torsion free coherent sheaves $V_{\alpha}$ indexed by multi-indices $\alpha=(\alpha_1,\cdots,\alpha_l)$ with $\alpha_i\in \R$, together with inclusions of sheaves of $\sO_{\sC}$-modules $V_{\alpha}\hookrightarrow V_{\beta}$ for $\alpha\geq \beta$ (we say $\alpha\geq \beta$ if $\alpha_i\geq \beta_i$ for all $i$), subject to the following conditions: 
		\begin{itemize}
			\item [(i)] (flatness) Each $V_{\alpha}$ is flat over $S$;
			\item [(ii)] (support and normalization) $V_{\alpha+\delta^i}=V_{\alpha}(-\sD_i)$ and $V_{\alpha+\delta^i}\hookrightarrow V_{\alpha}$ is the natural inclusion. Here $\delta^i=(0,\cdots,1,\cdots,0)$ is the multi-index having one at the $i$-th component and zero at the other components;
			\item [(iii)] (left continuous) for any $\alpha$, there exists some $c>0$ such that for any multi-index $\epsilon$ with $0\leq \epsilon_i<c$ we have $V_{\alpha}=V_{\alpha-\epsilon}$;
			\item [(iv)] (discreteness of weights) The subset of $t\in [0,1)$ such that $Gr_{\sD_i,t}V\neq 0$ (defined in the next sentence) is discrete in $\Q$ for each $i$.  For $0\le t<1$, set $V_{\sD_i,t}$ to be image subsheaf of $V_{t\delta^i}$ in $V_{0}/V_{\delta^i}$. Then set $Gr_{\sD_i,t}V=V_{\sD_i,t}/V_{\sD_i,t+\epsilon}$ with $\epsilon>0$ small.
		\end{itemize}
	\end{definition}
	By (iv), there are finitely many values $t\in [0,1)\cap \Q$ such that $Gr_{\sD_i,t}$ is nonzero. We call any $\Z$-translate of one of these values a \emph{weight} of $V$ along the component $\sD_i$. In this article, we consider only parabolic structures in \emph{rational} weights.  
	\begin{example}\label{trivial para}
		Let $V$ be an $S$-flat relatively torsion free sheaf on $\sC$. We define the \emph{trivial parabolic structure} as follows: set 
		$$ 
		V_{\alpha}=V(-\sum_{i=1}^l\lceil\alpha_i\rceil \sD_i)
		$$
		for $\lceil \ \rceil$ the ceiling function. 
	\end{example}
	The next is an important variant of the previous example. 
	\begin{example}\label{rational divisor}
		Fix a multi-index $\alpha\in \Q^l$. Set 
		$$
		V(-\sum_{i=1}^l\alpha_i\sD_i)_{\beta}=V(\sum_{i=1}^{l}-a_i\sD_i),
		$$
		where each $a_i$ is the least integer such that $a_i\geq \alpha_i+\beta_i$. We denote the parabolic sheaf by $V(-\sum_{i=1}^l\alpha_i\sD_i)$. We call a parabolic sheaf a \emph{parabolic line bundle} when it is of form $L(-\sum_{i=1}^l\alpha_i\sD_i)$ for some invertible sheaf $L$ over $\sC$.
	\end{example}
	
	\begin{definition}
		A parabolic sheaf $V$ is a \emph{parabolic vector bundle} if, in a Zariski neighborhood of any point $x\in \sC$ there is an isomorphism between $V$ and a direct sum of parabolic line bundles.\footnote{In \cite{IS}, Iyer-Simpson call this a \emph{locally abelian} parabolic bundle.} A morphism of parabolic sheaves $\phi: V\to W$ is a collection of morphisms $\phi_{\alpha}: V_{\alpha}\to W_{\alpha}$ satisfying the obvious commutativity for $\phi_{\alpha}$ and $\phi_{\beta}$ once $\alpha\geq \beta$.  
	\end{definition} 
	\begin{definition}
		For a $\lambda\in\Gamma(S,\sO_S)$, a \emph{parabolic $\lambda$-connection} over $\sC_{\log}/S$ consists of a parabolic vector bundle $V$ over $\sC$ and a $\lambda$-connection
		$$
		\nabla: V\to V\otimes \omega_{\sC_{\log}/S},
		$$
		where  $\omega_{\sC_{\log}/S}$ is the sheaf of relative logarithmic K\"ahler differentials. More precisely, this means that for each $\alpha$, there is a $\lambda$-connection $\nabla_{\alpha}: V_{\alpha}\to V_{\alpha}\otimes \omega_{\sC_{\log}/S}$ and there is a commutative diagram of $\nabla_{\alpha}$ and $\nabla_{\beta}$ for $\alpha\geq \beta$. A parabolic $\lambda$-connection is said to be flat if $\nabla_{\alpha}\wedge \nabla_{\alpha}=0$ for each $\alpha$. 
	\end{definition}
	Regarding $\sO_{\sC}(\sD_i)$ as a subsheaf of $j_*\sO_{\sU}$, the exterior differential 
	$$
	d: j_*\sO_{\sU}\to j_*\sO_{\sU}\otimes j_*\Omega_{\sU}
	$$
	induces a $\lambda$-connection $\lambda d: \sO_{\sC}(\sD_i)\to \sO_{\sC}(\sD_i)\otimes \omega_{\sC_{\log}/S}$. Let $(V,\nabla)$ be a $\lambda$-connection. For $V_{\alpha-\delta_i}=V_{\alpha}\otimes \sO_{\sC}(\sD_i)$, we claim that 
	\begin{claim}\label{tensor product}
		\begin{equation*}\label{connection on parabolic bundle}
			\nabla_{\alpha-\delta_i}=\nabla_{\alpha}\otimes id+id\otimes\lambda d.
		\end{equation*}
	\end{claim}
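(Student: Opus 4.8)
The plan is to treat the asserted formula as an identity of two $\lambda$-connections on the bundle $V_{\alpha-\delta_i}=V_{\alpha}\otimes_{\sO_{\sC}}\sO_{\sC}(\sD_i)$ and to verify it by restriction to the dense open $\sU=\sC-\sD$. First I would check that the right-hand side $\nabla_{\alpha}\otimes\id+\id\otimes\lambda d$ genuinely defines a $\lambda$-connection $V_{\alpha-\delta_i}\to V_{\alpha-\delta_i}\otimes\omega_{\sC_{\log}/S}$: this is the standard Leibniz computation for a tensor product of $\lambda$-connections, and it uses precisely the fact, recorded just before the Claim, that the exterior differential induces a \emph{logarithmic} $\lambda$-connection $\lambda d$ on $\sO_{\sC}(\sD_i)$, so that the pole of $d(t^{-1})$ along $\sD_i$ is absorbed into $\omega_{\sC_{\log}/S}$. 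Granting this, it remains to prove that this $\lambda$-connection agrees with the one $\nabla_{\alpha-\delta_i}$ already carried by $V_{\alpha-\delta_i}$ as part of the parabolic structure.

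For that, set $\theta:=\nabla_{\alpha-\delta_i}-(\nabla_{\alpha}\otimes\id+\id\otimes\lambda d)$, which is an $\sO_{\sC}$-linear map $V_{\alpha-\delta_i}\to V_{\alpha-\delta_i}\otimes\omega_{\sC_{\log}/S}$, i.e.\ a global section of $\Hom_{\sO_{\sC}}(V_{\alpha-\delta_i},V_{\alpha-\delta_i})\otimes\omega_{\sC_{\log}/S}$. Since $\sC$ is a smooth curve and $V_{\alpha-\delta_i}$ is a vector bundle, this sheaf is torsion free, so $\theta$ is determined by $\theta|_{\sU}$; hence it suffices to show $\theta|_{\sU}=0$. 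Over $\sU$ the divisor $\sD_i$ becomes trivial: the inclusion $V_{\alpha}\hookrightarrow V_{\alpha-\delta_i}$ coming from $\alpha\ge\alpha-\delta_i$ restricts to an isomorphism, the canonical section $1\in\sO_{\sC}\subset\sO_{\sC}(\sD_i)$ restricts to a $\lambda d$-flat generator of $\sO_{\sC}(\sD_i)|_{\sU}$, and under the resulting identification $v\leftrightarrow v\otimes 1$ the right-hand side restricted to $\sU$ sends $v$ to $\nabla_{\alpha}v\otimes 1+v\otimes\lambda d(1)=\nabla_{\alpha}v$. On the other hand, the compatibility of $\nabla_{\alpha-\delta_i}$ with $\nabla_{\alpha}$ along $V_{\alpha}\hookrightarrow V_{\alpha-\delta_i}$ — which is part of the definition of a parabolic $\lambda$-connection — gives $\nabla_{\alpha-\delta_i}v=\nabla_{\alpha}v$ over $\sU$. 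Thus $\theta|_{\sU}=0$ and the Claim follows.

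If one prefers an argument with no appeal to torsion-freeness, the identity can instead be checked by a direct local computation: away from $\sD_i$ there is nothing to prove, since there $\sO_{\sC}(\sD_i)=\sO_{\sC}$ with $\lambda d$ the trivial $\lambda$-connection and $V_{\alpha-\delta_i}=V_{\alpha}$, $\nabla_{\alpha-\delta_i}=\nabla_{\alpha}$; near a point of $\sD_i$, pick a local coordinate $t$ cutting out $\sD_i$ and a local frame $e_1,\dots,e_r$ of $V_{\alpha}$, so that $\{t^{-1}e_j\}$ is a local frame of $V_{\alpha-\delta_i}$, and apply both sides to $t^{-1}e_j$ using $\nabla_{\alpha-\delta_i}e_j=\nabla_{\alpha}e_j$, the $\lambda$-Leibniz rule, and $\lambda d(t^{-1})=-\lambda\,t^{-1}\otimes\frac{dt}{t}$; both sides then produce the common value $t^{-1}\nabla_{\alpha}(e_j)-\lambda\,(t^{-1}e_j)\otimes\frac{dt}{t}$, with $t^{-1}\nabla_{\alpha}(e_j)$ interpreted inside $V_{\alpha-\delta_i}\otimes\omega_{\sC_{\log}/S}$. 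I do not expect a genuine obstacle here — the statement is essentially a bookkeeping identity forced by the two axioms (normalization $V_{\alpha+\delta^i}=V_{\alpha}(-\sD_i)$ and compatibility of the $\nabla_{\alpha}$) — and the only points that require attention are keeping all the identifications $V_{\alpha-\delta_i}=V_{\alpha}(\sD_i)=V_{\alpha}\otimes\sO_{\sC}(\sD_i)$ and the placement of the $\omega_{\sC_{\log}/S}$-factor consistent, and getting the sign from $d(t^{-1})$ right.
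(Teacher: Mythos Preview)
Your proposal is correct. Your second approach --- the local computation near $\sD_i$ with a coordinate $t$ and a frame $\{e_j\}$ --- is essentially the paper's own proof: the paper writes a local section $v\in V_{\alpha}$ as $fv\otimes\frac{1}{f}$ (with $f$ a local equation for $\sD_i$), applies the $\lambda$-Leibniz rule for $\nabla_{\alpha-\delta_i}$ to this product, and then divides by $f$ using local freeness of $V_{\alpha-\delta_i}$ to isolate $\nabla_{\alpha-\delta_i}(v\otimes\frac{1}{f})$. This is exactly what your local computation amounts to, once one notes that the $\lambda$-Leibniz rule must be applied with the \emph{regular} function $t$ (writing $e_j=t\cdot(t^{-1}e_j)$) rather than with $t^{-1}$ directly.

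Your first approach, however, is a genuinely different and somewhat cleaner route: you observe that the difference $\theta$ of the two candidate $\lambda$-connections is $\sO_{\sC}$-linear, hence a section of a locally free sheaf, and therefore vanishes as soon as its restriction to the dense open $\sU$ does --- which is immediate from the compatibility axiom for the $\nabla_{\alpha}$. This avoids any choice of local coordinate or frame and makes transparent that the identity is forced purely by the normalization $V_{\alpha-\delta_i}=V_{\alpha}(\sD_i)$ together with the commutativity of the $\nabla_{\alpha}$ with the parabolic inclusions. The paper's computation, on the other hand, has the virtue of being entirely explicit and of displaying the formula $\nabla_{\alpha-\delta_i}(v\otimes\frac{1}{f})=\nabla_{\alpha}(v)\otimes\frac{1}{f}+v\otimes\lambda d(\frac{1}{f})$ in the form in which it is actually used later. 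One small point worth making explicit in your first argument: the injectivity of restriction to $\sU$ holds over a general base $S$ because each $\sD_i$ is an effective Cartier divisor, locally cut out by a non-zerodivisor.
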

	\begin{proof}
		To see this, we may argue locally and assume that the local defining equation of $\sD_i$ is given by $f=0$. Let $v$ be a local section of $V_\alpha$. Then
		$$
		\nabla_{\alpha}(v)=\nabla_{\alpha-\delta_i}(fv\otimes\frac{1}{f})=\lambda v\otimes \frac{df}{f}+f\nabla_{\alpha-\delta_i}(v\otimes \frac{1}{f}). 
		$$
		As $V_{\alpha-\delta_i}$ is locally free, it follows that 
		$$
		\nabla_{\alpha-\delta_i}(v\otimes \frac{1}{f})=\nabla_{\alpha}(v)\otimes\frac{1}{f}+v\otimes \lambda d(\frac{1}{f}).
		$$
		The claim is proved. 
	\end{proof}
	Therefore, a $\lambda$-connection on a parabolic vector bundle $V$ is \emph{fully determined} by $\nabla_0$ on $V_0$.  Also, as a matter of convention, a parabolic flat $\lambda$-connection is said to be a \emph{parabolic flat bundle} for $\lambda=1$, while a \emph{parabolic Higgs bundle} for $\lambda=0$.

	Let $(V,\nabla)$ be a parabolic $\lambda$-connection. Note that the composite morphism
	$$
	V_{\alpha}\stackrel{\nabla_{\alpha}}{\longrightarrow}V_{\alpha}\otimes \omega_{\sC_{\log}/S}\stackrel{id\otimes res_{\sD_i}}{\longrightarrow} V_{\alpha}\otimes \sO_{\sD_i}
	$$
	is $\sO_{\sD_i}$-linear and factors through the restriction $V_{\alpha}\to V_{\alpha}\otimes \sO_{\sD_i}$. The resulting endomorphism of $V_{\alpha}\otimes \sO_{\sD_i}$ is said to be \emph{residue} of $\nabla_{\alpha}$ along $\sD_i$ and is denoted by $res_{\sD_i}\nabla_{\alpha}$. Note that for a parabolic $\lambda$-connection $(V,\nabla)$ over $\sC_{\log}/S$, there exists a positive integer $N$ such that its weights belong to $\frac{1}{N}\Z$. The following definition is a variant of Definition 3.2 \cite{IS} \footnote{We remind the reader that for $S=\Spec\C$, our definition differs from \cite{IS} by a minus sign. This is basically because a parabolic structure in our definition is decreasing, instead of increasing as adopted in \cite{IS}.}.
	\begin{definition}\label{adjusted condition}
		Let $(V,\nabla)$ be a parabolic $\lambda$-connection over $\sC_{\log}/S$, it is said to be \emph{adjusted} if the weights belong to $\frac{1}{N}\Z$ for some $N$ which is invertible in $S$, and moreover $res_{\sD_i}\nabla_{\alpha}$ acts on $Gr_{\sD_i,\alpha_i}V\neq 0$ as an operator with the eigenvalue $\lambda\alpha_i$.   
	\end{definition} 
	
	Now we resume the setup in \ref{basic notation}. Let $\mathrm{HIG}^{\ppar}_{p-1,N}(C_{\log}/k)$ be the category of parabolic Higgs bundles over $C_{\log}/k$ which are nilpotent of exponent $\leq p-1$ and whose parabolic structures are supported in $D$ and whose weights are contained in $\frac{1}{N}\Z$. Let  $\mathrm{MIC}^{\ppar}_{p-1,N}(C_{\log}/k)$ be the category of \emph{adjusted} parabolic flat bundles over $C_{\log}/k$ whose $p$-curvatures as well as the nilpotent part of residues are nilpotent of exponent $\leq p-1$ and whose parabolic structures are supported in $D$ and whose weights are contained in $\frac{1}{N}\Z$. Using Example \ref{trivial para}, $\HIG_{p-1}^{lf}(C_{\log/k})$ (resp. $\MIC_{p-1}^{lf}(C_{\log}/k)$) is naturally a full subcategory of $\mathrm{HIG}^{\ppar}_{p-1,N}(C_{\log}/k)$ (resp. $\mathrm{MIC}^{\ppar}_{p-1,N}(C_{\log}/k)$). We shall extend the Cartier/inverse Cartier transform to the larger categories.  
	\begin{theorem}\label{ov correspondence for parabolic objects}
		Let $C_{\log}$ and $\tilde C_{\log}$ be as \ref{basic notation}. Then there is an equivalence of categories 
		$$
		\mathrm{HIG}^{\ppar}_{p-1,N}(C_{\log}/k)\xrightleftharpoons[\ C_{\ppar}\ ]{\ C_{\ppar}^{-1}\ }\mathrm{MIC}^{\ppar}_{p-1,N}(C_{\log}/k),
		$$
		which restricts to the Cartier/inverse Cartier transform in Theorem \ref{OV correspondence} over the objects with trivial parabolic structure. The functors $C_{\ppar}$ and $C_{\ppar}^{-1}$ depend only on the choice of $W_2$-lifting of the pair $(C,D)$. 
	\end{theorem}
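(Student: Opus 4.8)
The strategy is to deduce the parabolic statement from the already-available non-parabolic correspondence (Theorem \ref{OV correspondence}) by passing to a tamely ramified cover of $C$ on which parabolic data becomes ordinary equivariant data with connection. First I would fix a connected smooth curve $Y$ over $k$ and a finite Galois morphism $\pi\colon Y\to C$ with group $\Gamma$, tamely ramified exactly along $D$ with ramification index $N$ at every point over each $D_i$; set $E=\pi^{-1}(D)_{\mathrm{red}}$, $Y_{\log}=(Y,E)$, and note that the inertia group at each component of $E$ is cyclic of order $N$, i.e. isomorphic to $G$. (One may instead use the $N$-th root stack of $D$ in $C$, which is canonical; nothing below changes.) Since $(N,p)=1$ the cover is Kummer-étale, so the $W_2(k)$-lifting $\tilde C_{\log}$ induces a $W_2(k)$-lifting $\tilde Y_{\log}$ of $Y_{\log}$ together with a lift of the $\Gamma$-action compatible with the projection to $\tilde C_{\log}$.

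The core of the reduction is the Biswas--Iyer--Simpson correspondence in characteristic $p$: for each of $\lambda=0,1$, an equivalence between the parabolic category over $C_{\log}/k$ with weights in $\frac1N\Z$ and the category of $\Gamma$-equivariant objects of the corresponding non-parabolic category over $Y_{\log}/k$. At the level of bundles this generalizes Biswas \cite{Biswas} and Iyer--Simpson \cite{IS}: a parabolic vector bundle with weights in $\frac1N\Z$ is the same as a $\Gamma$-equivariant vector bundle on $Y$, with the weight $\alpha_i\in[0,1)$ along $D_i$ read off from the character by which the inertia $G$ acts on the relevant eigen-subsheaf near $E$. For $\lambda$-connections the point is the translation of the \emph{adjusted} condition of Definition \ref{adjusted condition}: pulling back along $\pi$ multiplies residues by $N$ (as $\pi^*\frac{dt}{t}=N\frac{ds}{s}$ in a ramified local coordinate), so the eigenvalue $\lambda\alpha_i$ of $res_{\sD_i}\nabla_\alpha$ becomes $\lambda N\alpha_i\in\lambda\Z$ upstairs; absorbing this integral shift into the choice of $\Gamma$-equivariant integral model (a twist by a $\Gamma$-equivariant line bundle supported on $E$) makes the residues on $Y$ genuinely nilpotent, of exponent still $\leq p-1$ since $N$ is a unit and the weights lie in $[0,1)$. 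Conversely a $\Gamma$-equivariant log $\lambda$-connection on $Y_{\log}$ with nilpotent residues descends to an adjusted parabolic one. Running this together with the matching of $p$-curvature nilpotency (compatible with $\pi^*$ since $p$-curvature is functorial) and with Claim \ref{tensor product} (which makes everything independent of the index $\alpha$), one obtains that $\mathrm{HIG}^{\ppar}_{p-1,N}(C_{\log}/k)$ and $\mathrm{MIC}^{\ppar}_{p-1,N}(C_{\log}/k)$ are identified with the $\Gamma$-equivariant objects of $\HIG^{lf}_{p-1}(Y_{\log}/k)$ and $\MIC^{lf}_{p-1}(Y_{\log}/k)$ respectively.

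It then remains to transport Theorem \ref{OV correspondence} across. The functors $C^{-1},C$ attached to $Y_{\log}/k$ and $\tilde Y_{\log}$ are constructed functorially from the lift, hence commute with the $\Gamma$-action and preserve $\Gamma$-equivariant objects; composing with the two equivalences above produces mutually quasi-inverse functors $C^{-1}_{\ppar},C_{\ppar}$ between $\mathrm{HIG}^{\ppar}_{p-1,N}(C_{\log}/k)$ and $\mathrm{MIC}^{\ppar}_{p-1,N}(C_{\log}/k)$. For objects with trivial parabolic structure (Example \ref{trivial para}) the associated $\Gamma$-equivariant object is a pullback from $C_{\log}$, and since $\tilde Y_{\log}$ is the pullback of $\tilde C_{\log}$, functoriality of the Cartier transform for $\pi$ shows its upstairs transform is the pullback of its downstairs transform; hence $C^{-1}_{\ppar},C_{\ppar}$ restrict to $C^{-1},C$ of Theorem \ref{OV correspondence}. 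Finally, only $Y_{\log}$, $\tilde Y_{\log}$ and $\Gamma$ were used, all produced from $(C,D)$ and its $W_2$-lifting alone, which gives the asserted dependence.

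The main obstacle is the second step --- re-establishing the Biswas--Iyer--Simpson correspondence over an algebraically closed field of characteristic $p$ coprime to $N$, i.e. in the tamely ramified setting rather than over $\C$, and pinning down the precise dictionary between the adjusted condition and the equivariant normalization. One must choose the twist by a $\Gamma$-equivariant line bundle on $E$ so that residues upstairs become \emph{exactly} nilpotent, and check that this twist, the identification $\pi^*\omega_{C_{\log}/k}\cong\omega_{Y_{\log}/k}$, and the $G$-eigenspace decomposition along $E$ all use nothing beyond $N$ being invertible --- this is where $(N,p)=1$, rather than the cruder $N\le p-1$ of an earlier version, enters. The careful bookkeeping of residues and $p$-curvatures at the punctures, on both $C$ and $Y$, is the technical heart; once in place, the transport of the non-parabolic correspondence is formal.
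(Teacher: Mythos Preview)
Your approach is essentially the paper's: establish a Biswas--Iyer--Simpson correspondence for $\lambda$-connections in characteristic $p$ (this is Proposition \ref{Biswas correspondence for Higgs/flat bundles in positive char}, including exactly the residue computation you sketch identifying nilpotent residues upstairs with the adjusted condition downstairs), and then transport the non-parabolic Ogus--Vologodsky correspondence across using that equivariant automorphisms are carried to equivariant automorphisms by an equivalence of categories.

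The one point worth flagging is your choice of a \emph{global} Galois cover $\pi\colon Y\to C$ ramified exactly along $D$ with index $N$ everywhere. Such a cover need not exist (for projective $C$ one typically must throw in auxiliary branch points, which then forces a choice of their $W_2$-lifts and an additional independence check), and this is precisely why the paper carries out the final assembly on the $N$-th root stack $Z=C[\frac{D_1}{N},\ldots,\frac{D_l}{N}]$ (Proposition \ref{OV correspondence for tame root stack}): the root stack is canonical, admits a canonical $W_2$-lift from $\tilde C_{\log}$, and locally is a quotient $[C'_i/G]$ so that the BIS computations you describe apply verbatim. You do note the root stack as an alternative, and indeed that is what makes your final dependence claim go through cleanly.
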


	The strategy to prove the last theorem is to make the reduction to the trivial parabolic structure case by base change along a suitable cyclic cover.  Our first step is going to establish a Biswas-Iyer-Simpson correspondence for parabolic $\lambda$-connections in arbitrary characteristic, which is of independent interest. To this purpose, we make a bit more general setup as follows:
	
	\begin{notation}\label{parabolic_notation} Let $k$ be an algebraically closed field.  Let $\pi: C'\to C$ be a cyclic cover of order $N$ with branch divisor $D$.  Let $D'=\sum_{i=1}^lD_i'$ be the reduced divisor $(\pi^*D)_{red}$. Hence $\pi^*D=ND'$. Set $C'_{\log}$ to be the log pair $(C',D')$.
	\end{notation}
	
	A $G$-equivariant vector bundle over $C'$ is a vector bundle $E$ over $C'$ together with a family of isomorphisms
	$$
	\phi_g: g^*E\cong E,
	$$
	indexed by $g\in G$ which satisfies the cocycle condition $\phi_g\circ g^*\phi_h=\phi_{hg}$ for any $g,h\in G$. 
	The construction of parabolic pushforward in the next lemma is taken from Biswas \cite{Biswas} (in characteristic zero). 
	\begin{lemma}\label{parabolic pushforward}
		Notation as in \ref{parabolic_notation}. Let $E$ be a $G$-equivariant vector bundle over $C'$. For each $\alpha\in \Q^l$, set 
	$$
	V_{\alpha}:=(\pi_*(E\otimes \sO_Y(\sum_i[-\alpha_i N]D_i')))^{G}.
	$$
		Then the collection of $V_{\alpha}$s form a parabolic vector bundle $V$ over $C$. The parabolic structure of $V$ is supported on $D$ and its weights are contained in $\frac{1}{N}\Z$. The construction from $E$ to $V$ forms a functor $\pi_{\ppar*}$ from the category of $G$-equivariant vector bundles over $C'$ to the category of parabolic vector bundles over $C$, which is called \emph{parabolic pushforward}.
	\end{lemma}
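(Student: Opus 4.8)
The plan is to verify that the collection $\{V_\alpha\}_{\alpha\in\Q^l}$ satisfies the axioms (i)--(iv) of Definition \ref{parabolic sheaf} and, in addition, is Zariski-locally a direct sum of parabolic line bundles, with the only non-formal content — the last, \emph{locally abelian}, assertion — reduced to an explicit computation on the completed local rings at the points of $D$, where the standing hypothesis that $N$ is invertible in $k$ is what makes everything work. I will use throughout that, because $|G|=N$ is a unit, the idempotent $\tfrac{1}{N}\sum_{g\in G}g$ exhibits $\sF\mapsto\sF^G$ as an exact functor (indeed a direct summand) on $G$-equivariant $\sO_{C}$-modules; together with exactness of the finite flat pushforward $\pi_*$, this shows that the formation of $V_\alpha$ is compatible with the exact sequences below, and in particular that each $V_\alpha$, being a direct summand of the locally free sheaf $\pi_*(E\otimes\sL_\alpha)$ with $\sL_\alpha:=\sO_{C'}(\sum_i[-\alpha_iN]D_i')$, is locally free.

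First I would produce the transition maps and clear the easy axioms. Monotonicity of the integer-part function gives, for $\alpha\geq\beta$, an inclusion $\sL_\alpha\hookrightarrow\sL_\beta$, and applying $(-\otimes E)$, $\pi_*$ and $(-)^G$ yields the required inclusions $V_\alpha\hookrightarrow V_\beta$; local freeness of the $V_\alpha$ over $S=\Spec k$ is axiom (i). For (ii) one notes $\pi^*\sO_C(-D_i)=\sO_{C'}(-ND_i')$ and that the integer part drops by $N$ under $\alpha_i\mapsto\alpha_i+1$, hence $\sL_{\alpha+\delta^i}=\sL_\alpha\otimes\pi^*\sO_C(-D_i)$; the projection formula together with the fact that $G$-invariants commute with tensoring by a locally free sheaf pulled back from $C$ then identifies $V_{\alpha+\delta^i}$ with $V_\alpha(-D_i)$, compatibly with the natural inclusions. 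Axiom (iii) is the right-continuity of the integer part: for fixed $\alpha$ the divisor $\sum_i[-(\alpha_i-\epsilon_i)N]D_i'$, hence $\sL_{\alpha-\epsilon}$ and $V_{\alpha-\epsilon}$, is independent of $\epsilon$ for all $0\le\epsilon_i<c$ with $c$ sufficiently small. Axiom (iv) is immediate since by construction every weight of $V$ lies in $\tfrac{1}{N}\Z$, which is discrete and meets $[0,1)$ in finitely many points; and the parabolic structure is supported on $D$ because the index $\alpha$ only records the behaviour along the $D_i$.

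The main step, and the one I expect to be the genuine obstacle, is that $V$ is Zariski-locally on $C$ a direct sum of parabolic line bundles. Over $U=C-D$ the map $\pi$ is \'etale, so $V|_U=(\pi_*E)^G|_U$ is locally free, hence locally a sum of (trivially parabolic) line bundles, and there is nothing to prove. Near a point $P\in D_i$ with unique preimage $P'\in D_i'$, I would invoke the comparison between the Zariski-local and the formal-local versions of the locally abelian condition (as in Iyer--Simpson \cite{IS}, following Biswas \cite{Biswas}) and complete: write $\widehat{\sO}_{C',P'}=k[[t']]$, $\widehat{\sO}_{C,P}=k[[t]]$ with $t=t'^{N}$ and $\sigma$ acting by $t'\mapsto\zeta t'$. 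A $G$-equivariant bundle on $\Spec k[[t']]$ is a finite free $k[[t']]$-module $M$ with a semilinear $G$-action; since $N$ is invertible, averaging lets one lift an eigenbasis of the representation $M/t'M$ to a $k[[t']]$-basis $v_1,\dots,v_r$ of $M$ with $\sigma v_a=\zeta^{c_a}v_a$, so $M=\bigoplus_a k[[t']]\,v_a$ is a direct sum of equivariant line bundles. A direct computation then shows that the parabolic pushforward of a single summand $k[[t']]v_a$ is a parabolic line bundle over $k[[t]]$ with one weight in $\tfrac{1}{N}\Z$ determined by $c_a$: the $G$-invariants of the $v_a$-component of $E\otimes\sL_\alpha$ are freely generated over $k[[t'^{N}]]$ by $(t')^{m}v_a$, where $m$ is the least integer $\ge -[-\alpha_iN]$ congruent to $-c_a$ modulo $N$. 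Hence the completion of $V$ at $P$ is a direct sum of parabolic line bundles, which gives local abelianness. The single delicate point here is the passage from this formal splitting to a Zariski-local one, which is precisely why I would rely on the Iyer--Simpson comparison rather than attempt to globalize the eigenbasis directly.

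Finally, functoriality is routine: a $G$-equivariant morphism $E\to E'$ induces $E\otimes\sL_\alpha\to E'\otimes\sL_\alpha$ for every $\alpha$, and applying $\pi_*$ and $(-)^G$ gives a compatible system $V_\alpha\to V'_\alpha$, i.e. a morphism of parabolic bundles; compatibility with composition and identities is clear, so $\pi_{\ppar*}$ is a functor.
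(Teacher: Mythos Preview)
Your proof is correct, and your verification of axioms (i)--(iv) is in fact more explicit than the paper's. The real difference is in how you establish local abelianness near a point of $D$. You complete to $k[[t']]$, lift an eigenbasis of $M/t'M$ by averaging, compute the $G$-invariants of each eigenline explicitly, and then invoke an external formal-to-Zariski comparison from \cite{IS}. The paper instead stays Zariski-local throughout: from the eigendecomposition of the fiber $E_{D'_i}$ (which exists because $t^N-1$ is separable), it chooses any map $u_0$ from $E|_{U'_i}$ to a direct sum of equivariant line bundles restricting to that decomposition, averages to $u=\sum_{g\in G}g^{-1}u_0\circ g$, and observes that $u$ is a $G$-equivariant isomorphism on a $G$-stable Zariski open neighborhood of $D'_i$ (hence the preimage of a Zariski open in $C$). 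This produces a genuine Zariski-local $G$-eigenbasis of $E$, after which the computation of the pushforward is the same as yours.

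The paper's route is self-contained and, more importantly, the Zariski-local eigenbasis it produces is reused verbatim in the proofs of Lemma~\ref{Biswas correspondence in positive char} and Proposition~\ref{Biswas correspondence for Higgs/flat bundles in positive char}. Your route is clean but the step you flag as delicate is, on a curve, actually unnecessary: once you know each $V_\alpha$ is locally free (which you have, via the averaging idempotent), the parabolic structure at $D_i$ is nothing more than a flag in the fiber $V_0|_{D_i}$, and any Zariski-local basis of $V_0$ adapted to that flag already splits $V$ as a sum of parabolic line bundles. So you could drop the appeal to \cite{IS} entirely.
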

	\begin{proof}
		Set $U'=\pi^{-1}U=C'-D'$. Then the restriction $\pi: U'\to U$ is finite \'etale and Galois. For any $\alpha$, $V_{\alpha}|_{U}$ is isomorphic to $(\pi_*E|_{U'})^G$ which is the Galois descent of $E_{U'}$, and the natural inclusion $V_{\alpha}\hookrightarrow V_{\beta}$ for $\alpha\geq \beta$ is the identity. So the support of the parabolic structure is contained in $D$. It is also clear that the weights of $V$ are elements of $\frac{1}{N}\Z$. As $V_{\alpha}$ is a subsheaf of $\pi_*(E\otimes \sO_Y(\sum_i[-\alpha_i N]D_i')$, it is torsion free. Since $C$ is a smooth curve, it is locally free (which can be seen more clearly by examining $V_{\alpha}$ locally as we shall do below). So it remains to show $V_{\alpha}$ is locally isomorphic to direct sum of parabolic line bundles.
		
		Let $x$ be a local coordinate at some open neighborhood $U_i$ of a component $D_i$, and $U'_i=\pi^{-1}U_i$ is defined by $y^N=x$. Shrinking $U_i$ if necessary, we may assume that $E$ is free over $U'_i$. We will argue that, after possibly shrinking $U_i$ further, $E$ admits a basis of eigenvectors for $G$. As $D'_i=\{y=0\}$ is the fixed point under the $G$-action, $G$ acts on the fiber $E_{D'_i}$. Since $(N,p)=1$ when $\Char(k)=p>0$, the polynomial $t^N-1$ in any case is separable in $k[t]$ and hence splits into product of linear factors of multiplicity one. It follows that there is eigen decomposition 
		$$
		E_{D'_i}=\bigoplus_jk(\xi_j)
		$$
		where $\xi_j$s are characters of $G$. Define the $G$-equivariant direct sum of line bundles
		$$
		F:=\bigoplus_j \sO_{U'_i}\otimes _kk(\xi_j).
		$$
		Let $u_0\colon E_{U'_i}\to F$ be any morphism which restricts over $D'_i$ to an isomorphism given by the previous eigen decomposition. Set $u=\sum_{g\in G}g^{-1}u_0\circ g$, which is $G$-equivariant. Now let $\tilde U'_i\subset U'_i$ be the open subset over which $u$ is an isomorphism. It is non-empty, because $u_0$ and $u$ coincide at $y$. It must be $G$-invariant, which implies that it is the inverse image of an open subset $\tilde U_i\subset U_i$. Thus, we may assume at the beginning that over $U'_i$, $E$ admits an $\sO_{U'_i}$-basis $\{e_j\}$ which are eigenvectors under the $G$-action.  Write $R=\Gamma(U_i,\sO_{U_i})$ and $R'=\Gamma(U'_i,\sO_{U'_i})$. Then $R'$ is a free $R$-module with basis $\{1,y,\cdots,y^{N-1}\}$. Then $\sigma(y)=\zeta y$, where $\zeta$ be a primitive 
		$N$-th root of unity, and $\sigma(e_j)=\zeta^{n_j}e_j$ for some $0\leq n_j\leq N-1$. Assume $\alpha_i=\frac{m_i}{N}$ for some $0\leq m_i\leq N-1$. Then it follows that $V_{\alpha}$ over $U_i$ is isomorphic to 
		$$
		\bigoplus_j\sO_{U_i}(\frac{m_i-n_j}{N}D_i)
		$$
		as parabolic bundles. 
		
		Finally, let $h: E_1\to E_2$ be a morphism of $G$-equivariant vector bundles over $C'$. To see that $h$ induces a natural morphism of parabolic vector bundles $\pi_{\ppar*}E_1\to \pi_{\ppar*}E_2$, it suffices to check that 
		$$
		\pi_*(h)^G: \pi_*(E_1)^G\to \pi_*(E_2)^G
		$$
		preserves the parabolic structures. The question is local around each $D_i$ and one may assume that $E_i, i=1,2$ is a direct sum of $G$-equivariant line bundles. But for line bundles, the assertion is clear. Thus the parabolic pushforward is a functor. The lemma is proved.
	\end{proof}
	Given a parabolic vector bundle $V$ on $C$ whose support is contained in $D$ and weights are contained in $\frac{1}{N}\Z$, for each $1\leq i\leq l$ we construct a filtration of coherent sheaves on $C'$ as follows: set $E^0_i=\pi^*(V_0\otimes \sO_C(D))$. Let $\{\alpha_i^1,\cdots,\alpha_i^{w_i}\}\subset (0,1)$ be the nonzero weights supported at $D_i$. Write $\alpha_i^j=\frac{m_i^j}{N}$ for $1\leq m_i^j\leq N-1$. We assume that $\alpha_i^1<\cdots<\alpha_i^{w_i}$. We inductively define for $0\leq j\leq w_i$ by elementary transformation
	$$
	E^{j+1}_{i}:=\ker(E^{j}_i\to Gr_{D_i,\alpha_i^j}V\otimes_{k(D_i)}\sO_{C'}(ND'_i)/m^{N-m_i^j+1}_{D'_i}),
	$$
	where $k(D_i)$ is the residue field at the closed point $D_i$, $m_{D'_i}$ is the maximal ideal of the closed point $D'_i$ and the morphism $k(D_i)\to \sO_{C'}(ND'_i)/m^{N-m_i^j+1}_{D'_i}$ is the composite of natural morphisms
	$$
	k(D_i)\stackrel{\pi^*}{\to}\sO_{C'}(ND'_i)/m^{N}_{D'_i}\to \sO_{C'}(ND'_i)/m^{N-m_i^j+1}_{D'_i}.
	$$ 
	By construction, we have the following sequence of coherent sheaves:
	$$
	E^{0}_i\supset E^1_i\supset \cdots\supset E^{w_i}_i\supset E^{w_i+1}_i.
	$$
	As $E^0_i=\pi^*V_0\otimes \sO_{C'}(ND')$ for any $i$, $\{E^{w_i+1}_i\}_{1\leq i\leq l}$ are all subsheaves of the same sheaf. Define $E=\cap_{i=1}^{l}E^{w_i+1}_i$. Away from $D'$, $E$ is equal to $\pi^*V_0$; around each $D'_i$, $E$ is equal to $E^{w_i+1}_i$. As $E$ is torsion free, it is a vector bundle (which can be seen more clearly by local computation).

	The following lemma is a slight generalization of Lemma 2.3 \cite{IS} in the curve case. 
	\begin{lemma}\label{Biswas correspondence in positive char}
		The vector bundle $E$ over $C'$ constructed above from a parabolic vector bundle is $G$-equivariant. Furthermore, it forms a functor $\pi^*_{\ppar}$ from the category of parabolic vector bundles over $C'$, whose supports are contained in $D$ and weights are contained in $\frac{1}{N}\Z$, to the category of $G$-equivariant vector bundles over $C'$, which is called \emph{parabolic pullback}. The parabolic pushforward is an equivalence of categories and the parabolic pullback is a quasi-inverse to the parabolic pushforward.  
	\end{lemma}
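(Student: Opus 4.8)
Following Biswas \cite{Biswas} and Iyer--Simpson \cite{IS}, the plan is to reduce every assertion to a computation in a Zariski (or \'etale) neighborhood of each component $D_i$ (resp.\ $D_i'$), where a parabolic bundle on $C$ decomposes into parabolic line bundles and a $G$-equivariant bundle on $C'$ decomposes into $G$-equivariant line bundles. The only genuinely positive-characteristic input is the separability of $t^N-1$, which is guaranteed by $(N,p)=1$; this is exactly what makes the relevant eigen-decompositions available, so that the characteristic-zero arguments carry over verbatim.

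For the $G$-equivariance of $E$, I would observe that $E=\bigcap_{i=1}^l E_i^{w_i+1}$ is an intersection of subsheaves of the $G$-equivariant sheaf $\pi^*(V_0\otimes\sO_C(D))$, and that each $E_i^{j+1}$ is the kernel of a $G$-equivariant morphism. Indeed $Gr_{D_i,\alpha_i^j}V$ is a skyscraper sheaf supported at the $G$-fixed point $D_i$, pulled back to $C'$; the line bundle $\sO_{C'}(ND_i')=\pi^*\sO_C(D_i)$ carries its canonical $G$-equivariant structure; and the ideal $m_{D_i'}^{k}$ is $G$-stable since $G$ fixes the reduced point $D_i'$. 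Hence, by induction on $j$, every $E_i^j$ is $G$-stable, and so is $E$; over $U'$ this recovers the tautological equivariant structure on $\pi^*(V_0|_{U'})$ for the Galois cover $U'\to U$. Functoriality is then formal: a morphism $\phi\colon V\to W$ of parabolic bundles is by definition compatible with the parabolic filtrations, so $\pi^*\phi_0$ is compatible with all the elementary transformations defining $E_V$ and $E_W$, a local assertion near each $D_i$ which reduces, exactly as in Lemma~\ref{parabolic pushforward}, to the case of parabolic line bundles, where it is immediate.

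For the equivalence, note first that over $U$ (resp.\ $U'$) the functor $\pi^*_{\ppar}$ is ordinary pullback and $\pi_{\ppar*}$ is Galois descent $(\pi_*(-))^G$, so the unit $\id\to\pi_{\ppar*}\pi^*_{\ppar}$ and counit $\pi^*_{\ppar}\pi_{\ppar*}\to\id$ are isomorphisms over $U$ (resp.\ $U'$) by classical descent along the finite \'etale Galois cover $U'\to U$. It therefore suffices to check that they are isomorphisms in a neighborhood of each $D_i$ (resp.\ $D_i'$). Shrinking to $U_i$ with coordinate $x$ and $U_i'\colon y^N=x$, $\sigma(y)=\zeta y$, the argument of Lemma~\ref{parabolic pushforward} produces, after further shrinking, an $\sO_{U_i'}$-basis of eigenvectors for $G$, so both categories are locally sums of line bundles; one then computes directly, using that $R'=R[y]/(y^N-x)$ is free over $R$ on $1,y,\dots,y^{N-1}$ and the $\sigma$-eigen-decomposition, that $\pi^*_{\ppar}$ sends $\sO_{U_i}(\tfrac{a}{N}D_i)$ to the $G$-equivariant line bundle with the character determined by $a\bmod N$, that $\pi_{\ppar*}$ sends it back, and that the two composites are canonically the identity. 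Since the local isomorphisms so produced are the restrictions of the global unit and counit, they glue, and the equivalence follows.

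The step I expect to be the main obstacle is the local bookkeeping near $D_i'$: tracking the successive elementary transformations $E_i^0\supset E_i^1\supset\cdots\supset E_i^{w_i+1}$, identifying $E_i^{w_i+1}$ with the correct direct sum of character line bundles, and — most importantly — verifying that the resulting local identifications $\pi_{\ppar*}\pi^*_{\ppar}V\cong V$ and $\pi^*_{\ppar}\pi_{\ppar*}E\cong E$ are the canonical, choice-independent ones, so that they genuinely are the restrictions of the natural transformations above and patch across the finitely many components $D_i$. Everything else is either formal or an immediate consequence of Galois descent away from $D$; compared with \cite{Biswas,IS}, the only new point in positive characteristic is that $t^N-1$ remains separable, which holds precisely because $(N,p)=1$.
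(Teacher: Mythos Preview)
Your proposal is correct and follows essentially the same route as the paper's proof: the $G$-equivariance of $E$ is obtained exactly as you say (each $E_i^j$ is the kernel of a $G$-equivariant map out of a $G$-equivariant sheaf, hence $G$-stable, and so is the intersection), and the equivalence is established by reducing to the open part $U'/U$, where it is Galois descent, and then to a local computation near each $D_i$, where the eigen-decomposition (available precisely because $(N,p)=1$) reduces everything to parabolic/$G$-equivariant line bundles.

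The only difference is packaging. The paper argues that $\pi_{\ppar*}$ is fully faithful and essentially surjective: fully faithful by embedding $\Hom_G(E_1,E_2)$ into its restriction to $U'$ and then localizing near $D_i$; essentially surjective by constructing the natural inclusion $V_0=(\pi_*\pi^*V_0)^G\hookrightarrow(\pi_{\ppar*}\pi_{\ppar}^*V)_0$ from the inclusion $\pi^*V_0\hookrightarrow\pi_{\ppar}^*V$ and checking, locally and after reduction to line bundles, that it is an isomorphism of parabolic bundles. You instead check that both natural transformations $\id\to\pi_{\ppar*}\pi_{\ppar}^*$ and $\pi_{\ppar}^*\pi_{\ppar*}\to\id$ are isomorphisms. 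One small caution: calling these ``unit'' and ``counit'' presupposes an adjunction you have not set up; it is cleaner to construct them directly (the first exactly as the paper does, the second from the evaluation map $\pi^*(\pi_*E)^G\to E$) and then verify they are isomorphisms. The paper's explicit construction of the global map before localizing is precisely what resolves the gluing concern you flag in your last paragraph, so you may find it useful to imitate that rather than produce the local isomorphisms first and argue they patch.
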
  
	\begin{proof}
		Since each $D_i$ is fixed under the $G$-action and $E_i^0$ is $G$-equivariant, each $E_i^j$ for $0\leq j\leq w_i$ is $G$-equivariant by construction. So is the intersection $E$. We are going to show that the parabolic pushforward is fully faithful and essentially surjective. Let $E_i, i=1,2$ be two $G$-equivariant vector bundles over $C'$. To verify that the natural map
		$$
		\Hom_{G}(E_1,E_2)\to \Hom(\pi_{\ppar*}E_1,\pi_{\ppar*}E_2), \quad h\mapsto \pi_{\ppar*}(h)
		$$
		is bijective, one may reduce it to the case of direct sums of $G$-equivariant line bundles, in which the verification becomes obvious. Indeed, we embed $\Hom_{G}(E_1,E_2)$ to $\Hom_{G}((E_1)|_{U'},(E_2)|_{U'})$. By Galois descent, the natural map is bijective:
		$$
		\pi_*^G: \Hom_{G}((E_1)|_{U'},(E_2)|_{U'})\to  \Hom((\pi_{\ppar*}E_1)|_{U},(\pi_{\ppar*}E_2)|_{U}).
		$$
		So the problem is local around each $D_i$, and as argued in Lemma \ref{parabolic pushforward}, one may fix an $G$-equivariant isomorphism of $E_i, i=1,2$ to a direct sum of $G$-equivariant line bundles. To show the essentially surjectivity of the functor $\pi_{\ppar*}$, we are going to verify that there is a natural isomorphism of parabolic vector bundles:
		$$
		V\cong \pi_{\ppar*}\pi_{\ppar}^*V,
		$$
		where $V$ is a parabolic vector bundle over $C$ in the category. It is easy to see that the usual pullback 
		$\pi^*V_0$ is a subsheaf of the parabolic pullback $\pi_{\ppar}^*V$. Let $h: \pi^*V_0\to \pi_{\ppar}^*V$ be the natural inclusion of $G$-equivariant vector bundles. By Galois descent, we have a natural inclusion
		$$
		(\pi_*h)^G: V_0=(\pi_*\pi^*V_0)^G\to (\pi_{\ppar*}\pi_{\ppar}^*V)^G=(\pi_{\ppar*}\pi_{\ppar}^*V)_0.
		$$ 
		To verify $(\pi_*h)^G$ to be an isomorphism and respects the parabolic structure, it suffices to argue around each $D_i$. Hence the problem is reduced to the case of $V$ being a direct sum of parabolic line bundles. For the parabolic line bundle case, the statement is trivially true. The lemma is proved.    
	\end{proof}
	Next, we shall study the behavior of $\lambda$-connections under parabolic pushforward and parabolic pullback. Let $(E,\nabla)$ be a $\lambda$-connection over $C'_{\log}$. It is said to be $G$-equivariant, if $E$ is a $G$-equivariant vector bundle and the isomorphisms $\{\phi_g\}_{g\in G}$ are horizontal. We call the following result the Biswas-Iyer-Simpson correspondence for $\lambda$-connections. 
	\begin{proposition}\label{Biswas correspondence for Higgs/flat bundles in positive char}
		Notation as in \ref{parabolic_notation}. The parabolic pushfoward and parabolic pullback induce the equivalence of categories between the category of $G$-equivariant $\lambda$-connections over $C'_{\log}$ and the category of parabolic $\lambda$-connections over $C_{\log}$ whose parabolic structure is supported along $D$ and has weights in $\frac{1}{N}\Z$. Moreover, under this equivalence, $G$-equivariant $\lambda$-connections with nilpotent residues corresponds to adjusted parabolic $\lambda$-connections.
	\end{proposition}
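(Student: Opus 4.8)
The plan is to upgrade the equivalence of Lemma~\ref{Biswas correspondence in positive char} from vector bundles to $\lambda$-connections, and then to match the two residue conditions by a local computation at the branch locus. The geometric fact that makes everything fit together is the canonical isomorphism
$$
\omega_{C'_{\log}/k}\;\cong\;\pi^{*}\omega_{C_{\log}/k},
$$
which holds for a cyclic cover of order $N$ with $\pi^{*}D=ND'$: near a branch point, if $x=y^{N}$ is a local equation, it sends $\tfrac{dy}{y}$ to $\tfrac1N\pi^{*}\tfrac{dx}{x}$ (using that $N$ is invertible in $k$), and it is $G$-equivariant for the tautological $G$-structure on the right-hand side.

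First I would define the pushforward of $\lambda$-connections. Given a $G$-equivariant $\lambda$-connection $(E,\nabla)$ on $C'_{\log}$, I equip, for each $\alpha$, the sheaf $\sO_{C'}(\sum_{i}[-\alpha_{i}N]D'_{i})$ with its canonical logarithmic $\lambda$-connection $\lambda d$ and form the tensor-product $\lambda$-connection on $E\otimes\sO_{C'}(\sum_{i}[-\alpha_{i}N]D'_{i})$ as in Claim~\ref{tensor product}; it remains $G$-equivariant. Applying $\pi_{*}$ (exact, since $\pi$ is affine), the projection formula, and the isomorphism above, I obtain a $\lambda$-connection with values in $\omega_{C_{\log}/k}$ on the $\sO_{C}$-module $\pi_{*}\bigl(E\otimes\sO_{C'}(\sum_{i}[-\alpha_{i}N]D'_{i})\bigr)$, which is $G$-equivariant and hence restricts to a logarithmic $\lambda$-connection $\nabla_{\alpha}$ on the invariant subsheaf $V_{\alpha}=\pi_{\ppar*}(E)_{\alpha}$. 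Since the natural inclusions for $\alpha\geq\beta$ are horizontal, the $\nabla_{\alpha}$ assemble into a parabolic $\lambda$-connection $\pi_{\ppar*}(E,\nabla)$, with parabolic structure supported on $D$ and weights in $\tfrac1N\Z$ by Lemma~\ref{parabolic pushforward}; horizontal $G$-equivariant maps push forward to morphisms, so $\pi_{\ppar*}$ is a functor. For the quasi-inverse: given a parabolic $\lambda$-connection $(V,\nabla)$ with the prescribed support and weights, compatibility of the $\nabla_{\alpha}$ with the inclusions $V_{\beta}\hookrightarrow V_{\alpha}$ shows that $\nabla_{0}$ is logarithmic along each $D_{i}$ and that $res_{D_{i}}\nabla_{0}$ preserves the parabolic filtration on $V_{0}\otimes\sO_{D_{i}}$; pulling $\nabla_{0}$ back and twisting by $\lambda d$ on $\sO_{C'}(ND')$ gives a logarithmic $\lambda$-connection on $E^{0}_{i}=\pi^{*}(V_{0}(D))$, and because $res_{D_{i}}\nabla_{0}$ respects the parabolic filtration this connection preserves each elementary transformation $E^{j+1}_{i}\subset E^{j}_{i}$ from the construction of $E=\pi^{*}_{\ppar}V$ in Lemma~\ref{Biswas correspondence in positive char}, hence preserves $E=\bigcap_{i}E^{w_{i}+1}_{i}$. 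Over $U'=\pi^{-1}U$ both functors reduce to the classical Galois-descent equivalence, so the bundle isomorphisms $V\cong\pi_{\ppar*}\pi^{*}_{\ppar}V$ and $E\cong\pi^{*}_{\ppar}\pi_{\ppar*}E$ of Lemma~\ref{Biswas correspondence in positive char} are horizontal on $U'$, hence everywhere, a logarithmic $\lambda$-connection being determined by its restriction to the open part. This gives the asserted equivalence of categories.

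It remains to match the residue conditions. I would argue locally at a branch point, as in the proof of Lemma~\ref{parabolic pushforward}: fix $y$ with $x=y^{N}$ and an $\sO_{C'}$-basis $\{e_{j}\}$ of $E$ with $\sigma(e_{j})=\zeta^{n_{j}}e_{j}$, and write $\nabla(e_{j})=\sum_{k}a_{jk}e_{k}\otimes\tfrac{dy}{y}$. The $G$-equivariance of $\nabla$ forces $\sigma(a_{jk})=\zeta^{\,n_{j}-n_{k}}a_{jk}$, so $a_{jk}$ is supported in $y$-degrees $\equiv n_{j}-n_{k}\pmod N$; in particular $res_{D'_{i}}\nabla$ is block-diagonal with respect to the decomposition of $E_{D'_{i}}$ into $G$-characters. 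Tracing the induced connection through the basis $\{y^{(N-n_{j})\bmod N}e_{j}\}$ of $V_{0}$ and its analogues for the $V_{\alpha}$, and using $\tfrac{dy}{y}=\tfrac1N\pi^{*}\tfrac{dx}{x}$, one computes that the block of $res_{D'_{i}}\nabla$ on the $\zeta^{a}$-eigenspace is carried by $\pi_{\ppar*}$, up to the scalar $\tfrac1N$, onto the operator $res_{D_{i}}\nabla_{\alpha}-\lambda\alpha_{i}\cdot\mathrm{id}$ acting on $Gr_{D_{i},\alpha_{i}}V$, where $\alpha_{i}=\tfrac{(N-a)\bmod N}{N}$ is exactly the weight of that block. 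Hence $res_{D'_{i}}\nabla$ is nilpotent along every $D'_{i}$ if and only if $res_{D_{i}}\nabla_{\alpha}$ acts on each $Gr_{D_{i},\alpha_{i}}V$ with the single eigenvalue $\lambda\alpha_{i}$, i.e.\ if and only if $\pi_{\ppar*}(E,\nabla)$ is adjusted.

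The abstract, category-theoretic part is routine once the functors are set up; I expect the main obstacle to be the local analysis at the branch locus, namely verifying that the connection, transparent on the \'etale part by descent, extends logarithmically and compatibly with the elementary transformations defining $\pi^{*}_{\ppar}$, and carrying out the cyclic-cover bookkeeping (the coordinates $y^{N}=x$, the $G$-eigendecomposition, and the floor functions in the weight indices) needed for the residue identification in the last step.
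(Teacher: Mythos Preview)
Your proposal is correct and follows essentially the same three-step structure as the paper's proof: construct the induced $\lambda$-connections under parabolic pushforward and pullback, verify the equivalence (you use that horizontality on the \'etale locus $U'$ extends, while the paper supplements this with a short explicit local check), and match the nilpotent-residue condition with the adjusted condition via the same local computation in a $G$-eigenbasis at a branch point. Your emphasis on the canonical isomorphism $\omega_{C'_{\log}/k}\cong\pi^{*}\omega_{C_{\log}/k}$ makes the construction slightly more conceptual, but the substance and the key residue calculation are the same as in the paper.
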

	\begin{proof}
		Notice that a $\lambda$-connection $\nabla$ on a $G$-equivariant vector bundle $E$ makes a $G$-equivariant $\lambda$-connection if and only if the $\lambda$-connection
		$$
		\nabla: E\to E\otimes \omega_{C'_{\log}/k}
		$$
		is $G$-equivariant, where the right hand side is equipped with the tensor product of the given $G$-action on $E$ and the natural $G$-action on $\omega_{C'_{\log}/k}$. Therefore, when $D$ is empty, the proposition is nothing but the Galois descent. In general, the proposition holds away from $D$ for the same reason, and we we need to show is a statement locally around each $D_i$. However we should be aware that we \emph{cannot} assume in general that $(E,\nabla)$ is a direct sum of rank one $G$-equivariant $\lambda$-connections. We divide the whole proof into three small steps.  
		
		{\itshape Step 1. Induced $\lambda$-connections under parabolic pushfoward/pullback.} Let $(E,\nabla)$ be a $G$-equivariant $\lambda$-connection over $C'_{\log}$. Taking the usual pushforward and then the $G$-invariant, we obtain a $\lambda$-connection over $C_{\log}$:
		$$
		\pi_{\ppar*}\nabla: (\pi_{\ppar*}E)_0\to (\pi_{\ppar*}E)_0\otimes \omega_{C_{\log}/k}. 
		$$ 
		Around each $D_i$, we may write $E$ into a direct sum of rank one $G$-equivariant line bundles. Using that, it is easy to verify that $\pi_{\ppar*}\nabla$ respects the parabolic structure on $\pi_{\ppar*}E$. So $\pi_{\ppar*}\nabla: \pi_{\ppar*}E\to \pi_{\ppar*}E\otimes\omega_{C_{\log}/k}$ is a parabolic $\lambda$-connection. Conversely, let $(V,\nabla)$ be a parabolic $\lambda$-connection on $C_{\log}$. We equip $E_i^0=\pi^*V_0\otimes \sO_{C'}(ND')$ with the pullback $\lambda$-connection on the first factor and the canonical $\lambda$-connection on the second factor. Denote it by $\tilde \pi^*\nabla$. Because $\nabla$ respects the parabolic filtration on $V_0$, it follows that $E_i^j$s are $\tilde \pi^*\nabla$-invariant. So is the intersection $\pi_{\ppar}^*V$. The induced $\lambda$-connection on $\pi_{\ppar}^*V$ is denoted by $\pi_{\ppar}^*\nabla$. As $\tilde \pi^*\nabla$ is $G$-equivariant, so is $\pi_{\ppar}^*\nabla$. 
		
		{\itshape Step 2. Equivalence of categories for $\lambda$-connections.} Without $\lambda$-connection the equivalence is the statement of Lemma \ref{Biswas correspondence in positive char}. Away from $D$, this is the consequence of Galois descent. So the problem is local around $D$. Let $(E,\nabla)$ be a $G$-equivariant connection over $C'_{\log}$, and let $V=\pi_{\ppar*}E$ be the parabolic pushforward of $E$. Let $x$ be a local coordinate of $D_i\in C$ and $y$ be a local coordinate of $D'_i\in C'$ satisfying $\pi^*x=y^N$. Because
		$$
		E\subset \pi^*(V_0)\otimes \sO_{C'}(ND')=\pi^*(\pi_*(E)^G)\otimes \sO_{C'}(ND'),
		$$
		one may write any local section of $E$ around $D_i$ into the form $s\otimes \frac{f}{y^N}$ with $s$ a $G$-invariant section of $E$ and $f\in \sO_{C'}$. Therefore,
		$$
		\pi^*_{\ppar}\pi_{\ppar*}\nabla(s\otimes \frac{f}{y^N})=\lambda\cdot d(\frac{f}{y^N})s+\frac{f}{y^N}\pi_*\nabla(s)=\lambda\cdot d(\frac{f}{y^N})s+\frac{f}{y^N}\nabla(s)=\nabla(s\otimes \frac{f}{y^N}).
		$$ 
		So $\pi^*_{\ppar}\pi_{\ppar*}\nabla=\nabla$. We leave the verification of $\pi_{\ppar*}\pi_{\ppar}^*\nabla=\nabla$ for a parabolic $\lambda$-connection $\nabla$ over $C_{\log}$ as an exercise. 
		
		{\itshape Step 3. Adjustedness condition.} It remains to show that the nilpotence condition of a $\lambda$-connection over $C'_{\log}$ corresponds to the adjustedness condition of a $\lambda$-connection over $C_{\log}$. 
		
		Let $(E,\nabla)$ be a $G$-equivariant $\lambda$-connection over $C'_{\log}$ with nilpotent residues. We shall continue the local calculation started in Step 2. We may choose a local basis $\{e_{ij}\}_{1\leq i\leq w,1\leq j\leq s_i}$ of $E$ such that 
		$$
		\sigma(e_{ij})=\zeta^{-m_i}e_{ij},\quad \sigma(y)=\zeta y,
		$$
		and 
		$$
		0\leq m_1<m_2<\cdots<m_w\leq N-1.
		$$ 
		Then one computes that $V_0$ has a local basis $\{y^{m_{i}}e_{ij}\}$ and the parabolic filtration on $V_0$ (around $D_i$) is explicitly determined by
		$$
		V_{\frac{m_k}{N}}=(\pi_*(E\otimes \sO_{C'}(-m_kD')))^G=\bigoplus_{i,j} \sO_Xy^{m_i+n_i}e_{ij},\ 1\leq k\leq w.
		$$
		Set $e_i=\{e_{i1},\cdots,e_{is_i}\}_{1\leq i\leq w}$ and $e=\{e_1,\cdots,e_w\}$. Let $A'$ be the $\lambda$-connection one-form of $\nabla$ with respect to the local basis $e$, namely $\nabla(e)=eA'$. Write $A'=(A'_{ij})_{1\leq i\leq w,1\leq j\leq w}$ with $A'_{ij}$ a block matrix of log one-forms of size $s_i \times s_j$. Then, the $G$-equivariance property of $\nabla$ says that 
		\begin{equation}\label{G-equivariance property}
			A'_{ij}=y^{m_i-m_j}\pi^*A_{ij},
		\end{equation}
		where $A_{ij}$ is a block matrix of log one-forms on $C_{\log}$ with the same size as $A'_{ij}$. Fix the local basis $d\log x$ (resp. $d\log y$) of $\omega_{C_{\log}/k}$ (resp. $\omega_{C'_{\log}/k}$). They are related by 
		$$
		\pi^*d\log x=Nd\log y.
		$$
		So we may write uniquely $A_{ij}=a_{ij}d\log x$ (resp. $A'_{ij}=a'_{ij}d\log y$) with $a_{ij}$ (resp. $a'_{ij}$) a block matrix of local regular functions of $x$ (resp. $y$). Define $\mathrm{ord}_x(a_{ij})$ to be the minimum of the orders of $x$ among the entries of $a_{ij}$ and similarly define $\mathrm{ord}_y(a'_{ij})$. The relation \ref{G-equivariance property} yields the following inequality
		$$
		m_i-m_j+N\mathrm{ord}_x(a_{ij})\geq 0.
		$$
		It follows $\mathrm{ord}_x(a_{ij})>0$ when $i<j$ (which in turn implies $\mathrm{ord}_y(a'_{ij})>0$.). Notice $res(A'_{ij})=0$ for $i>j$ for trivial reason. Now that the residue of $\nabla$ at $D_i$ is nilpotent by condition, the eigenvalues of $res(A'_{ii})$ are zero for each $i$. With respect to the local basis $\{y^{m_1}e_1,\cdots,y^{m_w}e_w\}$ of $V_0$, the $\lambda$-connection one-form of $\pi_{\ppar*}\nabla$ takes the following form:
		$$B=\begin{pmatrix}
			A_{11}+\frac{\lambda \cdot m_1}{N}I_{s_1}d\log x & A_{12} &\cdots & A_{1w} \\
			A_{21} & A_{22}+\frac{\lambda \cdot  m_2}{N}I_{s_2}d\log x &\cdots & A_{2w}\\
			\vdots & \vdots & \ddots &\vdots\\
			A_{w1} & A_{w2}&\cdots & A_{ww}+\frac{\lambda \cdot  m_w}{N}I_{s_w}d\log x.
		\end{pmatrix}$$
		The residue matrix $res(B)$ is block lower triangular because the order of $x$ in any entry in $A_{ij}$ for $i<j$ is shown to be strictly positive. Since the eigenvalues of $res(A_{ii})$ are $N$-multiple of those of $res(A'_{ii})$ which are zero, it follows that the eigenvalues of $res(B)$ are exactly
		$\{\frac{\lambda \cdot m_i}{N}\}_{1\leq i\leq w}$. Summarizing the above discussions, we conclude that $\pi_{\ppar*}\nabla$ satisfies the adjustedness condition. 
		
		Conversely, let $(V,\nabla)$ be an adjusted parabolic $\lambda$-connection on $C_{\log}$. Let $E=\pi^*_{\ppar}V$ be the parabolic pullback of $V$. We are going to show the residues of $\pi_{\ppar}^*\nabla$ are nilpotent. Because of $V=\pi_{\ppar*}(\pi^*_{\ppar}V)=\pi_{\ppar*}E$,  the previous discussion provides us a set of local basis $\{y^{m_1}e_1,\cdots,y^{m_w}e_w\}$ of $V_0$ (around $D_i$), where $\{e_1,\cdots,e_w\}$ is a local basis of $E$ consisting of eigenvectors under the $G$-action. Now let $P=(P_{ij})$ be the $\lambda$-connection one-form of $\nabla$ on $V_0$ with respect to this basis. Here each $P_{ij}$ is a block matrix of one-forms. As $\nabla$ preservers the parabolic filtration, $res(P_{ij})=0$ for $i<j$. So $res(P)$ is a block lower triangular matrix. Now we let $Q=(Q_{ij})$ be the $\lambda$-connection one-form of $\pi_{\ppar}^*\nabla$ with respect to the basis $\{e_1,\cdots,e_w\}$. A simple calculation using $\lambda$-Leibniz rule yields the following equality 
		$$
		Q_{ij}=y^{m_i-m_j}\pi^*P_{ij}-\delta_{ij}\lambda\cdot m_iI_{s_i}d\log y. 
		$$
		Thus the matrix $res(Q)$ is block upper triangular (in fact block diagonal). By the adjustedness condition, the eigenvalues of $res(A_{ii})$ are exactly $\lambda \frac{m_i}{N}$ for each $i$. It follows that the eigenvalues of $res(Q)$ are
		$$
		\{-\lambda m_1+N\lambda \frac{m_i}{N}, \cdots, -\lambda m_w+N\lambda \frac{m_w}{N}\}
		$$ 
		which are all zeros. So $res(Q)$ is nilpotent as claimed. The proposition is proved. 
		
	\end{proof}
	
	\begin{remark}\label{exactness of parabolic pullback}
		The pair of the functors $(\pi^*_{\ppar}, \pi_{\ppar*})$ discussed above may be regarded as a twisted version of the classical Galois descent in finite \'etale case. Obviously, both functors are exact. That is, $\pi^*_{\ppar}$ sends a short exact sequence of parabolic $\lambda$-connections over $C_{\log}/k$ to a short exact sequence of $G$-equivariant $\lambda$-connections over $C'_{\log}/k$, and vice versa for $\pi_{\ppar*}$. Also, they commute with direct sum and tensor product. The proof is not difficult (see e.g. \cite[Lemma 4.3]{SW}).
	\end{remark}

Second, we observe that	there is a straightforward generalization of Theorem \ref{OV correspondence} for stacks as follows.
	\begin{notation}\label{log_root_curve_notation}
		
		Let $C/k$ be a smooth curve and $D\subset C$ be a reduced divisor, as in Notation \ref{basic notation}. Let $\{n_i\}_{1\leq i\leq l}$ be a set of positive integers coprime to $p$. Let $Z=C[\frac{D_1}{n_1},\cdots,\frac{D_l}{n_l}]$ be the (smooth, tame, Deligne-Mumford) root stack, with $\pi:Z\rightarrow C$ the map to the coarse moduli space. Let $Z_{\log}$ be the induced logarithmic root stack, equipped with the logarithmic structure determined by the divisor $\pi^*D\subset Z$. As $Z/k$ is smooth and tame, the sheaf of logarithmic K\"ahler differentials $\omega_{Z_{\log}/k}$ is a line bundle. Define the categories $\HIG^{lf}_{p-1}(Z_{\log}/k)$ and $\MIC^{lf}_{p-1}(Z_{\log}/k)$ as the corresponding categories for $C_{\log}$ in Notation \ref{basic notation}. As we are given the $W_2$-lifting $(\tilde C,\tilde D)$ of the log pair $(C,D)$, the log root stack admits a natural $W_2$-lifting $\tilde Z_{\log}$ too: let $\tilde Z=\tilde{C}[\frac{\tilde D_1}{n_1},\cdots,\frac{\tilde D_l}{n_l}]$ be the root stack over $W_2$ and $\tilde \pi: \tilde Z\to \tilde C$ the induced map to the coarse space. Equip $\tilde Z$ with the log structure determined by the divisor $\tilde \pi^*\tilde D$.
	\end{notation}
	By the standard descent argument, one deduces from Theorem \ref{OV correspondence} the following  
	\begin{proposition}\label{OV correspondence for tame root stack}
		Notation as in \ref{log_root_curve_notation}. Then there is an equivalence of categories
		$$
		\HIG^{lf}_{p-1}(Z_{\log}/k)\xrightleftharpoons[\ C_{Z_{\log}\subset \tilde Z_{\log}}\ ]{\ C_{Z_{\log}\subset \tilde Z_{\log}}^{-1}\ }\MIC^{lf}_{p-1}(Z_{\log}/k).
		$$
		When all $n_i$s equal one, the functors are the inverse Cartier/Cartier transform for $C_{\log}$ with respect to the lifting $\tilde C_{\log}$. 
	\end{proposition}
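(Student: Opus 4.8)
The plan is to deduce this from the logarithmic form of Theorem~\ref{OV correspondence} (Ogus--Vologodsky, Schepler) by \'etale descent along a scheme atlas of the Deligne--Mumford stack $Z$, exploiting that the Cartier and inverse Cartier transforms are canonically attached to the $W_2$-lifting and commute with log \'etale base change.

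\emph{Step 1: a scheme atlas with compatible lifting.} Away from $D$ the stack $Z$ is just $U=C-D$. Near a component $D_i$, choose an \'etale neighbourhood $\Spec A\to C$ on which $D_i=\{x=0\}$ and no other component meets it; then the pullback of $Z$ is $[\Spec B/\mu_{n_i}]$, where $B=A[y]/(y^{n_i}-x)$ is again a smooth curve over $k$ with the scaling $\mu_{n_i}$-action on $y$ and coarse map $x=y^{n_i}$. As $(n_i,p)=1$, the group $\mu_{n_i}$ is \'etale, so $\Spec B\to[\Spec B/\mu_{n_i}]$ is an \'etale $\mu_{n_i}$-torsor. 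Gluing $U$ with these charts gives a smooth $k$-scheme $W$ and an \'etale representable surjection $q\colon W\to Z$; the log structure of $Z_{\log}$ restricts on each $\Spec B$ to the divisorial one for $\{y=0\}$, so $W_{\log}$ is log smooth over $k$. Since $q$ is \'etale, topological invariance of the \'etale site produces a unique \'etale $W_2$-lifting $\tilde W_{\log}$ over $\tilde Z_{\log}$ (in each chart $\Spec W_2[y]$ with the $\{y=0\}$-log structure, carrying the lifted $\mu_{n_i}$-action), and likewise $W\times_Z W$ is a smooth $k$-curve with canonical lifting $\tilde W_{\log}\times_{\tilde Z_{\log}}\tilde W_{\log}$, whose two projections to $\tilde W_{\log}$ lift the two projections $W\times_Z W\rightrightarrows W$.

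\emph{Step 2: descent.} By \'etale descent for (quasi-)coherent sheaves with extra structure, $\HIG^{lf}_{p-1}(Z_{\log}/k)$, resp.\ $\MIC^{lf}_{p-1}(Z_{\log}/k)$, is the category of objects of $\HIG^{lf}_{p-1}(W_{\log}/k)$, resp.\ $\MIC^{lf}_{p-1}(W_{\log}/k)$, equipped with descent data along $W\times_Z W\rightrightarrows W$; the nilpotence and exponent bounds are \'etale-local and so cause no difficulty. Theorem~\ref{OV correspondence} applied to $W_{\log}\subset\tilde W_{\log}$ (and to $W\times_Z W$ with its lifting) gives mutually inverse functors $C_W,C_W^{-1}$. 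To descend them one must check that they intertwine the two pullbacks along $W\times_Z W\rightrightarrows W$, i.e.\ (in the local charts) that they are $\mu_{n_i}$-equivariant. This is the step with content: it is precisely the assertion that the Cartier/inverse Cartier transform commutes with log \'etale base change of the chosen $W_2$-lifting, which is built into the Ogus--Vologodsky--Schepler formalism and is transparent from the exponential-twisting description of \cite{LSZ0,LSYZ}, where the splitting module of the relevant Azumaya algebra is produced functorially from the lifting. Granting it, the descent data transport and $C_W,C_W^{-1}$ glue to mutually inverse functors $C_{Z_{\log}\subset\tilde Z_{\log}}^{-1}, C_{Z_{\log}\subset\tilde Z_{\log}}$ on $Z_{\log}$.

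\emph{Step 3: the unramified case.} When all $n_i=1$ the root stack $Z$ is $C$ itself, $\tilde Z_{\log}=\tilde C_{\log}$, and one may take $q$ to be the identity (or any \'etale cover of $C$), so the descended functors are exactly the Cartier/inverse Cartier transforms of Theorem~\ref{OV correspondence} attached to $\tilde C_{\log}$. The main obstacle, as indicated, is the base-change (equivariance) compatibility of the OV functors along the atlas; the remainder is formal descent.
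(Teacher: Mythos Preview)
Your proof is correct and follows essentially the same strategy as the paper: present the root stack \'etale-locally as a quotient $[\Spec B/\mu_{n_i}]$, apply Theorem~\ref{OV correspondence} on the scheme cover, and descend using that the (inverse) Cartier transform is compatible with the group action because the $W_2$-lifting is $G$-equivariant. The paper packages this as ``$G_i$-equivariant objects on local cyclic covers $C'_i\to C_i$ plus Zariski gluing over $C$,'' whereas you phrase it as groupoid descent along an \'etale atlas $W\to Z$; these are equivalent formulations, and you are slightly more explicit than the paper about the key point (log \'etale base-change compatibility of $C^{-1}$, which the paper records separately as Sublemma~\ref{inverse Cartier commutes with finite etale} but in this proof simply calls ``automatic'').
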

	\begin{proof}
		Take an open affine cover $C=\cup_iC_i$ positive integers $N_i$ coprime to $p$, finite cyclic groups $G_i$ of order prime to $p$, and $G_i$ covers $\pi_i\colon C'_i\rightarrow C_i$ branched over $D_i:=D\cap C_i$ such that that $Z_i:=Z\times_C C_i$ is isomorphic to the stack quotient $[C'_i/G_i]$. An object in $\HIG^{lf}_{p-1}(Z_{\log}/k)$ is obtained by gluing local objects in $\HIG^{lf}_{p-1}(Z_{i,\log}/k)$, and a similar statement is true for morphisms. Therefore we are reduced to proving are statement for each $C_i$ and ensuring that gluing maps are sent to gluing maps. On the other hand, for each $i$, there is an equivalence of categories of $\HIG^{lf}_{p-1}(Z_{i,\log}/k)$ and the category of $G$-equivariant Higgs bundles of $\HIG^{lf}_{p-1}(C'_{i,\log}/k)$. Since the inverse Cartier (with respect to the natural $W_2$-lifting $\tilde C_{i,\log}$) is an equivalence of categories, it \emph{automatically} promotes to an equivalence of the abelian categories of $G_i$-equivariant objects. The gluing is guaranteed because the inverse Cartier commutes with localization, i.e., is functorial with respect to open immersions of log schemes over $W_2$. This is the construction of the functor $C_{Z_{\log}\subset \tilde Z_{\log}}^{-1}$. The construction of the functor $C_{Z_{\log}\subset \tilde Z_{\log}}$ is done simply by reversing the direction of the above process. 
	\end{proof}
	
	Now we may proceed to the proof of Theorem \ref{ov correspondence for parabolic objects}.
	\begin{proof}[Proof of Theorem \ref{ov correspondence for parabolic objects}]

		Let $\pi: Z=C[\frac{D_1}{N},\cdots,\frac{D_l}{N}]\to C$ be the natural projection. Let $(E,\theta)$ be an object in $\HIG^{\ppar}_{p-1,N}(C_{\log}/k)$. We claim that $\pi^*$ induces the following equivalences of categories:
		\begin{itemize}
			\item [(i)] the category $\HIG_{p-1,N}^{\ppar}(C_{\log}/k)$ and the category $\HIG^{lf}_{p-1}(Z_{\log}/k)$
			\item [(ii)] the category $\mathrm{MIC}^{\ppar}_{p-1,N}(C_{\log}/k)$ and the category $\MIC^{lf}_{p-1}(Z_{\log}/k)$.   
		\end{itemize}
		Since $\theta$ is nilpotent of exponent $\leq p-1$ and $\pi$ is \'etale over the nonempty Zariski open subset $C-D$, $\pi^*\theta$ is nilpotent of exponent $\leq p-1$ over an nonempty Zariski open subset of $Z$ and hence over the whole stack $Z$. Moreover, applying the local computation of the parabolic pullback (Proposition \ref{Biswas correspondence for Higgs/flat bundles in positive char}) of a $\lambda$-connection (in our case $\lambda=0$) to each $\pi_i: C'_i\to C_i$, we get that $\pi^*(E,\theta)$ over each $C'_i$ is a $G$-equivariant Higgs bundle over $C'_{i,\log}$ (whose log structure is determined by the unique ramification point of $\pi_i$). Thus, over each $Z_i$, $\pi^*(E,\theta)$ is a Higgs bundle with pole along the unique ramification divisor. Conversely, by the local computation of the parabolic pushforward, a $G$-equivariant Higgs bundle over $C'_{i,\log}$ (equivalently a Higgs bundle over $Z_{i,\log}$) which is of nilpotent of exponent $\leq p-1$ must come from a parabolic Higgs bundle over $C_{i,\log}$ which is nilpotent of exponent $\leq p-1$. This shows the equivalence of categories (i). This argument extends to the flat objects. Thus Proposition \ref{Biswas correspondence for Higgs/flat bundles in positive char} implies the equivalence of categories (ii). Now we apply the inverse Cartier/Cartier transform in Proposition \ref{OV correspondence for tame root stack}, to complete the construction of $C^{-1}_{\ppar}$ and $C_{\ppar}$. 
	\end{proof}

For any real number $x$, its fractional part $\left\langle x\right\rangle $ is the unique real number satisfying 
$$
0\leq \left\langle x\right\rangle <1, \quad x\equiv \left\langle x\right\rangle  \textrm{modulo} \ \Z.
$$
Let $N$ be a positive integer and $p$ a prime with $(p,N)=1$. Choose an integer $\triangle$ satisfying $p\triangle \equiv 1\  \textrm{mod}\ N$. Let $(E,\theta)$ (resp. $(V,\nabla)$) be an object in $\HIG_{p-1,N}^{\ppar}(C_{\log}/k)$ (resp.  $\MIC_{p-1,N}^{\ppar}(C_{\log}/k)$). Let $D=\sum D_i$ and $\{\frac{m^1_{i}}{N},\cdots,\frac{m_i^{w_i}}{N}\}\subset [0,1)$ be the parabolic weights of $E$ (resp. $V$) at $D_i$. (Please note that the superscript $w_i$ is an index and does not refer to taking a power!) The following simple result reveals a basic fact about the parabolic Cartier/inverse Cartier transform.
\begin{proposition}\label{change of par weights under inverse Cartier}
Notation as above. The parabolic weights of $C^{-1}_{\ppar}(E,\theta)$ at $D_i$ are the following set 
$$
\{\left\langle \frac{pm_i^{1}}{N}\right\rangle,\cdots, \left\langle \frac{pm_i^{w_i}}{N}\right\rangle\}.
$$ 
Correspondingly, the parabolic weights of $C_{\ppar}(V,\nabla)$ at $D_i$ are given by
$$
\{\left\langle \frac{\triangle m_i^{1}}{N}\right\rangle,\cdots, \left\langle \frac{\triangle m_i^{w_i}}{N}\right\rangle\}.
$$ 
\end{proposition}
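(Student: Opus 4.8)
The plan is to reduce everything to the tame logarithmic root stack $Z=C[\tfrac{D_1}{N},\dots,\tfrac{D_l}{N}]$ of Notation \ref{log_root_curve_notation}, on which, by the very construction used in the proof of Theorem \ref{ov correspondence for parabolic objects}, $C^{-1}_{\ppar}$ is the honest Ogus--Vologodsky inverse Cartier transform $C_{Z_{\log}\subset \tilde Z_{\log}}^{-1}$ of Proposition \ref{OV correspondence for tame root stack} (and likewise $C_{\ppar}$ becomes $C_{Z_{\log}\subset\tilde Z_{\log}}$). Since the assertion concerns only the parabolic weights along each $D_i$, which are local invariants there, I would first record the dictionary between these weights and the stacky structure of $Z$: via the local computations in the proofs of Lemma \ref{parabolic pushforward} and Proposition \ref{Biswas correspondence for Higgs/flat bundles in positive char}, the parabolic weights of a parabolic bundle at $D_i$ are exactly $\{\langle \epsilon\, a_i^j/N\rangle\}_j$, where $a_i^1,\dots,a_i^{w_i}\in\Z/N$ are the characters occurring in the $\mu_N$-representation obtained by restricting the corresponding bundle on $Z$ to the $\mu_N$-gerbe lying over $D_i$, and $\epsilon\in\{\pm1\}$ is a fixed sign coming from the (decreasing) conventions of Definitions \ref{parabolic sheaf}--\ref{adjusted condition} (in fact $\epsilon=-1$, but its value will play no role).

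Next I would pin down the effect of the inverse Cartier transform on the underlying bundle near $D_i$. Because a parabolic weight is local at $D_i$ and a lift of Frobenius exists over a suitable (\'etale, or formal) neighbourhood of the gerbe over $D_i$ inside $Z$, the exponential-twisting description of the inverse Cartier transform (\cite{LSZ0}, and \cite[\S6]{LSYZ} for the logarithmic case, carried over to the root stack by Proposition \ref{OV correspondence for tame root stack}) identifies, \emph{over such a neighbourhood}, the underlying bundle of $C_{Z_{\log}\subset\tilde Z_{\log}}^{-1}(E,\theta)$ with the Frobenius pullback of the underlying bundle of $E$; the Higgs field $\theta$ intervenes only in the connection and in the exponential gluing isomorphisms on overlaps, so it does not affect the restriction to the gerbe over $D_i$. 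Thus the $\mu_N$-representation attached to $C^{-1}_{\ppar}(E,\theta)$ at $D_i$ is the Frobenius pullback of the one attached to $E$. Since $(N,p)=1$, the group scheme $\mu_N=\Spec k[t]/(t^N-1)$ is \'etale over $k$ and its Frobenius $t\mapsto t^p$ induces multiplication by $p$ on the character group $\Z/N=\Hom(\mu_N,\G_m)$; hence that pullback replaces the character multiset $\{a_i^j\}$ by $\{p\,a_i^j\bmod N\}$. Feeding this back through the dictionary gives $\{\langle \epsilon\,p\,a_i^j/N\rangle\}=\{\langle p\,\langle \epsilon\,a_i^j/N\rangle\rangle\}=\{\langle p\,m_i^j/N\rangle\}$, which is the first assertion; the sign $\epsilon$ disappears because multiplication by $p$ and negation commute modulo $\Z$.

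For the second assertion I would invoke that, by Theorem \ref{ov correspondence for parabolic objects}, $C_{\ppar}$ is a quasi-inverse of $C^{-1}_{\ppar}$; therefore it must send the weight set $\{m_i^j/N\}$ to the unique set $\{m_i^{\prime j}/N\}$ with $\{\langle p\,m_i^{\prime j}/N\rangle\}=\{\langle m_i^j/N\rangle\}$. As multiplication by $p$ is a bijection of $\tfrac1N\Z/\Z$ whose inverse is multiplication by $\triangle$ (because $p\triangle\equiv 1\bmod N$), this forces $\{m_i^{\prime j}/N\}=\{\langle \triangle\,m_i^j/N\rangle\}$. Alternatively, and symmetrically, one can run the previous paragraph verbatim with $C_{Z_{\log}\subset\tilde Z_{\log}}$ in place of its inverse: its underlying-bundle functor is (locally near $D_i$) Frobenius descent, hence divides the $\mu_N$-characters by $p$, i.e.\ multiplies them by $\triangle$.

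The step carrying all the content is the local identification, near $D_i$, of the underlying bundle of the root-stack inverse Cartier transform with a Frobenius pullback \emph{compatibly with the $\mu_N$-structure} — this is precisely the exponential-twisting formula of \cite{LSZ0, LSYZ} together with Proposition \ref{OV correspondence for tame root stack}; everything after it is the formal dictionary of Section \ref{section:parabolic} plus the elementary fact that absolute Frobenius acts as multiplication by $p$ on $\Hom(\mu_N,\G_m)$. A minor bookkeeping point is to fix the sign convention of Definitions \ref{parabolic sheaf}--\ref{adjusted condition} so that the fractional parts come out exactly as $\langle p\,m_i^j/N\rangle$; as noted, this does not affect the argument since negation commutes with $p$-multiplication modulo $\Z$.
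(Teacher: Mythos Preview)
Your proposal is correct and is essentially the same argument as the paper's, only phrased in the language of the root stack and $\mu_N$-gerbes rather than that of an explicit local cyclic cover. The paper works directly with a cyclic cover $\pi\colon C'\to C$, writes $(E,\theta)=\pi_{\ppar*}(H,\eta)$, observes that locally the bundle of $C^{-1}(H,\eta)$ is $F_C^*H$, and reads off that $\sigma$ acts on the pulled-back basis by $\zeta^{-pm_{ij}}$; your version replaces the $G$-fixed point $D_i'$ by the $\mu_N$-gerbe and the $\sigma$-eigenvalue computation by ``Frobenius multiplies $\mu_N$-characters by $p$'', which is exactly the same content. Both proofs then deduce the statement for $C_{\ppar}$ from the quasi-inverse property and the fact that multiplication by $\triangle$ inverts multiplication by $p$ on $\tfrac{1}{N}\Z/\Z$.
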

 \begin{proof}
Clearly, the statement for the Cartier transform follows that for the inverse Cartier transform, because $C_{\ppar}$ is a quasi-inverse to $C^{-1}_{\ppar}$ and 
$$
\frac{m}{N}=\left\langle \frac{p\triangle m}{N}\right\rangle,\quad 0\leq m\leq N-1.
$$
The problem is local around $D_i$.  So we may assume the following setting: Let $\pi: C'\to C$ be the cyclic cover of order $N$ such that $(E,\theta)$ is the parabolic pushforward of $(H,\eta)\in \HIG_{p-1}(C'_{\log}/k)$. $F$ admits a local basis $\{h_{ij}\}$ such that $\sigma(h_{ij})=\zeta^{-m_{ij}}h_{ij}$. See Proposition \ref{Biswas correspondence for Higgs/flat bundles in positive char} Step 3 for detail. By definition, 
$$
C^{-1}_{\ppar}(E,\theta)=\pi_{\ppar*}(C^{-1}(H,\eta)).
$$
Here is the key. Since we are in a local situation, the bundle part of $C^{-1}(H,\eta)$ is simply given by $F^*_{C}H$, where $F_C$ is the absolute Frobenius of $C$. Therefore, $F_C^*H$ admits a local basis $\{h_{ij}\otimes 1\}$ with the induced $G$-action given by
$$
\sigma(h_{ij}\otimes 1)=\zeta^{-pm_{ij}}(h_{ij}\otimes 1)=\zeta^{-m'_{ij}}(h_{ij}\otimes 1), \quad m'_{ij}=\left\langle \frac{pm_{ij}}{N}\right\rangle.
$$
The proposition follows.
 \end{proof}

	To conclude this section,  we record two lemmas for later use. Let $\pi: C'\to C$ be a cyclic cover of order $N$, which is coprime to $p$ and whose branch divisor is $D+B$ with $B$ a reduced divisor disjoint from $D$. Set $D'=\pi^{-1}D$ and $B'=\pi^{-1}B$. Then the parabolic pullback $\pi^*$ is a fully faithful functor from $\HIG^{\ppar}_{p-1,N}(C_{\log}/k)$ to $\HIG_{p-1}^{lf}(C'_{\log}/k)$, where $C'_{\log}=(C',D')$. Choose \emph{any} lifting $\tilde B\subset \tilde C$ of $B\subset C$. Then since $\pi: (C', D'+B')\to (C, D+B)$ is log \'etale, there is a unique $W_2$-lifting $(\tilde C', \tilde D'+\tilde B')$ of the pair $(C', D'+B')$ over $(C,D+B)$ up to isomorphism. 
	\begin{lemma}\label{parabolic inverse cartier commutes with pullback}
		Notation as above. Then for an object in $\HIG^{\ppar}_{p-1,N}(C_{\log}/k)$, one has the following natural isomorphism in $\MIC_{p-1}^{lf}(C'_{\log}/k)$:
		$$
		C^{-1}(\pi_{\ppar}^*(E,\theta))\cong \pi_{\ppar}^*(C^{-1}_{\ppar}(E,\theta)). 
		$$
	\end{lemma}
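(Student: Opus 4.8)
The plan is to verify the isomorphism locally on $C'$ and, on each piece, to reduce it to the base-change functoriality of the ordinary logarithmic inverse Cartier transform, unwinding the construction of $C^{-1}_{\ppar}$ from the proof of Theorem \ref{ov correspondence for parabolic objects}. Recall that $C^{-1}_{\ppar}$ is obtained by pulling an object of $\HIG^{\ppar}_{p-1,N}(C_{\log}/k)$ back to the root stack $Z=C[\frac{D_1}{N},\cdots,\frac{D_l}{N}]$ along the projection $\pi_Z\colon Z\to C$, applying the stacky transform $C^{-1}_{Z_{\log}\subset\tilde Z_{\log}}$ of Proposition \ref{OV correspondence for tame root stack}, and descending along the equivalence $\pi_Z^*$; thus $\pi_Z^*(C^{-1}_{\ppar}(E,\theta))\cong C^{-1}_{Z_{\log}\subset\tilde Z_{\log}}(\pi_Z^*(E,\theta))$, and by construction the stacky transform is local on $Z$ and over each quotient chart $Z_j\cong[C'_j/G_j]$ is computed by pulling back to the \'etale atlas $C'_j$, applying the ordinary $C^{-1}$ there, and descending. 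Accordingly I would treat three overlapping types of opens of $C'$: neighborhoods of the components of $D'$, the locus $C'-D'-B'$, and neighborhoods of the components of $B'$.

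The essential case is near a component $D_i$. Over a small neighborhood $C_i$ of $D_i$ the given cover restricts to a $G$-cover $C'_i:=C'\times_C C_i\to C_i$ totally tamely ramified along $D_i$, so that $Z\times_C C_i\cong[C'_i/G]$ with $C'_i$ its \'etale atlas, and $\pi\colon(C'_i,D'_i)\to(C_i,D_i)$ is log \'etale; hence there is a unique $W_2$-lifting of $(C'_i,D'_i)$ over $(\tilde C_i,\tilde D_i)$, and it is at once the restriction of $(\tilde C',\tilde D')$ and the lifting used to build $C^{-1}_{Z_{\log}\subset\tilde Z_{\log}}$ over this chart. By the Biswas-Iyer-Simpson correspondence (Proposition \ref{Biswas correspondence for Higgs/flat bundles in positive char}), $\pi_Z^*(E,\theta)|_{Z_i}$ is the $G$-equivariant bundle $\pi_{\ppar}^*(E,\theta)|_{C'_i}$ on $C'_i$; feeding this into the construction of $C^{-1}_{\ppar}$ recalled above and using $\pi_{\ppar}^*\circ\pi_{\ppar*}\cong\mathrm{id}$ together with the compatibility of the ordinary $C^{-1}$ with restriction to the open $C'_i\subset C'$, one obtains a natural isomorphism $\pi_{\ppar}^*(C^{-1}_{\ppar}(E,\theta))|_{C'_i}\cong C^{-1}(\pi_{\ppar}^*(E,\theta))|_{C'_i}$.

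Away from $D$ the parabolic structures are trivial, $C^{-1}_{\ppar}$ coincides with $C^{-1}$, $\pi_{\ppar}^*=\pi^*$ and $Z\cong C$, so the claim reduces to $\pi^*\circ C^{-1}\cong C^{-1}\circ\pi^*$. On $C-D-B$ the map $\pi$ is finite \'etale and this is the standard \'etale base-change compatibility of $C^{-1}$ already used in the proof of Proposition \ref{OV correspondence for tame root stack}. Near the tame branch divisor $B$ I would temporarily enlarge the logarithmic structures to $(C,D+B)$ and $(C',D'+B')$, for which $\pi$ is log \'etale, and apply functoriality of the logarithmic $C^{-1}$; then, since $(E,\theta)$ has no pole along $B$, this auxiliary logarithmic structure leaves the transform unchanged, so the resulting isomorphism descends to one in $\MIC^{lf}_{p-1}(C'_{\log}/k)$ over a neighborhood of $B'$.

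Finally, all overlaps of the three regions lie in $C'-D'-B'$, where $\pi$ is finite \'etale; by naturality of all the functors involved, each of the local isomorphisms constructed above restricts there to the canonical \'etale base-change isomorphism for $\pi$, so they patch to a single natural isomorphism $C^{-1}(\pi_{\ppar}^*(E,\theta))\cong\pi_{\ppar}^*(C^{-1}_{\ppar}(E,\theta))$ in $\MIC^{lf}_{p-1}(C'_{\log}/k)$. I expect the one genuinely delicate point to be the step near $B'$: that the logarithmic inverse Cartier transform is unchanged by adjoining a logarithmic structure along a divisor where the underlying Higgs or flat bundle has vanishing residue. This is immediate from the local exponential-twisting description of $C^{-1}$ (cf.\ \cite{LSZ0}, \cite[\S6]{LSYZ})---concretely, near $B$ the tame cover is \'etale-locally the power map $x=y^N$, which lifts to a morphism of $W_2$-schemes compatible with the standard Frobenius lifts $x\mapsto x^p$ and $y\mapsto y^p$, whence the twisting construction visibly commutes with $\pi^*$---but it is the one ingredient not already packaged as a statement in the preceding sections. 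Everything else is formal bookkeeping on top of Propositions \ref{Biswas correspondence for Higgs/flat bundles in positive char} and \ref{OV correspondence for tame root stack}.
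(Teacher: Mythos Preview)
Your argument is correct and follows the same strategy as the paper: reduce to the compatibility of the ordinary logarithmic inverse Cartier transform with log-\'etale pullback (the paper's Sublemma \ref{inverse Cartier commutes with finite etale}). The paper packages this via a single global factorization $C'\stackrel{\delta}{\to} Z\stackrel{\eta}{\to} C$ through the root stack and then runs \'etale descent from an atlas $Y\to Z$, whereas you work Zariski-locally on $C'$ in three regimes; near $D'$ your chart $Z\times_C C_i\cong[C'_i/G]$ is exactly the atlas the paper would use.

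One point worth recording: your treatment near the auxiliary branch locus $B'$ is more explicit than the paper's. The paper simply asserts that $\delta$ is log \'etale, but with the stated log structures (along $D'$ on $C'$ and along $\eta^{-1}D$ on $Z$) this fails near $B'$, where $\delta$ is a genuinely ramified map with no compensating log structure. Your fix---temporarily enlarging to $(C,D+B)$ and $(C',D'+B')$, applying log-\'etale functoriality there, and observing via the exponential-twisting description that the added poles do not affect $C^{-1}$ on objects regular along $B$---is exactly what is needed to make that step honest, and is consistent with how the $W_2$-lifting of $C'$ was produced in the setup preceding the lemma.
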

	
	We first state a sublemma.
	\begin{sublemma}\label{inverse Cartier commutes with finite etale}
		Let $\tilde C'_{\log}=(\tilde C',\tilde D')$ be a smooth curve over $W_2(k)$ equipped with a relative effective divisor and $\tilde \pi: \tilde C'_{\log}\to \tilde C_{\log}$ a \'etale morphism of log schemes over $W_2(k)$. Set $\pi: C'_{\log}\to C_{\log}$ to be the reduction of $\tilde \pi$ mod $p$. Then the inverse Cartier transform (with respect to those given $W_2$-liftings) commutes with the pull-back via $\pi$. More precisely, there exists a natural transformation of functors $\eta_{\pi}\colon C^{-1}\circ \pi^*\rightarrow \pi^*\circ C^{-1}$ that is an isomorphism. Moreover, if we have a third curve $\tilde{C}''_{\log}$ equipped with a \'etale morphism of log schemes $\tilde{\nu}\colon \tilde{C}''_{\log}\rightarrow \tilde{C}'_{\log}$, then the following diagram of functors and natural transformations commutes:
		\begin{equation}\label{compatibility between different pullbacks}\xymatrix{
				C^{-1}\circ \nu^*\circ \pi^*\ar[d]_{\eta_{\nu}}\ar[rr]^{\eta_{\pi\circ\nu}} & & \nu^*\circ \pi^*\circ C^{-1}\\
				\nu^*\circ C^{-1}\circ \pi^*.\ar[urr]_{\eta_{\pi}}  &  &
			}
		\end{equation}
	\end{sublemma}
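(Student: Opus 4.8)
The plan is to use the explicit \emph{exponential twisting} description of the inverse Cartier transform (\cite{LSZ0}; \cite[\S6]{LSYZ} for the logarithmic case), which manufactures $C^{-1}$ out of local data attached to the $W_2$-lifting, and then to check that every such piece of data is compatible with pullback along a log-\'etale morphism of $W_2$-schemes. (Alternatively one may invoke the functoriality built into the Ogus--Vologodsky construction of Theorem \ref{OV correspondence}; what follows is the hands-on version.)

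First I would verify that $\pi^{*}$ carries $\HIG^{lf}_{p-1}(C_{\log}/k)$ into $\HIG^{lf}_{p-1}(C'_{\log}/k)$ and $\MIC^{lf}_{p-1}(C_{\log}/k)$ into $\MIC^{lf}_{p-1}(C'_{\log}/k)$, so that the asserted isomorphism is even meaningful. Since $\tilde\pi$ is log-\'etale there is a canonical identification $\omega_{C'_{\log}/k}\cong\pi^{*}\omega_{C_{\log}/k}$, so $\pi^{*}\theta$ (resp. $\pi^{*}\nabla$) is again a logarithmic Higgs field (resp. logarithmic connection); nilpotence of exponent $\le p-1$ of the Higgs field, of the $p$-curvature (by compatibility of the $p$-curvature with log-\'etale pullback), and of the residues along the components of $D'$ is inherited from the corresponding nilpotence over $C$, the residues being, up to scaling, pullbacks of those over $C$ and nilpotence being unaffected by scaling.

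The heart of the matter: cover $C$ by Zariski opens $C_\alpha$ such that each $\tilde C_{\alpha,\log}$ carries a lift $\tilde F_\alpha$ of the absolute Frobenius over $W_2(k)$, and set $C'_\alpha=\pi^{-1}C_\alpha$. Because $\tilde\pi$ is log-\'etale and the absolute Frobenius of $C'_\alpha$ already solves, modulo $p$, the problem of lifting $\tilde F_\alpha\circ\tilde\pi$ through $\tilde\pi$, there is a \emph{unique} lift $\tilde F'_\alpha$ of the absolute Frobenius of $C'_\alpha$ over $W_2(k)$ with $\tilde\pi\circ\tilde F'_\alpha=\tilde F_\alpha\circ\tilde\pi$, and $d\tilde F'_\alpha/p$ is the pullback of $d\tilde F_\alpha/p$ under $\omega_{C'_{\log}}\cong\pi^{*}\omega_{C_{\log}}$. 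Feeding $\tilde F_\alpha$ and $d\tilde F_\alpha/p$ into the exponential-twisting recipe and then applying $\pi^{*}$ yields a canonical isomorphism $C^{-1}(\pi^{*}(E,\theta))|_{C'_\alpha}\cong\pi^{*}\bigl(C^{-1}(E,\theta)\bigr)|_{C'_\alpha}$, natural in $(E,\theta)$; and on an overlap the gluing datum for $C^{-1}$ is the gauge transformation attached to $(\tilde F_\alpha-\tilde F_\beta)/p$, whose pullback is the gauge transformation attached to $(\tilde F'_\alpha-\tilde F'_\beta)/p$, i.e. the gluing datum for $C^{-1}$ over $C'$. Invoking that $C^{-1}$ is functorial for open immersions of log schemes over $W_2$ (used already in the proof of Proposition \ref{OV correspondence for tame root stack}), the local isomorphisms glue to a natural isomorphism $\eta_\pi\colon C^{-1}\circ\pi^{*}\xrightarrow{\ \sim\ }\pi^{*}\circ C^{-1}$, and comparison on a common refinement shows it is independent of the auxiliary choices. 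For the compatibility square \eqref{compatibility between different pullbacks}, lift the Frobenius once more: over $C''_\alpha$ there is a unique $\tilde F''_\alpha$ with $\tilde\nu\circ\tilde F''_\alpha=\tilde F'_\alpha\circ\tilde\nu$, and by the same uniqueness it is simultaneously the unique lift compatible with $\tilde F_\alpha$ through $\tilde\pi\circ\tilde\nu$; hence the local isomorphism defining $\eta_{\pi\circ\nu}$ is, tautologically, the composite of the $\nu$-pullback of the one defining $\eta_\pi$ with the one defining $\eta_\nu$, and since all of them glue the square commutes.

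The sole non-formal ingredient is the unique liftability of the absolute Frobenius along a log-\'etale morphism of $W_2$-schemes, used in the third paragraph; the remaining work is bookkeeping — verifying that the twisting \emph{cocycles}, and not merely the objects, pull back compatibly, so that the locally defined isomorphisms genuinely glue and are independent of the chosen cover and Frobenius lifts. I expect this cocycle bookkeeping, rather than any conceptual hurdle, to consume most of the write-up.
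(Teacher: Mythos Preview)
Your argument is correct. The paper, however, does not give a direct proof at all: it simply records that the sublemma is a special case of \cite[Corollary~1.2]{Sh} (Sheng, \emph{Twisted functoriality in nonabelian Hodge theory in positive characteristic}). So the two ``proofs'' differ in that the paper defers entirely to an external reference, whereas you supply a self-contained argument via the exponential-twisting description of $C^{-1}$. Your approach---lift Frobenius uniquely through the log-\'etale map $\tilde\pi$, observe that both the Deligne--Illusie maps $d\tilde F_\alpha/p$ and the transition cocycles $(\tilde F_\alpha-\tilde F_\beta)/p$ pull back compatibly, then glue---is essentially what the cited paper \cite{Sh} carries out in greater generality; you have reconstructed the special case needed here. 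The compatibility triangle \eqref{compatibility between different pullbacks} via uniqueness of the iterated Frobenius lift is exactly the right mechanism. What you gain is self-containment and transparency about why log-\'etaleness is the correct hypothesis; what the paper gains is brevity and access to a more general statement (arbitrary log-smooth morphisms, not just curves) should it be needed elsewhere.
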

	\begin{proof}
		This is a special case of \cite[Corollary 1.2]{Sh}.
	\end{proof}
	\begin{proof}[Proof of Lemma \ref{parabolic inverse cartier commutes with pullback}]
		Let $\eta: Z=C[\frac{D_1}{N},\cdots,\frac{D_l}{N}]\to C$ be the natural projection. Then there is a factorization of $\pi$
		$$
		\pi: C'\stackrel{\delta}{\to} Z\stackrel{\eta}{\to} C.
		$$
		The morphism $\delta$ is log \'etale. Therefore, it suffices to prove that $C^{-1}$ commutes with $\delta^*$. This amounts to \'etale descent in conjunction with Sublemma \ref{inverse Cartier commutes with finite etale}, and especially the compatibility guaranteed by \ref{compatibility between different pullbacks}, but for convenience we spell out the details.
		
		Let $Y\rightarrow Z$ be an (\'etale) atlas of $Z$. Then we have the following diagram:
		
		$$
		\xymatrix{
			C'\ar[d]^{\delta}&   C'_Y\ar[d]^{\delta_{Y}}\ar[l]&    C'_{Y\times_Z Y}\ar[d]^{\delta_{Y\times_Z Y}}\ar@<1ex>[l]^{q_1}\ar@<-1ex>[l]_{q_2}&   \ar@<1ex>[l]\ar[l]\ar@<-1ex>[l] C'_{Y\times_Z Y\times_Z Y}\ar[d]^{\delta_{Y\times_Z Y\times_Z Y}}\\
			Z& Y\ar[l]& Y\times_Z Y\ar@<1ex>[l]^{p_1}\ar@<-1ex>[l]_{p_2}& \ar@<1ex>[l]\ar[l]\ar@<-1ex>[l] Y\times_Z Y\times_Z Y
		}
		$$
		All of the vertical arrows are log \'etale. The category of Higgs bundles on the stack $Z$ is equivalent to the category of Higgs bundles with descent data on the nerve of the cover, i.e., the category whose objects are triples $(F,\theta,\varphi)$ where $(F,\theta)$ is a Higgs bundle on $Y$ and $\varphi\colon p_1^*(F,\theta)\rightarrow p_2^*(F,\theta)$ is an isomorphism of Higgs bundles on $Y\times_Z Y$ that satisfies the natural cocycle condition on $Y\times_Z Y\times_Z Y$. Similarly, the category of Higgs bundles on the scheme $C'$ is equivalent to the category of Higgs bundles on $C'_Y$ equipped with an isomorphism between the two pullbacks to $C'_{Y\times_Z Y}$ that satisfies the cocycle condition on $C'_{Y\times_Z Y\times_Z Y}$ by usual \'etale descent for coherent sheaves. There is a precisely analogous description of the category of modules with an integrable connection. 
		
		Under the above equivalences, the functor $\delta^*\circ C^{-1}$ may be described on objects via $(F,\theta,\varphi)\mapsto (\delta_Y^*(C^{-1}(F,\theta)),\delta_Y^*C^{-1}\varphi)$. Here, $C^{-1}\varphi$ is an isomorphism between $C^{-1}p_1^*(F,\theta)$ and $C^{-1}p_2^*(F,\theta)$, two modules with an integrable connection on $Y\times_Z Y$, that satisfies the cocycle condition. Using Sublemma \ref{inverse Cartier commutes with finite etale}, it follows that this may be thought of as an isomorphism $p_1^*C^{-1}(F,\theta)\rightarrow p_2^*C^{-1}(F,\theta)$. (We have implicitly suppressed the natural isomorphisms $\eta_{p_1}$ and $\eta_{p_2}$. This will be harmless by the compatibility guaranteed in \ref{compatibility between different pullbacks}.) Then $\delta_Y^*C^{-1}\varphi$ denotes the induced isomorphism:
		$$
		\delta_{Y\times_Z Y}^* p_1^* C^{-1}(F,\theta)\rightarrow \delta_{Y\times_Z Y}^*p_1^* C^{-1}(F,\theta).$$
		Again using Sublemma \ref{inverse Cartier commutes with finite etale}, we may think of this as an isomorphism $q_1^*\delta_Y^*C^{-1}(F,\theta)\rightarrow q_2^*\delta_Y^*C^{-1}(F,\theta)$ because $p_1\circ \delta_{Y\times_Z Y} =\delta_Y\circ q_1$ and $p_2\circ \delta_{Y\times_Z Y} =\delta_Y\circ q_2$. 
		
		Similarly, the functor $C^{-1}\circ \delta^*$ may be described on objects via $$(F,\theta,\varphi)\mapsto (C^{-1}\delta_Y^*(F,\theta)),C^{-1}\delta_Y^*\varphi).$$ Here, $C^{-1}\delta_Y^*\varphi$ is an isomorphism $p_1^*C^{-1}\delta_Y^*(F,\theta)\rightarrow p_2^*C^{-1}\delta_Y^*(F,\theta)$, where we have again used Sublemma \ref{inverse Cartier commutes with finite etale}.
		
		Now, Sublemma \ref{inverse Cartier commutes with finite etale} furnishes us with natural isomorphisms of functors $\eta_{\delta_Y}\colon C^{-1}\circ \delta_Y^*\rightarrow \delta_Y^*\circ C^{-1}$ and $\eta_{\delta_{Y\times_Z Y}}\colon C^{-1}\circ \delta_{Y\times_Z Y}^*\rightarrow \delta_{Y\times_Z Y}^*\circ C^{-1}$ that are compatible in the sense of Diagram \ref{compatibility between different pullbacks}. It follows that $\eta_{\delta_Y}$ will send the object $C^{-1}\delta_Y^*(F,\theta)$ to $\delta_Y^*C^{-1}(F,\theta)$ and moreover $\delta^*_YC^{-1}\varphi$ is sent to to $C^{-1}\delta_{Y}^*\varphi$. This defines a natural isomorphism $\eta_{\delta}\colon  C^{-1}\circ \delta^*\rightarrow \delta^*\circ C^{-1}$ by the above equivalence of categories of modules with an integrable connection on $C'$ and modules with an integrable connection on $C'_Y$ equipped with descent data.
	\end{proof} 
	\begin{lemma}\label{inverse Cartier commutes with tensor product}
		Let $(E_i,\theta_i),i=1,2$ be two objects in $\HIG^{\ppar}_{p-1,N}(C_{\log}/k)$. Let $l_i, i=1,2$ be the nilpotent index of $\theta_i$. If $l_1+l_2\leq p-1$, then the inverse Cartier transform commutes with tensor product, that is,
		$$
		C_{\ppar}^{-1}((E_1,\theta_1)\otimes (E_2,\theta_2))\cong C_{\ppar}^{-1}(E_1,\theta_1)\otimes C_{\ppar}^{-1}(E_2,\theta_2).
		$$
	\end{lemma}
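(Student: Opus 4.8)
The plan is to strip off the parabolic and the stacky structure, reducing the assertion to the fact that the ordinary logarithmic inverse Cartier transform of Ogus--Vologodsky is compatible with tensor products as long as all nilpotence indices involved are $\le p-1$. First, the bookkeeping: since $l_1+l_2\le p-1$, the Higgs field $\theta_1\otimes\id+\id\otimes\theta_2$ on $E_1\otimes E_2$ is nilpotent of exponent $\le l_1+l_2\le p-1$, so $(E_1,\theta_1)\otimes(E_2,\theta_2)$ is again an object of $\HIG^{\ppar}_{p-1,N}(C_{\log}/k)$ and $C^{-1}_{\ppar}(E_1,\theta_1)\otimes C^{-1}_{\ppar}(E_2,\theta_2)$ lies in $\MIC^{\ppar}_{p-1,N}(C_{\log}/k)$; in particular both sides of the asserted isomorphism are defined. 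This bound must be retained at every step below, since without it none of the categories $\HIG^{lf}_{p-1}$, $\MIC^{lf}_{p-1}$, $\HIG^{\ppar}_{p-1,N}$, $\MIC^{\ppar}_{p-1,N}$ is closed under tensor product.

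Recall from the proof of Theorem \ref{ov correspondence for parabolic objects} that $C^{-1}_{\ppar}$ is the composite of the equivalence $\pi^*\colon\HIG^{\ppar}_{p-1,N}(C_{\log}/k)\xrightarrow{\ \sim\ }\HIG^{lf}_{p-1}(Z_{\log}/k)$, where $Z=C[\frac{D_1}{N},\cdots,\frac{D_l}{N}]$, with the stacky inverse Cartier transform $C^{-1}_{Z_{\log}\subset\tilde Z_{\log}}$ of Proposition \ref{OV correspondence for tame root stack} and the quasi-inverse of the analogous equivalence for $\MIC$. One first checks that $\pi^*$ and its $\MIC$-analogue are tensor functors: the claim is local on $C$, and over an affine open $Z$ is the stack quotient $[C'/G]$ of a cyclic cover, in which case this is precisely Remark \ref{exactness of parabolic pullback}; hence the quasi-inverse is a tensor functor as well. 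This reduces the lemma to the statement that $C^{-1}_{Z_{\log}\subset\tilde Z_{\log}}$ commutes with tensor product on objects of $\HIG^{lf}_{p-1}(Z_{\log}/k)$ whose nilpotence indices sum to $\le p-1$.

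Next I would run the étale-descent argument already used in the proofs of Proposition \ref{OV correspondence for tame root stack} and Lemma \ref{parabolic inverse cartier commutes with pullback}. Choosing an étale atlas $Y\to Z$, an object of $\HIG^{lf}_{p-1}(Z_{\log}/k)$ (resp. of $\MIC^{lf}_{p-1}(Z_{\log}/k)$) amounts to a Higgs bundle (resp. flat bundle) on $Y$ together with descent data on the nerve of $Y$; the transform $C^{-1}_{Z_{\log}\subset\tilde Z_{\log}}$ is computed termwise by the ordinary logarithmic inverse Cartier transform, and Sublemma \ref{inverse Cartier commutes with finite etale} together with the compatibility diagram \ref{compatibility between different pullbacks} ensures that it carries descent data to descent data. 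Since forming descent data and all the intervening natural isomorphisms are compatible with $\otimes$, the lemma is reduced to a single statement on a genuine smooth log curve $W_{\log}/k$ with a fixed $W_2$-lifting: the ordinary logarithmic inverse Cartier transform $C^{-1}$ sends $(E_1,\theta_1)\otimes(E_2,\theta_2)$ to $C^{-1}(E_1,\theta_1)\otimes C^{-1}(E_2,\theta_2)$ whenever $l_1+l_2\le p-1$.

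For this last statement I would invoke the tensor compatibility of the Ogus--Vologodsky correspondence in the nilpotent range $\le p-1$; the most concrete route is the exponential-twisting description of $C^{-1}$ (\cite{LSZ0}, and \cite[\S6]{LSYZ} in the logarithmic case): locally $C^{-1}(E_i,\theta_i)$ is $F^*E_i$ with the canonical connection twisted by an ``exponential'' built from $\theta_i$ and a chosen Frobenius lifting, and the hypothesis $l_1+l_2\le p-1$ is exactly what makes the divided powers in the twist attached to $(E_1,\theta_1)\otimes(E_2,\theta_2)$ legitimate and equal to the ``product'' of the two individual twists, which is the twist governing $C^{-1}(E_1,\theta_1)\otimes C^{-1}(E_2,\theta_2)$. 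I expect this to be the main obstacle of the proof: the two preceding reductions are essentially formal once Remark \ref{exactness of parabolic pullback}, Proposition \ref{OV correspondence for tame root stack}, Sublemma \ref{inverse Cartier commutes with finite etale} and Diagram \ref{compatibility between different pullbacks} are granted, whereas here one must make the multiplicativity of the exponential-twisting cocycle precise under the \emph{sum} bound $l_1+l_2\le p-1$ rather than the individual bounds $l_i\le p-1$, and verify that the resulting local isomorphism glues independently of the auxiliary Frobenius liftings.
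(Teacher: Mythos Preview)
Your proposal is correct and follows essentially the same strategy as the paper: reduce the parabolic statement to the trivial-parabolic case and then invoke the exponential-twisting description of $C^{-1}$, where the identity $\exp(\theta_1\otimes\id+\id\otimes\theta_2)=\exp(\theta_1\otimes\id)\cdot\exp(\id\otimes\theta_2)$ holds precisely because $l_1+l_2\le p-1$. The only cosmetic difference is that the paper carries out the reduction by pulling back along a single global cyclic cover $\pi\colon C'\to C$ (using Lemma \ref{parabolic inverse cartier commutes with pullback} and the faithfulness of $\pi^*_{\ppar}$), whereas you route through the root stack $Z$ and \'etale descent; both reductions are equivalent and the core computation is identical.
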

	
	\begin{proof}
		Let $\pi: C'\to C$ be the cyclic cover in Lemma \ref{parabolic inverse cartier commutes with pullback}. As the parabolic pullback $\pi_{\ppar}^*$ is faithful, it suffices to show the above isomorphism after applying $\pi_{\ppar}^*$. Since $\pi_{\ppar}^*$ commutes with tensor product by Proposition \ref{Biswas correspondence for Higgs/flat bundles in positive char}, Lemma \ref{parabolic inverse cartier commutes with pullback} reduces the problem to the case of the trivial parabolic structure. We use the reinterpretation of the inverse Cartier transform via exponential twisting \cite{LSYZ}. Then by this case, the above isomorphism amounts to the fact that the following formal equality 
		$$
		\exp(\theta_1\otimes id+id\otimes \theta_2)=\exp(\theta_1\otimes id)\cdot \exp(id\otimes \theta_2)
		$$
		becomes a genuine equality under the above assumption on the nilpotent indices of $\theta_i$s.
	\end{proof}   
	
	As a matter of convention, we shall refer to $C^{-1}_{\ppar}$ (resp. $C_{\ppar}$) simply as $C^{-1}$ (resp. $C$). This shall not cause much confusion as they are natural extensions of Ogus-Vologodsky's functors to parabolic objects.

	\section{Parabolic de Rham bundles}
	Recall that a \emph{de Rham bundle} over $\sC_{\log}/S$ is a triple $(V,\nabla,Fil)$ where $(V,\nabla)$ is a flat bundle over $\sC_{\log}/S$ and $Fil$ is a finite decreasing, filtered free and Griffiths transverse filtration on $V$. Here \emph{filtered free} means that $(V,Fil)$ is Zariski locally isomorphic to a direct sum of $\sO_{\sC}[w_i]$s, where for $w\in \Z$, the underlying bundle $\sO_{\sC}[w]$ is $\sO_C$, and the filtration is given by:
	$$
	Fil^{w-i}=\sO_{\sC}, i\leq 0;\quad  Fil^{w+i}=0, i>0.
	$$
	On other other hand, a \emph{graded Higgs bundle} over $\sC_{\log}/S$ is a pair $(E,\theta)$, where $E=\bigoplus_iE^i$ is a direct sum of vector bundles, and $\theta$ is a Higgs field on $E$ satisfying
	$\theta(E^i)\subset E^{i-1}\otimes \omega_{\sC_{\log}/S}$. We shall generalize these two notions to the parabolic setting in the following. 
	
	\begin{definition}
		A \emph{parabolic de Rham bundle} over $\sC_{\log}/S$ is a triple $(V,\nabla,Fil_0)$ where $(V,\nabla)$ is a parabolic flat bundle over $\sC_{\log}/S$ and $(V_0,\nabla_0,Fil_0)$ is a de Rham bundle over $\sC_{\log}/S$. A morphism of parabolic de Rham bundles is a morphism of the underlying parabolic vector bundles which respects the connection and the filtration on $V_0$. A \emph{graded parabolic Higgs bundle} over $\sC_{\log}/S$ is triple $(E,\theta,\bigoplus_iE_0^i)$ where $(E,\theta)$ is a parabolic Higgs bundle and $(E_0=\bigoplus_iE_0^i,\theta_0)$ is a graded Higgs bundle over $\sC_{\log}/S$ satisfying the compatibility $E_{\alpha}=\bigoplus_i E_0^i\cap E_{\alpha}$ for any $\alpha\geq 0$. A morphism of graded parabolic Higgs bundles is a morphism of the underlying parabolic vector bundles which respects the Higgs field and the grading structure on $E_0$.  
	\end{definition}
	Let $\PaDR(\sC_{\log}/S)$ (resp. $\PaHG(\sC_{\log}/S)$) be the category of parabolic de Rham (resp. graded parabolic Higgs) bundles over $\sC_{\log}/S$.
	\begin{lemma}\label{the grading functor}
		There is an additive functor $\Gr: \PaDR(\sC_{\log}/S) \to \PaHG(\sC_{\log}/S)$, which sends a de Rham bundle $(V,\nabla,Fil)$, regarded as a parabolic de Rham bundle with trivial parabolic structure, to the parabolic graded Higgs bundle $(E=\Gr_{Fil}V,\theta=\Gr_{Fil}\nabla)$ with trivial parabolic structure. For a general parabolic de Rham bundle $(V,\nabla,Fil_0)$, the resulting $(E,\theta,\bigoplus_iE_0^i)=\Gr(V,\nabla,Fil_0)$ is called the \emph{associated} parabolic graded Higgs bundle. The weights of a parabolic de Rham bundle and its associated parabolic graded Higgs bundle coincide.
	\end{lemma}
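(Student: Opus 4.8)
The plan is to bootstrap from the classical grading construction for ordinary de Rham bundles and then to equip the associated graded with a parabolic structure by intersecting the Hodge filtration with the parabolic filtration of $V$. First consider a de Rham bundle $(V,\nabla,Fil)$ with trivial parabolic structure. Set $E=\Gr_{Fil}V:=\bigoplus_i Fil^i/Fil^{i+1}$; Griffiths transversality $\nabla(Fil^i)\subset Fil^{i-1}\otimes\omega_{\sC_{\log}/S}$ together with the Leibniz rule (whose error term lies in $Fil^i\otimes\omega_{\sC_{\log}/S}$) shows that the induced map $\theta:=\Gr_{Fil}\nabla\colon E^i\to E^{i-1}\otimes\omega_{\sC_{\log}/S}$ is $\sO_{\sC}$-linear, and integrability $\theta\wedge\theta=0$ is automatic in relative dimension one. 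Filtered freeness makes each $E^i$ locally free, so $(E,\theta)$ is a graded Higgs bundle, and the construction is visibly additive and functorial; this defines $\Gr$ on the full subcategory of trivially-parabolic de Rham bundles.

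For a general parabolic de Rham bundle $(V,\nabla,Fil_0)$ I keep $E_0:=\Gr_{Fil_0}V_0$ with its grading $E_0=\bigoplus_i E_0^i$, $E_0^i:=Fil_0^iV_0/Fil_0^{i+1}V_0$, and $\theta_0:=\Gr_{Fil_0}\nabla_0$ as above, and I install the parabolic structure as follows. For $\alpha\geq 0$ the sheaf $V_\alpha$ sits inside $V_0$, so I put $Fil_\alpha^i:=Fil_0^iV_0\cap V_\alpha$ and $E_\alpha:=\bigoplus_i (Fil_\alpha^i/Fil_\alpha^{i+1})$; an elementary check gives compatible inclusions $E_\alpha\hookrightarrow E_\beta$ for $\alpha\geq\beta\geq 0$ and $E_\alpha\hookrightarrow E_0$, and one extends to all multi-indices by the normalization rule $E_{\alpha+\delta^i}:=E_\alpha(-\sD_i)$. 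The Higgs field is $\theta_\alpha:=\theta_0|_{E_\alpha}$; by Claim \ref{tensor product} (with $\lambda=0$) a Higgs field on a parabolic bundle is determined by its value on the $0$-piece, and the only thing to verify is that $\theta_0$ sends $E_\alpha$ into $E_\alpha\otimes\omega_{\sC_{\log}/S}$, which follows from Griffiths transversality of the restriction $\nabla_0|_{V_\alpha}$ with respect to $Fil_\alpha^\bullet$ (intersect the transversality relation for $Fil_0^\bullet$ with $V_\alpha\otimes\omega_{\sC_{\log}/S}$, using that $\omega_{\sC_{\log}/S}$ is invertible).

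It remains to verify that $E$ is a parabolic vector bundle and that $(E,\theta,\bigoplus_i E_0^i)$ is a graded parabolic Higgs bundle. The axioms of Definition \ref{parabolic sheaf} — $S$-flatness, support along $\sD$, the normalization identity, left continuity, discreteness of weights — are inherited termwise from $V$, and the compatibility $E_\alpha=\bigoplus_i(E_0^i\cap E_\alpha)$ holds by construction; the substantive point is local abelianness of $E$. Away from $\sD$ this is the classical case. Near a component $\sD_i$ the plan is: use that the parabolic structure of $V$ is encoded by $V_0$ together with a filtration of $V_0|_{\sD_i}$ and weights, use filtered freeness to obtain a local frame of $V_0$ adapted to $Fil_0^\bullet$, and then apply a Bruhat-type argument to produce, possibly after shrinking so that the relative position of $Fil_0^\bullet$ and of the parabolic flag of $V_0|_{\sD_i}$ is constant, a local frame adapted to both filtrations at once; in such a frame $(V,Fil_0)$ is a direct sum of shifted parabolic line bundles, hence so is $E=\Gr_{Fil_0}V$, and its parabolic line summands carry exactly the weights of those of $V$ — this gives the last assertion of the lemma. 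Alternatively one reduces to the trivial-parabolic case equivariantly by pulling $(V,\nabla,Fil_0)$ back along a cyclic cover totally ramified along $\sD$, exactly as in Lemma \ref{parabolic pushforward} and Proposition \ref{Biswas correspondence for Higgs/flat bundles in positive char}, and pushing the classical grading back down. Finally, a morphism of parabolic de Rham bundles respects $\nabla$, $Fil_0$ and the parabolic structures, hence induces compatible maps on all the $Fil_\alpha^i$ and so a morphism of graded parabolic Higgs bundles; and $\Gr$ commutes with direct sums, so it is additive.

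I expect the local abelianness of the associated graded near $\sD$ to be the main obstacle: one needs a local normal form splitting the parabolic structure of $V$ and the Hodge filtration simultaneously, which over a positive-dimensional base $S$ forces one to control the relative position of the two filtrations along $\sD_i$. The cleanest workaround is the cyclic-cover reduction to the trivial-parabolic case, mirroring the strategy of Section \ref{section:parabolic}.
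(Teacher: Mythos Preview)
Your proof is correct and follows essentially the same route as the paper: intersect the Hodge filtration with the parabolic pieces $V_\alpha$ for $\alpha\geq 0$, extend to all multi-indices by the normalization/tensor rule, take the associated graded termwise, and verify the result is a graded parabolic Higgs bundle. The paper frames the extension step slightly differently, first observing that filtered freeness forces $Fil_0=Fil_{n\delta_i}\otimes Fil_{tr}$ on $V_0=V_{n\delta_i}\otimes\sO_{\sC}(n\sD_i)$, which is precisely what makes your normalization rule consistent with the intersection definition at integer indices; you use this implicitly.

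The difference is one of emphasis. You single out local abelianness of $E$ near $\sD$ as the substantive point and sketch two routes (a Bruhat-type splitting, or reduction via a cyclic cover); the paper simply writes ``clearly'' and moves on. Your caution is well placed, and your Bruhat sketch is the right idea: near a point of $\sD_i$ one chooses a frame of $V_0$ adapted to $Fil_0$ whose restriction to the fiber is also adapted to the parabolic flag, whence each graded piece $E^i$ becomes a direct sum of parabolic line bundles with the same multiset of weights as the corresponding subquotient of $V$. Conversely, the paper spells out the compatibility $E_\alpha=\bigoplus_i E_0^i\cap E_\alpha$ with a short argument, whereas you say it ``holds by construction''; this is fine once one notes that $Fil_\alpha^{i+1}=Fil_0^{i+1}\cap V_\alpha=Fil_0^{i+1}\cap Fil_\alpha^i$, so the natural map $Fil_\alpha^i/Fil_\alpha^{i+1}\hookrightarrow Fil_0^i/Fil_0^{i+1}$ is injective. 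Including that one line would make your ``by construction'' self-contained.
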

	\begin{proof}
		Let $V$ be a parabolic vector bundle and let $Fil_0=Fil_0^{\bullet}$ be a finite decreasing and filtered free filtration on $V_0$. For each $i$ and each $n\in \N$, we define a filtration on $V_{n\delta_i}\subset V_0$ by
		$$
		Fil^{\bullet}_{n\delta_i}:=Fil^{\bullet}_0\cap V_{n\delta_i}.
		$$
		As $V_{0}=V_{n\delta_i}\otimes \sO_{\sC}(n\sD_i)$ and $Fil_0$ is filtered free, it follows that 
		$$
		Fil_0=Fil_{n\delta_i}\otimes Fil_{tr}
		$$
		where $Fil_{tr}$ stands for the trivial filtration (on $\sO_{\sC}(n\sD_i)$). This property allows us to equip each $V_{\alpha}$ a well-defined filtration from $Fil_0$: For $\alpha\geq 0$, we set the filtration on $V_{\alpha}$ by $Fil^{\bullet}_{\alpha}:=Fil_0^{\bullet}\cap V_{\alpha}$, and demand the filtration $Fil_{\alpha-\delta_i}$ on $V_{\alpha-\delta_i}=V_{\alpha}\otimes \sO_{\sC}(\sD_i)$ be the tensor product of $Fil_{\alpha}$ and the trivial filtration on $\sO_{\sC}(\sD_i)$. Now let $(V,\nabla,Fil_0)$ be a parabolic de Rham bundle. We shall verify that $Fil_{\alpha}$ on $V_{\alpha}$, constructed as above, satisfies Griffiths transversality. Because of the property $\nabla_{\alpha-\delta_i}=\nabla_{\alpha}\otimes id+id\otimes d$ (Claim \ref{tensor product}), it suffices to verify it for $\alpha\geq 0$. But this is clear:
		$$
		\nabla_{\alpha}(Fil^i_{\alpha})=\nabla_{\alpha}(Fil^i_0\cap V_{\alpha})\subset (Fil^{i-1}_0\cap V_{\alpha})\otimes \omega_{\sC_{\log}/S}=Fil^{i-1}_{\alpha}\otimes \omega_{\sC_{\log}/S}. 
		$$
		Thus we obtain the associated graded Higgs bundle for each $\alpha$:
		$$
		(E_{\alpha},\theta_{\alpha})=(\Gr_{Fil_{\alpha}}V_{\alpha},\Gr_{Fil_{\alpha}}\nabla_{\alpha}). 
		$$ 
		Clearly, the collection of $(E_{\alpha},\theta_{\alpha})$s and the natural inclusions $E_{\alpha}\hookrightarrow E_{\beta}$ for $\alpha\geq \beta$ form a parabolic Higgs bundle. For any $\alpha\geq 0$, one has a priori $\bigoplus_i E^i_0\cap E_{\alpha}\subset E_{\alpha}$. As 
		$$
		Fil^i_{\alpha}/Fil^{i+1}_{\alpha}=Fil^i_0\cap V_{\alpha}/Fil^{i+1}_0\cap V_{\alpha}\subset Fil^i_0/Fil^{i+1}_0,
		$$  
		it follows that 
		$$
		\bigoplus_i E^i_0\cap E_{\alpha}=\bigoplus_i (Fil^i_0/Fil^{i+1}_0\cap \bigoplus_i Fil^i_{\alpha}/Fil^{i+1}_{\alpha})\supset \bigoplus_i Fil^i_{\alpha}/Fil^{i+1}_{\alpha}=E_{\alpha},
		$$
		and hence $E_{\alpha}=\bigoplus_i E^i_0\cap E_{\alpha}$ as desired. The last statement on weights is clear by construction. 
	\end{proof}
	
	The essence of the above proof is the construction of $Fil_{\alpha}$ over $V_{\alpha}$ from the filtration $Fil_0$ over $V_{0}$, such that $Fil=\{Fil_{\alpha}\}_{\alpha}$ is a filtration of $V=\{V_{\alpha}\}_{\alpha}$ by parabolic subbundles.  So for a parabolic de Rham bundle $(V,\nabla,Fil_0)$, we may equally call $Fil$ as above \emph{the Hodge filtration} for $(V,\nabla)$.

	An important class of parabolic de Rham bundles arises from geometry. Let $C$ be a smooth projective complex curve. Consider a smooth projective morphism $f: X\to U$ of relative dimension $d\geq 0$, where $U\subset C$ is a dense Zariski open subset. Resolution of singularities allows us to compactify $f$ into a \emph{quasi-semistable} family over $C$. That is, we have a projective morphism $\bar f\colon \bar X\to C$ whose restriction to $U$ is isomorphic to $f$, and such that \'{e}tale locally $\bar f$ is the product of morphisms of following type:
	\begin{itemize}
		\item[(i)] $pr: \A^n\to \A^1$;
		\item[(ii)] $g: \A^n=\Spec \C[y_1,\cdots,y_n]\to \A^1=\Spec \C[x]$ with $g^*x=y_1^{e_1}\cdots y_n^{e_n}$ for a non-zero tuple of non-negative integers $(e_1,\cdots,e_n)$.
	\end{itemize}
	Set $D=C-U$ and $E=\bar f^{-1}(D)$ and $\bar H^i_{dR}=\mathbb{R}^i\bar f_*\Omega^{\cdot}_{\bar X/C}(\log E/D)$ for some $0\leq i\leq 2d$. Steenbrink \cite{St76} shows that over $\overline H^i_{dR}$, there is a logarithmic connection $\overline \nabla$ whose residues have eigenvalues in $[0,1)\cap \Q$, and which actually coincides with Deligne's canonical extension \cite{Del70} of the Gau{\ss}-Manin connection $\nabla$ on $H^i_{dR}(X/U)$. Moreover, he shows that the Hodge spectral sequence of the relative logarithmic de Rham complex degenerates at $E_1$, so that the Hodge filtration $Fil$ on $H^i_{dR}(X/U)$ extends to a Hodge filtration $\overline{Fil}$ on $\overline H^i_{dR}$. They forms the $i$-th \emph{logarithmic Gau{\ss}-Manin system} $\overline H_{dR}:=(\overline H^i_{dR},\overline \nabla,\overline{Fil})$ associated to $\bar f$, which is a de Rham bundle over $C_{\log}$.  
	
	We have the following extension lemma which is an enhanced version of \cite[Lemma 3.3]{IS} in the curve case. 
	\begin{lemma}\label{canonical parabolic extension}
		Let $C$ be a smooth projective complex curve and $D\subset C$ a reduced effective divisor. Then Deligne extension induces an equivalence of categories between the category of de Rham bundles over $U$ with regular singularities and rational residues and the category of adjusted parabolic de Rham bundles over $C_{\log}$. It is compatible with exact sequence, direct sums, tensor products and internal hom. A quasi-inverse of the Deligne extension is given by the restriction to $U$.  
	\end{lemma}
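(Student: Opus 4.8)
The plan is to produce the functor in one direction out of the parabolic Deligne canonical extension, produce the quasi-inverse as restriction to $U$, check that the two compositions are naturally isomorphic to the identity, and finally verify the compatibilities by a local analysis near $D$. On the level of the underlying flat bundle, essentially everything is already known: for a flat bundle $(V,\nabla)$ over $U$ with regular singularities and rational residues, Deligne's theory \cite{Del70} produces, for each multi-index $\alpha$, a canonical logarithmic lattice $V_\alpha\subset j_*V$ (where $j\colon U\hookrightarrow C$) characterized by the requirement that the residue of $\nabla_\alpha$ along $\sD_i$ have eigenvalues in $[\alpha_i,\alpha_i+1)$; the collection $\{V_\alpha\}$ is a parabolic flat bundle with rational weights, and it is \emph{adjusted} in the sense of Definition \ref{adjusted condition}, since by construction the residue acts on $Gr_{\sD_i,\alpha_i}V$ with the single eigenvalue $\alpha_i$. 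Conversely, adjustedness together with rationality of weights pins down the lattices $V_\alpha$ among all logarithmic extensions of $(V,\nabla)$, so every adjusted parabolic flat bundle is the parabolic Deligne extension of its restriction $V_0|_U=V$, and morphisms match. This is the curve case of \cite[Lemma 3.3]{IS}; alternatively, following the strategy of Section \ref{section:parabolic}, one may pull back along a cyclic cover $\pi\colon C'\to C$ branched over $D$, where parabolic objects become $G$-equivariant honest objects (Lemma \ref{parabolic pushforward}, \cite{Biswas}), and reduce to Galois descent plus the classical non-parabolic Deligne extension on $C'$.

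The new content is carrying along the Hodge filtration. Given a de Rham bundle $(V,\nabla,Fil)$ over $U$ with regular singularities and rational residues, I would set $\overline{Fil}^p:=j_*Fil^p\cap V_0$ inside $j_*V$. On a smooth curve this is automatically a subbundle of $V_0$: a local section of $V_0$ lying in $Fil^p$ over a dense open already lies in $\overline{Fil}^p$, because $Fil^p$ is a subbundle of $V$ over $U$, so $V_0/\overline{Fil}^p$ is torsion free, hence locally free; filtered-freeness of $\overline{Fil}^\bullet$ is then automatic, since over a DVR a finite decreasing filtration by direct summands splits as a filtered module. Griffiths transversality passes to $\overline{Fil}$ because $\bar\nabla_0\colon V_0\to V_0\otimes\omega_{C_{\log}}$ is a logarithmic connection, $\omega_{C_{\log}}$ is a line bundle restricting to $\Omega_U$, and tensoring by an invertible sheaf commutes with $j_*$ and with intersection:
$$
\bar\nabla_0(\overline{Fil}^p)\subset\bigl(j_*Fil^{p-1}\otimes\omega_{C_{\log}}\bigr)\cap\bigl(V_0\otimes\omega_{C_{\log}}\bigr)=\overline{Fil}^{p-1}\otimes\omega_{C_{\log}}.
$$
Hence $(V_0,\bar\nabla_0,\overline{Fil})$ is a de Rham bundle over $C_{\log}$, and together with $\{V_\alpha\}$ it is an adjusted parabolic de Rham bundle. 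This defines the Deligne extension functor.

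For the quasi-inverse I would take restriction to $U$. This sends an adjusted parabolic de Rham bundle to a de Rham bundle over $U$: the logarithmic pole of $\nabla_0$ forces regular singularities, and adjustedness together with rationality of weights forces rational residues. The two compositions are the identity up to natural isomorphism. In one direction, $(\,\overline{Fil}^\bullet\,)|_U=Fil^\bullet$ and $(V_0,\bar\nabla_0)|_U=(V,\nabla)$ trivially, since restriction to a dense open commutes with $j_*$ and with intersections. In the other, the flat part is recovered by the flat-level equivalence recalled above (this is exactly where adjustedness is used), and the filtration is recovered because $Fil_0^p\subset V_0$, being a subbundle, is saturated, so $Fil_0^p=j_*(Fil_0^p|_U)\cap V_0$. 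Naturality of both isomorphisms is immediate from the functorial descriptions.

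Finally, for the compatibilities: direct sums are immediate from functoriality of $\{V_\alpha\}$ and of $j_*Fil^\bullet\cap V_0$. Exactness of the correspondence on underlying flat bundles is the statement that parabolic Deligne extension is an equivalence of abelian categories. Compatibility with tensor product and internal hom is proved at the level of parabolic flat bundles — the parabolic tensor product (resp. parabolic internal hom) of parabolic Deligne extensions is the parabolic Deligne extension of the tensor product (resp. internal hom), a computation that is local at $D$ and conveniently done in the cyclic-cover model, where it reduces to the corresponding statement for $G$-equivariant flat bundles — and the extended Hodge filtration $\overline{Fil}^\bullet=j_*Fil^\bullet\cap V_0$, being the saturation inside $V_0$ of the filtration over $U$, is transported along accordingly. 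I expect the main obstacle to be bookkeeping rather than conceptual: precisely matching the normalization in the definition of \emph{adjusted} with the residue-eigenvalue window $[\alpha_i,\alpha_i+1)$ defining the lattices $V_\alpha$, and keeping the torsion-freeness/saturation arguments straight so that $j_*Fil^p\cap V_0$ interacts correctly with the formation of sub- and quotient objects on the curve.
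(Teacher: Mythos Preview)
Your proposal is correct and follows essentially the same route as the paper. The only cosmetic difference is in how the Hodge filtration is extended: the paper invokes \cite[Proposition 1]{Langton} to extend $Fil^{\bullet}_{\xi}$ from the generic point to a filtration by subbundles of $\overline{V}_0$, whereas you write down the explicit saturation formula $\overline{Fil}^p=j_*Fil^p\cap V_0$; these are the same construction, and your torsion-freeness argument is exactly what underlies Langton's statement in the curve case.
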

	For a de Rham bundle $V$ over $U$ in the above lemma, we shall call the associated adjusted parabolic de Rham bundle over $C_{\log}$ the \emph{canonical parabolic extension} of $V$. 
	\begin{proof}
		When the filtration is trivial, the lemma is nothing but \cite[Lemma 3.3, Lemma 3.6]{IS}.  For a general de Rham bundle $(V,\nabla,Fil)$ over $U$ in the category, one associates it by \cite[Lemma 3.3]{IS} an adjusted parabolic flat bundle $(\overline V, \overline \nabla)$ by forgetting $Fil$. Let $\xi$ be the generic point of $C$. Applying \cite[Proposition 1]{Langton} to 
		$$
		Fil^{\bullet}_{\xi}=Fil^{\bullet}\otimes_{\sO_U}\sO_{\xi,U}\subset V\otimes_{\sO_U}\sO_{\xi,U}=\overline V\otimes _{\sO_C}\sO_{\xi,C},
		$$
		one obtains a unique filtration $\overline{Fil}=\overline{Fil}^{\bullet}$ on $\overline V$ by \emph{subbundles}  which extends $Fil$ on $V$. It satisfies Griffiths transversality with respect to $\overline \nabla$ because the transversality condition holds over the generic point. Therefore, the triple $(\overline V, \overline \nabla,\overline{Fil})$ is indeed an adjusted parabolic de Rham bundle. Clearly, it is an equivalence of categories. We also note that, varying the nonempty subset $U\subset C$, one obtains a compatible family of extension functors.  
	\end{proof}
	Applying Lemma \ref{canonical parabolic extension} to the Gau{\ss}-Manin connection $(H^i_{dR}(X/U),\nabla)$, one obtains the \emph{canonical} parabolic structure on $\overline H_{dR}$ which is preserved by $\overline \nabla$. By \cite[Remark 3.4]{IS}, the canonical parabolic structure is trivial if and only if the residues of $\overline \nabla$ are nilpotent. Also by Lemma \ref{canonical parabolic extension}, the extended filtration $\overline{Fil}$ is independent of the choice of a quasi-semistable compactification of $f$. Therefore, we shall call the so-obtained adjusted parabolic de Rham bundle $\overline H_{dR}:=(\overline H^i_{dR},\overline \nabla,\overline{Fil})$ the $i$-th degree \emph{parabolic Gau{\ss}-Manin system} associated to $f$. 
	\begin{definition}\label{motivic objects}
		Let $C$ be a smooth projective curve over $\C$.  An adjusted parabolic de Rham bundle $(V,\nabla,Fil)$ over $C_{\log}$ is said to be \emph{motivic}, if over the generic point $\xi\in C$, it is isomorphic to a subspace of the restriction of a Gau{\ss}-Manin system over some nonempty Zariski open subset of $C$ to $\xi$. A parabolic graded Higgs bundle over $C_{\log}$ is said to be \emph{motivic} if it is the associated parabolic graded Higgs bundle to a motivic parabolic de Rham bundle over $C_{\log}$. 
	\end{definition}
	Now we resume to the setting of Notation \ref{parabolic_notation}. Consider a parabolic de Rham bundle $(V,\nabla,Fil_0)$ over $C_{\log}/k$, such that $(V,\nabla)$ is a parabolic flat bundle whose weights are contained in $\frac{1}{N}\Z$. Let $Fil$ be the Hodge filtration for $(V,\nabla)$. By the BIS correspondence, $\pi_{\ppar}^*Fil$ is a filtration of $\pi_{\ppar}^*V$ by subbundles.  
	
	\begin{proposition}\label{parabolic pullback commutes with grading functor}
		The parabolic pullback functor commutes with the grading functor $\Gr$ in Lemma \ref{the grading functor}. More precisely, let $(V,\nabla,Fil_0)$ be a parabolic de Rham bundle over $C_{\log}$ as above. Then there is a natural isomorphism of graded Higgs bundles over $C'_{\log}$:
		$$
		Gr_{\pi_{\ppar}^*Fil}(\pi_{\ppar}^*V,\pi_{\ppar}^*\nabla)\cong \pi_{\ppar}^*Gr_{Fil}(V,\nabla).
		$$
	\end{proposition}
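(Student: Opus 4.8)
The plan is to prove the asserted identity first over the finite étale locus $U'=\pi^{-1}(U)=C'-D'$, where $\pi^{*}_{\ppar}$ reduces to an honest pullback and everything is transparent, and then to extend the required identities over all of $C'$ using that a morphism of locally free sheaves on the smooth curve $C'$ is determined by its restriction to any dense open subscheme.

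First I would unwind the two functors. By Lemma \ref{the grading functor} and the discussion following it, the Hodge filtration $Fil=\{Fil^i\}$ of $(V,\nabla,Fil_0)$ is a finite decreasing filtration of the parabolic bundle $V$ by parabolic subbundles with $\nabla(Fil^i)\subseteq Fil^{i-1}\otimes\omega_{C_{\log}/k}$, and $\Gr_{Fil}(V,\nabla)$ is the graded parabolic Higgs bundle whose degree-$i$ piece is $E^i:=Fil^i/Fil^{i+1}$, equipped with the $\sO_C$-linear Higgs field $\theta^i\colon E^i\to E^{i-1}\otimes\omega_{C_{\log}/k}$ induced by $\nabla$. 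Since $\pi^{*}_{\ppar}$ is exact and additive (Remark \ref{exactness of parabolic pullback}), applying it to the short exact sequences $0\to Fil^{i+1}\to Fil^i\to E^i\to 0$ of parabolic vector bundles yields, over all of $C'$, natural isomorphisms $\pi^{*}_{\ppar}E^i\cong \pi^{*}_{\ppar}Fil^i/\pi^{*}_{\ppar}Fil^{i+1}$ of $G$-equivariant bundles (using that $\pi^{*}_{\ppar}Fil$ is a filtration of $\pi^{*}_{\ppar}V$ by subbundles, as recorded before the statement), hence a natural graded isomorphism
$$\phi\colon \pi^{*}_{\ppar}\bigl(\Gr_{Fil}V\bigr)\ \xrightarrow{\ \sim\ }\ \Gr_{\pi^{*}_{\ppar}Fil}\bigl(\pi^{*}_{\ppar}V\bigr)$$
of the underlying graded $G$-equivariant vector bundles.

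Next I would check that $\pi^{*}_{\ppar}\nabla$ is Griffiths transverse for $\pi^{*}_{\ppar}Fil$ and that $\phi$ intertwines the induced Higgs field $\Gr_{\pi^{*}_{\ppar}Fil}(\pi^{*}_{\ppar}\nabla)$ with $\pi^{*}_{\ppar}(\Gr_{Fil}\nabla)$. Both are assertions about $\sO_{C'}$-linear maps between locally free sheaves on the smooth curve $C'$: Griffiths transversality says the composite $\pi^{*}_{\ppar}Fil^i\hookrightarrow\pi^{*}_{\ppar}V\xrightarrow{\pi^{*}_{\ppar}\nabla}\pi^{*}_{\ppar}V\otimes\omega_{C'_{\log}/k}\twoheadrightarrow\bigl(\pi^{*}_{\ppar}V/\pi^{*}_{\ppar}Fil^{i-1}\bigr)\otimes\omega_{C'_{\log}/k}$ vanishes (here the target of $\pi^{*}_{\ppar}\nabla$ is $\pi^{*}_{\ppar}V\otimes\omega_{C'_{\log}/k}$ by Step 1 of Proposition \ref{Biswas correspondence for Higgs/flat bundles in positive char}, and $\pi^{*}_{\ppar}$ is compatible with tensoring by $\omega_{C_{\log}/k}$ since $\pi^{*}_{\ppar}\omega_{C_{\log}/k}\cong\omega_{C'_{\log}/k}$ via $\pi^{*}d\log x=N\,d\log y$), while the Higgs-field compatibility is an equality of two sections of a locally free sheaf. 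Over the dense open $U'$ the functor $\pi^{*}_{\ppar}$ restricts to the ordinary pullback along the finite étale map $\pi|_{U'}\colon U'\to U$ (Galois descent; cf. the proof of Proposition \ref{Biswas correspondence for Higgs/flat bundles in positive char}), for which both assertions are immediate, since ordinary pullback is flat, hence commutes with forming the associated graded of a Griffiths transverse filtration, and visibly commutes with passing to the induced Higgs field. Since each component of $C'$ meets $U'$ in a dense open, both identities propagate to all of $C'$. The gradings on the two sides are the Hodge gradings by $i$, which $\phi$ respects by construction, and the $G$-equivariant structures agree; naturality of $\phi$ in $(V,\nabla,Fil_0)$ follows by the same dense-open argument. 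Thus $\phi$ is the desired natural isomorphism of graded Higgs bundles over $C'_{\log}$.

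The delicate point — and what I expect to be the main obstacle — is the behaviour around the branch divisor $D'$: since $\pi^{*}_{\ppar}\nabla$ and $\pi^{*}_{\ppar}Fil$ are both defined near $D'$ by elementary transformations (Proposition \ref{Biswas correspondence for Higgs/flat bundles in positive char}), it is not a priori clear that $\pi^{*}_{\ppar}\nabla$ stays Griffiths transverse for $\pi^{*}_{\ppar}Fil$ or that the two graded objects agree there. The torsion-freeness argument above is designed precisely to bypass an explicit local computation; alternatively, one can verify these facts directly in the local eigenbasis $\{y^{m_i}e_i\}$ of $V_0$ around each $D_i$ from Step 3 of Proposition \ref{Biswas correspondence for Higgs/flat bundles in positive char}, where both $\pi^{*}_{\ppar}\nabla$ and the filtration $\pi^{*}_{\ppar}Fil$ are given by explicit formulas.
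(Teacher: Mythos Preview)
Your proof is correct and follows essentially the same strategy as the paper's. Both arguments first obtain the underlying bundle isomorphism from the exactness of $\pi_{\ppar}^*$ (Remark \ref{exactness of parabolic pullback}), and then verify the Higgs-field compatibility by reducing to the locus where $\pi_{\ppar}^*$ is the ordinary pullback: you restrict to the dense open $U'$ and invoke torsion-freeness, while the paper restricts to the generic point $\xi_{C'}$ (using Langton to identify $\pi_{\ppar}^*Fil$ there) --- these are equivalent moves. Your treatment is in fact slightly more careful than the paper's in that you explicitly address Griffiths transversality of $\pi_{\ppar}^*\nabla$ for $\pi_{\ppar}^*Fil$, correctly observing that the relevant composite is $\sO_{C'}$-linear so the dense-open argument applies.
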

	\begin{proof}
	It follows from the exactness property of parabolic pullback (take a $\lambda$-connection in Remark \ref{exactness of parabolic pullback} to be the zero Higgs field), that there is a natural isomorphism of ($G$-equivariant) vector bundles:
	$$
	Gr_{\pi_{\ppar}^*Fil}\pi_{\ppar}^*V\cong \pi_{\ppar}^*Gr_{Fil}V.
	$$
	To show the isomorphism respects the Higgs structure, it suffices to considering its restriction to the generic point $\xi_{C'}$ of $C'$.  Note that there is another way to characterize $\pi_{\ppar}^*Fil$: Indeed, again by \cite[Proposition 1]{Langton}, it is the unique filtration on $\pi_{\ppar}^*V$  by subbundles, which over $\xi_{C'}$ equals the usual pullback of $Fil_{\xi_C}$. Since $\pi_{\ppar}^*$ over $\xi_C$ is just the usual pullback, it follows that the restrictions of both $Gr_{\pi_{\ppar}^*Fil}\pi_{\ppar}^*\nabla$ and $\pi_{\ppar}^*Gr_{Fil}\nabla$ are naturally isomorphic to the usual pullback of the graded Higgs field $Gr_{Fil}\nabla$. 		
	\end{proof}

	\section{Periodic de Rham bundles}\label{section:periodic}
	The aim of the section is to introduce the notion of periodic parabolic Higgs/de Rham bundles over a smooth complex curve. Using Theorem \ref{ov correspondence for parabolic objects} and Lemma \ref{the grading functor}, we may generalize the notion of a \emph{periodic Higgs-de Rham flow} \cite[Defintion 3.1]{LSZ} to the parabolic setting.
	\begin{definition}\label{def of PHDF}
		
		Notation as in \ref{basic notation}. (In particular, we are working in characteristic $p>0$.) A \emph{periodic parabolic Higgs-de Rham flow} over $C_{\log}$ of period $f\in \N_{>0}$ is a diagram as follows:
		
		$$
		\xymatrix{
			& C^{-1}(E_0,\theta_0)\ar[dr]^{Gr_{Fil_0}} && C^{-1}(E_{f-1},\theta_{f-1})\ar[dr]^{Gr_{Fil_{f-1}}} \\
			(E_0,\theta_0) \ar[ur]^{C^{-1} } & & \cdots \ar[ur]^{C^{-1}}&& (E_f,\theta_f),\ar@/^2pc/[llll]^{\stackrel{\phi}{\cong} } }
		$$
		
		where
		
		\begin{itemize}
			
			\item the initial term $(E_0,\theta_0)$ is a parabolic graded Higgs bundle lying in the category $\HIG^{\ppar}_{p-1,N}(C_{\log}/k)$ by ignoring the grading structure,
			
			\item for each $i\geq 0$ $Fil_i$ is a Hodge filtration on the adjusted parabolic flat bundle $C^{-1}(E_i,\theta_i)$ making it into a adjusted parabolic de Rham bundle of level $\leq p-1$,
			
			\item $(E_i,\theta_i),i\geq 1$ is the associated parabolic graded Higgs bundle to  $$(C^{-1}(E_{i-1},\theta_{i-1}),Fil_{i-1}),$$
			
			\item and $\phi$ is an isomorphism of parabolic graded Higgs bundles.
			
		\end{itemize}
		We write $((E_0,\theta_0),Fil_0,\cdots,Fil_{f-1},\phi)$ for an $f$-periodic Higgs-de Rham flow. A morphism from $((E_0,\theta_0),Fil_0,\cdots,Fil_{f-1},\phi)$ to another $f$-periodic Higgs-de Rham flow $((E'_0,\theta'_0),Fil'_0,\cdots,Fil'_{f-1},\phi')$ is a morphism 
		$$
		g: (E_0,\theta_0)\to (E'_0,\theta'_0)
		$$
		of parabolic graded Higgs bundles, such that the induced morphisms of parabolic flat bundles by $C^{-1}$ respects the filtrations consecutively and such that the following diagram commutes:
		\begin{equation*}\label{eq1}
			\begin{CD}
				Gr_{Fil_{f-1}}C^{-1}(E_{f-1},\theta_{f-1})@>\phi>>(E_0,\theta_0)\\
				@V(\Gr\circ C^{-1})^{f}(g)VV@  VVgV\\
				Gr_{Fil'_{f-1}}C^{-1}(E'_{f-1},\theta'_{f-1})@>\phi'>>(E'_0,\theta'_0).
			\end{CD}
		\end{equation*}
	\end{definition}
	They form the category $\HDF^{\ppar}_{p-1,f}(\tilde C_{\log}/W_2)$ (which contains $\HDF_{p-1,f}(\tilde C_{\log}/W_2)$ of \S3 \cite{LSZ} as full subcategory). In a completely symmetric manner, one defines the notion of a \emph{periodic de Rham-Higgs flow} of period $f$. Again in terms of a diagram, a periodic de Rham-Higgs flow of period $f$ with the initial term an adjusted parabolic de Rham bundle $(V_0,\nabla_0,Fil_0)$ looks like as follows:
	$$\xymatrix{
		(V_0,\nabla_0)\ar[dr]^{Gr_{Fil_0}} & & \cdots\ar[dr]^{Gr_{Fil_{f-1}}} & & (V_f,\nabla_f)\ar@/_2pc/[llll]_{\stackrel{\psi}{\cong} } \\
		& Gr_{Fil_0}(V_0,\nabla_0)\ar[ur]^{C^{-1}} & &Gr_{Fil_{f-1}}(V_{f-1},\nabla_{f-1}).\ar[ur]^{C^{-1}} &
	}
	$$
	We denote $\DHF^{\ppar}_{p-1,f}(\tilde C_{\log}/W_2)$ for the category of $f$-periodic parabolic de Rham-Higgs flows. When we speak of a periodic flow, it means either a periodic parabolic Higgs-de Rham flow or a periodic parabolic de Rham-Higgs flow. Using the first term of a periodic flow, one obtains the natural forgetful functors:
	$$
	\DHF^{\ppar}_{p-1,f}(\tilde C_{\log}/W_2)\to \PaDR(C_{\log}/k), \quad \HDF^{\ppar}_{p-1,f}(\tilde C_{\log}/W_2)\to \PaHG(C_{\log}/k).
	$$
	We denote by $\PPDR_{f}(\tilde C_{\log}/k)$ the subcategory of $\PaDR(C_{\log}/k)$ which is the image of the category $\DHF^{\ppar}_{p-1,f}(\tilde C_{\log}/W_2)$. Note that for two positive integers $f,f'$, we can compose a morphism in $\PPDR_{f}(\tilde C_{\log}/k)$ with a morphism in $\PPDR_{f'}(\tilde C_{\log}/k)$ in the category $\PPDR_{\mathrm{lcm}(f,f')}(\tilde C_{\log}/k)$. Thus, the union of objects and morphisms in the categories $\PPDR_{f}(\tilde C_{\log}/k)$, where $f$ runs through all positive integers, forms a subcategory of $\PaDR(C_{\log}/k)$, which is denoted by $\PPDR(\tilde C_{\log}/k)$.

	An object in $\PPDR_{f}(\tilde C_{\log}/k)$ is called a \emph{periodic parabolic de Rham bundle} of period $f$, and an object in $\PPDR(\tilde C_{\log}/k)$ is  simply called a \emph{periodic parabolic de Rham bundle} (with respect to the $W_2$-lifting $\tilde C_{\log}$). Note that a morphism of two periodic parabolic de Rham bundles in the category $\PPDR(\tilde C_{\log}/k)$ simply means a morphism of the underlying parabolic de Rham bundles with the following property: there exist periodic de Rham-Higgs flows with initial de Rham terms the given ones such that the morphism of the initial parabolic de Rham bundle extends to a morphism of the periodic flows. In an analogous manner,  one defines the subcategories $\PPHG_{f}(\tilde C_{\log}/k)$ and $\PPHG(\tilde C_{\log}/k)$ of $\PaHG(C_{\log}/k)$. The latter is called the category of \emph{periodic parabolic Higgs bundles}. The two categories $\PPDR(\tilde C_{\log}/k)$ and $\PPHG(\tilde C_{\log}/k)$ of periodic objects contain obvious full subcategories whose objects have trivial parabolic structure. These are important subcategories-by the cyclic covering trick, we can always pulls back (via parabolic pullback) a periodic parabolic object to a one with trivial parabolic structure (see Lemma \ref{periodic parabolic pulls back to parabolic} below). We denote them by $\LPDR(\tilde C_{\log}/k)$ and $\LPHG(\tilde C_{\log}/k)$ respectively. Their objects are called periodic de Rham bundles and periodic Higgs bundles respectively. We shall list two basic properties about the categories of periodic objects. 
	\begin{lemma}\label{abelian category}
		Notations as above. The following statements hold:
		\begin{itemize}
			\item [(i)] The grading functor 
			$$
			\Gr: \PPDR(\tilde C_{\log}/k)\to \PPHG(\tilde C_{\log}/k)  
			$$
			is additive, faithful and essentially surjective.
			\item [(ii)] The full subcategories $\LPDR(\tilde C_{\log}/k)$ and $\LPHG(\tilde C_{\log}/k)$ are abelian.
		\end{itemize}
	\end{lemma}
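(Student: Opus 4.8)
The plan is to obtain (i) by formal manipulation of periodic flows, and to obtain (ii) by reducing the abelianness of the periodic categories to a \emph{strictness} property of morphisms.

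\emph{Part (i).} Additivity is clear: the termwise direct sum of two periodic de Rham--Higgs flows is again one, and $\Gr$ is additive on each term by Lemma~\ref{the grading functor}. For faithfulness, let $g$ be a morphism in $\PPDR(\tilde C_{\log}/k)$ with $\Gr(g)=0$. By definition $g$ underlies a morphism of periodic de Rham--Higgs flows $\{(V_i,\nabla_i,Fil_i),\psi\}\to\{(V'_i,\nabla'_i,Fil'_i),\psi'\}$ whose component at stage $0$ is $g=g_0$ and whose component at stage $i+1$ is $g_{i+1}=C^{-1}(Gr_{Fil_i}(g_i))$. Since $\Gr(g)=Gr_{Fil_0}(g_0)=0$ and $C^{-1}$ is a functor, an induction gives $g_i=0$ for all $i\le f$; the compatibility $g_0\circ\psi=\psi'\circ g_f$ then forces $g=0$. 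For essential surjectivity, an object of $\PPHG(\tilde C_{\log}/k)$ is the initial graded Higgs term $(E_0,\theta_0)$ of a periodic Higgs--de Rham flow $((E_0,\theta_0),Fil_0,\dots,Fil_{f-1},\phi)$. The adjusted parabolic de Rham bundle $(C^{-1}(E_{f-1},\theta_{f-1}),Fil_{f-1})$ occurring at the final stage is the initial term of the cyclically rotated periodic de Rham--Higgs flow (the rotation absorbing $\phi$ into the periodicity isomorphism), and its associated parabolic graded Higgs bundle is $Gr_{Fil_{f-1}}C^{-1}(E_{f-1},\theta_{f-1})\cong(E_0,\theta_0)$. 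Hence $\Gr$ is essentially surjective; note that it is \emph{not} full, since a morphism of graded Higgs bundles need not be compatible with the flow.

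\emph{Part (ii): reduction to strictness.} Additivity of $\LPDR(\tilde C_{\log}/k)$ and $\LPHG(\tilde C_{\log}/k)$ is immediate as above. The heart of the matter is the assertion that \textbf{every morphism in $\LPDR(\tilde C_{\log}/k)$ (resp. $\LPHG(\tilde C_{\log}/k)$) is strict}: the image of the underlying sheaf map is saturated (hence a subbundle), and the kernel, image and cokernel, with their induced connections (resp. Higgs fields) and induced Hodge filtrations, are again de Rham (resp. Higgs) bundles. I would derive strictness from the fact that a periodic de Rham or Higgs bundle with trivial parabolic structure is \emph{semistable of degree zero}. Degree zero is elementary: traversing a period once, $\deg V_{i+1}=\deg C^{-1}(Gr_{Fil_i}V_i)=p\cdot\deg V_i$, because $Gr_{Fil_i}$ preserves degree while $C^{-1}$ multiplies it by $p$ on objects with trivial parabolic structure; since $V_f\cong V_0$ this gives $(p^f-1)\deg V_0=0$, i.e. $\deg V_0=0$. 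Semistability --- every saturated $\nabla$-stable (resp. $\theta$-stable) subbundle has degree $\le 0$ --- is the genuine input, coming from the theory of Higgs--de Rham flows in positive characteristic; after pulling back along a suitable finite cover it reduces to Langer-type semistability statements on a projective curve, cf. \cite{LSZ}. Granting it, strictness is a slope computation: for $g\colon A\to B$ with kernel $K$ and image $I$ of saturation $\bar I\subseteq B$, semistability of $A$ gives $\deg K\le 0$, whence $\deg I=-\deg K\ge 0$; semistability of $B$ gives $\deg\bar I\le 0$; and $\deg\bar I\ge\deg I$; so all three degrees vanish and $I=\bar I$. That the induced filtrations are then filtered free and Griffiths transverse is automatic (a filtration of a vector bundle over a curve by subbundles is Zariski-locally split). \emph{This semistability input is the step I expect to be the main obstacle; the rest is formal.}

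\emph{Part (ii): conclusion.} Granting strictness, let $g\colon A\to B$ be a morphism in $\LPDR(\tilde C_{\log}/k)$ with flow extension $\{g_i\colon A_i\to B_i\}$, $g_0=g$; each $A_i$ and $B_i$ is periodic (a cyclic rotation of the ambient flow), so each $g_i$ is strict. Hence $\{\ker g_i\}$, $\{\operatorname{im}g_i\}$ and $\{\operatorname{coker}g_i\}$ carry induced connections and Hodge filtrations, and, using that $C^{-1}$ is an equivalence (hence exact) and that $\Gr$ is exact on strict morphisms --- together with the exactness and grading compatibilities of Remark~\ref{exactness of parabolic pullback} and Proposition~\ref{parabolic pullback commutes with grading functor} --- these families inherit periodic de Rham--Higgs flow structures (for instance $\ker g_{i+1}=\ker C^{-1}(Gr_{Fil_i}g_i)=C^{-1}(Gr_{Fil_i}(\ker g_i))$). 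Therefore $\ker g=\ker g_0$ and $\operatorname{coker}g=\operatorname{coker}g_0$ lie in $\LPDR(\tilde C_{\log}/k)$, and strictness makes the canonical map $\operatorname{coim}g\to\operatorname{im}g$ an isomorphism there. Finally, if $g$ is a monomorphism in $\LPDR(\tilde C_{\log}/k)$ then the periodic inclusion $\ker g\hookrightarrow A$ is killed by $g$, forcing $\ker g=0$, so $g$ is injective on underlying sheaves and is the kernel of its own (periodic) cokernel; dually every epimorphism is a cokernel. Thus $\LPDR(\tilde C_{\log}/k)$ is abelian, and the identical argument with the Hodge filtration suppressed gives the same for $\LPHG(\tilde C_{\log}/k)$, strictness there resting on semistability of periodic Higgs bundles.
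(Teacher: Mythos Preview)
Your argument for (i) is essentially the paper's.

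For (ii) you take a genuinely different route from the paper, and there is a real gap. The paper does not argue via semistability or slopes at all: it invokes the equivalence of \cite[Theorem~1.1]{LSYZ} between $\HDF_{p-1,f}(\tilde C_{\log}/W_2)$ and the category $\mathrm{MF}^{\nabla}_{[0,p-1],f}(\tilde C_{\log}/W_2)$ of strict $p$-torsion logarithmic Fontaine modules with $\F_{p^f}$-structure, and then cites Faltings' theorem \cite[Theorem~2.1]{Fa88} that the latter category is abelian. Kernels and cokernels are thereby handled in one stroke, and the argument is insensitive to whether $C$ is projective or affine.

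Your gap is that ``image of the underlying sheaf map is saturated'' is strictly weaker than ``filtration-strict'', and only the latter gives you $\Gr_{Fil_i}(\ker g_i)=\ker(\Gr_{Fil_i}g_i)$, which is what you need to propagate kernels along the flow. A filtered morphism can have saturated image yet satisfy $g(Fil^jA)\subsetneq\operatorname{im}(g)\cap Fil^jB$; in that case $\Gr(\ker g)\subsetneq\ker(\Gr g)$ and your inductive identity $\ker g_{i+1}=C^{-1}(Gr_{Fil_i}(\ker g_i))$ fails. Your sentence ``$\Gr$ is exact on strict morphisms'' is therefore invoking a stronger hypothesis than the slope computation establishes. (Relatedly, your claim that the induced filtration on the cokernel is automatically filtered free is false in general: the image of $Fil^iB$ in $B/\operatorname{im}(g)$ need not be saturated.) The paper's remark immediately after the lemma flags exactly this point: abelianness here is ``surprising --- it implies morphisms of periodic de Rham bundles are automatically \emph{strict} for the filtration''; that strictness is precisely the content Faltings supplies and your argument does not. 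Your approach \emph{can} be repaired on a projective $C$: one shows $\operatorname{rk}(\ker g_i)\le\operatorname{rk}(\ker(\Gr g_i))=\operatorname{rk}(\ker g_{i+1})$ and then periodicity forces equality, after which a degree comparison (both kernels have degree~$0$ by the semistability you already used) kills the torsion quotient $\ker(\Gr g_i)/\Gr(\ker g_i)$. But you did not carry this out, and on an affine $C$ there is no degree to work with; compactifying would require knowing the extended object is periodic, which is the unproven Conjecture~\ref{extension conjecture}.
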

	We remark that the categories $\PPDR(\tilde C_{\log}/k)$ and $\PPHG(\tilde C_{\log}/k)$ are closed under direct sum (but not necessarily under tensor product); the abelian property about subcategories in (ii) is surprising - it implies morphisms of periodic de Rham bundles are automatically \emph{strict} for the filtration.  We do not know the whole categories are abelian or not, and this seems to be an interesting question.
	\begin{proof}
		In a periodic parabolic Higgs-de Rham flow, the associated graded of the last de Rham term is, by definition, isomorphic to the initial Higgs term. This means the functor $\Gr$ is essentially surjective. Let $V, V'$ be two two periodic parabolic de Rham bundles. We need to show the map 
		$$
		\Gr: \Hom(V,V')\to \Hom(\Gr V, \Gr V')
		$$
		is injective. By definition, a morphism from $V$ to $V'$ extends to a morphism of periodic flows. So if its grading is zero, then all morphisms after it in the flow are zero too. By periodicity, the starting morphism has to be zero. 
		
		For the abelian property of the subcategories, it suffices to show that every morphism between periodic de Rham/Higgs bundles has a kernel and a cokernel. Let $g: E\to E'$ be a morphism of periodic de Rham bundles. By \cite[Theorem 1.1]{LSYZ}, the category $\HDF_{f,p-1}(\tilde C_{\log}/W_2)$ is naturally equivalent to the category $\mathrm{MF}^{\nabla}_{[0,p-1],f}(\tilde C_{\log}/W_2)$ of strict $p$-torsion logarithmic Fontaine modules with endomorphism structure $\F_{p^f}$. Then, it follows from \cite[Theorem 2.1]{Fa88} that the latter category is abelian. Therefore, $\ker(g)$ as well as $\mathrm{coker}(g)$ are again periodic. For its de Rham counterpart, it suffices to notice that the category  $\DHF_{f,p-1}(\tilde C_{\log}/W_2)$ is equivalent to $\HDF_{f,p-1}(\tilde C_{\log}/W_2)$. The lemma is proved.
	\end{proof}
	\begin{lemma}\label{periodic parabolic pulls back to parabolic}
		Let $(C,D)$ and $(\tilde C, \tilde D)$ be as above. Then there is a cyclic covering $\pi:C'\to C$ whose ramification divisor contains $D$ such that the parabolic pullback of any object of $\PPDR(\tilde C_{\log}/k)$ (resp. $\PPHG(\tilde C_{\log}/k)$) lies in $\LPDR(\tilde C'_{\log}/k)$ (resp. $\LPHG(\tilde C'_{\log}/k)$). 
	\end{lemma}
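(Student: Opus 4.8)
The plan is to exhibit one cyclic cover that serves \emph{all} periodic objects simultaneously. Let $\pi\colon C'\to C$ be the cyclic cover of order $N$ (the integer fixed in the Notations) together with its $W_2$-lifting $\tilde C'_{\log}=(\tilde C',\tilde D')$, constructed exactly as in the paragraph preceding Lemma~\ref{parabolic inverse cartier commutes with pullback}: the branch divisor of $\pi$ is $D+B$ with $B$ reduced and disjoint from $D$, $D'=\pi^{-1}D$, and $\tilde C'_{\log}$ is the restriction to $(C',D')$ of the unique $W_2$-lifting of $(C',D'+B')$ attached to a fixed lift $\tilde B$ of $B$. By construction, every term of a periodic parabolic Higgs--de Rham or de Rham--Higgs flow lies in $\HIG^{\ppar}_{p-1,N}$ or $\MIC^{\ppar}_{p-1,N}$, hence has parabolic weights in $\frac{1}{N}\Z$ supported on $D$; so by the Biswas--Iyer--Simpson correspondence (Proposition~\ref{Biswas correspondence for Higgs/flat bundles in positive char}) its parabolic pullback along $\pi$ has \emph{trivial} parabolic structure on $C'_{\log}$. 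It thus suffices to check that $\pi^*_{\ppar}$ carries a periodic flow to a periodic flow.

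Let $\mathcal E=((E_0,\theta_0),Fil_0,\dots,Fil_{f-1},\phi)$ be an $f$-periodic parabolic Higgs--de Rham flow over $\tilde C_{\log}$ (the de Rham--Higgs case, which gives the statement for $\PPDR$, is symmetric). Apply $\pi^*_{\ppar}$ term by term. Each de Rham term is of the form $C^{-1}_{\ppar}(E_i,\theta_i)$, so Lemma~\ref{parabolic inverse cartier commutes with pullback} supplies natural isomorphisms
$$
\pi^*_{\ppar}\,C^{-1}_{\ppar}(E_i,\theta_i)\;\cong\;C^{-1}\bigl(\pi^*_{\ppar}(E_i,\theta_i)\bigr)
$$
in $\MIC^{lf}_{p-1}(C'_{\log}/k)$, and Proposition~\ref{parabolic pullback commutes with grading functor} supplies natural isomorphisms of graded Higgs bundles
$$
\pi^*_{\ppar}\,\Gr_{Fil_i}C^{-1}_{\ppar}(E_i,\theta_i)\;\cong\;\Gr_{\pi^*_{\ppar}Fil_i}\,C^{-1}\bigl(\pi^*_{\ppar}(E_i,\theta_i)\bigr).
$$
Therefore $\bigl((\pi^*_{\ppar}E_0,\pi^*_{\ppar}\theta_0),\pi^*_{\ppar}Fil_0,\dots,\pi^*_{\ppar}Fil_{f-1},\pi^*_{\ppar}\phi\bigr)$ again satisfies the chain of identifications defining an $f$-periodic flow, now with trivial parabolic structure throughout. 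The remaining points are routine: each $\pi^*_{\ppar}Fil_i$ is a Hodge filtration of level $\le p-1$ because it is a filtration by subbundles (by the BIS correspondence, as recalled before Proposition~\ref{parabolic pullback commutes with grading functor}), hence filtered free on the smooth curve $C'$, it has the same number of steps as $Fil_i$, and Griffiths transversality follows from Proposition~\ref{parabolic pullback commutes with grading functor}; and the nilpotent exponents of the Higgs fields, $p$-curvatures and residues remain $\le p-1$ since $\pi^*_{\ppar}$ cannot increase the order of nilpotence of the Higgs field or of the $p$-curvature (nilpotence being a closed condition, it may be tested where $\pi$ is \'etale) nor of the nilpotent part of a residue (Step~3 of Proposition~\ref{Biswas correspondence for Higgs/flat bundles in positive char}). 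Hence $\pi^*_{\ppar}\mathcal E$ lies in the trivial-parabolic category $\HDF_{p-1,f}(\tilde C'_{\log}/W_2)$, and reading off its initial term gives $\pi^*_{\ppar}(\PPHG(\tilde C_{\log}/k))\subset \LPHG(\tilde C'_{\log}/k)$.

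The only genuine obstacle I foresee is the naturality bookkeeping: one must verify that the isomorphisms produced by Lemma~\ref{parabolic inverse cartier commutes with pullback} and Proposition~\ref{parabolic pullback commutes with grading functor} are mutually compatible, so that composing them $f$ times identifies $\pi^*_{\ppar}$ applied to the endofunctor $\Gr\circ C^{-1}$ over $\tilde C_{\log}$ with the corresponding endofunctor over $\tilde C'_{\log}$, and that under this identification $\pi^*_{\ppar}\phi$ is again a periodicity isomorphism (together with preservation of the commuting square of Definition~\ref{def of PHDF}, if one wants $\pi^*_{\ppar}$ to be a functor between flow categories). This is the same sort of compatibility made explicit in Diagram~\ref{compatibility between different pullbacks} of Sublemma~\ref{inverse Cartier commutes with finite etale}, here combined with the exactness of $\pi^*_{\ppar}$ (Remark~\ref{exactness of parabolic pullback}); it demands care but no new ingredient.
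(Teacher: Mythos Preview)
Your proposal is correct and follows essentially the same route as the paper: construct an order-$N$ cyclic cover $\pi$ branched along $D$ (plus an auxiliary divisor), then transport a periodic flow term-by-term via $\pi^*_{\ppar}$ using Lemma~\ref{parabolic inverse cartier commutes with pullback} for the inverse Cartier step, Proposition~\ref{parabolic pullback commutes with grading functor} for the grading step, and the BIS correspondence (Proposition~\ref{Biswas correspondence for Higgs/flat bundles in positive char}) for the periodicity isomorphism. One small point: the paragraph preceding Lemma~\ref{parabolic inverse cartier commutes with pullback} \emph{posits} such a cover but does not construct it; the paper actually supplies the existence argument (treating the projective and affine cases separately) inside the proof of this very lemma, so you should include that short construction rather than cite a place where it does not appear.
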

	\begin{proof}
		Consider first a projective $C$. We may always choose an effective divisor $Z$ disjoint from $D$ such that the degree $D+Z$ is divisible by $N$. Hence we may take an $N^{\text{th}}$ root of $D+Z$ globally, that is, there is a cyclic cover $\pi: C'\to C$ of order $N$ which totally ramifies along $D+Z$. If $C$ is affine, we may even have a cyclic cover of order $N$ whose ramification locus is exactly $D$: let $\bar C$ be the compactification. Take a point $P\in \bar C-C$. Then there is a natural number $d$ such that the divisor $D+dP\subset \bar C$ is divisible by $N$. Therefore, we get a cyclic cover $\bar \pi: \bar C'\to \bar C$ of order $N$ which ramifies along $D+P$. Restricting $\bar \pi$ to the inverse image of $C$, we obtain the cyclic cover as desired. In summary, we may always find a cyclic cover $\pi: C'\to C$ of order $N$ whose ramification divisor is of form $D+Z$ with $Z$ reduced and disjoint from $D$. Set $D'=\pi^{-1}D$ and $B'=\pi^{-1}B$. So we are in the situation of Lemma \ref{parabolic inverse cartier commutes with pullback}. Now let $(V,\nabla,Fil)$ be a periodic parabolic de Rham bundle which initializes a periodic parabolic de Rham-Higgs flow over $(C,D)$ of period $f$. Let $(\pi_{\ppar}^*V,\pi_{\ppar}^*\nabla, \pi_{\ppar}^*Fil)$ be the parabolic pullback of $(V,\nabla,Fil)$. Proposition \ref{parabolic pullback commutes with grading functor} provides an isomorphism of graded Higgs bundles over $(C',D')$:
		$$
		Gr_{\pi_{\ppar}^*Fil}(\pi_{\ppar}^*V,\pi_{\ppar}^*\nabla)\cong \pi_{\ppar}^*(Gr_{Fil}(V,\nabla)).
		$$ 
		Next we apply Lemma \ref{parabolic inverse cartier commutes with pullback} and obtain the following isomorphism of parabolic flat bundles:
		$$
		C^{-1}\circ Gr_{\pi_{\ppar}^*Fil}(\pi_{\ppar}^*V,\pi_{\ppar}^*\nabla)\cong \pi_{\ppar}^*(C^{-1}\circ Gr_{Fil}(V,\nabla)).
		$$
		Finally, by Proposition \ref{Biswas correspondence for Higgs/flat bundles in positive char}, the isomorphism $$\psi: (V_f,\nabla_f)\cong (V_0,\nabla_0)=(V,\nabla)$$ yields an isomorphism of flat bundles via the parabolic pullback
		$$
		\pi_{\ppar}^*\psi: \pi_{\ppar}^*(V_f,\nabla_f)\cong \pi_{\ppar}^*(V,\nabla).
		$$
		Adding the previous discussions altogether, we have shown that the parabolic pullback of a periodic parabolic de Rham-Higgs flow over $(C,D)$ is a periodic de Rham-Higgs flow over $(C',D')$ (of the same period). The argument works verbatim for periodic parabolic Higgs-de Rham flows. The lemma follows. 
	\end{proof}
	Recall that for a smooth projective $C$ over an algebraically closed field and a parabolic vector bundle $V$ over it, the \emph{parabolic degree} of $V$ is defined as follows:
	$$
	\pdeg(V):=\deg V_0+\sum_{i=1}^l\sum_{j=0}^{w_i}\alpha_i^j\dim Gr_{D_i,\alpha_i^j}V.
	$$
	The definition implies that $\deg(\pi_{\ppar}^*V)=N\pdeg(V)$ holds for any parabolic vector bundle $V$ whose parabolic structure is supported in $D$ and weights in $\frac{1}{N}\Z$ and $\pi: C'\to C$ as given in Notation \ref{parabolic_notation}.
	\begin{proposition}\label{zero par degree in char p}
		Notation as in \ref{basic notation} and suppose that $C$ is projective. Then a periodic parabolic Higgs bundle over $C_{\log}/k$ is parabolic semistable of degree zero. 
	\end{proposition}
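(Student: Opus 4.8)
The plan is to reduce to the case of a trivial parabolic structure on a cyclic cover, and then to run the periodic flow simultaneously on the Higgs bundle and on a maximal destabilizing sub-Higgs-sheaf, exploiting that the inverse Cartier transform multiplies degrees by $p$ on underlying modules while the grading functor preserves them.

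\emph{Reduction step.} First I would apply Lemma~\ref{periodic parabolic pulls back to parabolic} to obtain a cyclic cover $\pi\colon C'\to C$ of degree $N$ such that $\pi_{\ppar}^*(E,\theta)$ is a periodic Higgs bundle with trivial parabolic structure, i.e.\ an object of $\LPHG(\tilde C'_{\log}/k)$. Since $\deg\pi_{\ppar}^*W=N\pdeg W$ for every parabolic vector bundle $W$ on $C_{\log}$ supported in $D$ with weights in $\frac{1}{N}\Z$, the equality $\pdeg(E,\theta)=\frac{1}{N}\deg\pi_{\ppar}^*(E,\theta)$ reduces the degree-zero assertion to $\deg\pi_{\ppar}^*(E,\theta)=0$. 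Likewise, a parabolic sub-Higgs-sheaf $F\subsetneq E$ with $\pdeg F>0$ would, after replacing $F$ by its Higgs saturation (which only raises $\pdeg$), pull back to a $G$-equivariant sub-Higgs-sheaf $\pi_{\ppar}^*F\subset\pi_{\ppar}^*E$ with $\deg\pi_{\ppar}^*F=N\pdeg F>0$, violating semistability of $\pi_{\ppar}^*E$. So it suffices to treat objects of $\LPHG(\tilde C'_{\log}/k)$, and I may replace $(C,D,(E,\theta))$ by $(C',D',\pi_{\ppar}^*(E,\theta))$.

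\emph{The non-parabolic case.} Let $(E_0,\theta_0)$ now be a periodic Higgs bundle of period $f$ with trivial parabolic structure on the projective curve $C'$, realized in a periodic Higgs--de Rham flow $((E_0,\theta_0),Fil_0,\dots,Fil_{f-1},\phi)$. Two facts drive the argument: (a) on underlying $\sO$-modules the inverse Cartier transform is pullback along the absolute Frobenius of $C'$ (this is explicit in the exponential-twisting description \cite{LSYZ}; compare the proof of Proposition~\ref{change of par weights under inverse Cartier}), so it preserves rank and multiplies degree by $p$; (b) the grading functor $\Gr$ of Lemma~\ref{the grading functor} preserves rank and degree, because the degree is additive along any finite filtration of a torsion-free sheaf on a smooth curve. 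Combining (a) and (b) along the flow gives $\deg E_i=p^i\deg E_0$, so $\phi\colon(E_f,\theta_f)\cong(E_0,\theta_0)$ forces $\deg E_0=p^f\deg E_0$, hence $\deg E_0=0$. For semistability, let $F_0\subset E_0$ be the maximal destabilizing $\theta_0$-invariant subbundle (which exists, log Higgs sheaves on a projective curve forming a noetherian abelian category), so that $\mu(F_0)=\mu_{\mathrm{max}}(E_0)\geq\mu(E_0)=0$. Since $C^{-1}$ is exact on underlying modules, $C^{-1}(F_0,\theta_0|_{F_0})$ is a sub-flat-bundle of $C^{-1}(E_0,\theta_0)$ on which $Fil_0$ restricts Griffiths-transversely; taking associated graded and then the Higgs saturation yields, by (b), a $\theta_1$-invariant subbundle of $E_1$ of rank $\rank F_0$ and degree $\geq p\deg F_0$. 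Iterating $f$ times and transporting along $\phi$ produces a $\theta_0$-invariant subsheaf of $E_0$ of rank $\rank F_0$ and degree $\geq p^f\deg F_0$; therefore $\mu_{\mathrm{max}}(E_0)\geq p^f\mu_{\mathrm{max}}(E_0)$, and since $p^f>1$ and $\mu_{\mathrm{max}}(E_0)\geq 0$ this forces $\mu_{\mathrm{max}}(E_0)=0=\mu(E_0)$, i.e.\ $(E_0,\theta_0)$ is semistable of degree zero.

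\emph{Main obstacle.} The delicate point is the functoriality and saturation bookkeeping in the last step: one must check that applying $C^{-1}$ and then $\Gr$ to a saturated $\theta$-invariant subbundle gives a sub-Higgs-sheaf of the next term of the flow with rank preserved and degree multiplied by at least $p$. This rests on exactness of Frobenius pullback, on additivity of the degree along the induced filtration, and on Higgs saturation not decreasing the degree; these are elementary but must be spelled out in the logarithmic setting, and one should also confirm the behaviour of $C^{-1}$ on underlying modules in that setting. As an alternative to this self-contained argument for the non-parabolic case, one may instead invoke the equivalence of $\HDF_{p-1,f}(\tilde C'_{\log}/W_2)$ with strict $p$-torsion logarithmic Fontaine modules \cite{LSYZ} and cite the semistability of the Higgs bundle underlying a Fontaine module (Langer).
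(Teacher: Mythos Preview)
Your proof is correct and follows the paper's approach exactly in the reduction step: both use Lemma~\ref{periodic parabolic pulls back to parabolic} to pass to a cyclic cover $C'$ where the parabolic structure becomes trivial, and both invoke the relation $\deg\pi_{\ppar}^*=N\cdot\pdeg$ to transfer degree-zero and semistability back down. The only difference is that for the non-parabolic case on $C'$, the paper simply cites \cite[Proposition~6.3]{LSZ}, whereas you reprove that result directly by tracking degrees through the flow (using that $C^{-1}$ is Frobenius pullback on underlying bundles, hence multiplies degree by $p$, while $\Gr$ preserves degree). Your argument is essentially the content of the cited proposition, so the two proofs are the same modulo what is delegated to a reference; your version has the virtue of being self-contained, and the bookkeeping you flag in the ``Main obstacle'' paragraph is indeed routine on a curve.
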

	\begin{proof}
		Let $(E,\theta)$ be a periodic parabolic Higgs bundle. By Lemma \ref{periodic parabolic pulls back to parabolic}, there is a degree $N$ cyclic covering $\pi: C'\to C$ such that $\pi_{\ppar}^*(E,\theta)$ is a periodic Higgs bundle. By \cite[Proposition 6.3]{LSZ}, $\deg \pi_{\ppar}^*E=0$. Then $\pdeg(E)=\frac{1}{N}\deg \pi_{\ppar}^*E=0$. By Proposition \ref{Biswas correspondence for Higgs/flat bundles in positive char}, parabolic Higgs subbundles corresponds to $G$-equivariant Higgs subbundles of $\pi_{\ppar}^*(E,\theta)$. By \cite[Proposition 6.3]{LSZ} again, $\pi_{\ppar}^*(E,\theta)$ is semistable and hence any Higgs subsheaf has degree $\leq 0$. It follows that any parabolic Higgs subsheaf of $(E,\theta)$ has parabolic degree $\leq 0$.  
	\end{proof}
The following consequence is immediate.
\begin{corollary}\label{parabolic degree of periodic parabolic dR bundle}
	Notation as above. A periodic parabolic de Rham bundle over $C_{\log}/k$ is of parabolic degree zero.
\end{corollary}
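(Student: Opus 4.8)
The plan is to reduce this to the Higgs case, Proposition \ref{zero par degree in char p}, via the grading functor. Let $(V,\nabla,Fil)$ be a periodic parabolic de Rham bundle over $C_{\log}/k$ (so $C$ is projective, as in the standing notation). By Lemma \ref{abelian category}(i), its associated parabolic graded Higgs bundle $\Gr(V,\nabla,Fil)=(E,\theta,\bigoplus_i E_0^i)$ lies in $\PPHG(\tilde C_{\log}/k)$, i.e.\ it is a periodic parabolic Higgs bundle; hence $\pdeg(E)=0$ by Proposition \ref{zero par degree in char p}. It therefore suffices to show that the grading functor preserves parabolic degree, i.e.\ $\pdeg(V)=\pdeg(E)$.

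To see this I would compare the two contributions to the parabolic degree separately. First, the underlying bundle of $E$ is $E_0=\Gr_{Fil}V_0=\bigoplus_i Fil^i/Fil^{i+1}$; since $Fil^\bullet$ is a finite decreasing filtration of $V_0$ by subbundles with $Fil^i=V_0$ for $i\ll 0$ and $Fil^i=0$ for $i\gg 0$, each graded piece is locally free and $\deg E_0=\sum_i\bigl(\deg Fil^i-\deg Fil^{i+1}\bigr)=\deg V_0$ by telescoping. Second, by the last assertion of Lemma \ref{the grading functor} the parabolic weights of $(V,\nabla,Fil)$ and of $(E,\theta,\bigoplus_i E_0^i)$ coincide along each $D_i$, and from the construction there (one induces $Fil_\alpha$ on each $V_\alpha$ and sets $E_\alpha=\Gr_{Fil_\alpha}V_\alpha$) the corresponding graded pieces have equal dimension, $\dim Gr_{D_i,\alpha_i^j}E=\dim Gr_{D_i,\alpha_i^j}V$ for all $i,j$. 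Substituting these equalities into
$$
\pdeg(V)=\deg V_0+\sum_{i=1}^l\sum_{j}\alpha_i^j\dim Gr_{D_i,\alpha_i^j}V
$$
and into the analogous expression for $E$ yields $\pdeg(V)=\pdeg(E)=0$.

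An equivalent route, mirroring the proof of Proposition \ref{zero par degree in char p} more closely, is to first pull back along the cyclic covering $\pi\colon C'\to C$ of Lemma \ref{periodic parabolic pulls back to parabolic}, so that $\pi_{\ppar}^*(V,\nabla,Fil)$ becomes a periodic de Rham bundle with \emph{trivial} parabolic structure on $C'_{\log}$; then use $\deg(\pi_{\ppar}^*V)=N\pdeg(V)$, Proposition \ref{parabolic pullback commutes with grading functor}, the identity $\deg\bigl(\Gr_{Fil}\pi_{\ppar}^*V_0\bigr)=\deg\pi_{\ppar}^*V_0$, and \cite[Proposition 6.3]{LSZ} to get $\deg\pi_{\ppar}^*V=0$, hence $\pdeg(V)=0$. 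There is essentially no real obstacle here; the only point needing (routine) verification is the claim that $\Gr$ preserves parabolic degree, which is immediate from the constructions in Lemma \ref{the grading functor} (and, on the second route, Proposition \ref{parabolic pullback commutes with grading functor}).
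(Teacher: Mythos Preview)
Your proof is correct and matches the paper's intended argument: the paper states the corollary as ``immediate'' from Proposition \ref{zero par degree in char p}, and your reduction via the grading functor (using Lemma \ref{abelian category}(i) to get periodicity of $\Gr V$ and Lemma \ref{the grading functor} to see that $\Gr$ preserves parabolic degree) is exactly what makes it immediate. Your alternative cyclic-cover route is also fine and just transports the same idea upstairs to $C'$.
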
	
The next result is useful in applications.
\begin{proposition}\label{black hole principle}
	Notations as Proposition \ref{zero par degree in char p}. If the initial term of a periodic parabolic Higgs-de Rham flow over $C_{\log}$ is parabolic stable, then any intermediate Higgs term of the periodic flow is also parabolic stable.
\end{proposition}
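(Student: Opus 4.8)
The plan is to reduce everything to a single ``one step'' statement and then chain around the period. Concretely, I would prove: \emph{if an intermediate Higgs term $(E_i,\theta_i)$ of the periodic parabolic Higgs--de Rham flow fails to be parabolic stable, then so does the next Higgs term $(E_{i+1},\theta_{i+1})=Gr_{Fil_i}C^{-1}(E_i,\theta_i)$}, where $Fil_i$ is the Hodge filtration furnished by the flow. Granting this, the proposition follows at once: if some $(E_j,\theta_j)$ with $1\le j\le f-1$ were not parabolic stable, then $(E_{j+1},\theta_{j+1}),\dots,(E_f,\theta_f)$ would all fail to be parabolic stable, and since the closing isomorphism $\phi$ identifies $(E_f,\theta_f)$ with the initial term $(E_0,\theta_0)$, the latter would not be parabolic stable, contradicting the hypothesis.

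Two preliminary facts feed into the one step. First, rotating the periodic flow exhibits each intermediate Higgs term $(E_j,\theta_j)$ as the initial term of a periodic parabolic Higgs--de Rham flow of period $f$; hence by Proposition~\ref{zero par degree in char p} each $(E_j,\theta_j)$ is parabolic semistable of parabolic degree $0$, and by Corollary~\ref{parabolic degree of periodic parabolic dR bundle} each de Rham term $C^{-1}(E_j,\theta_j)$ has parabolic degree $0$. Second, I would use that $C^{-1}$ multiplies parabolic degree by $p$, in particular preserves the property of having parabolic degree $0$. Granting Proposition~\ref{change of par weights under inverse Cartier} for the behavior of weights, this can be seen by pulling back along the degree $N$ cyclic cover $\pi\colon C'\to C$ of Notation~\ref{parabolic_notation} (which kills the parabolic structure and satisfies $\deg\pi^{*}_{\ppar}V=N\,\pdeg V$) and invoking Lemma~\ref{parabolic inverse cartier commutes with pullback} together with the fact that, in the trivial parabolic case, the bundle underlying $C^{-1}$ is a Frobenius pullback, hence of $p$ times the degree.

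For the one step: suppose $(E_i,\theta_i)$ is not parabolic stable. Since it is parabolic semistable of parabolic degree $0$, it admits a proper nonzero $\theta_i$--invariant saturated parabolic subbundle $F\subsetneq E_i$ with $\pdeg F=0$ (saturating a destabilizing parabolic Higgs subsheaf cannot raise the parabolic degree above $0$ by semistability, nor lower the original value $0$, and the saturation remains $\theta_i$--invariant); the quotient $E_i/F$ is again an object of $\HIG^{\ppar}_{p-1,N}(C_{\log}/k)$ of parabolic degree $0$. Applying $C^{-1}$ --- which, being assembled from the exact parabolic pushforward (Remark~\ref{exactness of parabolic pullback}) and the Ogus--Vologodsky transform, sends short exact sequences of parabolic bundles to short exact sequences --- to $0\to F\to E_i\to E_i/F\to 0$ yields a saturated parabolic flat subbundle $W:=C^{-1}F\subset V:=C^{-1}(E_i,\theta_i)$ with quotient $C^{-1}(E_i/F)$ and $\pdeg W=p\,\pdeg F=0$. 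Let $Fil:=Fil_i$ and let $S$ be the image of $W$ in $Gr_{Fil}V=E_{i+1}$, so that $S^{p}=(W\cap Fil^{p}+Fil^{p+1})/Fil^{p+1}$. Since $W$ is saturated in $V$, each $W\cap Fil^{p}$ is saturated in $Fil^{p}$, so the quotient filtration $\overline{Fil}$ on $V/W$ is by parabolic subbundles; a direct computation then produces a short exact sequence of parabolic sheaves
$$0\longrightarrow S\longrightarrow Gr_{Fil}V\longrightarrow Gr_{\overline{Fil}}(V/W)\longrightarrow 0 .$$
Consequently $S$ is a saturated parabolic subsheaf of $E_{i+1}$; it is $\theta_{i+1}$--invariant by Griffiths transversality of $Fil$ and the $\nabla$--stability of $W$; it has rank equal to $\rank F$, hence is proper and nonzero; and since passing to the associated graded of a filtration by parabolic subbundles preserves parabolic degree (in the spirit of Lemma~\ref{the grading functor}) and parabolic degree is additive, $\pdeg S=\pdeg V-\pdeg(V/W)=\pdeg W=0=\pdeg E_{i+1}$. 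Thus $S$ destabilizes $(E_{i+1},\theta_{i+1})$, completing the one step.

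I expect the main obstacle to be the degree bookkeeping across the two functors: one must know that $C^{-1}$ scales parabolic degree by the universal factor $p$ (so that a slope--zero destabilizing subbundle is carried to another slope--zero subbundle, rather than to one of negative slope, which would be harmless), and one must verify that the image of a saturated subbundle in an associated graded is again a saturated subbundle of the same parabolic degree --- this is precisely the role of the quotient--filtration exact sequence above, and it is where the hypothesis that $W$ is saturated gets used. Everything else --- the semistability of the intermediate terms, the exactness of $C^{-1}$ on short exact sequences of bundles, and the chaining around the period --- is formal.
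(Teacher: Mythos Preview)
Your argument is correct and matches the approach the paper defers to (namely \cite[Lemma 2.4]{LS}): propagate a degree-zero destabilizing Higgs subsheaf forward through $C^{-1}$ and then through $\Gr_{Fil}$, and chain around the period to contradict stability of the initial term. One small imprecision: from ``$W$ saturated in $V$'' you correctly deduce that each $W\cap Fil^{p}$ is saturated in $Fil^{p}$ and in $W$, but this does \emph{not} imply that the quotient filtration $\overline{Fil}$ on $V/W$ is by saturated subsheaves (so $Gr_{\overline{Fil}}(V/W)$ may acquire torsion and $S$ need not be saturated in $E_{i+1}$). Fortunately this is irrelevant: what you actually use is that $W\cap Fil^{\bullet}$ is a filtration of $W$ by parabolic \emph{subbundles}, whence $\pdeg S=\pdeg W=0$ by additivity, and a proper nonzero $\theta_{i+1}$-invariant subsheaf of slope zero already witnesses non-stability of the semistable $(E_{i+1},\theta_{i+1})$.
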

\begin{proof}
The proof of \cite[Lemma 2.4]{LS} works verbatim in the parabolic setting. 
\end{proof}

	We come to the definition of periodic parabolic de Rham bundles and periodic parabolic Higgs bundles over $\C$.
	
	\begin{definition}\label{arithmetic_definition}
		Let $(C,D)$ be a logarithmic curve over $\C$.
		\begin{enumerate}
			\item A parabolic de Rham bundle $(V,\nabla,Fil)$ over $C_{\log}$ is called \emph{periodic} if it is adjusted and there exists:
			\begin{itemize}
				\item an integer $f$;
				\item a spreading-out $$(\mathscr{C},\scrD, \scrV,\nabla,{\mathscr Fil})\rightarrow S,$$ where $S$ is an integral scheme of finite type over $\Z$; and
				\item a proper closed subscheme $Z\subset S$,  
			\end{itemize} 
			such that for all geometric points $s\in S-Z$, the reduction $(\scrV_s,\nabla_s, {\mathscr Fil}_s)$ at $s$ belongs to $\PPDR_f(\mathscr{C}_{\tilde s,\log}/k(s))$ for \emph{all} $W_2(k(s))$-lifting $\tilde s\to S-Z$.  A morphism $\alpha: (V_1,\nabla_1,Fil_1)\to (V_2,\nabla_2,Fil_2)$ of periodic parabolic de Rham bundles is a morphism of parabolic de Rham bundles such that the reduction $\alpha_s$ at $s$ belongs to $\PPDR_f(\mathscr{C}_{\tilde s,\log}/k(s))$ for all $W_2(k(s))$-lifting $\tilde s\to S$.
			
			\item A parabolic Higgs bundle $(E,\theta)$ over $C_{\log}$ is called \emph{periodic} if it is graded and there exists:
			\begin{itemize}
				\item an integer $f$;
				\item a spreading-out $$(\mathscr{C},\scrD, \scrE,\Theta)\rightarrow S,$$ where $S$ is an integral scheme of finite type over $\Z$; and
				\item a proper closed subscheme $Z\subset S$,  
			\end{itemize} 
			such that for all geometric points $s\in S-Z$, the reduction $(\scrE_s,\Theta_s)$ at $s$ belongs to $\PPHG_f(\mathscr{C}_{\tilde s,\log}/k(s))$ for \emph{all} $W_2(k(s))$-lifting $\tilde s\to S-Z$. A morphism $\beta: (E_1,\theta_1)\to (E_2,\theta_2)$ of periodic parabolic Higgs bundles is a morphism of parabolic graded Higgs bundles such that the reduction $\beta_s$ at $s$ belongs to $\PPHG_f(\mathscr{C}_{\tilde s,\log}/k(s))$ for all $W_2(k(s))$-lifting $\tilde s\to S$. 
		\end{enumerate}
		
	\end{definition}
	One notes that the above definition for $D=\emptyset$ is nothing but Definition \ref{definition of motivic and periodic objects}. The next lemma allows one to speak of \emph{periodicity} for an object of either $\PaDR(C_{\log}/\C)$ or $\PaHG(C_{\log}/\C)$ over the field of complex numbers. 
	\begin{lemma}\label{independence of integral model}
		If a parabolic de Rham/Higgs bundle over $C_{\log}$ is periodic with respect to one spreading-out, then it is periodic for any spreading-out.
	\end{lemma}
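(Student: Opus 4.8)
The plan is to show that any two spreading-outs of the given complex parabolic de Rham (resp.\ Higgs) bundle are dominated by a common one, to transport the periodicity condition — with the same period $f$ — to that common base, and then to descend it back down; the only delicate point is this last descent. We treat the de Rham case, the Higgs case being verbatim the same with $\PPHG_f$ in place of $\PPDR_f$. Let $\mathscr{S}_1\to S_1$ and $\mathscr{S}_2\to S_2$ be two spreading-outs as in Definition \ref{arithmetic_definition} (each $S_i$ integral of finite type over $\Z$; we may shrink each to an affine open $\Spec A_i$ through which $\Spec\C$ factors), each equipped with the canonical identification of its base change along $\Spec\C\to S_i$ with the given object over $C_{\log}$. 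Assume periodicity holds with respect to $\mathscr{S}_1$, witnessed by an integer $f$ and a proper closed $Z_1\subset S_1$. Since $\C$ is the filtered union of its finitely generated $\Z$-subalgebras, I would choose a finitely generated $\Z$-subalgebra $A\subset\C$ containing $A_1$, $A_2$, and the finitely many elements of $\C$ needed to write the canonical isomorphism $(\mathscr{S}_1)_\C\cong(\mathscr{S}_2)_\C$; put $S=\Spec A$, with dominant morphisms $g_i\colon S\to S_i$ and a compatible morphism $\Spec\C\to S$. By the standard spreading-out machinery \cite{EGA IV}, after replacing $S$ by a dense open there is an isomorphism of spreading-out data $\phi\colon g_1^*\mathscr{S}_1\xrightarrow{\ \sim\ }g_2^*\mathscr{S}_2$ over $S$ restricting to the canonical one over $\C$.

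Next I would transport periodicity to $S$. Shrink $S$ once more to avoid $g_1^{-1}(Z_1)$, which is proper closed since $g_1$ is dominant. For a geometric point $s$ of $S$ and any $W_2(k(s))$-lifting $\tilde s\to S$, the composite $g_1\circ\tilde s$ is a $W_2(k(s))$-lifting of the geometric point $g_1\circ s$ of $S_1-Z_1$ (the residue field is unchanged), and the reduction of $g_1^*\mathscr{S}_1$ at $s$ with respect to $\tilde s$ is \emph{literally} the reduction of $\mathscr{S}_1$ at $g_1\circ s$ with respect to $g_1\circ\tilde s$, which lies in $\PPDR_f$ by hypothesis. Thus $g_1^*\mathscr{S}_1$ is periodic over $S$ with period $f$; transporting along $\phi$ — using that $\PPDR_f$ is stable under isomorphism of parabolic de Rham bundles and that the parabolic Cartier transform, hence $\PPDR_f$, is functorial in the $W_2$-lifted log curve (cf.\ Theorem \ref{ov correspondence for parabolic objects}) — so is $g_2^*\mathscr{S}_2$.

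Finally I would descend along $g_2\colon S\to S_2$. This morphism is dominant between integral schemes, and its generic fibre lives over $\mathrm{Frac}(A_2)$, a field of characteristic zero; hence $g_2$ is smooth at the generic point of $S$, so its smooth locus $\Sigma\subseteq S$ is a dense open. By Chevalley's theorem, $g_2(\Sigma)$ is constructible and dense, so it contains a dense open $W\subseteq S_2$; set $Z_2:=S_2\setminus W$. Given a geometric point $t$ of $W$, the fibre of $\Sigma\to S_2$ over $t$ is a nonempty finite-type scheme over the algebraically closed field $k(t)$, so it has a $k(t)$-point $s$: a geometric point of $S$ lying in $\Sigma$, over $t$, with $k(s)=k(t)$. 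For any $W_2(k(t))$-lifting $\tilde t\to S_2$ of $t$, formal smoothness of $g_2$ near $s$ along the square-zero thickening $\Spec k(t)\hookrightarrow\Spec W_2(k(t))$ provides a $W_2(k(t))$-lifting $\tilde s\to S$ of $s$ with $g_2\circ\tilde s=\tilde t$. The reduction of $\mathscr{S}_2$ at $t$ with respect to $\tilde t$ then coincides with the reduction of $g_2^*\mathscr{S}_2$ at $s$ with respect to $\tilde s$, which lies in $\PPDR_f$ by the previous step. Since $\tilde t$ was arbitrary, $\mathscr{S}_2$ is periodic over $S_2-Z_2$ with period $f$, which is the required periodicity.

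The hard part is precisely this descent. Periodicity is a condition at \emph{all} geometric points of the base together with \emph{all} of their $W_2$-liftings, and there is no reason a $W_2$-lifting of a point of $S_2$ should extend to the larger base $S$ — unless $g_2$ is smooth there. What saves us is that $g_2$ is generically smooth, because its generic fibre is a variety over a characteristic-zero field; this lets us shrink $S_2$ to a dense open over which the required liftings always exist. The remaining inputs — the common refinement and the spreading out of the comparison isomorphism, the functoriality of the parabolic Cartier transform in the $W_2$-lifted log curve, and the bookkeeping that keeps residue fields matched so that reductions agree on the nose rather than merely up to a base change of algebraically closed fields — are routine.
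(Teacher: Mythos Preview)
Your proof is correct and follows essentially the same approach as the paper: build a common refinement $S$ dominating both spreading-outs, transport periodicity upward via the identification of fibres, and descend along $S\to S_2$ using (generic) smoothness to lift $W_2$-points. The only cosmetic difference is that the paper arranges $S\to S_i$ to be smooth from the outset and invokes the functoriality result \cite[Corollary~1.2]{Sh} for the upward transport, whereas you observe more directly that the fibres over $\tilde s$ and $g_1\circ\tilde s$ literally coincide, so no such lemma is needed there.
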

	\begin{proof}
		We shall prove the lemma \emph{only} for a parabolic Higgs bundle $(E,\theta)$; the proof for a parabolic de Rham bundle is the same. Let $(\sC_1, \sD_1,\sE_1,\Theta_1)\to S_1=\Spec(A_1)$ be a spreading-out of $(C,D,E,\theta)$ satisfying the periodicity condition in Definition \ref{arithmetic_definition}. By shrinking $S_1$, we may assume that the proper closed subscheme $Z_1\subset S_1$ is empty. Now let $(\sC_2, \sD_2,\sE_2,\Theta_2)\to S_2=\Spec(A_2)$ be another spreading-out of $(C,D,E,\theta)$. Clearly, there exists some finitely generated $\Z$-subalgebra $A\subset \C$ satisfying the following properties:
		\begin{itemize}
			\item[(a)] $A_i\subset A, i=1,2$;
			\item[(b)] The isomorphisms $(\sC_i,\sD_i,\sE_i,\Theta_i)\otimes_{A_i} \C \cong (C,D,E,\theta), i=1,2$ are defined over $A$; 
			\item[(c)] The natural morphisms $S:=\Spec(A)\to S_i, i=1,2$ are smooth. 
		\end{itemize}
		Set $(\sC,\sD,\sE,\Theta)=(\sC_1,\sD_1,\sE_1,\Theta_i)\otimes_{A_1} A$; by definition of $A$, this is isomorphic to $(\sC_2,\sD_2,\sE_2,\Theta_2)\otimes_{A_2}A$ over $A$ and is isomorphic to $(C,D,E,\theta)$ over $\C$. Let $\pi_1: (\sC,\sD)\to (\sC_1,\sD_1)$ be the natural morphism of log schemes over $S$. Take any $\tilde s\in S(W_2(\bar \F_p))$ and let $\tilde s_1\in S_1(W_2(\bar \F_p))$ be its image. Let $s, s_1$ be the corresponding points modulo $p$, i.e., the corresponding geometric points. Then, $\pi_1$ restricts to a log smooth morphism
		$$
		\pi_{1,s}:(\sC,\sD)|_{\tilde{s}}\rightarrow (\sC_{1},\sD_{1})|_{\tilde s_1}
		$$
		over $W_2(\bar \F_p)$. By \cite[Corollary 1.2]{Sh}, there is an isomorphism of functors 
		$$
		\pi_{1,s}^*\circ C^{-1}_{(\sC_{1,s_1},\sD_{1,s_1})\subset (\sC_{1,\tilde s_1},\sD_{1,\tilde s_1})}\cong C^{-1}_{(\sC_{s},\sD_{s})\subset (\sC_{\tilde s},\sD_{\tilde s})}\circ \pi_{1,s}^*.
		$$
		This allows us to pull back a periodic parabolic Higgs-de Rham flow over $(\sC_{1,s_1},\sD_{1,s_1})$ initializing $(\sE_{1,s_1},\Theta_{1,s_1})$ to a periodic parabolic Higgs-de Rham flow over $(\sS_{s},\sD_{s})$ initializing $(\sE_{s},\Theta_{s})$ of \emph{the same period}. Therefore, $(E,\theta)$ is periodic with respect to the spreading-out $(\sC,\sD,\sE,\Theta)$. As the morphism $S\to S_2$ is smooth, there is a proper closed subscheme $Z_2\subset S_2$ such that $S\to S_2-Z_2$ is \emph{surjective}. Let $s_2\in S_2-Z_2$ be a geometric point and $\tilde s_2\to S_2$ be any $W_2$-lifting of $s_2$. Pick any inverse image $s\in S$ of $s_2$ as well as a $W_2$-lifting $\tilde s\to S$ over $s_2$ by smoothness. By construction, there is a natural isomorphism 
		$$
		(\sC,\sD,\sE,\Theta)|_{\tilde s}\cong (\sC_2,\sD_2,\sE_2,\Theta_2)|_{\tilde s_2}.
		$$
		So $(E,\theta)$ is also periodic with respect to the spreading-out $(\sC_2,\sD_2,\sE_2,\Theta_2)$ as claimed. 
	\end{proof}
 We conclude this section with some preliminary properties of periodic objects over $\C$.
	\begin{corollary}\label{vanishing chern classes and semistability}
		Assume $C$ is a smooth projective curve over $\C$ and $D\subset C$ is a reduced divisor. Let $(E,\theta)$ be a periodic parabolic Higgs bundle over $C_{\log}$. Then $(E,\theta)$ is parabolic semistable of degree zero. 
	\end{corollary}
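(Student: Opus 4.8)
The plan is to deduce the statement from its positive-characteristic counterpart, Proposition \ref{zero par degree in char p}, by the standard spreading-out argument. First I would fix a spreading-out $(\mathscr{C},\scrD,\scrE,\Theta)\to S$ witnessing the periodicity of $(E,\theta)$ over $\C$, as in Definition \ref{arithmetic_definition}(2), with exceptional locus $Z\subsetneq S$; by Lemma \ref{independence of integral model} the particular choice is immaterial, and I am free to shrink $S$ to a dense affine open. After such shrinking I would arrange that $S$ is smooth over $\Z[1/n]$ for some $n\geq 1$, that $\mathscr{C}\to S$ is smooth projective with $\scrD$ an $S$-relative reduced normal crossings divisor, and that $\scrE$ is $S$-flat with the degree of its zero-weight piece on fibres, the parabolic weights, and the graded dimensions $\dim Gr_{\scrD_i,\alpha}\scrE_s$ all locally constant and equal to the corresponding data of $(E,\theta)$.

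Next I would exploit the smoothness of $S$ over $\Z[1/n]$: a dense set of closed points $s\in S-Z$ has finite residue field of characteristic $p\nmid n$, and each such $s$ lifts to a $W_2(k(s))$-point $\tilde s\to S$, which necessarily factors through $S-Z$. Passing to a geometric point $\bar s$ over $s$, the pair $(\mathscr{C},\scrD)|_{\tilde s}$ is a $W_2(k(\bar s))$-lifting of the smooth projective logarithmic curve $(\mathscr{C},\scrD)|_{\bar s}$, so by the definition of periodicity the reduction $(\scrE_{\bar s},\Theta_{\bar s})$ lies in $\PPHG_f(\mathscr{C}_{\bar s,\log}/k(\bar s))$ for the fixed integer $f$. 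Proposition \ref{zero par degree in char p} then tells us that $(\scrE_{\bar s},\Theta_{\bar s})$ is parabolic semistable of parabolic degree zero. Since $\pdeg$ is computed from the degree of the zero-weight bundle, the weights, and the graded dimensions — all of which were arranged to agree with those of $(E,\theta)$ — I conclude $\pdeg(E)=\pdeg(\scrE_{\bar s})=0$.

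For semistability I would argue by contradiction. If $(E,\theta)$ were not parabolic semistable there would be a nonzero $\theta$-invariant parabolic subsheaf $F\subsetneq E$ with parabolic slope $\pdeg(F)/\rank F>\pdeg(E)/\rank E=0$. Shrinking $S$ once more, I would spread $F$ out to an $S$-flat $\Theta$-invariant parabolic subsheaf $\scrF\subset \scrE$ restricting to $F$ over $\C$, with locally constant fibrewise parabolic degree and rank (again using that the weights and graded dimensions are preserved and that degrees are locally constant in flat families). For a closed point $s\in S-Z$ of the kind produced above, $\scrF_{\bar s}$ is then a nonzero $\Theta_{\bar s}$-invariant parabolic subsheaf of the parabolic semistable Higgs bundle $(\scrE_{\bar s},\Theta_{\bar s})$ of slope $\pdeg(F)/\rank F>0$, a contradiction. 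Hence $(E,\theta)$ is parabolic semistable.

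The only genuine subtlety — the "hard part", such as it is — lies in the bookkeeping of the spreading-out step: one must extend the parabolic data (weights, filtration jumps, and graded pieces along each component of $\scrD$) over $S$ and check it is preserved under the relevant reductions, so that the parabolic degrees of $\scrE$ and of the destabilizing $\scrF$ are genuinely locally constant on fibres over a dense open of $S-Z$; and one must ensure that enough closed points of $S-Z$ carry a $W_2$-lifting, which is precisely what arranging $S$ smooth over $\Z[1/n]$ supplies. Everything else is a direct appeal to Proposition \ref{zero par degree in char p}.
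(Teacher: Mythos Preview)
Your proposal is correct and is essentially the same argument as the paper's, just unpacked in full. The paper's proof is a single sentence --- ``As semistability is an open condition, the proposition follows from Proposition \ref{zero par degree in char p}'' --- and your spreading-out of a putative destabilizing subsheaf to obtain a contradiction at closed points is precisely the standard way to make ``semistability is open in families'' effective, while your locally-constant-degree bookkeeping is the implicit content of the degree-zero assertion.
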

	\begin{proof}
		As semistability is an open condition, the proposition follows from Proposition \ref{zero par degree in char p}.
	\end{proof}
A parabolic de Rham bundle $(V,\nabla,Fil)$ is said to be irreducible if $V$ does not contain nontrivial proper $\nabla$-invariant subbundle. Let $U$ be a smooth curve defined over $\bar \Q\subset \C$. To an irreducible periodic de Rham bundle over $U$, we are going to associate a family of representations of $\pi_1(U_{an})$.  Let us consider the problem first for an irreducible periodic parabolic de Rham bundle $V=(V,\nabla,Fil)$ over $C_{\log}=(C,D)$, which is assumed to be defined over a number field $K\subset \C$. Also, for simplicity, we shall assume the parabolic structure of $V$ is trivial (otherwise, we may achieve this by a finite cover $\pi: C'\to C$, \'etale over $U$).

Fix an isomorphism $\C\cong \C_p$ for each $p$.  Choose and then fix a spreading-out $(\sC,\sD, \sV,\nabla,\mathcal{F}il)$ of $(C,D,V,\nabla,Fil)$ over $S=\Spec\sO_K[\frac{1}{M}]$ satisfying the following properties:
\begin{itemize}
	\item [(i)] $\sC$ is smooth projective over $S$, and $\sD\subset \sC$ an $S$-relative simple normal crossing divisor;
   \item[(ii)] For each geometric point $s\in S$, $\Char\ k(s)>\rank(V)$ and there is a unique $W_2(k(s))$-lifting $\tilde s\to S$;
   \item[(iii)] The reduction $(\sV,\nabla,\mathcal{F}il)_s$ is periodic of period $f$, for any geometric point $s\in S$. 
\end{itemize}
Let $P_K$ be the set of finite places of $K$ and the chosen infinite place $K\subset \C$. 
\begin{proposition}
	Notations and assumptions as above.  Modulo finitely many finite places of $K$ and modulo a Tate twist, there is a unique family $\rho^V$ of representations of $\pi(U_{an})$ indexed by $P_K$ attached to $V$.
\end{proposition}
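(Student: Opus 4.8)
Here is the plan.

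The plan is to build the family $\rho^V=(\rho^V_v)_{v\in P_K}$ one place at a time and then to verify that it depends on $V$ alone, up to the two stated ambiguities. At the chosen archimedean place I would take $\rho^V_{v_\infty}$ to be the monodromy representation of the flat bundle $(V,\nabla)|_{U_{an}}$ provided by the Riemann--Hilbert correspondence; since $(V,\nabla)$ is defined over $K$ and, being periodic, has local residues with roots of unity as eigenvalues, this is a representation of $\pi_1(U_{an})$ of dimension $\rank(V)$ defined over $\overline{\Q}$. It involves no auxiliary choice and therefore will serve to rigidify the Tate-twist normalization of the finite-place terms.

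For a finite place $v\mid p$ of $K$ outside a finite exceptional set $T$ --- which I enlarge as needed, in particular to contain the places dividing $M$, those with residue characteristic $\le\rank(V)$, and those at which $K_v/\Q_p$ ramifies, so that $\sO_{K_v}=W(k(v))$ and condition (ii) holds --- I would proceed as follows. Fixing $\C\cong\C_p$, base change the chosen spreading-out $(\sC,\sD,\sV,\nabla,\mathcal{F}il)$ to $W(k(v))$ along $v$, and write $\sU:=\sC-\sD$. By condition (iii) its special fibre underlies a periodic parabolic Higgs--de Rham flow over $C_{\log}/k(v)$ of period $f$, hence by the parabolic logarithmic equivalence \cite[Theorem 1.1]{LSYZ} (reduced to the case of trivial parabolic structure via the cyclic cover of Lemma \ref{periodic parabolic pulls back to parabolic}) it corresponds to a strict $p$-torsion logarithmic Fontaine--Faltings module on $(\sC,\sD)$ with $\F_{p^f}$-endomorphism structure. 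Faltings' functor \cite{Fa88} turns this into a crystalline $\F_{p^f}$-local system on the generic fibre $\sU_{K_v}$; restricting to the geometric \'etale fundamental group, identifying $\sU_{\overline{K_v}}$ with $U_{\overline{\Q}}$ through the fixed embeddings, and invoking Grothendieck's comparison $\pi_1^{\mathrm{et}}(U_{\overline{\Q}})\cong\widehat{\pi_1(U_{an})}$, one obtains $\rho^V_v\colon\pi_1(U_{an})\to\Gl_{\rank(V)}(\overline{\F_p})$. With the additional input that the mod-$p$ flow lifts compatibly along the tower $W_n(k(v))$ --- which it does because the de Rham bundle with its Hodge filtration is already defined over $W(k(v))$ and the inverse Cartier transform of \S\ref{section:parabolic} is compatible with truncation --- the same recipe upgrades $\rho^V_v$ to a $p$-adic crystalline representation over a finite extension of $\Q_p$.

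It then remains to see that $\rho^V$ is independent of the choices. Independence of the spreading-out is proved exactly as Lemma \ref{independence of integral model}: any two spreading-outs are dominated by a common one, over which the inverse Cartier transform, and hence the entire construction, is compatible. Replacing the period $f$ by a multiple only enlarges the coefficient field inside $\overline{\F_p}$ and leaves the representation unchanged. Finally, for fixed $(\sC,\sD,\sV,\nabla,\mathcal{F}il)$ the periodic flow structure is unique up to an overall shift of the filtration indices: by Corollary \ref{vanishing chern classes and semistability} the associated parabolic Higgs bundle is semistable of parabolic degree zero, and by Lemma \ref{abelian category} (which rests on \cite[Theorem 2.1]{Fa88}) the pertinent category of Fontaine--Faltings modules is abelian, so the module attached to $(V,\nabla,Fil)$ is determined up to the autoequivalence effected by a Tate twist --- which multiplies $\rho^V_v$ by a power of the (mod-$p$) cyclotomic character and fixes $\rho^V_{v_\infty}$. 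Assembling these facts yields the family $\rho^V$, well defined up to discarding finitely many finite places and up to one simultaneous Tate twist, which is the assertion.

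The principal difficulty is not the diagram chasing but the control of \emph{filtrations}: establishing that the characteristic-$p$ periodic flow through $(V,\nabla,Fil)$ is essentially unique (so that $\rho^V_v$ genuinely depends only on $V$) and, for the $p$-adic refinement, that mod-$p$ periodicity of period $f$ together with the integral model forces period-$f$ periodicity over $W(k(v))$; both hinge on the rigidity of the Hodge filtrations occurring along a Higgs--de Rham flow and on the abelianness in Lemma \ref{abelian category}. By contrast, the passage through the various comparison isomorphisms of fundamental groups, though it requires attention to the geometric-versus-arithmetic distinction, is routine. I note finally that Conjecture \ref{motivic conjecture} predicts $\rho^V$ to be the system of realizations of a single motive over $U$, which would in particular make its members genuinely compatible.
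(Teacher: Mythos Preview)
Your overall architecture is close to the paper's, but the heart of the matter --- \emph{why} the periodic flow through $(V,\nabla,Fil)$ is essentially unique --- is where your argument and the paper's diverge, and yours has a gap.

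The paper exploits irreducibility in a sharp way. Since $V$ is irreducible and periodic, Lemma~\ref{openess of completely reducibility}(iii) gives that $\Gr V$ is parabolic \emph{stable} (not merely semistable). By \cite[Lemma~7.1]{LSZ}, stability of the associated graded forces $Fil$ to be the Simpson filtration $Fil_S$, up to a shift of index; this is exactly the Tate-twist ambiguity in the statement, and it pins down $Fil_0$. Then Proposition~\ref{black hole principle} (the ``black hole principle'') propagates stability along the flow: every intermediate Higgs term is stable, so at each step the Hodge filtration is again forced to be the Simpson filtration. This is what makes the entire periodic flow canonical and hence makes $\rho^V(s)$ depend only on $V$.

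Your uniqueness argument instead invokes semistability (Corollary~\ref{vanishing chern classes and semistability}) and the abelianness of the Fontaine--Faltings category (Lemma~\ref{abelian category}). Neither suffices. Abelianness is a statement about kernels and cokernels of \emph{morphisms}; it says nothing about how many periodic-flow structures a fixed de Rham bundle can carry. And semistability alone does not force a Griffiths-transverse filtration to be the Simpson filtration: that implication needs stability. Without the stability input you have not excluded the possibility of two genuinely different flows through $(V,\nabla,Fil)_s$ producing non-isomorphic Fontaine modules.

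A secondary point: the paper produces only $\F_{p^f}$-valued representations at finite places. Your claim that the mod-$p$ flow lifts along the $W_n(k(v))$-tower ``because the de Rham bundle is already defined over $W(k(v))$'' is not justified; having an integral model of the de Rham data does not by itself lift the \emph{periodicity} isomorphism $\phi$, and the paper makes no such assertion. You should drop the $p$-adic upgrade or acknowledge it as an additional, nontrivial step beyond what is being proved here.
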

\begin{proof}
For the infinite place, $V$ defines an irreducible $\C$-local system over $U_{an}$, that is, a representation
$$
\rho^V(\infty): \pi_1(U'_{an}) \to \Gl(\C).
$$
Using Lemma \ref{openess of completely reducibility} (iii) below, $\Gr(V)$ is stable. It follows from \cite[Lemma 7.1]{LSZ}, $Fil$ coincides with the Simpson filtration $Fil_S$ up to a Tate twist (see \cite[Proposition 6.9]{LSZ} for the notion of \emph{Simpson filtration}). So we assume $Fil=Fil_S$ in the following.  Fix $s\in S$ which represents a finite place of $K$.  Then Proposition \ref{black hole principle} asserts that each Higgs term in a periodic de Rham-Higgs flow defining the property (iii) is stable. Again by \cite[Lemma 7.1]{LSZ}, one may actually take the grading functor with respect to the Simpson filtration in the periodic de Rham-Higgs flow. This makes a unique choice of periodic flow initializing $(\sV,\nabla,\mathcal{F}il)_s$ (without changing the period). Now by \cite[Theorem 1.2]{LSZ} and the Fontaine-Laffaille-Faltings correspondence \cite{Fa88}, we obtain a representation as follows:
$$
\rho^V(s): \pi_1(U_{an})\to \widehat{\pi_1(U_{an})}\cong \pi_1(U_{\C_p})\to \Gl(\F_{p^f}).
$$
By the $p$-adic Simpson correspondence \cite{Fa05}, $\rho^V(s)$ is irreducible.

\end{proof}

Let $V=(V,\nabla,Fil)$ be an irreducible periodic de Rham bundle over a smooth complex curve $U=C-D$.  Periodicity implies $\nabla$ is globally nilpotent (with respect to any spreading-out of $V$). The celebrated monodromy theorem of N. Katz \cite[13.0.1]{Kat70} asserts that $\nabla$ has only regular singularities and rational residues at $D$. Therefore, Lemma \ref{canonical parabolic extension} gives us a unique adjusted parabolic de Rham bundle $\bar V=(\bar V,\bar \nabla,\overline{Fil})$ over $C_{\log}$, which is moreover irreducible. 
\begin{conjecture}(Extension conjecture)\label{extension conjecture}
Let $U$ be a smooth complex curve. The canonical parabolic extension of an irreducible periodic de Rham bundle over $U$ is again periodic. 	
\end{conjecture}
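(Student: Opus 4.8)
The plan is to \emph{manufacture}, for almost all primes $p$, a periodic parabolic Higgs--de Rham flow over the reduction $(\sC_s,\sD_s)$ whose initial term is $\bar V_s$, by applying the canonical parabolic extension functor of Lemma \ref{canonical parabolic extension} term by term to a periodic flow over the open curve $\sU_s$. Periodicity of $V$ over $U$ implies $\nabla$ is globally nilpotent, so Katz's monodromy theorem \cite{Kat70} gives regular singularities and rational residues along $D$; hence $\bar V=(\bar V,\bar\nabla,\overline{Fil})$ over $C_{\log}$ exists and is adjusted, and irreducibility passes to $\bar V$. First I would fix a spreading-out of $(U,V,\nabla,Fil)$ together with a proper closed $Z\subset S$ so that for every geometric point $s\notin Z$ and every $W_2(k(s))$-lifting there is a periodic (non-logarithmic) flow over $\sU_s$ of period $\leq f$; after enlarging $S$ I may assume $(\sC,\sD,\scrV,\nabla,{\mathscr Fil})$ is defined over $S$ with $\scrV$ restricting over $\sU$ to the chosen model and $\Char k(s)>\rank V$.

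For such an $s$, a choice of $W_2$-lifting induces compatible liftings of $\sU_s$ and of $(\sC_s,\sD_s)$, and I would replace each term of the $\sU_s$-flow by its canonical parabolic extension and check that the three operations of the flow propagate. The grading $\Gr$ commutes with canonical extension: the extension of a Hodge filtration by subbundles is filtered free, and its graded is the canonical extension of $\Gr_{Fil_i}(V_i,\nabla_i)$ (the extension analogue of Proposition \ref{parabolic pullback commutes with grading functor}); since the residue of the graded Higgs field is the graded of $\mathrm{res}\,\bar\nabla_i$, it shifts the Hodge filtration by one and is nilpotent, so the extended object lies in $\HIG^{\ppar}_{p-1,N}$. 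The inverse Cartier transform commutes with canonical extension because $\sU_s\hookrightarrow\sC_s$ is strict and $C^{-1}$ commutes with localization (the local content of Proposition \ref{OV correspondence for tame root stack} and Sublemma \ref{inverse Cartier commutes with finite etale}): thus $C^{-1}_{\ppar}$ on $(\sC_s,\sD_s)$ restricts over $\sU_s$ to the Ogus--Vologodsky transform of the $\sU_s$-flow, and since $C^{-1}_{\ppar}(\overline{(E_i,\theta_i)})$ is an adjusted parabolic flat bundle it must be the canonical extension of $(V_{i+1},\nabla_{i+1})$ by the uniqueness in Lemma \ref{canonical parabolic extension}, with nilpotent $p$-curvature of level $\leq p-1$ by genericity, hence in $\MIC^{\ppar}_{p-1,N}$. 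The closing isomorphism $\psi\colon(V_f,\nabla_f)\xrightarrow{\sim}(V_0,\nabla_0)$ extends by functoriality of canonical extension, but Proposition \ref{change of par weights under inverse Cartier} shows $C^{-1}_{\ppar}$ multiplies the weights at $\sD_i$ by $p\bmod N$, so after $f$ steps the weights of $\overline{(V_f,\nabla_f)}$ are $p^f$ times those of $\overline{(V_0,\nabla_0)}$; hence $\bar\psi$ is an isomorphism of \emph{parabolic} flat bundles only if $p^f\equiv1\pmod N$. Since a flow of period $f$ is also one of any multiple $f'$, and $p^{f'}\equiv1\pmod N$ for every $p$ coprime to $N$ once $\lambda\mid f'$ with $\lambda$ the exponent of $(\Z/N\Z)^{\times}$, I would run the extended flow with the \emph{fixed} period $f':=\mathrm{lcm}(f,\lambda)$ and closing isomorphism $\psi^{\circ(f'/f)}$: then the weights close up, giving a periodic parabolic flow over $(\sC_s,\sD_s)$ of period $f'$ initializing $\bar V_s$, and by Lemma \ref{independence of integral model} this makes $\bar V$ periodic.

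A variant that bypasses the weight bookkeeping is to first pull back along a cyclic cover $\pi\colon C'\to C$ of order $N$ totally ramified over $D$ (plus, if necessary, an auxiliary reduced divisor disjoint from $D$), as in Notation \ref{parabolic_notation} and Lemma \ref{periodic parabolic pulls back to parabolic}. Then $\pi^*V$ is periodic over $U'=\pi^{-1}U$ of period $f$, its residues along $D'$ become integral, and $\pi^{*}_{\ppar}\bar V$ has trivial parabolic structure; the construction above then takes place with zero weights in $\HIG^{lf}_{p-1}$, $\MIC^{lf}_{p-1}$, where one checks inductively, using that $C^{-1}$ on $U'_s$ is the restriction of the logarithmic $C^{-1}$, that the $\nabla_i$ retain nilpotent residues. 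This gives periodicity of $\pi^{*}_{\ppar}\bar V$ of period $f$; one then descends, observing that the flow obtained is $G$-equivariant (being the canonical extension of the pullback of the flow for $V$) and that the Biswas--Iyer--Simpson correspondence (Proposition \ref{Biswas correspondence for Higgs/flat bundles in positive char}), compatible with $C^{-1}$ by Lemma \ref{parabolic inverse cartier commutes with pullback}, turns a $G$-equivariant periodic logarithmic flow over $(C'_s,D'_s)$ into a periodic parabolic flow over $(C_s,D_s)$; hence $\bar V$ is periodic.

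I expect the main obstacle to be twofold. First, \emph{uniformity in $p$}: one must spread out the periodic flows over the various $\sU_s$ compatibly, so that their canonical extensions assemble into a single model over $S$ — and this is exactly where irreducibility is needed, since by $\Gr$-stability and the black hole principle (Proposition \ref{black hole principle}) the intermediate Higgs terms are stable and the Hodge filtrations are forced to be the Simpson filtrations, rigidifying the flow. Second, the \emph{comparison} of the non-logarithmic flow on the open curve, which never sees $D$, with the logarithmic/parabolic flow on the compactification: one must control inductively that the canonical extensions of all intermediate terms land in $\MIC^{\ppar}_{p-1,N}$ (nilpotent residues and nilpotent $p$-curvature of level $\leq p-1$), i.e.\ that these two a priori unrelated flows are forced to agree. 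Both points come down to careful, but in principle checkable, statements about the inverse Cartier transform, Deligne's canonical extension, and Langton-type extension of filtrations.
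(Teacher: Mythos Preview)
The paper does \emph{not} prove this statement: it is stated as a conjecture (the ``Extension conjecture'') and left open. There is therefore no proof in the paper to compare against, and your proposal is an attempt to settle what the authors regard as a genuine open problem.

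Your argument has a structural gap that I do not see how to close with the tools in the paper. The core move is to take the periodic flow on $\sU_s$ (characteristic $p$) and apply ``canonical parabolic extension'' term by term to obtain a flow on $(\sC_s,\sD_s)$. But Lemma \ref{canonical parabolic extension} is a characteristic-zero statement: Deligne's canonical extension relies on the Riemann--Hilbert correspondence and the existence of a fundamental solution matrix with prescribed monodromy, and there is no analogous functor from flat bundles on an open affine curve in characteristic $p$ to adjusted logarithmic (let alone parabolic) flat bundles on the compactification. The intermediate terms $(V_i,\nabla_i)$ of the $\sU_s$-flow are characteristic-$p$ objects that never saw $D$; the symbol $\overline{(V_i,\nabla_i)}$ has no meaning without first producing such a functor, and the paper does not supply one. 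Your own second ``obstacle'' paragraph essentially concedes this, but the issue is not a checkable technicality---it is the entire content of the conjecture.

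Even if one sidesteps this by defining the $\sC_{s,\log}$-flow intrinsically from $\bar V_s$ (via $C^{-1}_{\ppar}\circ\Gr$ and Simpson filtrations, say), the periodicity is not inherited from $\sU_s$ in the way you suggest. The closing isomorphism $\psi$ on $\sU_s$ need not extend to an isomorphism of the parabolic bundles on $\sC_s$: an isomorphism of vector bundles over a dense open does not in general extend to an isomorphism of two given extensions (think $\sO_C(-P)\hookrightarrow\sO_C$). Functoriality of canonical extension would give this---but again, only in characteristic zero. The cyclic-cover variant does not escape the problem: to even speak of ``nilpotent residues of $\nabla_i$'' along $D'_s$ you must already have chosen an extension of $\nabla_i$ across $D'_s$, which is exactly what is in question.
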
 
\begin{remark}\label{simpson filtration}
	Let $(V,\nabla)$ be an irreducible algebraic connection over $U$. Suppose there is some Hodge filtration $Fil$ on $V$ making the de Rham bundle $(V,\nabla,Fil)$ periodic. Then the conjecture predicts that, up to a shift of index, $Fil$ must be the restriction of the Simpson filtration\footnote{The existence of the Simpson filtration for an adjusted parabolic connection over $C_{\log}$ is shown by the same argument for \cite[Proposition 6.9]{LSZ}.} on the canonical parabolic extension of $(V,\nabla)$ over $C_{\log}$. Indeed, assuming the conjecture, the periodicity of $(V,\nabla,Fil)$ implies the periodicity of its canonical parabolic extension $(\overline V,\overline \nabla,\overline{Fil})$. By Lemma \ref{openess of completely reducibility} (iii), $Gr_{\overline{Fil}}(\overline{V},\overline{\nabla})$ is parabolic stable. Using the argument in the proof of Lemma 4.1 \cite{LSZ}, one concludes that $\overline{Fil}$ must be the Simpson filtration up to a shift of index. 
\end{remark}

Now we assume $U$ and $V$ are defined over $\bar \Q\subset \C$. Then the pair $(C,D)$ as well as the canonical extension $\bar V$ are also defined over $\bar \Q$.  Assume Conjecture \ref{extension conjecture}. Then we may take some finite cover $\pi: C'\to C$ defined over $\bar \Q$, \'etale over some nonempty open subset of $U$, such that $\pi^*_{\ppar}\bar V$ is of trivial parabolic structure and periodic (Lemma \ref{periodic parabolic pulls back to parabolic}). Therefore, the above discussion about an irreducible periodic logarithmic de Rham bundle applies.  Without assuming the conjecture, we may still have the \emph{preperiodicty} for $(\pi^*_{\ppar}\bar \sV)_s$ over any geometric point $s\in S$. The proof for preperiodicity is similar to that of \cite[Corollary 6.8]{LSZ}. Since we may always use the Simpson filtration defining the preperiodic de Rham-Higgs flow over $s$, we obtain an essentially unique representation of $\pi_1(U'_{an})$ into $\Gl(k(s))$ (where $U'$ is a finite \'etale cover of some nonempty open subset of $U$).

As we shall see, there is an apparent asymmetry between these two notions of periodic objects. Let $\DR(U)$ be the category of de Rham bundles over $U$. It is the category $\DR^{\ppar}(\sC_{\log}/S)$ in \S3, where $S=\Spec \C$ and $\sC_{\log}=(U,\emptyset)$. Similarly, we let $\mathrm{HG}(U)=\mathrm{HG}^{\ppar}(\sC_{\log}/S)$ be the category of graded Higgs bundles over $U$. The periodic de Rham (resp. Higgs) bundles form the subcategory $\PDR(U)$ (resp. $\PHG(U)$) of $\DR(U)$ (resp. $\mathrm{HG}(U)$). Note the grading functor restricts to $\Gr\colon \PDR(U)\to \PHG(U)$. This is because by taking the grading, a periodic de Rham-Higgs flow with the initial de Rham term $V$ gives rise to a periodic Higgs-de Rham flow with initial Higgs term $\Gr V$.
	
	\begin{proposition}\label{Tannakian cat}
		Let $U$ be a smooth complex curve. Then the category $\PDR(U)$ of periodic de Rham bundles over $U$ is a neutral Tannakian category over $\C$.
	\end{proposition}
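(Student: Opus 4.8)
The plan is to check the axioms of a neutral Tannakian category: $\PDR(U)$ should be a rigid abelian $\C$-linear symmetric monoidal category with $\End(\mathbf 1)=\C$, together with an exact faithful $\C$-linear tensor functor to finite-dimensional $\C$-vector spaces. All the monoidal data---the tensor product $(V_1,\nabla_1,Fil_1)\otimes(V_2,\nabla_2,Fil_2)$ with the induced Hodge filtration, the internal hom, the unit $\mathbf 1=(\sO_U,d,Fil_{\mathrm{tr}})$, and the associativity and symmetry constraints---together with $\C$-linearity are inherited from the ambient category $\DR(U)$ of all de Rham bundles, where they are evident. Since $\mathbf 1$ is periodic (the evident period-one flow $(\sO_U,0)\xrightarrow{C^{-1}}(\sO_U,d)\xrightarrow{\Gr}(\sO_U,0)$) and $\End_{\DR(U)}(\sO_U,d)=H^0_{\mathrm{dR}}(U)=\C$ because $U$ is connected, one gets $\End_{\PDR(U)}(\mathbf 1)=\C$. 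So the substance of the proposition reduces to three assertions: (1) $\PDR(U)$ is closed under $\otimes$ and internal hom and contains duals, so it is rigid; (2) $\PDR(U)$ is abelian; and (3) it admits a fibre functor.

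For (1) the point is the spreading-out built into Definition \ref{arithmetic_definition}: periodicity over $\C$ constrains only the reductions at geometric points of $S$ outside a proper closed subscheme $Z$, so one may freely enlarge $Z$ to discard finitely many small primes. Given periodic $V_1,V_2$ of periods $f_1,f_2$, spread both out over a common $S=\Spec A$ and shrink $S$ so that moreover $\Char k(s)-1\ge \rank V_1+\rank V_2$ for every geometric point $s\in S-Z$; since the nilpotency exponent of a Higgs field on a bundle of rank $r$ is at most $r$, Lemma \ref{inverse Cartier commutes with tensor product} applies fibrewise, and the termwise tensor product (with the induced Hodge filtrations) of a periodic Higgs--de Rham flow for $(V_1)_s$ with one for $(V_2)_s$---both first passed to the common period $\mathrm{lcm}(f_1,f_2)$---is again a periodic flow, using also that $\Gr$ of a tensor is the tensor of the $\Gr$'s. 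Hence $V_1\otimes V_2\in\PDR(U)$, of period $\mathrm{lcm}(f_1,f_2)$. Duals are handled the same way (and more easily, since passing to the dual does not raise the nilpotency exponent, so no bound on $p$ is needed), and the usual evaluation and coevaluation are morphisms of $\DR(U)$, hence of $\PDR(U)$ once the objects are known to be periodic; this gives closure under internal hom and rigidity. I expect this to be the main obstacle: in positive characteristic the category of periodic objects is genuinely \emph{not} $\otimes$-closed, and the statement over $\C$ holds only because periodicity there is \emph{defined} so as to ignore small primes, so the crux is to organize the spreading-out so that the hypothesis $l_1+l_2\le p-1$ of Lemma \ref{inverse Cartier commutes with tensor product} holds at every relevant fibre \emph{with a uniform bound on the period}.

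For (2), a de Rham bundle over $U$ carries no parabolic data, so the reductions of a periodic de Rham bundle over $U$ lie in the abelian category $\LPDR(\tilde{\sC}_{s,\log}/k(s))$ of Lemma \ref{abelian category}(ii). Given $\alpha\colon V_1\to V_2$ in $\PDR(U)$, form $\ker\alpha$ and $\mathrm{coker}\,\alpha$ inside $\DR(U)$; after spreading $\alpha$ out over some $S$ and deleting the finitely many primes at which $\mathrm{coker}\,\alpha$ fails to be $S$-flat, the formation of kernel and cokernel commutes with reduction modulo $p$, and Lemma \ref{abelian category}(ii) shows that $(\ker\alpha)_s$ and $(\mathrm{coker}\,\alpha)_s$ are periodic of period $f$. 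Hence $\ker\alpha,\mathrm{coker}\,\alpha\in\PDR(U)$ with periodic structural maps, and the canonical surjection $\mathrm{coim}\,\alpha\to\mathrm{im}\,\alpha$ is an isomorphism, since it is a surjection of vector bundles on $U$ that becomes an isomorphism after reduction at every $s\in S-Z$, hence at the generic point of $S$, hence over $\C$; equivalently, every morphism of periodic de Rham bundles is strict for the Hodge filtration. Bi-exactness of $\otimes$ is clear on underlying vector bundles. Thus $\PDR(U)$ is an abelian $\otimes$-category.

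For (3), fix a base point $x\in U(\C)$ and let $\omega_x\colon\PDR(U)\to\mathrm{Vec}_\C$ send $(V,\nabla,Fil)$ to the fibre $V\otimes_{\sO_U}k(x)$. It is $\C$-linear, symmetric monoidal, and valued in finite-dimensional spaces by construction. It is exact, since by (2) a short exact sequence in $\PDR(U)$ underlies a short exact sequence of vector bundles on $U$, which remains exact after $\otimes_{\sO_U}k(x)$. It is faithful, since if $\omega_x(\alpha)=0$ then $\mathrm{im}\,\alpha\subset V_2$, being $\nabla_2$-stable and hence saturated, hence a subbundle, has vanishing fibre at $x$ and is therefore the zero bundle, so $\alpha=0$. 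Therefore $\omega_x$ is a fibre functor and $\PDR(U)$ is a neutral Tannakian category over $\C$.
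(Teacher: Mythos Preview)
Your proof is correct and follows essentially the same approach as the paper: closure under $\otimes$ and duals via Lemma \ref{inverse Cartier commutes with tensor product} after shrinking $S$, the abelian property via Lemma \ref{abelian category}(ii), and the fiber functor $\omega_x$ at a point of $U$. The only minor difference is that the paper obtains exactness and faithfulness of $\omega_x$ by factoring it as $\PDR(U)\to\MIC(U)\xrightarrow{\omega_x}\mathrm{Vect}_\C$ and invoking the Riemann--Hilbert correspondence, rather than arguing directly as you do.
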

	Before beginning the proof, we remind the reader that the category $\DR(U)$ is not abelian.  In particular, although $\MIC(U)$ is abelian, adding the filtration (and demanding compatibility with the filtration for morphisms) destroys the property of being an abelian category.
	\begin{proof}
		By \cite[Theorem 3.2]{LSZ} and \cite[Theorem 2.1]{Fa88}, it follows that the subcategory $\PDR(U)$ is abelian. To show the $\C$-linear abelian category $\PDR(U)$ is neutral Tannakian over $\C$, we use a criterion furnished by Deligne \cite[Proposition 1.20 ]{DM}. Fix a base point $x\in U$. Taking the fiber of an object over $x$ provides the fiber functor $\omega_{DR}\colon \PDR(U)\to \mathrm{Vect}_{\C}$. Note that $\PDR(U)$ is a rigid tensor category. By Lemma \ref{inverse Cartier commutes with tensor product}, $\PDR$ is a tensor subcategory and obviously a rigid tensor subcategory (namely, the dual of a periodic object is again periodic). A unit object in $\PDR(U)$ is given by $(\sO_U,d,Fil_{tr})$. It is clear that the natural morphism $\C$ to the endomorphism of the unit object is an isomorphism. It remains to show the above fiber functor is faithful and exact. Let $\MIC(U)$ be the category of flat bundles over $U$. Then, the forgetful functor of abelian categories $F: \PDR(U)\to \MIC(U)$ is exact and faithful. By Riemann-Hilbert correspondence, the fiber functor $\omega_x: \MIC(U)\to \mathrm{Vect}_{\C}$ is exact and faithful. As $\omega_{DR}=\omega_x\circ F$, $\omega_{DR}$ is also exact and faithful.  
	\end{proof}
	We shall come back to the discussion of categorical properties of $\PDR(U)$ after we establish our main discovery: de Rham periodicity theorem.

	\section{De Rham periodicity theorem}\label{section:proof} \label{section:geometric_objects}
	In this section, we shall establish Theorem \ref{main result}. Let $U$ be a smooth complex curve and $f: X\to U$ be a smooth projective morphism. Let $H_{dR}$ be a Gau{\ss}-Manin system associated to $f$. By Deligne's semisimplicity theorem, $H_{dR}$ decomposes into direct sum of irreducible factors. Say $H_{dR}=\oplus_i H_{i}$. Take a spreading-out of the decomposition, defined over some integral noetherian scheme $S$. Then, it follows from \cite[Theorem 6.2]{Fa88} and \cite[Proposition 3.3]{LSZ} that $H_{dR,s}$ is one-periodic for any geometric point $s$ (with respect to any $W_2$-lifting of $s$ inside $S$) away from a proper closed subscheme $Z\subset S$. Although the isomorphism class of the total sum $H_{dR,s}$ is invariant under the action of the flow operator $C^{-1}\circ\Gr$, one loses the insight about its action on the set of direct factors $\{H_{i,s}\}$. For instance, suppose there is an isomorphism $\alpha: A\oplus B\cong A\oplus B$; then it may very well happen that $\alpha(A)$ is isomorphic neither to $A$ nor to $B$.

	Our key idea is to extend the decomposition over the compactification $C$, so that the stability of irreducible factors of the associated graded parabolic Higgs bundles gives us the necessary control of the action in question. Recall that, by Lemma \ref{canonical parabolic extension}, one has the canonical parabolic extension $\bar H_{dR}$ of $H_{dR}$ over $C_{\log}$. 
	
	\begin{definition}
	Let $k$ be an algebraically closed field, and $C$ a smooth projective curve over $k$. A parabolic connection $(V,\nabla)$ over $C_{\log}/k$ is said to be \emph{irreducible} if there does not exist a nontrivial proper $\nabla$-invariant subbundle $W\subset V$ whose parabolic degree equals that of $V$. A parabolic de Rham bundle $(V,\nabla,Fil)$ over $C_{\log}/k$ is said to irreducible if $(V,\nabla)$ is irreducible.
	\end{definition}
	We maintain the assumptions of the beginning of the section. 
	\begin{lemma}\label{complete reducibility}
		Let $\bar H_{dR}$ be the canonical parabolic Gau\ss-Manin system attached to $f$. Then $\bar H_{dR}$ is completely reducible in the category of parabolic de Rham bundles over $C_{\log}$. Namely, there is a decomposition into a direct sum of parabolic de Rham bundles over $C_{\log}$: 
		$$
		\bar H_{dR}=\bigoplus_{i=1}^{n}\bar H_i^{\oplus m_i}, \quad m_i\geq 1,
		$$
		where $\bar H_i\subset \bar H_{dR}$ is an irreducible parabolic de Rham subbundle for each $i$, and $\{\bar H_i\}$s are pairwisely non-isomorphic. 
	\end{lemma}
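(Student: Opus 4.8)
The plan is to deduce the statement almost formally from Deligne's semisimplicity theorem over $U$ (recalled at the beginning of this section) together with the functoriality of the canonical parabolic extension established in Lemma \ref{canonical parabolic extension}. By that semisimplicity theorem the Gau{\ss}-Manin system, regarded as a de Rham bundle over $U$, decomposes as $H_{dR}=\bigoplus_{i=1}^n H_i^{\oplus m_i}$, where the $H_i$ are pairwise non-isomorphic irreducible de Rham bundles over $U$; here the Hodge filtration splits along the isotypic decomposition because each multiplicity space inherits a Hodge structure and a finite filtered vector space is a direct sum of shifted lines, so after grouping isomorphic summands the shifts get absorbed into the $H_i$. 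Each $H_i$ is a direct summand of $H_{dR}$, which has regular singularities and rational residues along $D$ (being the restriction to $U$ of the adjusted parabolic de Rham bundle $\bar H_{dR}$), hence each $H_i$ has regular singularities and rational residues as well.

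Next I would invoke Lemma \ref{canonical parabolic extension}: the canonical parabolic extension is an equivalence between de Rham bundles over $U$ with regular singularities and rational residues and adjusted parabolic de Rham bundles over $C_{\log}$, compatible with direct sums. Writing $\bar H_i$ for the canonical parabolic extension of $H_i$, and using that $\bar H_{dR}$ is itself the canonical parabolic extension of $H_{dR}|_U$ (exactly the compatibility of Steenbrink's logarithmic construction with Deligne's canonical extension that was used to define the parabolic Gau{\ss}-Manin system), compatibility with direct sums yields an isomorphism of parabolic de Rham bundles over $C_{\log}$
$$
\bar H_{dR}\;\cong\;\bigoplus_{i=1}^n \bar H_i^{\oplus m_i},
$$
so each $\bar H_i$ is a parabolic de Rham subbundle of $\bar H_{dR}$. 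Full faithfulness of the extension functor gives $\Hom(\bar H_i,\bar H_j)\cong\Hom(H_i,H_j)=0$ for $i\neq j$, and likewise there is no isomorphism $\bar H_i\cong\bar H_j$; so the $\bar H_i$ are pairwise non-isomorphic.

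It then remains to verify that each $(\bar H_i,\bar\nabla)$ is irreducible in the sense of the definition preceding the lemma, and this is the only step that is not purely formal. Let $W\subset\bar H_i$ be a nontrivial proper $\bar\nabla$-invariant parabolic subbundle. Its restriction $W|_U$ is a $\nabla$-invariant subbundle of $H_i|_U$, hence equals $0$ or $H_i|_U$ because $H_i$ is irreducible over $U$. In the first case $W_0$ is a torsion-free subsheaf of $(\bar H_i)_0$ vanishing on $U$, so $W_0=0$ and thus $W=0$, contradicting nontriviality. Therefore $W|_U=H_i|_U$; then $W$ and $\bar H_i$ have the same generic fibre, $W\subsetneq\bar H_i$, and the quotient parabolic sheaf $\bar H_i/W$ is a nonzero torsion parabolic sheaf supported on $D$. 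Such a sheaf has strictly positive parabolic degree — its weight-zero term is a nonzero torsion $\sO_C$-module of length at least one, while the parabolic contribution is non-negative — so by additivity of the parabolic degree $\pdeg W<\pdeg\bar H_i$. Thus $\bar H_i$ carries no nontrivial proper $\bar\nabla$-invariant subbundle of parabolic degree equal to its own, i.e. $(\bar H_i,\bar\nabla)$ is irreducible; equipped with the induced Hodge filtration it is an irreducible parabolic de Rham bundle. Assembling the three points gives the asserted decomposition.

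I expect the main obstacle to be precisely this last paragraph: reconciling the definition of irreducibility of a parabolic connection, formulated in terms of invariant subbundles of \emph{equal} parabolic degree, with ordinary irreducibility of the restriction to $U$. Everything reduces to the parabolic-degree inequality above; morally the definition of ``irreducible'' has been tailored so that it is automatically inherited by the canonical (Deligne) extension, and the rest of the argument is a transport of structure along the equivalence of Lemma \ref{canonical parabolic extension}.
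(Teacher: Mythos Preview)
Your proof is correct and follows essentially the same route as the paper: invoke Deligne's semisimplicity for $R^if_*\C$ (with the $\C$-VHS structure on the multiplicity spaces splitting the Hodge filtration), then transport the resulting decomposition of de Rham bundles over $U$ to $C_{\log}$ via the equivalence of Lemma~\ref{canonical parabolic extension}. The paper simply stops there, saying ``an application of Lemma~\ref{canonical parabolic extension} concludes the proof''; you have supplied the verification, which the paper leaves implicit, that irreducibility in the parabolic sense survives the extension. Your case analysis is sound; the only imprecision is the appeal to ``positive parabolic degree of a torsion quotient,'' which is cleaner to justify by noting that under the BIS correspondence a proper same-rank parabolic subbundle pulls back to a proper same-rank $G$-equivariant subbundle on the cyclic cover, hence has strictly smaller ordinary degree and therefore strictly smaller parabolic degree.
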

	\begin{proof}
		The lemma is a consequence of \emph{Deligne's semisimplicity theorem} \cite{De71}, \cite{De84}\footnote{This formulation of semisimplicity theorem follows from the argument of \cite[Proposition 1.13]{De84}. More precisely, Deligne states a semisimplicity result for weight 0 polarized variations of Hodge structures, but the weight never enters into the argument. See also \cite[Theorem 2.1]{Mol} and the succeeding discussion on page 332.}: for each $i\geq 0$, there is a decomposition
		\begin{equation*}\label{eqn:decomposition}\displaystyle 
			R^i f_{*}\C= \bigoplus_{j=1}^{l}\mathbb{V}_j\otimes W_j,
		\end{equation*}
		where the $\mathbb{V}_j$s are mutually non-isomorphic irreducible complex local systems over $U_{an}$ and the $W_j$s are complex vector spaces. Moreover, each $\mathbb{V}_j$ carries a structure of $\C$-VHS and $W_j$ a structure of constant $\C$-VHS, unique up to a shift of bigrading, such that the above equation is an equality of $\C$-VHS.
		
		So by the Riemann-Hilbert correspondence, the Gau{\ss}-Manin connection $\nabla^{GM}$ decomposes into irreducible ones. Moreover, because of the $\C$-VHS property in the Deligne's decomposition, the Hodge filtration $F_{hod}$ splits accordingly-it suffices to write the constant $\C$-VHS structure on $W_i$ into a direct sum of Tate twists. Therefore, $H_{dR}=(H_{dR},\nabla^{GM},F_{hod})$ decomposes into irreducible ones in the category of de Rham bundles over $U$. Then an application of Lemma \ref{canonical parabolic extension} concludes the proof. 
	\end{proof}
	
	\begin{proposition}\label{motivic objects are determined by its generic fiber}
		Let $(V,\nabla,Fil)$ be a motivic de Rham bundle over $U$. Then its canonical parabolic extension is isomorphic to a direct summand of a parabolic Gau{\ss}-Manin system over $C_{\log}$. 
	\end{proposition}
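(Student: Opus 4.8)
The plan is to reduce the statement to the generic point $\xi$ of $U$, split off the subsystem there as a \emph{filtered} direct summand using Deligne's semisimplicity theorem in the refined $\C$-VHS form already invoked in Lemma \ref{complete reducibility}, and then propagate the splitting first across $U$ and then across $D$ by the (additive) canonical parabolic extension functor of Lemma \ref{canonical parabolic extension}.

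First I would check that the canonical parabolic extension of $(V,\nabla,Fil)$ makes sense. By hypothesis $(V,\nabla,Fil)_\xi$ is isomorphic to a subsystem $V_\xi\subseteq (H_{dR})_\xi$, equipped with the induced filtration, for some Gau{\ss}--Manin system $H_{dR}=(H^i_{dR}(X/U),\nabla^{GM},F_{hod})$. Since the local monodromies of a $\nabla$-submodule of a Gau{\ss}--Manin system are subquotients of quasi-unipotent operators, $(V,\nabla)$ is regular singular with rational residues, so Lemma \ref{canonical parabolic extension} applies. I would also record the standard fact that the restriction functor from regular singular de Rham bundles over $U$ to de Rham bundles over $\xi$ is fully faithful: a horizontal morphism defined over a dense open extends across the interior points of $U$ (the local monodromy there is trivial), and two subbundles of a bundle on a curve agreeing over $\xi$ coincide. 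Hence it suffices to exhibit $(V,\nabla,Fil)$ as a direct summand of $H_{dR}|_U$ in the category of de Rham bundles over $U$; applying the canonical parabolic extension functor, which is additive, and using that $\bar H_{dR}$ is by construction the canonical parabolic extension of $H_{dR}|_U$, then gives the proposition.

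Now I would carry out the splitting over $U$. Restricting the decomposition of Lemma \ref{complete reducibility} to $U$ yields $H_{dR}|_U=\bigoplus_i H_i^{\oplus m_i}$ with the $H_i$ pairwise non-isomorphic irreducible de Rham bundles, and passing to $\xi$ gives $(H_{dR})_\xi=\bigoplus_i (H_i)_\xi\otimes\C^{m_i}$ with Hodge filtration $F^\bullet_{hod}=\bigoplus_i F^\bullet((H_i)_\xi)\otimes\C^{m_i}$, the multiplicity spaces $\C^{m_i}$ carrying the trivial filtration; this compatibility is exactly the $\C$-VHS refinement used in the proof of Lemma \ref{complete reducibility}. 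Each $(H_i)_\xi$ is an irreducible $k(U)$-differential module whose horizontal endomorphism ring is a finite-dimensional division algebra over the field of constants $\C$ of $k(U)$, hence equal to $\C$, so by Schur's lemma the $\nabla$-submodule $V_\xi$ is isotypic: $V_\xi=\bigoplus_i (H_i)_\xi\otimes W_i'$ for subspaces $W_i'\subseteq\C^{m_i}$. Intersecting with the Hodge filtration and using that $\C^{m_i}$ carries the trivial filtration, the induced filtration on $V_\xi$ is $\bigoplus_i F^\bullet((H_i)_\xi)\otimes W_i'$, so $V_\xi\cong (H'')_\xi$ as de Rham bundles over $\xi$, where $H'':=\bigoplus_i H_i^{\oplus \dim W_i'}$ is manifestly a direct summand of $H_{dR}|_U$. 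By the full faithfulness noted above, the isomorphism $(V,\nabla,Fil)_\xi\cong (H'')_\xi$ lifts to an isomorphism $(V,\nabla,Fil)\cong H''$ of de Rham bundles over $U$, and the canonical parabolic extension functor then identifies $\bar V$ with a direct summand of $\bar H_{dR}$.

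The step I expect to require the most care is keeping the Hodge filtration under control throughout. The point is not merely that $R^if_*\C$ is a semisimple local system -- that would only split the underlying parabolic flat bundle -- but that the Deligne decomposition can be taken in the category of polarizable $\C$-VHS, so that the isotypic $\nabla$-decomposition of $(H_{dR})_\xi$ is automatically compatible with $Fil$ and the induced filtration on the subsystem $V_\xi$ is again a direct sum of (Tate-twisted) Hodge filtrations; this is precisely what lets the splitting descend to a splitting of parabolic de Rham bundles rather than of parabolic flat bundles only. A secondary subtlety is the passage from $\xi$ back to $U$ and then across $D$, for which one uses that regular singular de Rham bundles over a smooth curve are determined by their generic fibres and that Deligne's canonical parabolic extension is additive and respects direct summands.
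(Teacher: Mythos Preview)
Your argument follows the same route as the paper's: pass to the generic point, invoke the decomposition of Lemma~\ref{complete reducibility}, split off the subsystem, and then push the splitting across $D$ with Lemma~\ref{canonical parabolic extension}. The paper compresses the middle step into the single sentence ``by complete reducibility of $\bar H_{dR}$, the subbundle is a direct factor'', whereas you try to make that step explicit via Schur's lemma at $\xi$. That is a reasonable thing to do, and your handling of the extension and full-faithfulness steps is fine.

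There is one genuine slip in your Schur step. You index the isotypic decomposition by the $H_i$ of Lemma~\ref{complete reducibility}, i.e.\ by pairwise non-isomorphic irreducible \emph{de Rham} bundles, and then assert that every $\nabla$-submodule $V_\xi$ of $\bigoplus_i (H_i)_\xi\otimes\C^{m_i}$ has the form $\bigoplus_i (H_i)_\xi\otimes W_i'$. But Schur's lemma operates in the category of flat modules, and two distinct $H_i$ may very well have isomorphic underlying flat modules (they can differ by a Tate twist, as is visible in the proof of Lemma~\ref{complete reducibility} when one decomposes the constant $\C$-VHS $W_j$ into one-dimensional pieces). In that case your displayed decomposition of $V_\xi$ need not hold: a diagonal copy of $\V_\xi$ inside $\V_\xi(0)\oplus\V_\xi(1)$ is a counterexample.

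The fix is short and does not change your strategy. Use Deligne's original indexing $R^if_*\C=\bigoplus_j\V_j\otimes W_j$ with the $\V_j$ pairwise non-isomorphic irreducible local systems and the $W_j$ genuine constant $\C$-VHS carrying a (possibly nontrivial) Hodge filtration. Schur at the flat level then gives $V_\xi=\bigoplus_j(\V_j)_\xi\otimes W_j'$ with $W_j'\subset W_j$ arbitrary subspaces. A direct computation shows that the filtration induced on $(\V_j)_\xi\otimes W_j'$ is exactly the tensor product of $F^\bullet(\V_j)_\xi$ with the filtration induced on $W_j'$. Since any filtered subspace of a finite-dimensional filtered vector space over a field admits a filtered complement, each $W_j'$ splits off inside $W_j$, and hence $V_\xi$ splits off inside $(H_{dR})_\xi$ as a filtered $\nabla$-submodule. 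The rest of your argument then goes through unchanged. (This filtered-complement point is what is implicitly used in the paper's one-line appeal to complete reducibility; you were right to flag the filtration as the place requiring care.)
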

	\begin{proof}
		By definition, there exists some nonempty Zariski open subset $U'\subset U$ and a smooth projective morphism $f: X'\to U'$ such that $V=(V,\nabla,Fil)$ is isomorphic to a de Rham subbundle of a Gau{\ss}-Manin system $H_{dR}$ associated to $f$. By Lemma \ref{canonical parabolic extension}, the canonical parabolic extension of $V|_{U'}$ is a parabolic de Rham subbundle of $\bar H_{dR}$. Because of the complete reducibility of $\bar H_{dR}$ (Lemma \ref{complete reducibility}), it follows that the canonical parabolic extension of $V|_{U'}$, which is just the canonical parabolic extension of $V$, is a direct factor of $\bar H_{dR}$. The proposition is proved. 	
	\end{proof} 
	
	We have the following parabolic Higgs version of Deligne's semisimplicity theorem. 
	\begin{lemma}\label{polystablility}
		Let $\bar H_{dR}=\bigoplus_i \bar H_i^{\oplus m_i}$ be the decomposition in Lemma \ref{complete reducibility}. The associated graded parabolic Higgs bundle $\Gr \bar H_{dR}$ is polystable of parabolic degree zero and it decomposes as
		$$
		\Gr \bar H_{dR}=\bigoplus_{i=1}^n (\Gr \bar H_i)^{\oplus m_i}
		$$
		with each $\Gr \bar H_i$ parabolic stable, and $\{\Gr \bar H_i\}$s are pairwisely non-isomorphic as graded parabolic Higgs bundles.
	\end{lemma}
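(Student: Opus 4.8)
The plan is to reduce the statement to the nonabelian Hodge correspondence on the logarithmic curve $C_{\log}$, applied to the polarized variations of Hodge structure extracted in the proof of Lemma \ref{complete reducibility}, and to use additivity of the grading functor to propagate the decomposition.

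First I would observe that, since $\Gr$ is an additive functor which moreover preserves parabolic weights (Lemma \ref{the grading functor}), the decomposition $\bar H_{dR}=\bigoplus_{i=1}^{n}\bar H_i^{\oplus m_i}$ of Lemma \ref{complete reducibility} immediately induces a decomposition of parabolic graded Higgs bundles $\Gr\bar H_{dR}=\bigoplus_{i=1}^{n}(\Gr\bar H_i)^{\oplus m_i}$, with each $\Gr\bar H_i$ having the same parabolic weights as $\bar H_i$. Consequently it suffices to prove that each $\Gr\bar H_i$ is parabolic stable of parabolic degree zero and that the $\Gr\bar H_i$ are pairwise non-isomorphic as graded parabolic Higgs bundles; polystability of $\Gr\bar H_{dR}$ and the vanishing of its parabolic degree are then formal, parabolic degree being additive over direct sums.

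Next I would recall, from the proof of Lemma \ref{complete reducibility}, that over $U_{an}$ there is an isotypic decomposition $R^if_*\C=\bigoplus_j\mathbb{V}_j\otimes W_j$ of $\C$-variations of Hodge structure, where the $\mathbb{V}_j$ are pairwise non-isomorphic irreducible polarizable $\C$-VHS with quasi-unipotent monodromy along $D$, and each $W_j=\bigoplus_k\C(n_{jk})$ is a sum of Tate twists. Each de Rham factor $\bar H_i$ is then, by Lemma \ref{canonical parabolic extension}, the canonical parabolic (Deligne--Steenbrink) extension of $\mathbb{V}_{j(i)}\otimes\C(n_i)$ for a well-defined index $j(i)$ and an integer $n_i$, and distinct values of $i$ correspond to distinct pairs $(j(i),n_i)$ — this is precisely the statement that the $\bar H_i$ are pairwise non-isomorphic parabolic de Rham bundles. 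By Simpson's theory of tame harmonic bundles on noncompact curves and the resulting nonabelian Hodge correspondence on $C_{\log}$ (\cite{Sim90}; see also the discussion of the Simpson filtration around \cite[Proposition 6.9]{LSZ} and Lemma \ref{openess of completely reducibility}(iii)), an irreducible polarizable $\C$-VHS on $U_{an}$ corresponds to a parabolic Higgs bundle on $C_{\log}$ that is parabolic stable of parabolic degree zero, and this parabolic Higgs bundle is canonically the associated graded of its canonical parabolic extension equipped with the Hodge filtration. Since tensoring with the one-dimensional constant VHS $\C(n_i)$ merely shifts the grading, which affects neither parabolic stability nor parabolic degree, we obtain that $\Gr\bar H_i$ is parabolic stable of parabolic degree zero for every $i$.

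Finally, for the pairwise non-isomorphism I would argue as follows. If $j(i)\neq j(i')$, then already the underlying irreducible local systems $\mathbb{V}_{j(i)}$ and $\mathbb{V}_{j(i')}$ are non-isomorphic, so by full faithfulness of the nonabelian Hodge correspondence on parabolic stable objects of parabolic degree zero the underlying parabolic Higgs bundles of $\Gr\bar H_i$ and $\Gr\bar H_{i'}$ are non-isomorphic. If $j(i)=j(i')$ but $n_i\neq n_{i'}$, the underlying parabolic Higgs bundles agree up to isomorphism, but the two gradings differ by the nonzero shift $n_{i'}-n_i$; a nonzero graded object with finite grading cannot be isomorphic to a nonzero shift of itself (the top nonzero degree is shifted), so $\Gr\bar H_i\not\cong\Gr\bar H_{i'}$ as graded parabolic Higgs bundles. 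Hence the $\Gr\bar H_i$ are pairwise non-isomorphic, and $\Gr\bar H_{dR}=\bigoplus_i(\Gr\bar H_i)^{\oplus m_i}$ is polystable of parabolic degree zero. I expect the main obstacle to be the bookkeeping in the third paragraph: verifying that the parabolic weights coming from the Deligne--Steenbrink extension coincide with those dictated by the tame harmonic metric, and that the correspondence detects isomorphisms at the level of graded parabolic Higgs bundles. Both are contained in Simpson's (and Mochizuki's) treatment of harmonic bundles on curves, but they deserve to be recorded carefully; the remaining steps are formal.
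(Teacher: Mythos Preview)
Your proposal is correct and follows essentially the same route as the paper: both arguments rest on Simpson's theory of tame harmonic bundles on noncompact curves \cite{S90}, using that the Hodge metric on a polarizable $\C$-VHS is harmonic (Griffiths) and tame (Schmid), so that the associated graded parabolic Higgs bundle is parabolic (poly)stable of degree zero. The only organizational difference is that the paper applies this to the full $\bar H_{dR}$ and then observes that orthogonal complements with respect to the Hermitian--Yang--Mills metric of graded subobjects are again graded, whereas you first decompose via the additivity of $\Gr$ and then apply Simpson's correspondence to each irreducible factor; your treatment of the pairwise non-isomorphism (separating the cases $j(i)\neq j(i')$ and $n_i\neq n_{i'}$) is more explicit than the paper's.

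One caution: drop the parenthetical reference to Lemma~\ref{openess of completely reducibility}(iii). That lemma assumes periodicity of the parabolic de Rham bundle, and periodicity of the $\bar H_i$ is established only later (Corollary~\ref{geometric is periodic}) \emph{using} the present lemma, so invoking it here would be circular. Your main argument does not need it---Simpson's correspondence for irreducible $\C$-VHS already gives stability directly---so simply remove that cross-reference.
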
 
	\begin{proof}
		The proof is transcendental (though the statement is purely algebraic), we shall use the Simpson correspondence over quasi-projective curves \cite[Main Theorem, p. 755]{S90}. We need to make use of the Hodge metric on the $\C$-VHS $\V=R^if_{*}\C$ in the proof of Lemma \ref{complete reducibility}. By the Griffiths's curvature formula \cite[Lemma 7.18]{Sch}, the Hodge metric on $\V\otimes \sC^{\infty}(U)$ is harmonic. By Schmid's calculation (see \cite[Theorem 6.6]{Sch}), it is tame (compare  \cite[Proposition 2.1]{S90}). Then one uses \cite[Theorem 5]{S90} (and its proof, which is an application of the Chern-Weil formula), to conclude the parabolic polystability of $\Gr \bar H_{dR}$. It decomposes in the claimed form for the following reason: the orthogonal complement with respect to a Hermitian-Yang-Mills metric of a graded parabolic Higgs subbundle (=filtered regular subsystem of Hodge bundles in \cite{S90}) of degree zero is again \emph{graded} (i.e., $\C^{\times}$-fixed). 
	\end{proof}
	
	Now we study some consequences of the Deligne's semisimplicity theorem in arithmetic. We first record a lemma, which allows us to carry the decomposition over $\C$ to various mod $p$ reductions.

	\begin{lemma}\label{openess of completely reducibility}
		 Let $k$ be an algebraically closed field and $C$ a smooth projective curve over $k$. Then the following holds.
		\begin{enumerate}
			\item[(i)] Let $(V,\nabla)$ be a parabolic flat connection over $C_{\log}/\C$. If it is completely reducible with $m$ irreducible factors, then for any spreading-out $(\mathcal{C}_{\log},\mathcal{V},\nabla)$ over $S$, and for any geometric point $s$ of $S$ whose residue characteristic is sufficiently large, the parabolic flat connection $(\mathcal{V},\nabla)_s$ is completely reducible in the category parabolic flat connections on $(\mathcal C_{\log})_s$, and moreover has exactly $m$ irreducible factors.
			\item[(ii)] Let $(V,\nabla,Fil)$ be a periodic parabolic de Rham bundle over $C_{\log}/k$ which is completely reducible, where $k$ is either of positive characteristic or $\C$. Then $\Gr V$ is parabolic polystable. The number of parabolic stable factors of $\Gr V$ is equal to the number of irreducible factors of $(V, \nabla,Fil)$.
			\item[(iii)] Let $(V,\nabla,Fil)$ be as (ii). Then it is irreducible if and only if $\Gr V$ is parabolic stable.
		\end{enumerate}
	\end{lemma}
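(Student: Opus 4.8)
\emph{Strategy.} I would prove (iii) first, deduce (ii) from it by bookkeeping, and handle (i) separately, since (i) is the only statement about passage from $\C$ to positive characteristic. The easy implication of (iii) is this: if $\Gr V$ is parabolic stable then, by Corollary~\ref{parabolic degree of periodic parabolic dR bundle} a periodic parabolic de Rham bundle has parabolic degree $0$, and the grading functor preserves parabolic degree (Lemma~\ref{the grading functor}), so $\pdeg V=\pdeg\Gr V=0$; were $W\subsetneq V$ a proper nonzero $\nabla$-invariant parabolic subbundle with $\pdeg W=\pdeg V$, then $\Gr(W)\subset\Gr V$ would be a proper nonzero $\theta$-invariant parabolic Higgs subsheaf of parabolic degree $0=\pdeg\Gr V$, contradicting stability. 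Hence $V$ is irreducible. (Only the vanishing of the parabolic degree is used here, not the full strength of periodicity.)

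\emph{The hard direction of (iii), and the main obstacle.} Assume $V$ is irreducible; I must show $(E_0,\theta_0):=\Gr V$ is parabolic stable. It is parabolic semistable of degree $0$ by Corollary~\ref{vanishing chern classes and semistability} (equivalently Proposition~\ref{zero par degree in char p}); suppose it is not stable. Since $\Gr V$ is periodic — it is the associated graded of the de Rham--Higgs flow realizing periodicity of $V$ — it initializes a periodic parabolic Higgs--de Rham flow of some period $f$, with intermediate Higgs terms $(E_j,\theta_j)$, all parabolic semistable of degree $0$, and $(E_f,\theta_f)\cong(E_0,\theta_0)$. The plan is to propagate a destabilizing subobject around this cycle to produce a \emph{proper nonzero periodic sub-de Rham bundle} of $V$: such a bundle has parabolic degree $0=\pdeg V$ (Corollary~\ref{parabolic degree of periodic parabolic dR bundle}), so its existence contradicts irreducibility of $V$. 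Concretely, I expect the right object to propagate is the socle $S_j\subset E_j$ (the maximal polystable degree-$0$ Higgs subsheaf), which is canonical and, using the ``black hole principle'' Proposition~\ref{black hole principle} together with the stability-preservation properties of the flow from \cite{LSZ}, is carried by the flow operator $\Phi=\Gr_{Fil}\circ C^{-1}$ into $S_{j+1}$; comparing ranks around the cycle then forces $\Phi(S_j)=S_{j+1}$, so $\{S_j\}$ is a periodic sub-flow, and if $S_0\subsetneq E_0$ we are done. If $S_0=E_0$, i.e.\ $E_0=\bigoplus_iT_i^{\oplus b_i}$ is polystable with the $T_i$ stable of degree $0$ and pairwise non-isomorphic, then for $\ge 2$ distinct $T_i$ the isotypic components are canonical and again give a proper periodic sub-flow, while for $E_0=T^{\oplus b}$ with $b\ge 2$ the return isomorphism $\Phi^f(E_0)\xrightarrow{\ \sim\ }E_0$ lies in $\mathrm{GL}_b(k)$ and, $k$ being algebraically closed, fixes a line $\ell\subset k^b$, so $T\otimes\ell\subset E_0$ is a proper $\Phi^f$-stable stable subsheaf generating a proper periodic sub-flow. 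I expect the crux to be precisely the verification that in each case the Hodge filtrations of the flow restrict \emph{strictly} to these canonical subobjects, so that the propagated subsheaf genuinely assembles into a periodic flow; this should come down to the uniqueness of the Simpson filtration of the stable building blocks (\cite[Lemma~7.1]{LSZ}) and to checking that $\Phi$ acts on the relevant $\mathbb{P}^{b-1}(k)$ by a projective-linear automorphism.

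\emph{Deducing (ii).} Write $V=\bigoplus_jV_j^{\oplus n_j}$ with the $V_j$ pairwise non-isomorphic irreducible periodic de Rham bundles (a direct summand of a periodic object is periodic; after the cyclic-cover reduction of Lemma~\ref{periodic parabolic pulls back to parabolic} this is Lemma~\ref{abelian category}). By (iii) each $\Gr V_j$ is parabolic stable, so $\Gr V=\bigoplus_j(\Gr V_j)^{\oplus n_j}$ is parabolic polystable of degree $0$. Moreover the $\Gr V_j$ are pairwise non-isomorphic: if $\Gr V_j\cong\Gr V_{j'}$ then, both being stable, the periodic flows they initialize carry the Simpson filtration at every stage (\cite[Lemma~7.1]{LSZ}) and are thus canonically determined by the common initial Higgs bundle, so reading off de Rham terms gives $V_j\cong V_{j'}$. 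Hence the number of parabolic stable factors of $\Gr V$ equals the number of irreducible factors of $V$.

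\emph{Proof of (i).} This is a standard spreading-out argument. Spread out the decomposition $V=\bigoplus_iV_i^{\oplus m_i}$ over $S$. Each $V_i$ is parabolic stable as a flat bundle; parabolic stability is an open condition in the relative moduli, and the $\nabla$-invariant destabilizing subbundles, of bounded rank and parabolic degree pinned to $0$, are parametrized by a proper $S$-scheme whose fibre over the (characteristic $0$) generic point of $S$ is empty, hence supported on a proper closed $Z'\subset S$. Off $Z'$ every $(\mathcal V_i,\nabla)_s$ is parabolic stable, so $(\mathcal V,\nabla)_s=\bigoplus_i(\mathcal V_i,\nabla)_s^{\oplus m_i}$ is completely reducible, and upper semicontinuity of $\dim\Hom_\nabla(\mathcal V_i,\mathcal V_j)$ together with algebraic closedness of $k(s)$ keeps the $(\mathcal V_i)_s$ pairwise non-isomorphic, giving exactly $m$ irreducible factors. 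To phrase this as ``for all $s$ of sufficiently large residue characteristic'' one needs $Z'$ to meet only finitely many fibres of $S\to\Spec\Z$; that is the remaining point requiring care, and together with the strictness issue in the hard direction of (iii) it is where I expect the real work to lie.
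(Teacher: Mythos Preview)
Your approach to (i) and the easy direction of (iii) is essentially the paper's: the Quot-scheme/constructibility argument for (i) is exactly what the paper does (your worry about $Z'$ meeting only finitely many fibres is dispatched there by Hilbert's Nullstellensatz---if the $\nabla$-invariant Quot has points over arbitrarily large residue characteristics, its image in $S$ dominates $\Spec\Z$ and hence has a $\C$-point), and your deduction of (ii) from (iii) is the same bookkeeping the paper leaves implicit. Your easy direction of (iii) is in fact more direct than the paper's, which instead applies the Cartier transform $C$ to $W\subset V$ and lands a degree-zero Higgs subbundle inside $(\Gr\circ C^{-1})^{f-1}\Gr V$, shown stable by the black-hole principle; both are fine.

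The real divergence is in the hard direction of (iii), and here you have overcomplicated matters in a way that creates the very gaps you flag. You try to manufacture a \emph{periodic} sub-de-Rham bundle of $V$ via socles, isotypic components, and eigenlines, and you correctly identify that strictness of the induced filtrations and the precise behaviour of $\Phi$ on multiplicity spaces are unresolved. But none of this is needed: irreducibility of $V$ forbids \emph{any} proper nonzero $\nabla$-invariant parabolic subbundle of parabolic degree zero, periodic or not. The paper simply takes an arbitrary degree-zero Higgs subbundle $F\subsetneq\Gr V$ (which exists since $\Gr V$ is semistable but assumed not stable) and pushes it once around the ambient periodic flow: $C^{-1}$ preserves degree-zero subbundles, and grading with the induced filtration yields a degree-zero Higgs subsheaf whose saturation again has degree zero because every intermediate Higgs term is semistable of degree zero (Proposition~\ref{zero par degree in char p}). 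After $f$ steps one lands a proper nonzero $\nabla$-invariant degree-zero subbundle inside $V_f\cong V$, contradicting irreducibility directly. No periodicity of the subobject, no socle, no eigenline is required; the ambient periodicity alone transports $F$ back into $V$.
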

	\begin{proof}
		We first prove (i). As $(V,\nabla)$ is the direct sum of irreducible factors, it suffices to show that if $(V,\nabla)$ is an irreducible parabolic flat connection, then for the above spreading-out and all $s$ of large residue characteristic, $(\mathcal V,\nabla)_s$ is irreducible in the category of parabolic flat connections on $(\mathcal C_{\log})_s$. We prove this by contradiction: assume there are points $s$ of $S$ of arbitrarily large residue characteristic such that $(\mathcal V,\nabla)_s$ has a parabolic flat subbundle of parabolic degree 0. 
		
		By the BIS correspondence, we may assume the parabolic structure of $V$ is trivial. Let $\textrm{Quot}^{(r,0)}_{\mathcal V/\sX/S}$ be the Quot-scheme parameterizing quotient sheaves of $\sV_{s}$ which are of degree zero and rank $r$. Set $\textrm{Quot}=\coprod_{r}\textrm{Quot}^{(r,0)}_{\mathcal V/\sX/S}$. Then there is a \emph{closed} subscheme $\textrm{Quot}^{\nabla}$ of $\textrm{Quot}$ parametrizing $\nabla$-invariant quotient sheaves. It is a projective scheme of finite type over $S$. Because $\textrm{Quot}^{\nabla} (s)$ is non-empty as long as a geometric point $s\in S$ has large residue characteristic, it follows from Hilbert's Nullstellensatz that $\textrm{Quot}^{\nabla}(\C)$ is non-empty, contradicting the original assumption.
		
		By (i), it suffices to show the case $\Char k=p$ in (ii). We may also assume $V=(V,\nabla)$ is irreducible. As $V$ is periodic, $\Gr V$ is periodic, hence is parabolic semistable by Proposition \ref{vanishing chern classes and semistability}. Assume that $\Gr V$ is not parabolic stable. Then there exists $F\subsetneq \Gr V $, a nontrivial and proper parabolic Higgs subbundle of degree zero. Spread out the whole picture over $S$. By periodicity, it follows that after replacing $S$ by an open subscheme, there exists a natural number $f$ such that for all geometric points $s\in S$, $C^{-1}(\Gr\circ C^{-1} )^{f-1}(\sF_s)$ gives a nontrivial and proper parabolic de Rham subbundle of parabolic degree zero in $\mathcal V_s$, contradiction. which contradicts (i).  
		
		The only-if direction of (iii) follows from (ii). Now we assume that $\Gr V$ is parabolic stable. Consider first $\Char k=p$. By Proposition \ref{black hole principle}, $(\Gr\circ C^{-1} )^{f-1}\Gr V$ is again parabolic stable. Now assume the contrary, that there exists a nonzero $\nabla$-invariant subbundle $W\subsetneq V$ of parabolic degree zero.
        Since there is an isomorphism:
        $$
        C^{-1}(\Gr\circ C^{-1} )^{f-1}\Gr V\cong V,
        $$
        it follows that $C(W)\subset C(V)$ gives rise to a parabolic Higgs subbundle in $(\Gr\circ C^{-1} )^{f-1}\Gr V$ of parabolic degree zero, contradicting the parabolic stability. When $k=\C$, we take a spreading-out $(\mathcal C_{\log},\mathcal V,\nabla,Fil)$ over $S$. As parabolic stability is an open condition (\cite[Proposition 2.8]{MY92}), it follows that after replacing $S$ by an open subscheme, for geometric points $s\in S$, $\Gr(\mathcal V_s)=(\Gr \mathcal V)_s$ is parabolic stable. Hence the char $p$ statement together with (i) confirms this case. This proves the if-direction of (iii).
	\end{proof}
	The next result generalizes a result of Faltings \cite[Theorem 6.2]{Fa88} in the following sense: In loc. cit., for a \emph{semistable} family (see \cite[Ch.VI, b)]{Fa88}), Faltings shows that the higher direct image of the constant crystal (under certain natural conditions) is a Fontaine module, which is equivalent to a \emph{one-periodic} Higgs-de Rham flow by the correspondence established in \cite{LSZ}. His proof is restricted to the realm of logarithmic geometry. Hence it seems hard to extend this result for a \emph{quasi-semistable} family. Moreover, it is false that it is one-periodic in general. However, we are going to show that \emph{periodicity}, rather than one-periodicity, is still preserved. This provides more evidence that periodicity is the correct context to study motivic de Rham/Higgs bundles.
	\begin{proposition}\label{one-periodicity}
	Let $f: X\to U$ be a smooth projective morphism, and let $\bar H_{dR}$ be a parabolic Gau{\ss}-Manin system over $C_{\log}$ associated to $f$. Then $\bar H_{dR}$ is periodic. 
	\end{proposition}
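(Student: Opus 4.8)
The plan is to spread everything out over an integral $\Z$-scheme $S$, pass to a cyclic cover on which the parabolic structure becomes trivial \emph{and} the family becomes semistable, apply Faltings's theorem there, and descend. First I would invoke Lemma \ref{complete reducibility} to write $\bar H_{dR}=\bigoplus_{i=1}^n\bar H_i^{\oplus m_i}$ with each $\Gr\bar H_i$ parabolic stable and the $\Gr\bar H_i$ pairwise non-isomorphic (Lemma \ref{polystablility}); recall also that the Hodge filtration splits along this decomposition. Choose a quasi-semistable compactification $\bar f\colon\bar X\to C$ of $f$ and spread out the triple $(C,D,\bar H_{dR})$, the family $\bar f$, and the two decompositions above over an integral $S$ of finite type over $\Z$. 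After removing a proper closed subscheme $Z\subset S$ I may assume: on $S-Z$ the residue characteristics are large enough that the parabolic Cartier/inverse Cartier transforms of \S\ref{section:parabolic} apply (all objects of level $\le p-1$, weights in $\tfrac1N\Z$); by Lemma \ref{openess of completely reducibility}(i) and openness of parabolic stability, for every geometric point $s\in S-Z$ the reduction $\bar{\mathcal H}_{dR,s}=\bigoplus_i\bar{\mathcal H}_{i,s}^{\oplus m_i}$ with each $\bar{\mathcal H}_{i,s}$ irreducible and each $\Gr\bar{\mathcal H}_{i,s}$ parabolic stable of parabolic degree zero. By Definition \ref{arithmetic_definition} it then suffices to exhibit, for every such $s$ and every $W_2(k(s))$-lifting, a periodic parabolic de Rham--Higgs flow with initial term $\bar{\mathcal H}_{dR,s}$ of period bounded independently of $s$ and of the lifting.

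Next I would reduce to the case of trivial parabolic structure. Following Lemma \ref{periodic parabolic pulls back to parabolic}, pick a cyclic cover $\pi\colon C'\to C$ of order $N$ ramified along $D$ (plus one auxiliary point if $C$ is not affine), choosing $N$ divisible by all denominators of the parabolic weights of $\bar H_{dR}$ and by all multiplicities $e_j$ appearing in the local models of $\bar f$, so that $\pi^*_{\ppar}\bar H_{dR}$ has trivial parabolic structure and, by semistable reduction for families over a curve, the base-changed family $X\times_U\pi^{-1}(U)\to\pi^{-1}(U)$ admits a $G$-equivariant semistable compactification $\bar f'\colon\bar X'\to C'$ with $G=\Z/N$. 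By compatibility of the canonical parabolic (Deligne) extension with pullback (Lemma \ref{canonical parabolic extension}) and of Hodge filtrations with base change, $\pi^*_{\ppar}\bar H_{dR}$ is, as a de Rham bundle over $C'_{\log}$, the canonical de Rham bundle $\bar H'_{dR}$ of $\bar f'$. The Biswas--Iyer--Simpson correspondence (Proposition \ref{Biswas correspondence for Higgs/flat bundles in positive char}) together with Proposition \ref{parabolic pullback commutes with grading functor} and Lemma \ref{parabolic inverse cartier commutes with pullback} shows that $G$-equivariant Higgs--de Rham flows over $\tilde C'_{\log}$ correspond, via $\pi_{\ppar*}$, to parabolic flows over $\tilde C_{\log}$ of the same period; so it is enough to produce a $G$-equivariant periodic flow over $\tilde C'_{\log}$ initializing $\bar H'_{dR,s}$ of bounded period.

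This last step is where I would apply Faltings: for $s$ of large characteristic and any $W_2$-lifting of $C'_{\log}$ compatible with one of $C_{\log}$, the semistable family $\bar f'_s$ produces a strict $p$-torsion logarithmic Fontaine module in the sense of \cite[Ch.\ VI]{Fa88}; by \cite[Theorem 6.2]{Fa88} and the correspondence of \cite{LSYZ} (cf.\ \cite[Proposition 3.3]{LSZ}) this is equivalent to a periodic Higgs--de Rham flow over $\tilde C'_{\log}$ initialized by $\Gr\bar H'_{dR,s}$, of period equal to the degree of the endomorphism field $\F_{p^{f}}$ of the Fontaine module; since $\bar H'_{dR,s}$ has rank $\le\rank(\bar H_{dR})$, $f$ is bounded independently of $s$ (this is exactly the source of the failure of one-periodicity: the irreducible constituents $\bar{\mathcal H}_{i,s}$ may be Galois-conjugate and are cyclically permuted by the flow operator, so the period of the sum is a least common multiple of orbit lengths rather than $1$). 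All constructions are functorial in $\bar f'$, hence $G$-equivariant, so the flow descends through $\pi_{\ppar*}$ to a periodic parabolic flow over $\tilde C_{\log}$ initializing $\Gr\bar{\mathcal H}_{dR,s}$ of the same period $f$. Here stability of $\Gr\bar{\mathcal H}_{i,s}$ over the \emph{projective} curve $C$ is used: by Proposition \ref{black hole principle} and \cite[Lemma 7.1]{LSZ} the grading may be taken with respect to the Simpson filtration at each stage, so the flow is canonical and its period is intrinsic. Feeding this uniform bound into Definition \ref{arithmetic_definition} gives that $\bar H_{dR}$ is periodic over $C_{\log}$.

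The main obstacle is the Faltings input in the quasi-semistable setting: one must either push the Fontaine-module formalism of \cite{Fa88} through the semistable base change and then descend $G$-equivariantly (as above), or develop it directly on the logarithmic root stack $C[\tfrac{D_1}{N},\dots,\tfrac{D_l}{N}]$ via Proposition \ref{OV correspondence for tame root stack}; checking that the descended Hodge filtrations remain Griffiths transverse and of level $\le p-1$, and that the period stays bounded, is the delicate point. The passage from $U$ to $C_{\log}$ is precisely what makes stability of the graded Higgs factors available, and stability is what pins down the Hodge filtration at every stage of the flow.
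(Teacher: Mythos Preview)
Your overall plan---spread out, take a cyclic cover to trivialize the parabolic structure and achieve semistable reduction, apply Faltings upstairs, then descend via the BIS correspondence---is exactly the scaffolding the paper uses. But the descent step contains a genuine gap, and your diagnosis of why one-periodicity fails is off.

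The Fontaine module Faltings attaches to the semistable family $\bar f'_s$ is \emph{one}-periodic: it has $\F_p$-structure, not $\F_{p^f}$-structure, so the isomorphism $\psi\colon C^{-1}\circ\Gr\,\bar H'_{dR,s}\to \bar H'_{dR,s}$ exhibits period $1$ upstairs. The problem is that this $\psi$ is \emph{not} $G$-equivariant for the $G$-structure on $C^{-1}\circ\Gr\,\bar H'_{dR,s}$ obtained by applying the functor $C^{-1}$ to the equivariant structure on $\Gr\,\bar H'_{dR,s}$---and this is precisely the $G$-structure required for BIS descent. Concretely, $C^{-1}_{\ppar}$ multiplies parabolic weights by $p$ modulo $\Z$ (Proposition \ref{change of par weights under inverse Cartier}): if $\bar H_{dR,s}$ has a nonzero weight $\tfrac{m}{N}$ at a point of $D$, then $C^{-1}_{\ppar}\Gr\,\bar H_{dR,s}$ has weight $\langle\tfrac{pm}{N}\rangle$ there, and these differ whenever $p\not\equiv 1\pmod N$. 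So no parabolic isomorphism between them can exist downstairs, hence no $G$-equivariant isomorphism upstairs either. Your sentence ``all constructions are functorial in $\bar f'$, hence $G$-equivariant, so the flow descends \dots\ of the same period'' is exactly where this breaks; the Remark immediately after the paper's proof makes the impossibility of one-periodicity downstairs explicit.

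What the paper actually does is measure the failure of $G$-equivariance of $\psi$. In the irreducible case $\Hom(C^{-1}\Gr\,\bar H'_{dR,s},\bar H'_{dR,s})\cong k$ (Claim \ref{scalar}), so $\sigma$ acts on $\psi$ by some $\zeta^{l}$. Because $C^{-1}$ is Frobenius-semilinear on morphisms, the $f$-fold iterate $\psi\circ(C^{-1}\Gr)\psi\circ\cdots\circ(C^{-1}\Gr)^{f-1}\psi$ carries $\sigma$-character $\zeta^{l(1+p+\cdots+p^{f-1})}$; choosing $f$ with $N\mid 1+p+\cdots+p^{f-1}$ (Claim \ref{bounded period}) makes this $G$-equivariant and hence descendable. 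The resulting bound $f\le\phi(N\cdot(N-2)!)$ is uniform in $p$ and depends on $N$, not on the rank. For reducible $\bar H'_{dR,s}$ one decomposes into isotypic pieces and the multiplicity spaces into $G$-eigenlines, reducing to the same character calculation. Your parenthetical about irreducible constituents being ``Galois-conjugate and cyclically permuted by the flow operator'' is a misdiagnosis: the obstruction to one-periodicity here is the Frobenius-twisting of parabolic weights, not permutation of factors (the latter is the phenomenon addressed in Theorem \ref{periodicity of stable factors of Kodaria-Spencer system}, not in this proposition).
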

	\begin{proof}
		Let us consider first the case that $f$ has unipotent local monodromies. In this case, the parabolic structure of a parabolic Gau{\ss}-Manin system associated to $f$ is trivial. Let $\bar f$ be a semistable compactification of $f$. It follows from \cite[Theorem 6.2]{Fa88} and \cite[Proposition 3.3]{LSZ}, that the reduction at $p$ of the logarithmic Gau{\ss}-Manin system attached to $\bar f$ is one-periodic, when $p$ is large enough. In general, we shall only obtain a quasi-semistable compactification $\bar f$, to which we shall apply the semistable reduction: Let $N$ be the least common multiple of the multiplicities of irreducible components of closed fibers of $\bar f$. Take another reduced divisor $D'\subset U$ such that $N$ is divisble by $\deg D+\deg D'$. So there is an ample line bundle $L$ such that $L^{\otimes N}=\sO_C(D+D')$. Then one forms the cyclic covering $\pi: C'\to C$ branching along $D+D'$ determined by $L$. We equip $C'$ with the logarithmic structure determined by the reduced divisor $(\pi^{-1}(D+D'))_{\text{red}}$. As $\pi$ is totally ramified over each point in $D+D'$ and unramified away from $D+D'$, $\pi$ is logarithmic smooth and even log finite \'etale. Now we consider the following commutative diagram:
		\[
		\xymatrix{ Y''\ar[dr]_{\bar g''}\ar[r]^{\tau}& Y'\ar[d]_{\bar g'}\ar[r]^{\delta} &Y\ar[d]^{\bar f} \\
			&  C'\ar[r]_{\pi} & C,}
		\]
		where $Y'$ is the normalization of $Y\times_CC'$ and $\tau$ is a resolution of singularities so that the composite map $\bar g''$ is semistable. Set $U'=C-D-D'\subset U$, $V'=f^{-1}(U')$ and $f'=f|_{V'}: V'\to U'$. Clearly, $\bar H_{dR}$ is also the canonical parabolic extension of the $i$-th Gau{\ss}-Manin system associcated to $f'$.  The local monodromies of the family $\bar g''$ are unipotent and hence the $i$-th logarithmic Gau{\ss}-Manin system $\bar H'_{dR}$ of $\bar g''$ is the canonical parabolic extension. By Lemma 3.7 \cite{IS}, the canonical parabolic extension is stable under logarithmic base change $(C', \pi^{-1}(D+D'))\to (C,D+D')$. It follows that
		$$
		\pi_{\ppar}^*\bar H_{dR}\cong \bar H'_{dR}
		$$
		as parabolic flat bundles. In fact, the above is an isomorphism of parabolic de Rham bundles since the Hodge filtration pulls back over the generic point.
		
		Now we spread out the above diagram of the semistable reduction and consider its reduction over a geometric point $s$ in the spreading-out of $C$ with $\mathrm{char}(k(s))$ sufficiently large (in particular, $\mathrm{char}(k(s))>N$).  By the one-periodicity of $\bar H'_{dR}$, there is an isomorphism of logarithmic flat bundles $\psi: C^{-1}\circ \Gr \bar H'_{dR,s}\to \bar H'_{dR,s}$.
	 Here $C^{-1}$ refers to any fixed $W_2(s)$-lifting $\tilde s\hookrightarrow S$. By Lemma \ref{parabolic inverse cartier commutes with pullback},
		$$
		C^{-1}\Gr \bar H'_{dR,s}=C^{-1}\pi_{\ppar}^*\Gr \bar H_{dR,s}=\pi_{\ppar}^* C^{-1}\Gr \bar H_{dR,s}.
		$$
	 Hence we are done if $\psi$ happens to be $G$-equivariant, since by the BIS correspondence it descends to an isomorphism of parabolic flat bundles from $C^{-1}\circ \Gr \bar H_{dR,s}$ to $\bar H_{dR,s}$. However, $\psi$ is not necessarily $G$-equivariant: Suppose that $\bar H_{dR,s}$ is irreducible. One has the following simple but important observation:
	 \begin{claim}\label{scalar}
	 $\Hom(C^{-1}\circ \Gr \bar H'_{dR,s},\bar H'_{dR,s})\cong k$.	
	 \end{claim}
	 \begin{proof}
	 This is a statement over an algebraically closed field $k=k(s)$ of positive characteristic. We note that the graded logarithmic Higgs bundle $\Gr \bar H'_{dR,s}$ is stable, instead of merely semistable. Hence, one has
	 $$
	 \End(\Gr \bar H'_{dR,s})\cong k.
	 $$
	 Since $C^{-1}$ is an equivalence of categories, it follows that
	 $$
	 \End(C^{-1}\circ \Gr \bar H'_{dR,s},C^{-1}\circ \Gr \bar H'_{dR,s})\cong k. 
	 $$
	 Fix an isomorphism $C^{-1}\circ \Gr \bar H'_{dR,s}\to \bar H'_{dR,s}$. Then one obtains the desired claim. 
	 \end{proof}
	 Therefore, $\sigma$ acts on $\psi$ by multiplication by $\zeta^l$ for some $0\leq l\leq N-1$. As $C^{-1}$ is semilinear, $\sigma$ acts on the induced isomorphism
		$$
		C^{-1}\circ \Gr\psi: (C^{-1}\circ \Gr)^2\bar H'_{dR,s} \to C^{-1}\circ \Gr\bar H'_{dR,s}
		$$
		by multiplication by $\zeta^{lp}$. It follows that $\sigma$ acts by multiplication by $\zeta^{l(1+p+\cdots +p^{f-1})}$ on the composite isomorphism 
		$$
		\psi\circ C^{-1}\circ \Gr\psi\circ\cdots \circ(C^{-1}\circ \Gr)^{f-1}\psi: (C^{-1}\circ \Gr)^{f}\bar H'_{dR,s} \to \bar H'_{dR,s}.
		$$		  
Let $p$ be large enough so that $(p,N)=1$. Let $1\leq q\leq N-1$ be the unique number which is congruent to $p$ modulo $N$. Set $d=\gcd(N,q-1)$, and write $N=dN'$, $q-1=dq'$. Let $k$ be the largest integer such that $d^k$ divides $q'$. 
\begin{claim}\label{bounded period}
$N$ divides the number $1+p+\cdots+p^{f-1}$ for $f=\phi(Nd^{k+1})$. 
\end{claim}
\begin{proof}
Indeed, since $(q,N)=(p,N)=1$ and $(q,d)=1$, it follows from Fermat's little theorem that $Nd^{k+1}=N'd^{k+2}$ divides $q^{f}-1$. As $q-1=q'd$ divides $q^f-1$ in any case, it follows that 
$$
\mathrm{lcm}(N'd^{k+2}, q'd)=\frac{N'd^{k+3}q'}{\gcd(N'd^{k+2}, q'd)}=N'd^2q'=N(q-1)
$$
divides $q^f-1$. Hence $N$ divides $\frac{q^f-1}{q-1}$. The claim follows. 
\end{proof}
So the composite isomorphism $\psi\circ C^{-1}\circ \Gr\psi\circ\cdots (C^{-1}\circ \Gr)^{f-1}\psi$ for $f=\phi(Nd^{k+1})$ is $G$-equivariant. Consequently, $(C^{-1}\circ \Gr)^{f}\bar H_{dR,s} \cong \bar H_{dR,s}$ as parabolic flat bundles, that is, $\bar H_{dR,s}$ is periodic with period a divisor of $\phi(Nd^{k+1})$, which is in turn a divisor of $\phi(N\cdot(N-2)!)$, a number independent of $p$. In order to handle the general case, we shall look into the structure of $\bar H'_{dR,s}$. First of all, Lemma \ref{complete reducibility} says that $\bar H'_{dR}$ is completely reducible. Then by Lemma \ref{openess of completely reducibility} (i), $\bar H'_{dR,s}$ remains completely reducible for $\mathrm{char}(k(s))$ sufficiently large. So it is a direct sum of irreducible logarithmic flat connections of degree 0, viz.
$$
\bar H'_{dR,s}\cong \bigoplus_i H_i^{\oplus m_i},
$$
with each $H_i$ irreducible and $\{H_i\}$s pairwisely non-isomorphic. Choose and then fix such an isomorphism. Then the $G$-action on $\bar H'_{dR,s}$ is transported over $\bigoplus_i H_i^{\oplus m_i}$. Clearly, each summand $H_i^{\oplus m_i}$ is fixed by the $G$-action. We may write $$H_i^{\oplus m_i}=H_i\otimes_k\Hom(H_i, H_i^{\oplus m_i}),$$ where $H_i$ in the tensor product is endowed with the trivial $G$-action. Then we decompose the multiplicity space $\Hom(H_i, H_i^{\oplus m_i})$ into a direct sum of eigen-spaces for the induced $G$-action. It follows that we may write $H_i^{\oplus m_i}=\oplus_{j=1}^{m_i}H_{ij}$ with $\sigma$ acting on $H_{ij}$ by multiplication by $\zeta^{l_{ij}}$. On the other hand, the existence of $\psi$ implies that there is also an isomorphism $C^{-1}\circ \Gr \bar H'_{dR,s}\cong \bigoplus_i H_i^{\oplus m_i}$. The previous argument also gives an isomorphism of $G$-modules:
$$
C^{-1}\circ \Gr\bar H'_{dR,s}\cong \bigoplus_{i,j} H_{ij},
$$
with $\sigma$ acting on $H_{ij}$ by multiplication by $\zeta^{l'_{ij}}$. Fix these $G$-isomorphisms. Then we consider the following (possibly new) isomorphism 
$$
\psi'=\bigoplus_{i,j}\psi_{ij}: C^{-1}\circ \Gr\bar H'_{dR,s}\to \bar H'_{dR,s},
$$  
induced by the identity map on $\bigoplus_{i,j} H_{ij}$. Then the natural $G$-action on $\psi'$ takes the following form
$$
\sigma(\bigoplus_{i,j}\psi_{ij})=\bigoplus_{i,j}\zeta^{l_{ij}-l'_{ij}}\psi_{ij}.
$$
The argument for an irreducible $H'_{dR,s}$ implies that the composite isomorphism $\psi'\circ C^{-1}\circ \Gr\psi'\circ\cdots \circ(C^{-1}\circ \Gr)^{f-1}\psi'$ is $G$-equivariant for $f=\phi(N\cdot(N-2)!)$. This completes the proof.
\end{proof}
\begin{remark}
It is possible to improve the integer $f$ in Claim \ref{bounded period}. The integer $\phi(N\cdot(N-2)!)$ may well serve as a period for \emph{any} $\bar H_{dR}$ whose weights are contained in $\frac{1}{N}\Z$.  It is an interesting question to find the optimal one in a specific situation.  However, we remind the reader that $f$ cannot be taken to be one in the following situation: There is a point in $D$ such that the set of nonzero weights of $\bar H_{dR,s}$ is simply $\{\frac{1}{N}\}$ with $\phi(N)\neq 1$. Indeed, since the grading functor does not change the set of weights,  it follows from Proposition \ref{change of par weights under inverse Cartier} that the set of nonzero weights of $C^{-1}\circ \Gr \bar H_{dR,s}$ at that point is $\{\left\langle \frac{p}{N}\right\rangle\}$. Therefore, the period for $\bar H_{dR,s}$ must be a multiple of $\phi(N)$.  
\end{remark}

	The following result improves and generalizes the Higgs periodicity theorem for semistable families \cite[Theorem 1.3]{LS}.
	\begin{theorem}\label{periodicity of stable factors of Kodaria-Spencer system}
		Notation as Proposition \ref{one-periodicity}. Let $(E,\theta)=\Gr \bar H_{dR}$ be the associated graded parabolic Higgs bundle with the decomposition into a direct sum of graded parabolic stable factors by Lemma \ref{polystablility}
		$$
		(E,\theta)=\bigoplus_{i=1}^n (E_i,\theta_i)^{\oplus m_i}.
		$$
		Then $(E,\theta)$ is periodic with respect to the Hodge filtration of $\bar H_{dR}$. Moreover, the Hodge filtration induces periodicity for each stable factor $(E_i,\theta_i)$.
	\end{theorem}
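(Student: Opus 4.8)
The plan is to deduce periodicity of each stable factor $(E_i,\theta_i)$ from the periodicity of the whole system $\bar H_{dR}$ (Proposition~\ref{one-periodicity}), by running a periodic flow for $\bar H_{dR}$ and showing that it restricts compatibly to each isotypic component of $\Gr\bar H_{dR}$; the period of a factor will then exceed that of $\bar H_{dR}$ only by the order of a permutation on $n$ letters, hence by a factor bounded independently of the residue characteristic.

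First I would spread out over an integral scheme $S$ of finite type over $\Z$ the family $f$, a quasi-semistable compactification, the parabolic Gau{\ss}--Manin system $\bar H_{dR}$, the de Rham decomposition $\bar H_{dR}=\bigoplus_{i=1}^{n}\bar H_{i}^{\oplus m_i}$ of Lemma~\ref{complete reducibility}, and the polystable decomposition $\Gr\bar H_{dR}=\bigoplus_{i=1}^{n}(E_i,\theta_i)^{\oplus m_i}$ of Lemma~\ref{polystablility}. By Lemma~\ref{independence of integral model} it suffices to prove periodicity of the $(E_i,\theta_i)$ with respect to this model. Shrinking $S$, I may arrange simultaneously: (a) $\bar H_{dR,s}$ is periodic of period $f=\phi(N\cdot(N-2)!)$ for every geometric point $s\in S$ and every $W_2(k(s))$-lifting (Proposition~\ref{one-periodicity} and its proof); (b) $\bar H_{dR,s}=\bigoplus_i\bar H_{i,s}^{\oplus m_i}$ is still the decomposition into pairwise non-isomorphic irreducible parabolic de Rham bundles (Lemma~\ref{openess of completely reducibility}(i) and upper semicontinuity of $\dim\Hom$); and (c) each $(E_{i,s},\theta_{i,s})=\Gr\bar H_{i,s}$ is parabolic stable of degree zero and the $(E_{i,s},\theta_{i,s})$ are pairwise non-isomorphic (openness of parabolic stability \cite[Proposition~2.8]{MY92}, semicontinuity of $\dim\Hom$, and the transcendental input of Lemma~\ref{polystablility}). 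It then remains to show that for all such $s$ each $(E_{i,s},\theta_{i,s})$ is periodic of period at most $f\cdot n!$.

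Fix $s$ of characteristic $p$ and a lifting $\tilde s$, and choose a periodic parabolic de Rham--Higgs flow of period $f$ with initial term $\bar H_{dR,s}$: $(V_0,\nabla_0,Fil_0)=\bar H_{dR,s}$, $(V_{j+1},\nabla_{j+1})=C^{-1}\big(\Gr_{Fil_j}(V_j,\nabla_j)\big)$, and $\psi\colon(V_f,\nabla_f)\xrightarrow{\ \sim\ }(V_0,\nabla_0)$. The heart of the proof is to show, by induction on $j$, that the flow respects the isotypic decomposition: $(V_j,\nabla_j)\cong\bigoplus_iW_{j,i}^{\oplus m_i}$ and $\Gr_{Fil_j}(V_j,\nabla_j)\cong\bigoplus_i(E_{j,i},\theta_{j,i})^{\oplus m_i}$, with each $W_{j,i}$ an irreducible parabolic flat bundle of degree zero, each $(E_{j,i},\theta_{j,i})$ parabolic stable of degree zero, the families $\{W_{j,i}\}_i$ and $\{(E_{j,i},\theta_{j,i})\}_i$ pairwise non-isomorphic, and $Fil_j$ restricting to a Hodge filtration on each summand $W_{j,i}^{\oplus m_i}$. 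The case $j=0$ is (b), (c). For the inductive step: since $C^{-1}$ is an additive exact equivalence it carries the polystable decomposition of $\Gr_{Fil_j}(V_j,\nabla_j)$ to a decomposition $(V_{j+1},\nabla_{j+1})\cong\bigoplus_i\big(C^{-1}(E_{j,i},\theta_{j,i})\big)^{\oplus m_i}$; and $C^{-1}$ of a parabolic stable Higgs bundle of degree zero is an irreducible parabolic flat bundle of degree zero, because the quasi-inverse $C$ is exact and only rescales parabolic degrees, so a $\nabla$-invariant subbundle of parabolic degree zero of $C^{-1}(E_{j,i},\theta_{j,i})$ descends to a parabolic Higgs subbundle of parabolic degree zero of the stable $(E_{j,i},\theta_{j,i})$ and is therefore trivial. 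Thus $W_{j+1,i}:=C^{-1}(E_{j,i},\theta_{j,i})$ are irreducible of degree zero and pairwise non-isomorphic. Next, $(V_{j+1},\nabla_{j+1},Fil_{j+1})$ is again periodic (a cyclic shift of the flow), so its associated graded is parabolic semistable of degree zero (Corollary~\ref{parabolic degree of periodic parabolic dR bundle}); since $(V_{j+1},\nabla_{j+1})=\bigoplus_iW_{j+1,i}^{\oplus m_i}$ with the $W_{j+1,i}$ pairwise non-isomorphic irreducible, the Hodge filtration $Fil_{j+1}$ is forced to split along this decomposition (rigidity of the Simpson filtration up to Tate twist on each constituent, compatible with Lemma~\ref{openess of completely reducibility}(ii)), and the restricted graded $(E_{j+1,i},\theta_{j+1,i})$ on each irreducible $W_{j+1,i}$ is parabolic stable by the argument of Proposition~\ref{black hole principle}, applied to the period-$(f\cdot\mathrm{ord}\,\sigma)$ sub-flow through the $i$-th component (whose existence is itself part of this induction) with stable initial term $\Gr\bar H_{i,s}$, equivalently by Lemma~\ref{openess of completely reducibility}(iii).

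Granting the inductive claim, after $f$ stages $(V_f,\nabla_f)\cong\bigoplus_iW_{f,i}^{\oplus m_i}$ with the $W_{f,i}$ pairwise non-isomorphic and indecomposable; comparing with $(V_0,\nabla_0)=\bigoplus_i\bar H_{i,s}^{\oplus m_i}$ via $\psi$ and using the Krull--Schmidt property for parabolic flat bundles on the projective curve $C$, the isomorphism $\psi$ matches $W_{f,i}^{\oplus m_i}$ with $\bar H_{\sigma(i),s}^{\oplus m_{\sigma(i)}}$ for a permutation $\sigma\in S_n$ with $m_i=m_{\sigma(i)}$ and $W_{f,i}\cong\bar H_{\sigma(i),s}$ (as de Rham bundles, up to a Tate twist). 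Restricting the flow to the $i$-th component and concatenating $\mathrm{ord}(\sigma)$ such blocks produces a periodic parabolic de Rham--Higgs flow with initial term $(\bar H_{i,s},\nabla,Fil_{i,s})$ and period $f\cdot\mathrm{ord}(\sigma)\le f\cdot n!$; applying the grading functor exhibits $(E_{i,s},\theta_{i,s})=\Gr\bar H_{i,s}$ as periodic of period at most $f\cdot n!$, a bound independent of $p$. As this holds at every geometric point of $S$ outside a fixed closed subscheme, each $(E_i,\theta_i)$, equipped with the restriction of the Hodge filtration of $\bar H_{dR}$, is a periodic parabolic Higgs bundle over $C_{\log}$, which is the assertion. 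I expect the main obstacle to be the inductive step of the previous paragraph: propagating parabolic stability and pairwise non-isomorphy of the Higgs factors through $\Gr_{Fil}\circ C^{-1}$, and especially showing that the non-canonical Hodge filtration chosen at each intermediate stage must respect the isotypic decomposition — this rests on Lemma~\ref{openess of completely reducibility}, the rigidity of the Simpson filtration, and Proposition~\ref{black hole principle}, with the book-keeping of Tate twists and the fact that $C^{-1}$ merely rescales parabolic degrees (so degree-zero subobjects correspond to degree-zero subobjects) as the remaining technical points.
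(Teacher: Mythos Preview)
Your overall strategy—track the orbit of a stable factor under the flow operator and bound the period by the order of a permutation of the factors—is exactly the paper's. The difference lies in how each handles the one delicate point you yourself flag: why the Hodge filtration at every intermediate stage of the flow respects the isotypic decomposition.

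Your justification for this is circular. You argue that $Fil_{j+1}$ splits by invoking ``rigidity of the Simpson filtration on each constituent'' and ``Proposition~\ref{black hole principle} applied to the period-$(f\cdot\mathrm{ord}\,\sigma)$ sub-flow through the $i$-th component (whose existence is itself part of this induction)''. But the Simpson-filtration rigidity applies to an irreducible flat bundle only once you already know $Fil_{j+1}$ restricts to it, and the black-hole argument requires the sub-flow to exist, which is precisely what you are trying to establish. Lemma~\ref{openess of completely reducibility}(ii) tells you $\Gr_{Fil_{j+1}}V_{j+1}$ is polystable with the right number of factors, but this by itself does not force an arbitrary Hodge filtration on a completely reducible flat bundle to split along the decomposition; there is no such statement in the paper, and it is not obvious.

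The paper closes this gap by first reducing (via the BIS pullback and the technique of Proposition~\ref{one-periodicity}) to the case where $f$ admits a \emph{semistable} compactification. In that case $\bar H_{dR}$ is \emph{one}-periodic by Faltings, so the only Hodge filtration ever used in the flow is the spread-out Hodge filtration from characteristic zero. That filtration splits along the isotypic decomposition by Lemma~\ref{complete reducibility}, so $C^{-1}$ of any stable Higgs factor, being an irreducible flat subbundle sitting inside a single isotypic block $\bar H_{i_0,s}^{\oplus m_{i_0}}$ with its diagonal filtration, automatically inherits the filtration. Thus the flow operator $\Gr\circ C^{-1}$ genuinely acts on the finite set $\{(\mathcal E_i,\Theta_i)_s\}$, and the paper then peels off one periodic factor at a time, bounding all periods by $M!$ with $M=\sum m_i$. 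Your argument would go through if you insert this reduction to the one-periodic case at the start; without it, the inductive step does not stand.
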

	\begin{proof}
		The first statement follows directly from Proposition \ref{one-periodicity}. It remains to show the second statement. Concerning the periodicity of each stable factor of $(E,\theta)$, one may reduce the general case to the semistable case using the same technique as Proposition \ref{one-periodicity}. So we may assume that $f$ admits a semistable compactification. Then \cite[Theorem 2.8]{LS} asserts that each factor $(E_i,\theta_i)$ is periodic: Let $\Gr$ be the grading functor with respect to the Hodge filtration. The proof therein actually shows that for some fixed number $r$, $(\Gr\circ C^{-1})^{r}$ maps each graded stable factor $(E_i,\theta_i)$ to some graded stable factor which is isomorphic to $(E_i,\theta_i)$ after ignoring the grading structure (see Caution in the proof of \cite[Theorem 2.8]{LS}). Now we will provide a simpler proof of this fact, which also shows that the grading structure is preserved under the flow operator.
		
		Spread out $(C_{\log},f\colon X\rightarrow U)$ to  $(\mathcal C_{\log},\mathfrak{f}\colon \mathcal X\rightarrow \mathcal U)$ over an integral affine scheme $S$ which is of finite type over $\Z$. By shrinking $S$, we may assume that both the Gau{\ss}-Manin bundle and its associated graded are locally free, and that the Hodge to de Rham spectral sequence degenerates at $E_1$. Let $(\mathcal E,\Theta)$ be the spreading out of the associated graded Higgs bundle of the Gau{\ss}-Manin system. Shrinking $S$ further if necessary, we may assume that the direct sum decomposition of $(E,\theta)$ spreads out over $S$:
		$$
		(\mathcal E,\Theta)=\bigoplus_{i=1}^n (\mathcal E_i,\Theta_i)^{\oplus m_i},
		$$
		and that for all geometric points $s$ of $S$, $(\mathcal E_i,\Theta_i)_s$ is parabolic stable, and moreover, $(\mathcal E_i,\Theta_i)_s\ncong (\mathcal E_j,\Theta_j)_s$ for $i\neq j$.
		
		Pick a graded parabolic stable factor $( E_1,\theta_1)$ in $(E,\theta)$, which by assumption spreads out to a parabolic Higgs subbundle $(\mathcal E_1,\Theta_1)\subset (\mathcal E,\Theta)$. For a closed point $s$ of $S$, consider the orbits under the flow operator $(\Gr\circ C^{-1})^f$ on $(\mathcal E_1,\Theta_1)_s$. The set $\{(\Gr\circ C^{-1})^{ft}(\mathcal E_1,\Theta_1)_s\}_{t\in \N}$ modulo \emph{graded} isomorphism is finite. Therefore, there exists a pair $(t_1,t_2)$ with $0\leq t_1<t_2\leq M$ such that 
		$$
		(\Gr\circ C^{-1})^{t_1}(\mathcal E_1,\Theta_1)_s\cong (\Gr\circ C^{-1})^{t_2}(\mathcal E_1,\Theta_1)_s
		$$
		as graded parabolic Higgs bundles. Let $(\mathcal F,\eta)$ be the unique stable parabolic Higgs subbundle of $(\mathcal E,\Theta)$ whose fiber at $s$ is isomorphic to $(\Gr\circ C^{-1})^{t_1}(\mathcal E_1,\Theta_1)_s$, which is $(t_2-t_1)$-periodic over $(\mathcal C_{\log})_s$. Now we write $(\mathcal E,\Theta)=(\mathcal F,\eta)\oplus (\mathcal E',\Theta')$. Because of one-periodicity, $C^{-1}(\mathcal E,\Theta)_s$ is isomorphic to $(\bar {\mathcal H}_{dR})_s$, the fiber of $\bar {\mathcal H}_{dR}$ over $(\mathcal C_{\log})_s$. Using Lemma \ref{complete reducibility} and Lemma \ref{openess of completely reducibility} (i), we may have that the Hodge filtration on $(\bar {\mathcal H}_{dR})_s$ splits, in the sense that it is the direct sum of filtrations on irreducible parabolic flat connections. In particular, there is a decomposition of parabolic de Rham bundles:
		$$
		(C^{-1}(\mathcal E,\Theta)_s, F_{hod})=(C^{-1}(\mathcal F,\eta)_s,F_{hod})\oplus (C^{-1}(\mathcal E',\Theta')_s,F_{hod}).
		$$ 
		
		From this, we conclude that $(\mathcal E',\Theta')_s$ with respect to the Hodge filtration is $(t_2-t_1)$-periodic. Note that $(\mathcal E',\Theta')_s$ is polystable and has one less graded stable factor than $(\mathcal E,\Theta)_s$. A repetition of the above argument shows that each stable factor in $(\mathcal E,\Theta)_s$ is periodic with respect to the Hodge filtration. Moreover, their periods are all bounded by $M!$ for $M=\sum_{i=1}^nm_i$. As $s$ is arbitrary, this shows the periodicity, as desired. 
		%
		
	\end{proof}
	\begin{corollary}\label{geometric is periodic}
		Let $\bar H_{dR}=\bigoplus_{i=1}^{n}\bar H_i^{\oplus m_i}$ be the decomposition in Lemma \ref{complete reducibility}. Then each irreducible factor $\bar H_{i}$ is periodic.
	\end{corollary}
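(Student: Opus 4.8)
The plan is to deduce the corollary from Proposition~\ref{one-periodicity} and Theorem~\ref{periodicity of stable factors of Kodaria-Spencer system} by tracking how a periodic flow of $\bar H_{dR}$ interacts with the decomposition $\bar H_{dR}=\bigoplus_{i=1}^{n}\bar H_i^{\oplus m_i}$ of Lemma~\ref{complete reducibility}. Spread out $(C_{\log},f\colon X\to U)$ over an integral affine $S$ of finite type over $\Z$, so that the decomposition of $\bar H_{dR}$ and the associated graded decomposition of Lemma~\ref{polystablility} both spread out; shrinking $S$, at every geometric point $s\in S-Z$ the residue characteristic is large, $\bar H_{dR,s}=\bigoplus_i \bar H_{i,s}^{\oplus m_i}$ is completely reducible with $\{\bar H_{i,s}\}$ and $\{\Gr\bar H_{i,s}\}=\{(E_i,\theta_i)_s\}$ pairwise non-isomorphic (Lemma~\ref{openess of completely reducibility}(i) and Lemma~\ref{polystablility}), and there is a unique $W_2(k(s))$-lifting. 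By Lemma~\ref{independence of integral model} it then suffices to exhibit, for each $i$ and with period bounded independently of $s$, a periodic parabolic de Rham--Higgs flow over $(\mathcal{C}_{\log})_s$ initializing $\bar H_{i,s}$.

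\textbf{Core step.} By Proposition~\ref{one-periodicity}, $\bar H_{dR,s}$ is periodic; fix a periodic parabolic de Rham--Higgs flow $\mathcal{F}$ for it, with de Rham terms $V_0=\bar H_{dR,s},V_1,\dots,V_{P-1}$ carrying Hodge filtrations and closing isomorphism $\psi\colon V_P\cong V_0$. The key claim is that the Hodge filtration is \emph{split} along the decomposition at every stage, i.e.\ for each $j$ the canonical isotypic decomposition of $V_j$ into irreducible flat subbundles is respected by its Hodge filtration. At stage $0$ this is the content of the $\C$-VHS splitting in the proof of Lemma~\ref{complete reducibility}, propagated to characteristic $p$ by Lemma~\ref{openess of completely reducibility}(i). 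At stages $j\geq 1$ one uses that $\mathcal{F}$ is built, as in the proof of Proposition~\ref{one-periodicity}, by descent from the one-periodic flow of $\bar H'_{dR,s}=\pi^{*}_{\ppar}\bar H_{dR,s}$ on a suitable cyclic cover $\pi\colon C'\to C$: there $V_1'=C^{-1}\Gr \bar H'_{dR,s}$ is, via the one-periodicity isomorphism and the $\C$-VHS splitting of the Gau{\ss}--Manin bundle, split along the irreducible decomposition, and $C^{-1}$ commutes with direct sums (Remark~\ref{exactness of parabolic pullback}) and sends parabolic polystable degree-zero Higgs bundles to completely reducible flat bundles (Corollary~\ref{vanishing chern classes and semistability}, Proposition~\ref{Biswas correspondence for Higgs/flat bundles in positive char}); descending, the same holds for the $V_j$ over $C$. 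Finally, since the $\bar H_{i,s}$ are pairwise non-isomorphic, uniqueness of isotypic decompositions forces $\psi$ to match isotypic components, hence to induce a permutation $\tau$ of $\{[\bar H_{i,s}]\}$; replacing the period $P$ by $P\cdot\mathrm{ord}(\tau)$ we may assume $\psi$ respects the decomposition exactly.

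\textbf{Conclusion and the obstacle.} With all filtrations split and $\psi$ compatible with the decomposition, $\mathcal{F}$ restricts to a diagram of the same shape with $\bar H_{i,s}$ (a direct summand of $\bar H_{dR,s}$) in place of $\bar H_{dR,s}$ at every stage: the restricted $\Gr$'s and $C^{-1}$'s are legitimate by exactness of these functors, the restricted filtrations are Hodge filtrations, and the restricted closing isomorphism is an isomorphism of parabolic flat bundles. This is precisely a periodic parabolic de Rham--Higgs flow initializing $\bar H_{i,s}$ of period $P\cdot\mathrm{ord}(\tau)$, which divides a bound independent of $p$ (e.g.\ a divisor of $\phi(N\cdot(N-2)!)\cdot(\mathrm{rank}\,\bar H_{dR})!$, using the period bound in the proof of Proposition~\ref{one-periodicity}). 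As $s\in S-Z$ and the $W_2$-lifting are arbitrary, Definition~\ref{arithmetic_definition}(1) shows $\bar H_i$ is a periodic parabolic de Rham bundle. The one genuinely non-formal point is the splitting of the Hodge filtration at the \emph{intermediate} de Rham stages of $\mathcal{F}$ (as opposed to stage $0$, where Deligne semisimplicity applies directly): I would establish it by the descent construction of Proposition~\ref{one-periodicity} indicated above, or equivalently extract it from the proof of Theorem~\ref{periodicity of stable factors of Kodaria-Spencer system}, where the stable Higgs factors are already split off from the flow compatibly with $F_{hod}$ — and then match associated gradeds, noting that the last de Rham term of the sub-flow attached to $(E_i,\theta_i)_s$ is an irreducible de Rham summand of a de Rham stage of $\mathcal{F}$ whose associated graded is $(E_i,\theta_i)_s=\Gr\bar H_{i,s}$, hence, the $\Gr\bar H_j$ being pairwise distinct, is isomorphic to $\bar H_{i,s}$.
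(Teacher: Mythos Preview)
Your fallback argument in the final paragraph is essentially the paper's proof, and it is correct; the detour through the ``Core step'' is unnecessary. The paper argues directly as follows: spread out the decomposition, and note (Lemma~\ref{polystablility} together with Lemma~\ref{openess of completely reducibility}(i),(ii)) that at every geometric point $s\in S-Z$ the grading functor $\Gr$ induces a \emph{bijection} from the set of isomorphism classes of irreducible parabolic de Rham summands of $\bar H_{dR,s}$ to the set of isomorphism classes of stable graded parabolic Higgs summands of $\Gr\bar H_{dR,s}$. Since Theorem~\ref{periodicity of stable factors of Kodaria-Spencer system} already gives a periodic Higgs--de Rham flow (with respect to the Hodge filtration) initializing $\Gr\bar H_{i,s}$ whose de Rham terms are summands of $\bar H_{dR,s}$, the last de Rham term has graded isomorphic to $\Gr\bar H_{i,s}$ and hence, by the bijection, is isomorphic to $\bar H_{i,s}$. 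Periodicity of $\bar H_i$ follows at once, with no need to analyze intermediate filtrations of the big flow.

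Your ``Core step'' is not wrong in spirit, but the justification you give for the splitting of the Hodge filtration at stages $j\ge 1$ is incomplete. You appeal to descent from the one-periodic flow on the cover, but the isomorphisms $\psi'$ used in the proof of Proposition~\ref{one-periodicity} are not $G$-equivariant (only their $f$-fold composite is), so one must still check that the filtrations transported along them are $G$-invariant and descend; this is true (scaling by a character preserves subspaces), but you do not say it. More importantly, none of this is needed: the paper bypasses the question entirely by working on the Higgs side first and then using the $\Gr$-bijection. Your last sentence already contains this idea; next time, lead with it.
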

	\begin{proof}
		Let $(\sC_{\log},\bar {\mathcal H}_{dR}=\bigoplus_i \bar {\mathcal H}_{i}^{\oplus m_i})$ be a spreading-out over $S$. Then Lemma \ref{polystablility} and Lemma \ref{openess of completely reducibility} (i), (ii) show that the grading functor $\Gr$ is a bijective map from the set of isomorphism classes of parabolic de Rham subbundles to the set of isomorphism classes of graded parabolic Higgs subbundles, first in characteristic zero and then over $k(s)$ for any geometric points $s\in S-Z$. So the periodicity for $\bar H_i$ follows from that of $\Gr \bar H_i$, as established in Theorem \ref{periodicity of stable factors of Kodaria-Spencer system}.  
	\end{proof}
	Now we are able to prove the main result of this article. 
	\begin{proof}[Proof of Theorem \ref{main result}]
		Let $V=(V,\nabla,Fil)$ be a motivic de Rham bundle over $U$. By Proposition \ref{motivic objects are determined by its generic fiber}, the canonical parabolic extension of $V$ to $C$ is a direct factor of some parabolic Gau{\ss}-Manin system $\bar H_{dR}$. By Proposition \ref{Tannakian cat}, we may assume $V$ is an irreducible factor of $\bar H_{dR}$. By Corollary \ref{geometric is periodic}, the canonical parabolic extension of $V$ is periodic. Clearly, periodicity is preserved under the restriction from $C$ to $U$. Hence the periodicity for $V$ follows.  
	\end{proof}

	\section{Periodic de Rham conjecture}\label{section:conjecture}
	Let us fix a smooth complex projective curve $C$.  Corollary \ref{geometric is periodic} asserts that motivic parabolic de Rham bundles over $C_{\log}$ are periodic. The next lemma shows a nontrivial property for irreducible periodic objects; the corresponding property for motivic objects was previously known via the Simpson correspondence using the Hodge metric.
	\begin{lemma}\label{fullness}
		Suppose $V_i,i=1,2$ are irreducible periodic parabolic de Rham bundles over $C_{\log}/\C$. Then for any $\beta: \Gr V_1\to \Gr V_2$, a morphism of periodic parabolic Higgs bundles, there exists a unique $\alpha: V_1\to V_2$ of periodic parabolic de Rham bundles such that $\Gr \alpha=\beta$.
	\end{lemma}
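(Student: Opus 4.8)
The plan is to push the problem to positive characteristic, where the irreducibility hypothesis forces the associated gradeds to be parabolic stable, and then to exploit the (essential) uniqueness of the periodic flow attached to a stable object in order to transport the morphism $\beta$. First I would record the formal reductions: by Lemma \ref{openess of completely reducibility}(iii) the graded parabolic Higgs bundles $\Gr V_1$ and $\Gr V_2$ are parabolic stable, and by Corollary \ref{vanishing chern classes and semistability} they have parabolic degree zero, so $\Hom(\Gr V_1,\Gr V_2)$ in the category of parabolic Higgs bundles is either $0$ or one-dimensional and spanned by an isomorphism. If $\beta=0$ one takes $\alpha=0$; so from now on assume $\beta$ is an isomorphism.

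For the existence of a lift over $\C$ I would invoke nonabelian Hodge theory on the curve. Each $\Gr V_i$ is a parabolic stable Higgs bundle of parabolic degree zero which is $\C^{\times}$-fixed (a graded Higgs bundle), so Simpson's correspondence \cite{S90} attaches to it, functorially, a tame harmonic bundle, hence an irreducible filtered $\C$-local system carrying a $\C$-variation of Hodge structure; its underlying de Rham bundle, equipped with the Hodge filtration, equals $(V_i,\nabla_i,Fil_i)$ up to a Tate twist pinned down by the grading of $\Gr V_i$ (here one uses \cite[Lemma 7.1]{LSZ}: for stable $\Gr V_i$ the Hodge filtration on $V_i$ is forced to be the Simpson filtration up to a shift). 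Functoriality of this correspondence then promotes the isomorphism $\beta$ to an isomorphism $\alpha\colon V_1\to V_2$ of parabolic de Rham bundles with $\Gr\alpha=\beta$. (Alternatively one can avoid the analytic input: spread out, produce $\alpha_s$ for every closed point $s$ by the characteristic-$p$ argument below, and conclude from a coherent-sheaf/base-change argument that $\Gr\colon\Hom_{\DR}(V_1,V_2)\to\Hom(\Gr V_1,\Gr V_2)$ is surjective over $\C$.)

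It remains to check that this $\alpha$ is a \emph{periodic} de Rham morphism, and to establish uniqueness. Spread out $(C,D,V_1,V_2,\alpha)$ over an integral affine scheme $S$ of finite type over $\Z$, shrinking $S$ so that $\sC$ is smooth projective over $S$, the periodicity of $V_1,V_2$ holds with a common period $f$ at every geometric point $s\in S$ relative to every $W_2(k(s))$-lifting, and $\Gr(\sV_i)_s$ is parabolic stable for every $s$ (openness of parabolic stability, \cite[Proposition 2.8]{MY92}, as used in the proof of Lemma \ref{openess of completely reducibility}). Fix $s$ and a $W_2$-lifting $\tilde s$. Since $\Gr(\sV_i)_s$ is parabolic stable, the black hole principle (Proposition \ref{black hole principle}) makes every Higgs term of a periodic flow through it parabolic stable, and by \cite[Lemma 7.1]{LSZ} the Hodge filtration entering each de Rham step of the flow is forced to be the Simpson filtration up to a Tate twist. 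Hence the reduction $\alpha_s$ of $\alpha$, a morphism of the underlying parabolic de Rham bundles, propagates under the functors $C^{-1}$ and $\Gr$ to a morphism of periodic Higgs--de Rham flows: at each stage one chooses the Tate twist of the Simpson filtration on the $(\sV_2)_s$-side, and finally the periodicity isomorphism on that side, so that $\alpha_s$ commutes with the flow maps. Thus $\alpha_s\in\PPDR_f(\sC_{\tilde s,\log}/k(s))$ for every $s$ and every $W_2$-lifting, so $\alpha$ is a morphism of periodic parabolic de Rham bundles by Definition \ref{arithmetic_definition}. Uniqueness follows from faithfulness of $\Gr$: if $\alpha,\alpha'$ both lift $\beta$ then $\alpha-\alpha'$ is a periodic de Rham morphism with vanishing associated graded, so Lemma \ref{abelian category}(i) gives $(\alpha-\alpha')_s=0$ for all $s$ in the above dense locus, whence $\alpha-\alpha'=0$ because a section of the (after shrinking, locally free) sheaf of de Rham morphisms that vanishes at a dense set of closed points of an integral Jacobson scheme is zero.

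I expect the main obstacle to be the last propagation step: showing that an isomorphism of the underlying de Rham bundles between two \emph{stable} periodic de Rham bundles in characteristic $p$ extends to an isomorphism of suitable periodic flows. This is exactly where stability enters — via \cite[Lemma 7.1]{LSZ} it pins down the Hodge filtrations occurring in the flow up to the harmless ambiguity of Tate twists, which can then be matched term by term, and it reduces the ambiguity in the wrap-around (periodicity) isomorphism to a scalar that can be absorbed over the algebraically closed field $k(s)$. A careful bookkeeping of these Tate twists is needed so that $\Gr\alpha=\beta$ holds on the nose rather than merely up to a shift of filtration index; the coherent-sheaf computation in the alternative route, and the comparison between the complex-analytic $\alpha$ and its characteristic-$p$ reductions, are routine once $S$ has been suitably shrunk.
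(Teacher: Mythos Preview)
Your primary route via Simpson's correspondence over $\C$ contains a genuine gap. You assert that the de Rham bundle attached by nonabelian Hodge theory to $\Gr V_i$ is $(V_i,\nabla_i,Fil_i)$ up to Tate twist, justifying this by the fact that $Fil_i$ must be the Simpson filtration on $V_i$. But knowing $Fil_i=F_S$ only tells you $\Gr V_i=\Gr_{F_S}(V_i,\nabla_i)$; it does \emph{not} tell you that $\Gr_{F_S}(V_i,\nabla_i)$ is the Higgs bundle Simpson-corresponding to $(V_i,\nabla_i)$. That identification holds precisely when $(V_i,\nabla_i)$ underlies a $\C$-VHS, which for an arbitrary irreducible periodic de Rham bundle is exactly the content of the periodic de Rham conjecture and cannot be assumed here. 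In general, for a non-VHS irreducible $(V,\nabla)$, the $\C^\times$-limit $\Gr_{F_S}(V,\nabla)$ (which is $\C^\times$-fixed) differs from the Simpson partner $(E,\theta)$ (which is not $\C^\times$-fixed), so functoriality of Simpson applied to $\beta$ would produce an isomorphism between the wrong flat bundles.

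The paper's proof, which is essentially your parenthetical ``alternative'' route made precise, avoids this circularity entirely. It first shows $(V_1,\nabla_1)\cong(V_2,\nabla_2)$ as parabolic flat bundles by spreading out: the isomorphism $\beta_s$ of stable gradeds propagates through the periodic flow (exactly as in your characteristic-$p$ paragraph) to give $(\sV_1)_s\cong(\sV_2)_s$, and an Isom-scheme argument lifts this to $\C$. Then it proves directly that any isomorphism $\psi$ of the underlying parabolic flat bundles automatically respects the Hodge filtrations: $\Gr_{\psi^*Fil_2}(V_1,\nabla_1)\cong\Gr_{Fil_2}(V_2,\nabla_2)$ is parabolic stable, and since $\Gr_{Fil_1}(V_1,\nabla_1)$ is also parabolic stable, the uniqueness statement in \cite[Lemma~4.1]{LSZ} (via BIS) forces $\psi^*Fil_2=Fil_1$. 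Hence $\Hom_{\PaDR}(V_1,V_2)=\Hom((V_1,\nabla_1),(V_2,\nabla_2))$ is one-dimensional, as is $\Hom(\Gr V_1,\Gr V_2)$ by stability, and the map $\Gr$ between them is nonzero since $\Gr\psi$ is an isomorphism. This dimension-counting argument is cleaner than tracking $\alpha_s$ through the flow and separately verifying periodicity, though your characteristic-$p$ propagation is correct and does the same work in a more hands-on way.
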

	\begin{proof}
		By Lemma \ref{openess of completely reducibility} (iii), $\Gr V_i, i=1,2$ are parabolic stable. A nonzero $\beta$ implies that $\Gr V_i, i=1,2$ are isomorphic. Let $(\sC_{\log},\sV_1,\sV_2)$ be a spreading-out of $(C_{\log}, V_1, V_2)$ over $S$. Then the periodicity of $V_i, i=1,2$ implies that $(\sV_i)_s, i=1,2$ are isomorphic for any geometric point $s\in S-Z$. Thus $V_i, i=1,2$ are isomorphic as parabolic flat bundles. Write $V_i=(V_i,\nabla_i,Fil_i)$ for $i=1,2$. Actually, we have shown that the $\C$-linear vector space $\Hom((V_1,\nabla_1),(V_2,\nabla_2))$ is one-dimensional. It contains the $\C$-linear subspace $\Hom((V_1,\nabla_1,Fil_1),(V_2,\nabla_2,Fil_2))$ whose elements preserve the Hodge filtrations.
		\begin{claim}
		$\Hom((V_1,\nabla_1,Fil_1),(V_2,\nabla_2,Fil_2))=\Hom((V_1,\nabla_1),(V_2,\nabla_2))$.
		\end{claim}
		\begin{proof}
		Fix an isomorphism $\psi: V_1\to V_2$ belonging to $\Hom((V_1,\nabla_1),(V_2,\nabla_2))$. Then $\psi$ induces an isomorphism of graded parabolic Higgs bundles: 
		$$
		\Gr_{\psi^*Fil_2}(V_1,\nabla_1)\cong \Gr_{Fil_2}(V_2,\nabla_2).
		$$ 
		In particular, $\Gr_{\psi^*Fil_2}(V_1,\nabla_1)$ is parabolic stable. Now by the BIS correspondence (Proposition \ref{Biswas correspondence for Higgs/flat bundles in positive char}) and the proof of \cite[Lemma 4.1]{LSZ}, it follows that $\psi^*Fil_2=Fil_1$ since $\Gr_{Fil_1}(V_1,\nabla_1)$ is also parabolic stable by assumption. The claim follows.
		\end{proof}
		Again by the parabolic stability, the $\C$-linear space $\Hom(\Gr_{Fil_1}(V_1,\nabla_1),\Gr_{Fil_2} (V_2,\nabla_2))$ of graded parabolic Higgs bundles is one-dimensional. The natural $\C$-linear map 
		$$
		\Gr: \Hom((V_1,\nabla_1,Fil_1),(V_2,\nabla_2,Fil_2))\to \Hom(\Gr_{Fil_1}(V_1,\nabla_1),\Gr_{Fil_2} (V_2,\nabla_2)) 
		$$
		is non-zero. Therefore, The above map $\Gr$ must be an isomorphism. The lemma is proved.
	\end{proof}
	By abuse of notation, we use $C_{\log}$ to mean a log curve $(C,D)$ for \emph{some} (viz. not fixed) reduced effective divisor $D$. We define the category $\MDR(C_{\log})$ (resp. $\MHG(C_{\log})$) of motivic parabolic de Rham (resp. Higgs) bundles over $C_{\log}$, as the full subcategory of the category $\PDR^{ss}(C_{\log})$ (resp. $\PHG^{ss}(C_{\log})$) of completely reducible periodic parabolic de Rham bundles (resp. polystable periodic parabolic Higgs bundles) over $C_{\log}$. 
	\begin{proposition}
	The grading functor $\Gr: \PDR^{ss}(C_{\log})\to \PHG^{ss}(C_{\log})$ is fully faithful and restricts to an equivalence of categories $\Gr: \MDR(C_{\log})\cong \MHG(C_{\log})$.	
	\end{proposition}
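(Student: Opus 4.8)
The plan is to first show that $\Gr$ carries $\PDR^{ss}(C_{\log})$ into $\PHG^{ss}(C_{\log})$ and is fully faithful there, the fullness coming by reduction to the irreducible case, which is precisely Lemma~\ref{fullness}, and then to deduce the equivalence $\Gr\colon\MDR(C_{\log})\cong\MHG(C_{\log})$ from the structure theory of parabolic Gau{\ss}-Manin systems established above.

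First I would check well-definedness. If $V\in\PDR^{ss}(C_{\log})$, then, using complete reducibility together with the fact that the irreducible constituents of a completely reducible periodic object are again periodic, we may write $V=\bigoplus_i A_i^{\oplus a_i}$ with the $A_i$ pairwise non-isomorphic irreducible periodic parabolic de Rham bundles. By Lemma~\ref{openess of completely reducibility}(iii) each $\Gr A_i$ is parabolic stable, so, since $\Gr$ is additive, $\Gr V=\bigoplus_i(\Gr A_i)^{\oplus a_i}$ is polystable and periodic, hence lies in $\PHG^{ss}(C_{\log})$.

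Next I would prove full faithfulness. Take $V_1=\bigoplus_i A_i^{\oplus a_i}$ and $V_2=\bigoplus_j B_j^{\oplus b_j}$ as above. Lemma~\ref{fullness} says $\Gr$ is fully faithful on irreducible periodic parabolic de Rham bundles; in particular it detects isomorphisms among them, so $A_i\cong B_j\iff\Gr A_i\cong\Gr B_j$. Moreover each $\Gr A_i,\Gr B_j$ is parabolic stable, whence $\Hom(\Gr A_i,\Gr B_j)\cong\C$ if $\Gr A_i\cong\Gr B_j$ and is $0$ otherwise, and Lemma~\ref{fullness} transports this to $\Hom(A_i,B_j)$. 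Since the ambient categories are additive, $\Hom(V_1,V_2)=\bigoplus_{i,j}\Hom(A_i,B_j)\otimes_\C\mathrm{Mat}_{b_j\times a_i}(\C)$ and $\Hom(\Gr V_1,\Gr V_2)=\bigoplus_{i,j}\Hom(\Gr A_i,\Gr B_j)\otimes_\C\mathrm{Mat}_{b_j\times a_i}(\C)$, and the isomorphisms furnished by Lemma~\ref{fullness} identify the two sides compatibly with $\Gr$. Hence $\Gr\colon\Hom(V_1,V_2)\to\Hom(\Gr V_1,\Gr V_2)$ is bijective and $\Gr$ is fully faithful.

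Finally, a fully faithful functor restricted to full subcategories remains fully faithful, so for the equivalence $\Gr\colon\MDR(C_{\log})\cong\MHG(C_{\log})$ it remains to check essential surjectivity. By Definition~\ref{motivic objects}, every object of $\MHG(C_{\log})$ is of the form $\Gr W$ for a motivic parabolic de Rham bundle $W$ over $C_{\log}$; I must verify $W\in\MDR(C_{\log})$, i.e. that $W$ is completely reducible and periodic. Over the generic point $W$ is a subsystem of the restriction of a Gau{\ss}-Manin system, so by the Deligne-extension equivalence (Lemma~\ref{canonical parabolic extension}) $W$ is a parabolic de Rham subbundle of a parabolic Gau{\ss}-Manin system $\bar H_{dR}$; by Lemma~\ref{complete reducibility} one has $\bar H_{dR}=\bigoplus_i\bar H_i^{\oplus m_i}$ with the $\bar H_i$ irreducible, so $W$ is a direct sum of copies of some of the $\bar H_i$ and is completely reducible, while each $\bar H_i$ is periodic by Corollary~\ref{geometric is periodic}, so $W$ is periodic; hence $W\in\MDR(C_{\log})$ and $\Gr W\in\Gr(\MDR(C_{\log}))$, giving surjectivity on objects. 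The proposition follows. The step I expect to require the most care is the fullness reduction --- justifying that a completely reducible periodic parabolic de Rham bundle decomposes into \emph{periodic} irreducible summands, so that Lemma~\ref{fullness} applies term by term, and matching the two isotypic decompositions under $\Gr$; the essential-surjectivity argument is routine given the tools already developed.
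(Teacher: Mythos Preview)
Your proposal is correct and follows the same approach as the paper, which proves the proposition in two sentences: ``The fully faithfulness property for $\Gr$ follows from Lemma~\ref{fullness}. The remaining is clear.'' You have simply unpacked what ``follows from'' and ``clear'' mean --- the reduction to irreducibles via additivity, the matching of isotypic blocks, and the essential surjectivity argument via Proposition~\ref{motivic objects are determined by its generic fiber} and Corollary~\ref{geometric is periodic} --- and your caveat about ensuring the irreducible summands of a completely reducible periodic object are themselves periodic is a genuine subtlety that the paper's terse proof also leaves implicit.
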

	\begin{proof}
	The fully faithfulness property for $\Gr$ follows from Lemma \ref{fullness}.  The remaining is clear.
	\end{proof}
	We make the following conjecture.
	\begin{conjecture}(weak periodic de Rham conjecture)
	The natural inclusion functor 
	$$\MDR(C_{\log})\to \PDR^{ss}(C_{\log})$$ 
	is an equivalence. In other words, each irreducible periodic parabolic de Rham bundle over $C_{\log}$ is motivic.
	\end{conjecture}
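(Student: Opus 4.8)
The plan is to reduce to the cases in which motivicity is already available and to isolate the genuinely new input required for the rest. By Proposition \ref{Tannakian cat} and the full faithfulness of the grading functor (Lemma \ref{fullness}), it suffices to treat a single irreducible periodic parabolic de Rham bundle $V=(V,\nabla,Fil)$; by Lemma \ref{openess of completely reducibility}(iii) its associated graded $\Gr V$ is then parabolic stable. After a finite cover $\pi\colon C'\to C$ as in Lemma \ref{periodic parabolic pulls back to parabolic}, using that parabolic pullback commutes with the flow (Proposition \ref{parabolic pullback commutes with grading functor}, Lemma \ref{parabolic inverse cartier commutes with pullback}), I may assume the parabolic structure is trivial, so that $V$ is an irreducible periodic de Rham bundle over a smooth affine curve $U=C-D$ which, after enlarging the base field, is defined over a number field $K$.

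First I would dispose of the low-complexity cases. In rank one, Proposition \ref{rank one} identifies periodic objects with torsion connections, and these are manifestly motivic, so rank one is settled. In the rigid case, periodicity forces the spreading-out to be globally nilpotent, whence by Katz's monodromy theorem \cite{Kat70} $(V,\nabla)$ has quasi-unipotent local monodromies and rational residues; if moreover $(V,\nabla)$ is weakly physically semi-rigid of type $J$ with torsion determinant, then Katz's classification of rigid local systems \cite{Kat96}, combined with the identification (recorded before Conjecture \ref{extension conjecture}) of the periodic Hodge filtration with the Simpson filtration, exhibits $V$ as a direct summand of a Gau{\ss}-Manin system. This proves the conjecture for all rigid periodic objects.

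The genuinely open part is the non-rigid case, where I would imitate Esnault--Gr\"ochenig's proof of Simpson's motivicity conjecture \cite{EG18}, using \emph{periodicity} in the role played there by rigidity. From the periodic structure one obtains, for almost all $p$ and all $W_2$-liftings, the essentially unique family $\rho^V(s)\colon \pi_1(U'_{an})\to \Gl(\F_{p^f})$ described after Conjecture \ref{extension conjecture}, compatibly with Faltings' $p$-adic Simpson correspondence \cite{Fa05} and with the Fontaine--Laffaille--Faltings correspondence \cite{Fa88} via the dictionary of \cite{LSZ}; periodicity is precisely what makes the associated crystal a torsion Fontaine module at every good prime. The plan is then: (a) assemble these into a compatible system of lisse $\overline{\Q}_\ell$-sheaves on a model $\sU\to\Spec\sO_K[1/M]$; (b) invoke the Langlands correspondence over function fields together with Deligne's companions conjecture to realise this system on each special fibre as one arising from an arithmetic sheaf of geometric origin; and (c) spread the geometric origin over an open subscheme of $\Spec\sO_K[1/M]$ and descend to characteristic zero, concluding that $V$ is motivic over $U$.

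The main obstacle is the conjunction of (b) and (c). Even granting a full compatible system of $\ell$-adic companions over a curve over $\F_q$, showing that they are \emph{of geometric origin} is the function-field analogue of the Fontaine--Mazur conjecture and is open in general; and lifting the resulting motive back to the original $\bar\Q$-curve meets an obstruction that rigidity removes --- when $M^{r,J}_{dR}(U)$ is zero-dimensional the characteristic-$p$ and characteristic-$0$ points must coincide --- but which in the non-rigid case requires rigidifying $V$ inside a positive-dimensional de Rham moduli space using only the positive-characteristic periodicity data. This is essentially the content of Andr\'e's nilpotency conjecture (\cite[Ch.V, Appendix]{Andre}), which implies ours and is itself open. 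I therefore expect a complete proof to need ideas beyond the Esnault--Gr\"ochenig circle, so that the realistic intermediate target is to enlarge the class of provable cases --- for instance those periodic objects whose $p$-adic companions can be shown geometric by independent means --- rather than the full conjecture.
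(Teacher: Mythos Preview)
This statement is a \emph{conjecture} in the paper, not a theorem; the paper does not provide a proof. Your ``proposal'' correctly recognises this: you dispose of the rank one case via Proposition \ref{rank one} and the rigid case via Katz's classification (exactly the cases the paper itself treats in Propositions \ref{irreducible WPSR is periodic}--\ref{motivicity of rigids}), and you accurately identify the non-rigid case as genuinely open, noting that it is implied by Andr\'e's nilpotency conjecture and would require ideas beyond what is currently available. There is therefore nothing to compare: the paper's stance and yours coincide --- the conjecture is open, with the indicated special cases settled.

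One small correction: in your reduction step you write ``so that $V$ is an irreducible periodic de Rham bundle over a smooth affine curve $U=C-D$''. After parabolic pullback along $\pi\colon C'\to C$ you land in $\LPDR(\tilde C'_{\log}/k)$, i.e.\ a periodic \emph{logarithmic} de Rham bundle with trivial parabolic structure over the projective curve $C'$, not a de Rham bundle over an affine curve; the distinction matters because the paper's Conjecture \ref{extension conjecture} (extension from $U$ to $C_{\log}$) is itself open, and your reduction should not silently assume it.
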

	
Now we fix a nonempty subset $U\subset C$.  Recall the category $\PDR(U)$ (resp. $\PHG(U)$) of periodic de Rham (resp. Higgs) bundles over $U$ defined in \S4. After Theorem \ref{main result} (resp. Theorem \ref{periodicity of stable factors of Kodaria-Spencer system}), we define the category $\MDR(U)$ (resp. $\MHG(U)$) of motivic de Rham (resp. Higgs) bundles over $U$ as full subcategory of $\PDR(U)$ (resp. $\PHG(U)$).  
\begin{proposition}\label{equivalence}
	The functor $\Gr: \PDR(U)\to \PHG(U)$ is a functor of rigid tensor categories which is faithful and preserves $\bigoplus$, $\bigotimes$ and unit objects. It restricts to 
	$$\Gr: \MDR(U)\to \MHG(U)$$ of neutral Tannakian categories over $\C$.
	\end{proposition}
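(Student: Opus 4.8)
The plan is to first establish the tensor-functoriality and faithfulness of $\Gr$ on all of $\PDR(U)$, and then to upgrade these to the Tannakian statement on the motivic subcategories. Additivity of $\Gr$ is part of Lemma~\ref{the grading functor}, so $\Gr$ commutes with $\bigoplus$. That $\PDR(U)$ and $\PHG(U)$ are closed under $\bigotimes$ and under duals, and that $\Gr$ carries the de Rham unit $(\sO_U,d,Fil_{tr})$ to the Higgs unit $(\sO_U,0)$, is the same bookkeeping already used in the proof of Proposition~\ref{Tannakian cat}; the only additional input is Lemma~\ref{inverse Cartier commutes with tensor product}, which I would apply at geometric points $s$ of a spreading-out whose residue characteristic exceeds $\rank V_1+\rank V_2$, so that the nilpotency indices of the two Higgs fields add up to less than $\Char k(s)$. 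The identity $\Gr(V_1\otimes V_2)\cong\Gr V_1\otimes\Gr V_2$ is the elementary fact that the associated graded of a tensor-product filtration is canonically the tensor product of the associated gradeds (with the two Higgs fields adding up as expected), and the same remark handles duals; I would simply record these.

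For faithfulness of $\Gr\colon\PDR(U)\to\PHG(U)$ I would take $\alpha\colon V_1\to V_2$ with $\Gr\alpha=0$, spread it out over a base $S$ realizing the periodicity of $V_1$ and $V_2$, and reduce modulo $p$: after shrinking $S$, for every geometric point $s$ the reduction $\alpha_s$ lies in $\PPDR_f(\mathscr C_{\tilde s,\log}/k(s))$ and $\Gr\alpha_s=(\Gr\alpha)_s=0$, so $\alpha_s=0$ by the faithfulness of the grading functor in Lemma~\ref{abelian category}(i); since $\alpha$ over $\C$ is the generic fibre of $\{\alpha_s\}$, this gives $\alpha=0$.

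The substantive part is the Tannakian statement. The key input is Deligne's semisimplicity theorem. A motivic de Rham bundle $V$ is, over the generic point, a subsystem of a Gau{\ss}-Manin system $H$, and $H$ is completely reducible as a de Rham bundle by Lemma~\ref{complete reducibility}; hence $V$ is a direct summand of $H$, so a direct sum of irreducible de Rham bundles each of which is again a summand of $H$ and therefore motivic. An irreducible de Rham bundle is a simple object of $\PDR(U)$ (a nonzero subobject is in particular a $\nabla$-invariant subbundle, hence all of it), so $V$ is a semisimple object of $\PDR(U)$, and it follows that every subobject and quotient of $V$ in $\PDR(U)$ is motivic. Thus $\MDR(U)$ is a full abelian subcategory of $\PDR(U)$ closed under subquotients. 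I would then check closure under $\bigotimes$ by restricting two such bundles to a common open and using the Künneth formula (so $V_1\otimes V_2$ is generically a subsystem of $H^{\bullet}_{dR}(X_1\times_U X_2/U)$), and closure under duals by relative Poincar\'e duality together with the fact that the Tate object $\mathbf 1_{DR}(1)$ is motivic; restricting the de Rham fibre functor of Proposition~\ref{Tannakian cat} then presents $\MDR(U)$ as a neutral Tannakian category over $\C$.

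Finally I would show that $\Gr$ restricts to an equivalence $\MDR(U)\xrightarrow{\sim}\MHG(U)$: it is essentially surjective by the definition of motivic Higgs bundle (cf. Definition~\ref{motivic objects}), faithful by the previous paragraph, and, since both categories are semisimple (the objects of $\MHG(U)$ are $\bigoplus_i(\Gr V_i)^{m_i}$ with each $\Gr V_i$ parabolic stable, by Lemma~\ref{polystablility} and Lemma~\ref{openess of completely reducibility}(iii)), fullness reduces to the claim that $\Gr V_1\cong\Gr V_2$ forces $V_1\cong V_2$ for irreducible motivic $V_i$. For this I would pass to the canonical parabolic extensions $\bar V_i$ over $C_{\log}$, which are irreducible periodic parabolic de Rham bundles by Proposition~\ref{motivic objects are determined by its generic fiber} and Corollary~\ref{geometric is periodic}; an isomorphism of their associated graded parabolic Higgs bundles lifts to an isomorphism $\bar V_1\cong\bar V_2$ by Lemma~\ref{fullness}, which restricts to $V_1\cong V_2$ by Lemma~\ref{canonical parabolic extension}. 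Transporting the Tannakian structure of $\MDR(U)$ along this equivalence exhibits $\MHG(U)$ as neutral Tannakian and $\Gr$ as a functor of neutral Tannakian categories. The step I expect to be the main obstacle is verifying that $\Gr$ of the canonical parabolic extension of $V_i$ \emph{is} the canonical parabolic extension of $\Gr V_i$ --- i.e., that the weight/adjustedness compatibility in Lemma~\ref{the grading functor} suffices to identify it, via the Higgs analogue of Lemma~\ref{canonical parabolic extension} --- since this is what lets one apply Lemma~\ref{fullness} after only knowing an isomorphism over $U$; a lesser subtlety is making the ``bounded number of bad primes'' in the tensor-product argument uniform, depending only on the ranks.
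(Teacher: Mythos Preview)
Your approach to faithfulness is essentially the paper's: spread out, reduce modulo $p$, and use that $\Gr\alpha_s=0$ forces $\alpha_s=0$ by periodicity. The paper phrases this directly (noting that $C^{-1}$ and $\Gr$ send the zero morphism to the zero morphism, so periodicity gives $\alpha_s=0$) rather than citing Lemma~\ref{abelian category}(i), but it is the same argument.

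For the Tannakian claim you do considerably more than the paper, and more than the statement asks. The proposition asserts only that $\MDR(U)$ and $\MHG(U)$ are neutral Tannakian and that $\Gr$ is a tensor functor between them---it does not claim $\Gr$ is an equivalence. The paper's proof of this part is two sentences: the motivic subcategories are evidently rigid tensor subcategories, and they are abelian by Lemmata~\ref{complete reducibility} and~\ref{polystablility} (Deligne semisimplicity on the de Rham and Higgs sides respectively), hence Tannakian subcategories of $\PDR(U)$. Your explicit K\"unneth and Poincar\'e-duality checks for closure under $\otimes$ and duals are correct and make rigorous what the paper glosses as ``clearly''. However, your entire final paragraph---lifting an isomorphism $\Gr V_1\cong\Gr V_2$ via Lemma~\ref{fullness} to prove $\Gr$ is an equivalence, together with the obstacle you flag about compatibility of $\Gr$ with canonical parabolic extension---is not needed for the proposition as stated and can be dropped. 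That obstacle is real if one wants the equivalence, but irrelevant here.
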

	
	\begin{proof}
		The first statement is clear except the faithfulness of $\Gr$. Let $\alpha: V_1 \to V_2$ be a morphism in $\PDR(U)$ such that $\beta=\Gr \alpha: \Gr V_1\to \Gr V_2$ equals zero. Take a spreading-out of $(U, V_1,V_2,\alpha)$ over $S$. So for each closed point $s\in S-Z$, the reduction $\beta_s$ at $s$ equals zero. The functors $C^{-1}$ and $\Gr$ map the zero morphism to the zero morphism. Hence by periodicity $\alpha_s=0$, which implies $\alpha=0$. So $\Gr$ is faithful. Clearly, the categories of geometric objects form rigid tensor subcategories. By Lemmata \ref{complete reducibility} and \ref{polystablility}, the categories are abelian. It follows that they are Tannakian subcategories.  
	\end{proof}
	
Combining the extension conjecture with the weak periodicity de Rham conjecture, we obtain the following conjecture. 	
\begin{conjecture}\label{periodic dR conjecture}
		Let $U$ be a smooth curve over $\C$. Let $\PDR^{ss}(U)$ be the full subcategory of $\PDR(U)$ consisting of completely reducible objects. Then
		$$
		\MDR(U)=\PDR^{ss}(U).
		$$
\end{conjecture}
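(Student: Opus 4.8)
The plan is to establish the two inclusions $\MDR(U)\subseteq \PDR^{ss}(U)$ and $\PDR^{ss}(U)\subseteq \MDR(U)$ separately; only the second uses Conjecture \ref{extension conjecture} and the weak periodic de Rham conjecture. For the first, let $V=(V,\nabla,Fil)$ be a motivic de Rham bundle over $U$. By Theorem \ref{main result} it is periodic, hence lies in $\PDR(U)$. By Proposition \ref{motivic objects are determined by its generic fiber}, its canonical parabolic extension $\overline V$ over $C_{\log}$ is a direct summand of a parabolic Gau{\ss}-Manin system; the latter is completely reducible by Lemma \ref{complete reducibility} (via Deligne's semisimplicity theorem \cite{De71}), a direct summand of a completely reducible object is completely reducible, and restriction to $U$ respects the splitting by Lemma \ref{canonical parabolic extension}. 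Hence $V=\overline V|_U$ is completely reducible, i.e. $V\in \PDR^{ss}(U)$.

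For the reverse inclusion, let $V=(V,\nabla,Fil)\in \PDR^{ss}(U)$. By the definition of complete reducibility we may write $(V,\nabla)=\bigoplus_i(V_i,\nabla_i)$ with each $(V_i,\nabla_i)$ irreducible as a flat bundle, the summands $V_i=(V_i,\nabla_i,Fil_i)$ being periodic de Rham bundles over $U$. Since $\MDR(U)$ is closed under direct sums (Proposition \ref{equivalence}), it suffices to show that a single irreducible periodic de Rham bundle $V=V_i$ is motivic. Periodicity makes $\nabla$ globally nilpotent with respect to any spreading-out, so by Katz's monodromy theorem \cite{Kat70} $\nabla$ has regular singularities and rational residues along $D=C-U$; Lemma \ref{canonical parabolic extension} then produces the canonical parabolic extension $\overline V=(\overline V,\overline\nabla,\overline{Fil})$ over $C_{\log}$, which is again irreducible and, being periodic, of parabolic degree zero (Corollary \ref{parabolic degree of periodic parabolic dR bundle}).

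Now I would apply the two conjectures in turn. Conjecture \ref{extension conjecture} gives that $\overline V$ is periodic over $C_{\log}$; being irreducible, $\overline V$ is an object of $\PDR^{ss}(C_{\log})$. The weak periodic de Rham conjecture then says $\overline V$ is motivic over $C_{\log}$: by Definition \ref{motivic objects}, over the generic point $\xi$ of $C$ it is isomorphic to a subsystem of the restriction to $\xi$ of a Gau{\ss}-Manin system defined over some nonempty open $U'\subseteq C$. Since $\xi$ is also the generic point of $U$ and de Rham cohomology of a smooth projective family is compatible with the base change $U'\cap U\hookrightarrow U'$, the same data realizes $V|_\xi=\overline V|_\xi$ as a subsystem of a Gau{\ss}-Manin system over $\xi$ in the sense of Definition \ref{definition of motivic and periodic objects}. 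Thus $V=\overline V|_U$ is motivic over $U$; reassembling the $V_i$ and using Proposition \ref{equivalence} again, $V\in \MDR(U)$ in general, whence $\PDR^{ss}(U)=\MDR(U)$.

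Granting the two input conjectures, the deduction above is essentially formal --- the whole difficulty of Conjecture \ref{periodic dR conjecture} is concentrated in Conjecture \ref{extension conjecture} (extending periodicity from $U$ to $C_{\log}$) and in the weak periodic de Rham conjecture (irreducible periodic $\Rightarrow$ motivic over a projective base). The only places where one must be a little careful are bookkeeping: that ``completely reducible'' in $\PDR^{ss}(U)$ splits an object into periodic summands that are irreducible as flat bundles, so that each is in the scope of Conjecture \ref{extension conjecture} --- this is built into the definition, with Lemma \ref{abelian category} and Lemma \ref{openess of completely reducibility}(iii) recording how periodicity behaves under these subobjects; that $\MDR(U)$ is closed under $\oplus$ (Proposition \ref{equivalence}); and that ``motivic over $C_{\log}$'' restricts to ``motivic over $U$'', which is immediate from the definitions since both are conditions at the common generic point $\xi$ and restriction to $U$ is a quasi-inverse of the canonical parabolic extension (Lemma \ref{canonical parabolic extension}).
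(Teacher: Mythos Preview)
This statement is a \emph{conjecture} in the paper, not a theorem; the paper offers no proof beyond the one-line remark preceding it that the conjecture is ``obtained by combining the extension conjecture with the weak periodic de Rham conjecture.'' Your proposal correctly expands that remark into a detailed derivation along exactly the route the paper gestures at --- reduce to an irreducible periodic summand, invoke Conjecture \ref{extension conjecture} to pass to $C_{\log}$, apply the weak periodic de Rham conjecture there, and restrict back to $U$ --- and you additionally supply the unconditional inclusion $\MDR(U)\subseteq\PDR^{ss}(U)$ via Theorem \ref{main result} and Lemma \ref{complete reducibility}, which the paper leaves entirely implicit.
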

In turn, this conjecture implies both the extension conjecture and the weak periodicity de Rham conjecture.  To support our conjecture, we make a classification of rank one objects in the category $\PDR(U)$. For a de Rham object $V=(V,\nabla,Fil)$, we define the Tate twist $V(n)$ to be $(V,\nabla,Fil[n])$.  
	\begin{proposition}\label{rank one case}
		A rank one object in $\PDR(U)$ is of form $L(n)$ where $L=(L,\nabla,F_{tr})$ with $F_{tr}$ is the trivial filtration and $(L,\nabla)$ torsion, namely, there is some positive integer $m$ such that $(L,\nabla)^{\otimes m}\cong (\sO_U, d)$. 
	\end{proposition}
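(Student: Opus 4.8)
The plan is to prove both implications, reducing each at once to the case of the trivial Hodge filtration. For a rank one bundle the only subbundles of $L$ are $0$ and $L$, so any finite decreasing filtration of $L$ by subbundles jumps exactly once, in some degree $n$; hence $(L,\nabla,Fil)=L(n)$ with $L=(L,\nabla,F_{tr})$. Moreover $L$ has associated graded Higgs bundle $(L,0)$, because a graded Higgs field on a bundle concentrated in a single degree must vanish; for the same reason $\Gr_{F_{tr}}$ sends any rank one flat bundle $(\mathcal M,\nabla')$ to $(\mathcal M,0)$. Finally, a Tate twist only shifts all filtration and grading indices, while the flow operator $\Gr\circ C^{-1}$ does not see the grading, so periodicity is invariant under Tate twists; thus in both directions it suffices to treat $(L,\nabla,F_{tr})$.

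For ``periodic $\Rightarrow$ torsion'': choose a spreading-out and, after shrinking $S$, assume the excluded subscheme $Z$ is empty, so that for every geometric point $s$ of $S$ the reduction $(\sL_s,\nabla_s)$ is the initial de Rham term of an $f$-periodic de Rham--Higgs flow. Through the closing isomorphism it is isomorphic to the final de Rham term, which is an inverse Cartier transform and therefore lies in $\MIC^{lf}_{p-1}(\sU_s)$; in particular its $p$-curvature is nilpotent, hence zero since the rank is one. Thus $(\sL,\nabla)$ has vanishing $p$-curvature modulo all but finitely many primes, and by the resolution of the Grothendieck--Katz $p$-curvature conjecture in rank one (Chudnovsky--Chudnovsky, Andr\'e; see \cite[\S2.4]{Bost}), applied over the (finitely generated) function field of $S$, the connection $(L,\nabla)$ has finite monodromy, i.e. $(L,\nabla)^{\otimes m}\cong(\sO_U,d)$ for some $m>0$.

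For ``torsion $\Rightarrow$ periodic'': assume $(L,\nabla)^{\otimes m}\cong(\sO_U,d)$, spread this isomorphism out over $S$, and shrink $S$ so that every residue characteristic $p$ is coprime to $m$ (no further constraint is needed, since in rank one the nilpotency exponent is $1\le p-1$ and the level is $0$). At a geometric point $s$ over such a $p$ the flat line bundle $(\sL_s,\nabla_s)$ has order dividing $m$, so $\sL_s^{\otimes m}\cong\sO$ as line bundles and its $p$-curvature vanishes; by Cartier descent (see \cite{Kat70}) it equals $(F^*M_s,\nabla_{can})$ for a line bundle $M_s$ of order dividing $m$ with $M_s^{\otimes p}\cong\sL_s$. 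Now run the de Rham--Higgs flow from $(\sL_s,\nabla_s,F_{tr})$: since $\Gr_{F_{tr}}$ kills the connection and the inverse Cartier transform of a bundle with zero Higgs field is $(F^*\mathcal M,\nabla_{can})$, independently of the $W_2$-lift, the $j$-th de Rham term is $(F^{*j}\sL_s,\nabla_{can})$ with underlying line bundle $\sL_s^{\otimes p^{j}}$, carrying the trivial filtration. Taking $f=\phi(m)$ we have $p^{f}\equiv 1\pmod m$, and since multiplication by $p$ is bijective on line bundles of order dividing $m$ this forces $F^{*(f-1)}\sL_s=\sL_s^{\otimes p^{f-1}}\cong M_s$; because $(F^{*f}\sL_s,\nabla_{can})=(F^*(F^{*(f-1)}\sL_s),\nabla_{can})$ and $(\sL_s,\nabla_s)=(F^*M_s,\nabla_{can})$, the full faithfulness of Cartier descent promotes this to an isomorphism of flat bundles closing an $f$-periodic flow. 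As $f=\phi(m)$ is independent of $s$, the de Rham bundle $(L,\nabla,F_{tr})$, and hence $L(n)$ for every $n$, is periodic.

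The main obstacle is the implication ``periodic $\Rightarrow$ torsion'', which we do not establish from first principles but derive from the known rank one case of the Grothendieck--Katz conjecture, once we observe that inverse Cartier transforms have nilpotent $p$-curvature. In the converse direction the only subtle point is upgrading the tautological line bundle isomorphism $\sL_s^{\otimes p^{f}}\cong\sL_s$ to an isomorphism of flat bundles compatible with the (trivial) Higgs and filtration data; this uses the vanishing of the $p$-curvature together with the full faithfulness of Cartier descent, and Euler's theorem supplies a period $\phi(m)$ uniform in $p$.
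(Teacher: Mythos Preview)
Your argument for the direction ``periodic $\Rightarrow$ torsion'' is correct and matches the paper's proof essentially step for step: reduce to the trivial filtration by a Tate twist, observe that a periodic de Rham bundle in characteristic $p$ has nilpotent (hence, in rank one, vanishing) $p$-curvature, and invoke the rank one case of the Grothendieck--Katz conjecture (Chudnovsky--Chudnovsky, Andr\'e).

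You additionally prove the converse ``torsion $\Rightarrow$ periodic'' by a direct computation with Cartier descent and Euler's theorem. The paper's written proof of this proposition does not address that direction; in the paper the converse is obtained instead by the route torsion $\Rightarrow$ motivic (the sentence immediately following the proposition exhibits a torsion $(L,\nabla)$ as a summand of $\pi_*(\sO_X,d)$ for a trivializing \'etale cover) $\Rightarrow$ periodic via the main Theorem~\ref{main result}. Your approach is more elementary and self-contained: it avoids the full de Rham periodicity theorem, makes explicit that the flow on rank one objects with zero Higgs field is independent of the $W_2$-lift, and yields the uniform period bound $f=\phi(m)$. The paper's route, on the other hand, situates the rank one case inside the general motivic framework and requires no separate verification.
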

	\begin{proof}
		Up to Tate twist, a rank one object in $\PDR(U)$ is of form $(V,\nabla, F_{tr})$. Periodic de Rham bundles in positive characteristic have nilpotent $p$-curvature. But a rank one connection having nilpotent $p$-curvature means that the $p$-curvature is zero. Thus $\nabla$ is a connection on $U$ with vanishing $p$-curvature for almost all $p$s. Thus, by the solution of Grothendieck-Katz $p$-curvature conjecture for rank one objects due to Chudnovsky-Chudnovsky and Andr\'{e} (see e.g. \cite[\S2.4 ]{Bost}), $(L,\nabla)$ is finite \'{e}tale trivializable. As it is of rank one, it can be trivialized by a finite cyclic covering of $U$. It follows that there is some $m\in \N$ such that $(L,\nabla)^{\otimes m}=(\sO_U,d)$.
	\end{proof}
	By the previous result, any periodic rank one de Rham bundle over $U$ is motivic: using the notion in the proof of Proposition \ref{rank one case}, $(L,\nabla)$ is a direct factor of $\pi_*(\sO_X,d)$, where $\pi: X\to U$ is a finite \'{e}tale cover trivializing $(L,\nabla)$ by pullback.

	Now we turn our attention to some special higher rank connections, namely those weak physically semi-rigid (WPSR) connections. By Riemann-Roch there are no interesting rigid connections over smooth curves in the classical sense. Our first task is to give a slight reformulation of the notion of WPSR connections.

	Let $M^{1,\Gamma}_{dR}(C_{\log})$ be the moduli space of rank one parabolic connections over $C_{\log}$ of degree zero and of fixed quasi-parabolic structure $\Gamma$. We also let $M^{1}_{dR}(C)$ be the moduli space of rank one connections on $C$. It has a natural action on $M^{1,\Gamma}_{dR}(C_{\log})$ by tensor product, which is simply transitive. Let $J$ be a set of Jordan normal forms whose eigenvalues are roots of unity (we call such a $J$ \emph{rational}). Consider the following (untwisted) de Rham moduli space
	$$
	M^{r,J}_{dR}(U):=\{(V,\nabla)|\rank(V)=r,\ \Res(\nabla)\cong J\}/\cong.
	$$
	Since $J$ is rational, $\det J=(\det J_1,\cdots, \det J_n)$ is also rational. For a connection $(V,\nabla)$ in the moduli, the adjusted rank one parabolic connection associated to $\det (V,\nabla)$ has a fixed quasi-parabolic structure on the punctures. Denote it by $\Gamma_{\det J}$. There is a natural map
	$$
	\overline\det: M^{r,J}_{dR}(U)\to M^{1,\Gamma_{\det J}}_{dR}(C_{\log}), \quad [(V,\nabla)]\mapsto [(\overline {\det V},\overline {\det \nabla})]. 
	$$ 
	Recall the following twisted de Rham moduli space
	$$
	M^{r,J}_{dR}(U)_{twisted}:=\{(V,\nabla)| \rank(V)=r,\ \Res(\nabla)\cong J\}/\thicksim,
	$$
	where $(W,\nabla)\thicksim (V,\nabla)$ if there exists some $(L,\nabla)$ over $C$ such that $(W,\nabla)\cong (V,\nabla)\otimes (L,\nabla)|_U$.
	\begin{lemma}\label{reformulation of WPSP connections}
		Fix any point $x_0$ in $M^{1,\Gamma_J}_{dR}(C_{\log})$. The fiber $\overline \det^{-1}(x_0)$ is a finite set if and only if the twisted de Rham moduli $M^{r,J}_{dR}(U)_{twisted}$ consists of finitely many points.
	\end{lemma}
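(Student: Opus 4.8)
The plan is to recast the statement as a fact about a torsor under the algebraic group $G:=M^{1}_{dR}(C)$ of rank one de Rham bundles on $C$, together with the $r$-th power isogeny on $G$. First I would record the three structural inputs. (1) $G$ acts on $X:=M^{r,J}_{dR}(U)$ by $(L,\nabla_L)\cdot[(V,\nabla)]=[(V,\nabla)\otimes(L,\nabla_L)|_U]$, and by definition the orbit set $X/G$ is precisely $M^{r,J}_{dR}(U)_{twisted}$. (2) $G$ acts simply transitively on $Y:=M^{1,\Gamma_{\det J}}_{dR}(C_{\log})$, as recalled in the text. (3) The map $\overline{\det}$ intertwines these two actions through the homomorphism $\phi_r\colon G\to G$, $(L,\nabla_L)\mapsto(L,\nabla_L)^{\otimes r}$; that is, $\overline{\det}(g\cdot v)=\phi_r(g)\cdot\overline{\det}(v)$. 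Input (3) holds because $\det\bigl((V,\nabla)\otimes(L,\nabla_L)|_U\bigr)\cong\det(V,\nabla)\otimes(L,\nabla_L)^{\otimes r}|_U$ and because the canonical parabolic extension is compatible with tensor products (Lemma \ref{canonical parabolic extension}), $(L,\nabla_L)$ being already defined over $C$ with trivial parabolic structure (so it equals its own canonical parabolic extension).

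Next I would use the elementary fact that $\phi_r$ is surjective with finite kernel: $G=M^{1}_{dR}(C)$ is an extension $0\to H^0(C,\Omega^1_C)\to G\to\mathrm{Pic}^0(C)\to 0$, on which multiplication by $r$ is an isomorphism of the vector group part and an isogeny of the abelian variety part, so $\phi_r$ is surjective and $\ker\phi_r\cong\mathrm{Pic}^0(C)[r]$ is finite. (Equivalently, $[r]$ is surjective with finite kernel on any connected commutative algebraic group in characteristic zero.)

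Given this, the equivalence is purely formal. Suppose $X/G$ is finite, with orbit representatives $v_1,\dots,v_m$; for each $i$ let $g_i\in G$ be the unique element with $g_i\cdot\overline{\det}(v_i)=x_0$ (unique since $Y$ is a $G$-torsor). Any point of $\overline{\det}^{-1}(x_0)$ has the form $g\cdot v_i$, and it lies in the fibre precisely when $\phi_r(g)=g_i$, i.e.\ when $g$ lies in a single coset of the finite group $\ker\phi_r$; hence $\overline{\det}^{-1}(x_0)$ is a finite union of finite sets. Conversely, suppose $\overline{\det}^{-1}(x_0)$ is finite. For any $v\in X$, pick the unique $g_0\in G$ with $g_0\cdot\overline{\det}(v)=x_0$ and, using surjectivity of $\phi_r$, write $g_0=\phi_r(g)$; then $g\cdot v\in\overline{\det}^{-1}(x_0)$ and $[v]=[g\cdot v]$ in $X/G$. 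Thus the composite $\overline{\det}^{-1}(x_0)\hookrightarrow X\twoheadrightarrow X/G$ is surjective, so $X/G=M^{r,J}_{dR}(U)_{twisted}$ is finite. The only point demanding care is the equivariance (3) — in particular, checking via Lemma \ref{canonical parabolic extension} that $\overline{\det}$ is well defined as a map into $M^{1,\Gamma_{\det J}}_{dR}(C_{\log})$ and that it intertwines the twisting action with the $r$-th power action; the surjectivity and finiteness of the kernel of $\phi_r$ are standard facts about $\mathrm{Pic}^{\natural}(C)$, and all remaining steps are the bookkeeping of a torsor under an isogeny.
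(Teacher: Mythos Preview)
Your proof is correct and takes essentially the same approach as the paper. The paper constructs the single map $\Psi\colon \overline{\det}^{-1}(x_0)\to M^{r,J}_{dR}(U)_{twisted}$ and shows directly that it is surjective (using divisibility of $M^1_{dR}(C)$, i.e.\ your surjectivity of $\phi_r$) with finite fibers (using that the twisting line bundle must be $r$-torsion, i.e.\ your finiteness of $\ker\phi_r$), which handles both directions at once; your torsor/equivariance framing unpacks exactly the same ingredients but treats the two implications separately.
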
 
	\begin{proof}
		Consider the obvious map 
		$$
		\Psi: \overline \det^{-1}(x_0)\to M^{r,J}_{dR}(U)_{twisted}.
		$$
		Let $[V_i], i=1,2$ be two elements in $\overline \det^{-1}(x_0)$. Suppose $\Psi([V_1])=\Psi([V_2])$. Then there is an element $L$ in $M^{1}_{dR}(C)$ such that $\overline{V}_1\cong \overline{V}_2\otimes L$. It follows that
		$$
		\det \overline{V}_1\cong \det \overline{V}_2\otimes L^{\otimes r}
		$$
		As $[\det \overline{V}_i]=[\overline{\det (V_i)}]=x_0$, $L$ is $r$-torsion. Thus the fiber of $\Psi$ is finite. On the other hand, for any element $[V]$ in $ M^{r,J}_{dR}(U)$, we may always find some rank one connection $L$ over $C$ such that 
		$$
		[\det (\overline V)\otimes L^{\otimes r}]=x_0
		$$
		by the divisibility of $M^{1}_{dR}(C)$. So the map $\Psi$ is surjective. The lemma follows. 
	\end{proof}
	
	\begin{proposition}\label{CVHS on rigids}
		Let $J$ be a set of Jordan normal forms which is rational. Let $(V,\nabla)$ be an irreducible WPSR connection of type $J$ over $U$. Then the monodromy representation of $(V,\nabla)$ underlies a structure of $\C$-VHS. Up to a shift of index, the Hodge filtration on $V$ is the restriction to $U$ of the Simpson filtration on the canonical extension of $(V,\nabla)$ over $C_{\log}$. 
	\end{proposition}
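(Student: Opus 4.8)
\emph{Plan of proof.} The plan is to run Simpson's rigidity-implies-variation-of-Hodge-structure argument in the parabolic setting, using the Corlette--Simpson correspondence over the noncompact curve $U$ (equivalently the parabolic nonabelian Hodge correspondence over $C_{\log}$, already exploited in the proof of Lemma \ref{polystablility}) together with the Hitchin $\C^{\times}$-action on the moduli of parabolic Higgs bundles. First I would transport $(V,\nabla)$ to the Higgs side. Since the eigenvalues of $J$ are roots of unity, the local monodromies of $(V,\nabla)$ are quasi-unipotent, so by \cite[Main Theorem]{S90} the irreducible connection $(V,\nabla)$ corresponds to a parabolic \emph{stable} Higgs bundle $(E,\theta)$ over $C_{\log}$ of parabolic degree zero whose residue $\Res_{D_i}\theta$ is nilpotent of the Jordan type of the nilpotent part of $J_i$, and whose parabolic weights at $D_i$ are determined in the standard way by the eigenvalues of $J_i$. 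By Simpson's local table near a puncture, the datum ``$\Res(\nabla)$ is component-wise conjugate to $J$'' is equivalent, under this correspondence, to the Higgs-side datum ``the parabolic weights at the $D_i$ are the prescribed ones and $\Res_{D_i}\theta$ is nilpotent of the prescribed Jordan type''; this is where the rationality of $J$ enters.

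Before using rigidity I would normalize the determinant. Using the divisibility of $M^{1}_{dR}(C)$, as in the proof of Lemma \ref{reformulation of WPSP connections}, I would twist $(V,\nabla)$ by a rank one connection over $C$ so that $\det(V,\nabla)$ has finite order; this replaces $J$ by a rational translate and changes neither irreducibility nor the twisted de Rham moduli, hence not the WPSR property, and it suffices to prove the statement after this twist, since the conclusion is insensitive to tensoring by a finite order rank one connection (which carries a weight zero $\C$-VHS). After the twist $\tr\,\theta=0$, so $(E,\theta)$ is of $\mathrm{SL}$-type, and the Hitchin action $t\cdot(E,\theta)=(E,t\theta)$ for $t\in\C^{\times}$ preserves the $\mathrm{SL}$-type, the nilpotent Jordan types of the residues, the parabolic weights, the underlying parabolic bundle, the fixed determinant, parabolic degree zero, and stability.

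Now I would invoke the WPSR hypothesis. By Lemma \ref{reformulation of WPSP connections} the fibers of $\overline{\det}$ are finite; combined with the dictionary of the first paragraph, the moduli $M$ of parabolic stable Higgs bundles over $C_{\log}$ of parabolic degree zero with the fixed finite order determinant, the prescribed parabolic weights, and the prescribed nilpotent residue Jordan types is a finite set. The Hitchin $\C^{\times}$-action acts on $M$; since $\C^{\times}$ is connected and $M$ is finite, the map $t\mapsto(E,t\theta)$ has connected image in $M$, hence is constant, so $(E,\theta)\cong(E,t\theta)$ for all $t$. Thus $(E,\theta)$ is a fixed point of the Hitchin action, i.e. a system of Hodge bundles, and by Simpson's characterization of the $\C^{\times}$-fixed locus (\cite{S92}, in the noncompact form of \cite{S90}) the associated harmonic bundle underlies a $\C$-variation of Hodge structure. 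Untwisting by the rank one connection gives the first assertion for the original $(V,\nabla)$.

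For the second assertion, the Hodge filtration $Fil$ coming from this $\C$-VHS satisfies that $\Gr_{Fil}(V,\nabla)$ is the Higgs bundle of the preceding paragraph (up to the harmless rank one twist), whose canonical parabolic extension over $C_{\log}$ is parabolic stable by the above. Then the argument of \cite[Lemma 4.1]{LSZ}, exactly as in Remark \ref{simpson filtration}, forces the canonical parabolic extension of $Fil$ to coincide, up to a shift of index, with the Simpson filtration on the canonical extension of $(V,\nabla)$; restricting to $U$ yields the claim. I expect the only genuine difficulty to lie in the bookkeeping of the first two paragraphs: pinning down Simpson's local dictionary precisely enough to see that ``$\Res(\nabla)\cong J$'' cuts out a $\C^{\times}$-invariant locus on the Higgs side, and carrying out the determinant normalization so that the reduction to the $\mathrm{SL}$-type case is legitimate; the rigidity-to-VHS mechanism itself is classical and the remainder is formal.
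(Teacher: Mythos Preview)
Your proposal is correct and follows essentially the same strategy as the paper: pass to the parabolic Higgs side via the Simpson correspondence \cite{S90}, use the WPSR finiteness to conclude that the Hitchin $\C^{\times}$-action fixes the Higgs bundle, and invoke Simpson's characterization of $\C^{\times}$-fixed points as systems of Hodge bundles. The paper's execution is slightly terser---it takes a sequence $t_i\to 1$ (with $t_i$ not roots of unity) and appeals directly to \cite[Theorem 8]{S90} rather than first normalizing the determinant via Lemma \ref{reformulation of WPSP connections}---but the mechanism is identical, and both proofs derive the second assertion from the argument recorded in Remark \ref{simpson filtration}.
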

	\begin{proof}
		Let $\V$ be the representation of $\pi_1(U_{an})$ attached to $(V,\nabla)$. By assumption, it is irreducible and weak physically semi-rigid in the sense of Katz. Let $(\overline E,\overline \theta)$ be the corresponding parabolic stable Higgs bundle over $C_{\log}$ under the Simpson correspondence \cite{S90}. Choose a sequence $t_i\rightarrow 1$, and assume that $t_i$s are not roots of unity. Then the representations $\V_i$ corresponding to $(\overline E,t_i\overline \theta)$ converges to $\V$. As $\V$ is WPSR, $\V_i$ must be isomorphic to $\V$ when $i$ is large enough. Thus, $(\overline E,t\overline \theta)\cong(\overline E,\overline \theta)$ for some $t\in \C^*$ of infinite order. It follows from Theorem 8 \cite{S90} that $\V$ underlies a $\C$-VHS. Now as $J$ is rational, there is a Hodge filtration $\overline{Fil}$ on the canonical parabolic extension $(\overline V,\overline \nabla)$ such that $Gr_{\overline{Fil}}(\overline V,\overline \nabla)=(\overline E,\overline \theta)$. The rest follows from Remark \ref{simpson filtration}.
	\end{proof}
	
	\begin{definition}
		Let $k$ be an algebraically closed field. Let $(E,\theta)$ be a parabolic Higgs bundle over $C_{\log}$ with parabolic structure $\Gamma$. It is said to be non-rigid if there exists a relative parabolic Higgs bundle $\Theta: \sE\to \sE\otimes \omega_{\sC_{\log}/S}$ of quasi-parabolic structure $\Gamma$, where $S$ is a connected algebraic variety over $k$ and $\sC=C\times_kS$, equipped with the log structure determined by the divisor $D\times_kS$, such that its restriction over one closed point $s_0\in S$ is isomorphic to $(E,\theta)$ and over another closed point $s_1\in S$ is non-isomorphic to $(E,\theta)$. Similarly, one defines non-rigid parabolic flat bundles over $C_{\log}$ with fixed quasi-parabolic structure.
	\end{definition}
	We need an important property about rigid/non-rigid adjusted parabolic connections in positive characteristic. 
	\begin{proposition}\label{rigidity lemma}
		Let $k$ be an algebraically closed field of char $p>0$. Let $C$ be a smooth projective curve over $k$. Let $(V,\nabla)$ be a parabolic connection in the category $\mathrm{MIC}^{\ppar}_{p-1,N}(C_{\log}/k)$ and $(E,\theta)$ the corresponding parabolic Higgs field by Theorem \ref{ov correspondence for parabolic objects}. Then 
		The following statements hold:
		\begin{itemize}
			\item[(i)] $(E,\theta)$ is $\theta$-semistable if and only if $(V,\nabla)$ is $\nabla$-semistable;
			\item[(ii)] Suppose $(E,\theta)$ is $\theta$-semistable. Then $(E,\theta)$ is rigid if and only if $(V,\nabla)$ is rigid.
		\end{itemize} 
	\end{proposition}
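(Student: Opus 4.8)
\emph{Proof proposal.} The plan is to treat the two parts separately: reduce (i) to the case of trivial parabolic structure by a cyclic covering, and handle (ii) by identifying the deformation theories of $(E,\theta)$ and $(V,\nabla)$ through the inverse Cartier transform.

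For (i), by the Biswas--Iyer--Simpson correspondence (Proposition \ref{Biswas correspondence for Higgs/flat bundles in positive char}) I would choose a cyclic cover $\pi\colon C'\to C$ of order $N$ whose branch divisor contains $D$, so that $\pi^*_{\ppar}(E,\theta)$ has trivial parabolic structure; by Lemma \ref{parabolic inverse cartier commutes with pullback} then $\pi^*_{\ppar}(V,\nabla)\cong C^{-1}(\pi^*_{\ppar}(E,\theta))$. The first observation is that a parabolic Higgs bundle on $C_{\log}$ is parabolic semistable if and only if its parabolic pullback to $C'_{\log}$ is semistable as an ordinary logarithmic Higgs bundle: parabolic sub-Higgs-sheaves correspond to $G$-equivariant sub-Higgs-sheaves (after saturation) with parabolic degree scaled by $N$, using $\deg\pi^*_{\ppar}(\cdot)=N\pdeg(\cdot)$, and the maximal destabilizing Harder--Narasimhan sub-Higgs-sheaf of a $G$-equivariant object is unique, hence automatically $G$-invariant; the same applies verbatim to parabolic flat bundles and $\nabla$-semistability. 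This reduces (i) to the statement that, for $(H,\eta)$ with trivial parabolic structure on $C'_{\log}$, semistability of $(H,\eta)$ is equivalent to semistability of $C^{-1}(H,\eta)$, a standard property of the Ogus--Vologodsky transform \cite{OV}: since $\eta$ is nilpotent one has $\mathrm{tr}\,\eta=0$, so $C^{-1}$ commutes with the determinant and $\deg\circ C^{-1}=p\cdot\deg$; moreover $C^{-1}$, as an equivalence on these subcategories, carries saturated sub-Higgs-bundles to saturated $\nabla$-invariant subbundles and back, so slopes are uniformly multiplied by $p$ and (semi)stability is preserved in both directions.

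For (ii), assume $(E,\theta)$ is $\theta$-semistable, so by (i) $(V,\nabla)$ is $\nabla$-semistable, and both lie in the moduli of semistable objects with the given quasi-parabolic structure. The key point is that $C^{-1}$ identifies the two deformation theories. Enlarging $p$ if necessary — harmless, since in the applications one already has $p>\rank V$ and only needs $p$ large compared with twice the nilpotency exponent of $\theta$ — Lemma \ref{inverse Cartier commutes with tensor product} together with compatibility of $C^{-1}$ with duals yields a natural isomorphism $(\mathcal{E}nd_{\ppar}(V),\nabla^{\mathrm{ad}})\cong C^{-1}(\mathcal{E}nd_{\ppar}(E),[\theta,-])$. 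Since the inverse Cartier transform is compatible with the hypercohomology of the de Rham, resp. Dolbeault, complexes, this gives $H^{1}_{dR}(C_{\log},\mathcal{E}nd_{\ppar}(V))\cong \mathbb{H}^{1}(C_{\log},\ \mathcal{E}nd_{\ppar}(E)\xrightarrow{[\theta,-]}\mathcal{E}nd_{\ppar}(E)\otimes\omega_{C_{\log}})$. These are exactly the tangent spaces of the moduli of $\nabla$-semistable, resp. $\theta$-semistable, parabolic objects with the prescribed quasi-parabolic structure, and $(V,\nabla)$, resp. $(E,\theta)$, is rigid precisely when this tangent space vanishes — the nontrivial implication, that vanishing forces rigidity, being where the $\theta$-semistability hypothesis enters (the moduli space is smooth at the stable points, and the strictly semistable case is reduced to it via the Jordan--H\"older filtration). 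Hence $(E,\theta)$ is rigid if and only if $(V,\nabla)$ is.

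I expect the main obstacle to be this identification of deformation spaces while staying inside the category $\MIC^{\ppar}_{p-1,N}$: the transform $C^{-1}$ interacts well with $\otimes$ and internal $\mathcal{E}nd$ only when the relevant nilpotency exponents remain below $p-1$, so one must either restrict to $p$ large relative to the rank or argue the hypercohomology comparison directly in the needed generality. A slightly different route, avoiding the $\mathcal{E}nd$ issue but importing a relative version of the Cartier transform, is the following: any non-rigidity witness for $(E,\theta)$ can be replaced — after connecting $s_0$ and a non-isomorphic fibre by a chain of curves, normalizing, and shrinking around $s_0$, where semistability and the nilpotency bound are open conditions — by a family over a smooth curve $S/k$ consisting of objects of the category; such an $S$ lifts to $\tilde S/W_{2}(k)$, so $\tilde C_{\log}\times_{W_{2}(k)}\tilde S$ is a $W_2$-lifting of $C_{\log}\times_{k}S$ over $\tilde S$, the $S$-relative inverse Cartier transform produces a family of flat bundles through $(V,\nabla)$, and since $C^{-1}$ is injective on isomorphism classes this family is again non-constant; the converse is symmetric, using the Cartier transform $C$ in place of $C^{-1}$.
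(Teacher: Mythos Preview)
Your treatment of (i) is essentially the paper's own argument, spelled out in a bit more detail.

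For (ii) there is a genuine gap in both of your routes, and it is precisely the difficulty the paper's proof is built to overcome. The families in the definition of (non-)rigidity are families of parabolic Higgs (or flat) bundles with fixed quasi-parabolic structure, \emph{not} families of objects in $\HIG^{\ppar}_{p-1,N}$ (or $\MIC^{\ppar}_{p-1,N}$). In your alternative route you assert that ``the nilpotency bound [is an] open condition'', but nilpotency of exponent $\le p-1$ is a \emph{closed} condition; so after shrinking around $s_0$ you have no reason to remain inside the domain of the parabolic inverse Cartier transform, and the family cannot be transported. The same obstruction appears on the de Rham side, via the $p$-curvature, when you try to run the converse with $C_{\ppar}$. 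Your main route has the cognate defect: you cannot ``enlarge $p$'', since $p=\Char k$ is fixed in the statement, and without that the adjoint Higgs field $[\theta,-]$ on $\mathcal{E}nd_{\ppar}(E)$ may have nilpotency exponent beyond $p-1$, so the comparison $(\mathcal{E}nd_{\ppar}(V),\nabla^{\mathrm{ad}})\cong C^{-1}_{\ppar}(\mathcal{E}nd_{\ppar}(E),[\theta,-])$ is simply unavailable in the stated generality. (Even granting it, the step ``rigidity $\Leftrightarrow$ $H^1=0$'' at strictly semistable points would need more than the sentence you give it.)

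The paper meets this head-on by invoking the Chen--Zhu extension of the nonabelian Hodge correspondence in characteristic $p$ \cite{CZ15}. After reducing to trivial parabolic structure via the BIS correspondence, one has, over an \'etale neighbourhood $U$ of the origin of the Hitchin base $B_{\omega_{C_{\log}/k}}$, an isomorphism between the coarse moduli $N_U$ of semistable logarithmic Higgs bundles and the coarse moduli $M_U$ of semistable logarithmic flat bundles, restricting to Ogus--Vologodsky over the nilpotent locus but defined for Higgs fields with arbitrary spectral data in $U$. A non-constant map $S\to M$ witnessing non-rigidity of $(V,\nabla)$ is then pulled back along the \'etale cover $M_U\to M$ and transported through $M_U\cong N_U\to N$ to a non-constant map into the Higgs moduli, contradicting rigidity of $(E,\theta)$; the other direction is symmetric. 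This Chen--Zhu input is the missing ingredient in your argument.
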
 
	\begin{proof}
		
		By Theorem \ref{ov correspondence for parabolic objects}, it follows that the set of  $\theta$-invariant parabolic subsheaves of degree $d$ is in one-to-one correspondence to the set of $\nabla$-invariant parabolic subsheaves of $V$ of degree $pd$. From this, (i) follows.
		
		To show (ii), we first apply the BIS correspondence to reduce it to the case of trivial parabolic structure (and with a $G$-action). It means that, given a nontrivial deformation of a parabolic Higgs bundle $(E,\theta)$, we will obtain a nontrivial $G$-equivariant deformation of the logarithmic Higgs bundle, the parabolic pullback of $(E,\theta)$. It may well happen that in the deformation of $(E,\theta)$, the Higgs field could become non-nilpotent. Therefore the Ogus-Vologodsky correspondence is not sufficient to form a corresponding deformation for $(V,\nabla)$. However, one may use the theory of Chen-Zhu \cite{CZ15} to tackle this difficulty. We first note that the main result Theorem 1.2 loc. cit. admits a direct generalization to our logarithmic setting. Namely, by taking the invertible sheaf $\sL$ in \cite{CZ15} to be   $\omega_{C_{\log}/k}$, the article \cite{CZ15} yields the following statement: over an \'etale open neighborhood $U$ of the origin in the Hitchin base $B_{\omega_{C_{\log}/k}}$, there is a one-to-one correspondence between ($G$-equivariant) logarithmic Higgs bundles and ($G$-equivariant) logarithmic flat bundles which restricts to the one given by Ogus-Vologodsky's correspondence over the origin of $B_{\omega_{C_{\log}/k}}$. Now suppose $(E,\theta)$ is $\theta$-semistable and rigid. Assume the contrary that the corresponding $(V,\nabla)$ is non-rigid. We prove that $(E,\theta)$ must be also non-rigid. By (i), $V$ is $\nabla$-semistable. By \cite[Theorem 1.1]{Lan14}, there is a coarse moduli variety $M$ of semistable logarithmic flat connections containing the moduli point $[(V,\nabla)]$. As semistablity is an open condition, we may assume there is a \emph{non-constant} morphism $\phi: S\to M$ with $\phi(s_0)=[(V,\nabla)]$. Likewise, there is a coarse moduli variety $N$ containing the point $[(E,\theta)]$. By the result of Chen-Zhu \cite{CZ15}, there is an isomorphism $\mathcal{C}: M_U=M\times_B U\cong N\times_U B=N_U$.  Pulling back $\phi$ via the natural map $M_U\to M$ which is \'etale, we shall obtain a non-constant map $T=S\times_{M}M_U\to M_U$. Composing it with $M_U\stackrel{\mathcal{C}}{\to}N_U\to N$, we get a \emph{non-constant} map $T\to N$ so that $[(E,\theta)]$ lies in its image. Therefore, by the moduli interpretation, $(E,\theta)$ admits a nontrivial deformation, which is a contradiction. One proves the other direction in a completely symmetric way.
	\end{proof}

	\begin{proposition}\label{irreducible WPSR is periodic}
		Let $(V,\nabla)$ be a connection over $U$ as in Proposition \ref{CVHS on rigids}; in particular, $(V,\nabla)$ is WPSR. Assume that the determinant of $(V,\nabla)$ is torsion. Let $F_S$ be the Simpson filtration on the canonical parabolic extension $(\overline V,\overline \nabla)$ . Then the de Rham bundle $(V,\nabla, (F_S)|_{V})$ is periodic.
	\end{proposition}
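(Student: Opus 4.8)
The plan is to adapt the Esnault--Gr\"ochenig philosophy \cite{EG18} --- a rigid object is ``arithmetic'', so its reduction modulo $p$ acquires a Frobenius structure --- to the nonabelian Hodge theory of Ogus--Vologodsky in characteristic $p$, with the parabolic inverse Cartier transform $C^{-1}$ playing the role of the crystalline Frobenius. First I would use Proposition \ref{CVHS on rigids}: the monodromy of $(V,\nabla)$ underlies a $\C$-VHS and $F_S|_V$ is, up to a shift of index, its Hodge filtration, so $(\overline V,\overline\nabla,F_S)$ is an adjusted parabolic de Rham bundle over $C_{\log}$ whose associated graded $\Gr(\overline V,\overline\nabla)=(\overline E,\overline\theta)$ is parabolic stable of parabolic degree zero. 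Since $(V,\nabla)$ is weakly physically semi-rigid with torsion determinant, Lemma \ref{reformulation of WPSP connections} shows $(\overline V,\overline\nabla)$ is an isolated point of the moduli space of parabolic connections over $C_{\log}$ of its rank, quasi-parabolic type and (torsion) determinant; as $(\overline V,\overline\nabla)$ is irreducible, this moduli is smooth there (on a curve, $H^2$ of the parabolic de Rham complex of $\mathrm{End}^0$ vanishes by duality), so the point is \emph{reduced}, i.e.\ $(\overline V,\overline\nabla)$ is cohomologically rigid: $H^1$ of the parabolic de Rham complex of $\mathrm{End}^0(\overline V,\overline\nabla)$ vanishes. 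By Simpson's correspondence over quasi-projective curves \cite{S90} the same holds for $(\overline E,\overline\theta)$, and $\Gr$ restricts to a bijection between the (finite) sets of cohomologically rigid parabolic de Rham bundles and cohomologically rigid parabolic graded Higgs bundles of the prescribed invariants; being rigid, all of these are defined over a number field $K$.

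Next I would spread $(C,D,\overline V,\overline\nabla,F_S)$ out over an integral $S=\Spec\sO_K[1/M]$ and shrink $S$ so that $(\sC,\sD)\to S$ is smooth projective with relative simple normal crossing divisor, every residue characteristic $p$ of $S$ satisfies $p>2\rank(V)$ and $(p,N)=1$, and --- for the \emph{finitely many} quasi-parabolic types obtained from that of $\overline V$ by iterating the weight operation $\{m/N\}\mapsto\{\langle pm/N\rangle\}$ of Proposition \ref{change of par weights under inverse Cartier}, and the finitely many torsion determinants --- the moduli schemes of cohomologically rigid parabolic graded Higgs bundles (resp.\ flat bundles) over $\sC_{\log}/S$ are \emph{finite \'etale} over $S$ and are identified by $\Gr$ (an $S$-morphism of finite \'etale $S$-schemes which is bijective on geometric points of the generic fibre by the previous paragraph, hence an isomorphism, as $S$ is connected). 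Thus for every geometric point $s\in S$ the set $\mathcal R_s$ of isomorphism classes of cohomologically rigid parabolic stable de Rham bundles over $(\sC_s,\sD_s)$ of the prescribed invariants is finite, of cardinality bounded independently of $s$, and $\Gr$ is a bijection of $\mathcal R_s$ onto the corresponding finite set of graded Higgs bundles.

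Now fix a geometric point $s\in S$ of residue characteristic $p$ and any $W_2(k(s))$-lifting $\tilde s\to S$, and form the parabolic de Rham--Higgs flow (Definition \ref{def of PHDF}) initialised by $(\overline{\sV}_s,\overline{\nabla}_s,(F_S)_s)$, taking at each stage the Simpson filtration on the inverse Cartier transform. By Proposition \ref{black hole principle} every intermediate Higgs term is parabolic stable. Every de Rham term $(V_i,\nabla_i)$ is cohomologically rigid: it is so for $V_0$ by the first paragraph; if $(E_i,\theta_i)=\Gr_{Fil_i}(V_i,\nabla_i)$ is cohomologically rigid then $C^{-1}(E_i,\theta_i)=(V_{i+1},\nabla_{i+1})$ is, by Proposition \ref{rigidity lemma}(ii) (applicable since $(E_i,\theta_i)$ is $\theta$-semistable), and $\Gr$ preserves cohomological rigidity by the second paragraph. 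Hence every flow term lies in $\mathcal R_s$, and the flow operator $\Psi=C^{-1}\circ\Gr$ restricts to a self-map of the finite set $\mathcal R_s$; since $C^{-1}$ is an equivalence of categories and $\Gr$ is a bijection on $\mathcal R_s$, $\Psi$ is a \emph{bijection} of $\mathcal R_s$. Therefore $(\overline{\sV}_s,\overline{\nabla}_s,(F_S)_s)$ lies on a cycle of $\Psi$, i.e.\ it is periodic of period at most $|\mathcal R_s|$; as this bound is independent of $s$ and of $\tilde s$, the triple $(\overline V,\overline\nabla,F_S)$ is a periodic parabolic de Rham bundle over $C_{\log}$ in the sense of Definition \ref{arithmetic_definition}, and restricting from $C$ to $U$ (exactly as at the end of the proof of Theorem \ref{main result}) gives periodicity of $(V,\nabla,(F_S)|_V)$.

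The hard part will be the claim, used twice above, that the grading functor preserves cohomological rigidity in characteristic $p$. Over $\C$ this is Simpson's homeomorphism between the de Rham and Dolbeault moduli spaces; in characteristic $p$ I expect two routes, either route being acceptable: one deduces it from the fact that the two relevant moduli are finite \'etale over $S$ and the comparison is checked in characteristic zero (the second paragraph); or one proves directly that the Hodge--de Rham spectral sequence of $\mathrm{End}^0(V_i,\nabla_i)$ degenerates at $E_1$ --- a Deligne--Illusie/Ogus--Vologodsky phenomenon valid in the nilpotency range once $p$ is large relative to $\rank V$ --- so that $\dim H^1$ of the Higgs complex of $\mathrm{End}^0(E_{i+1},\theta_{i+1})$ equals $\dim H^1$ of the de Rham complex of $\mathrm{End}^0(V_{i+1},\nabla_{i+1})$, namely $0$. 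It is precisely this ingredient which upgrades mere \emph{pre}periodicity --- immediate from finiteness of the $\Psi$-orbit --- to genuine periodicity, by making $\Psi$ a bijection of a finite set; a secondary, more routine, point is the construction and good behaviour over $S$ of the moduli spaces of cohomologically rigid parabolic connections and parabolic Higgs bundles of fixed discrete invariants.
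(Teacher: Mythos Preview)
Your strategy is the same as the paper's: propagate rigidity through the Higgs--de Rham flow using Proposition~\ref{rigidity lemma}, confine the flow to a finite set of rigid objects, and conclude periodicity from bijectivity of the flow operator on that set. The technical packaging differs in two respects.

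First, you pass to \emph{cohomological} rigidity ($H^1$ of the parabolic endomorphism complex vanishes) and build finite \'etale moduli of cohomologically rigid objects over $S$, whereas the paper works directly with \emph{physical} rigidity: by WPSR and Lemma~\ref{reformulation of WPSP connections} the entire moduli $\sM_{dR}^{r,\Gamma_J,\scrL_{dR,i}}(\C)$ of adjusted parabolic connections with fixed invariants is already a finite set, so there is no need to cut out a rigid sublocus or invoke smoothness/reducedness. This makes the paper's argument shorter and avoids your ``secondary, more routine, point'' entirely.

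Second, the bijectivity arguments differ. You argue that $\Psi=C^{-1}\circ\Gr$ is a bijection of $\mathcal R_s$ because $C^{-1}$ is an equivalence and $\Gr$ is a bijection on rigid objects (your ``hard part''). The paper instead shows that $(C^{-1}\circ\Gr)^{\phi(m)}$ is a \emph{surjective} self-map of the finite set $\sM_{dR}^{r,\Gamma_J,\scrL_{dR,1}}(\C)$ by reversing the process with the Cartier transform $C$: given a rigid $(\overline\scrV,\overline\nabla)_s$, its Cartier transform $C(\overline\scrV,\overline\nabla)_s$ is rigid by Proposition~\ref{rigidity lemma}, hence is the reduction of some $\C$-point of $\sM_{Dol}$, which by the Simpson correspondence over $\C$ is the grading of some $\C$-point of $\sM_{dR}$. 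Surjectivity on a finite set gives bijectivity. This sidesteps your hard part: the paper never needs $\Gr$ to preserve rigidity in characteristic $p$, only over $\C$ (where it is the Simpson correspondence), because every char-$p$ rigid object arising in the flow is first shown to be the reduction of a $\C$-point. In particular, neither of your two proposed routes for the hard part is needed. Your route via Deligne--Illusie degeneration would work, but is more than what is required here; your route via finite \'etale comparison is essentially what the paper does, but phrased through specialisation of $\C$-points rather than an abstract $S$-isomorphism of moduli.

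Your tracking of the finitely many quasi-parabolic types under the weight operation of Proposition~\ref{change of par weights under inverse Cartier} is a correct refinement; the paper folds the determinant bookkeeping into the exponent $\phi(m)$ and treats the quasi-parabolic type $\Gamma_J$ as fixed throughout.
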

	\begin{proof}
		We are going to show that the parabolic de Rham bundle $(\overline V,\overline \nabla,F_S)$ is periodic. By Proposition \ref{CVHS on rigids}, $(\overline E,\overline \theta)=Gr_{F_S}(\overline V,\overline \nabla)$ is the \emph{corresponding} parabolic Higgs bundle under the Simpson correspondence \cite[p. 721-722]{S90}. Therefore, $(\overline E,\overline \theta)$ is rigid. 
		
		Fix a spreading-out $(\mathscr{C},\scrD, \overline \scrV,\overline \nabla, \mathscr{Fil}_S)$ of $(C,D,\overline V,\overline \nabla,F_S)$ over $S$. A spreading-out of $(E,\theta)$ over $S$ is then given by $(\scrE,\Theta)=Gr_{\mathscr{F}_S}(\overline \scrV,\overline \nabla)$. Let $\Gamma_J$ be the quasi-parabolic structure of $(\overline V,\overline \nabla)$. By assumption, the canonical extension of $(\det V,\det \nabla)$ is a torsion, say of order $m$. Set $L_{dR,i}=(\det V,\det \nabla)^{\otimes i}$ for $0\leq i\leq m-1$. Let $\sM_{dR}^{r,\Gamma_J,\scrL_{dR,i}}(\mathscr{C}_{\log})$ be the moduli scheme of $S$-relative adjusted parabolic connections over $\mathscr{C}_{\log}$ of rank $r$, quasi-parabolic structure $\Gamma_J$ and of determinant $(L,\nabla)$, and $\sM_{Dol}^{r,\Gamma_J,\scrL_{Dol,i}}(\mathscr{C}_{\log})$ be the moduli scheme of parabolic semistable $S$-relative Higgs fields over $\mathscr{C}_{\log}$ of rank $r$, quasi-parabolic structure $\Gamma_J$ and of determinant $L_{Dol,i}=((\det V)^{\otimes i},0)$ \footnote{Such moduli schemes for parabolic connections/Higgs fields exist by using the techniques in \cite{Lan14}.}. After shrinking $S$ if necessary, we may assume, all geometric fibers of the scheme $\sM_{dR}^{r,\Gamma_J,\scrL_{dR,i}}(\mathscr{C}_{\log})$ over $S$ have the same number of irreducible components. The same holds for the Higgs moduli. Moreover, by assumption and Lemma \ref{reformulation of WPSP connections}, $\sM_{dR}^{r,\Gamma_J,\scrL_{dR,i}}(\mathscr{C}_{\log})(\C)$ consists of finitely many points for each $i$, and its cardinality equals to that of $\sM_{Dol}^{r,\Gamma_J,\scrL_{Dol,i}}(\mathscr{C}_{\log})(\C)$ by the Simpson correspondence. 
		
		Let $N$ be the least common multiple of the orders of eigenvalues of residues of $\nabla$, so that the parabolic weights of $\Gamma$ lie in $\frac{1}{N}\Z$. Pick a geometric point $s\in S$ such that the characteristic $\mathrm{char}(k(s))=p$ satisfies
		$$
		p-1\geq \max\{N,m\}.
		$$
		The reduction $(\overline \scrV,\overline \nabla)_s$ is an object in the category $\mathrm{PMIC}_{p-1,N}(\mathscr{C}_{\log,s}/k(s))$, where  $\mathscr{C}_{\log,s}$ is the reduction of  $\mathscr{C}_{\log}$ at $s$. Choose any $W_2(k(s))$-lifting $\tilde s$ inside $S$ and denote $C^{-1}_{s\subset \tilde s}$ for the corresponding parabolic inverse Cartier transform. By Proposition \ref{rigidity lemma}, $C^{-1}_{s\subset \tilde s}(\scrE_s,\Theta_s)$ is again rigid. However, its determinant is isomorphic to $\scrL_{dR,s}^{\otimes p}$. Because 
		$$
		\scrL_{dR,s}^{\otimes p}=(\scrL_{dR}^{\otimes p})_s=(\scrL_{dR,i})_s,
		$$ 
		where $0\leq i\leq m-1$ satisfies $i\equiv p\mod m$, it follows that $C^{-1}_{s\subset \tilde s}(\scrE_s,\Theta_s)$ comes from the reduction at $s$ of some parabolic connection belonging to $\sM_{dR}^{r,\Gamma_J,\scrL_{dR,i}}(\C)$. Therefore, $\Gr_{F_{S,s}}(C^{-1}_{s\subset \tilde s}(\scrE_s,\Theta_s))$ is again rigid. Repetition of the arguments shows that 
		$$
		(C^{-1}_{s\subset \tilde s}\circ \Gr_{F_{S,s}})^{\phi(m)}(\overline \scrV,\overline \nabla)_s
		$$ 
		is rigid and of determinant $(\scrL_{dR,1})_s$. Therefore, the operator $(C^{-1}_{s\subset \tilde s}\circ \Gr_{F_S})^{\phi(m)}$ sets up a self-map on the set $\sM_{dR}^{r,\Gamma_J,\scrL_{dR,1}}(\C)$ by the above process. The map is surjective, because we can reverse the process: note that $C_{s\subset \tilde s}(\overline \scrV,\overline \nabla)_s$ is rigid by Proposition \ref{rigidity lemma} and therefore must be reduction at $s$ of some rigid parabolic Higgs bundle over $C_{\log}$ belonging to  $\sM_{Dol}^{r,\Gamma_J,\scrL_{Dol,i}}(\mathscr{C}_{\log})(\C)$ for some $i$. By the Simpson correspondence, it is the grading of some rigid parabolic connection over $C_{\log}$. As the set is finite, the self-map must be bijective. So $(\overline V,\overline \nabla,F_S)$ is periodic as desired. The proposition is proved.
	\end{proof}
	
	Katz \cite{Kat96} has given a classification of (irreducible) WPSR local systems over $U$. Based on this, we are going to show the following
	\begin{proposition}\label{motivicity of rigids}
		Let $(V,\nabla)$ be as in Proposition \ref{irreducible WPSR is periodic}, i.e., $(V,\nabla)$ is an irreducible weakly physically semi-rigid flat connection on $U$. Then it is motivic in the sense of Definition \ref{motivic objects}.
	\end{proposition}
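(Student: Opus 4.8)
The plan is to deduce the motivicity of $(V,\nabla)$ from Katz's classification of rigid local systems \cite{Kat96}, while pinning down the Hodge filtration by means of the variation of Hodge structure already extracted in Proposition \ref{CVHS on rigids}.

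First I would pass to the Betti realization: under the Riemann--Hilbert correspondence $(V,\nabla)$ becomes an irreducible local system $\V$ on $U_{an}$ whose local monodromies are quasi-unipotent with eigenvalues roots of unity and which, by Definition \ref{WPSR connection}, is weakly physically semi-rigid in Katz's sense. By Proposition \ref{CVHS on rigids}, $\V$ underlies an irreducible polarized $\C$-variation of Hodge structure whose Hodge filtration is, up to a shift of index, the restriction to $U$ of the Simpson filtration $F_S$. Hence it is enough to realize $\V$, generically on $U$, as a subquotient of a Gau\ss--Manin local system: the matching of filtrations will then be automatic (last paragraph), and since a Gau\ss--Manin system of a smooth projective family is semisimple, an irreducible subquotient is a direct summand, i.e.\ a subsystem in the sense of Definition \ref{motivic objects}.

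Next I would feed $\V$ into Katz's algorithm \cite[Ch. 1, 8]{Kat96}: an irreducible weakly physically semi-rigid local system with quasi-unipotent local monodromy is produced from a rank one local system by a finite iteration of (a) tensoring with a rank one local system, (b) pulling back along a Kummer covering near a puncture, and (c) middle convolution $MC_\chi(-)=(-)\ast_{\mathrm{mid}}\mathcal{L}_\chi$ with a Kummer sheaf $\mathcal{L}_\chi$. Since the determinant of $(V,\nabla)$ is torsion --- a hypothesis inherited from Proposition \ref{irreducible WPSR is periodic} --- the seed and the twisting characters may be taken of finite order, as may $\chi$. The crux is that the class of generic restrictions of Gau\ss--Manin systems is stable under (a)--(c): a finite-order rank one local system is a direct summand of $\pi_\ast\C$ for a finite \'etale cover $\pi$, hence motivic (cf.\ Proposition \ref{rank one case}), and so is the Kummer sheaf $\mathcal{L}_\chi$ on $\G_m$; tensor products are handled by the K\"unneth formula for fibre products of the defining families, and Kummer pullback by base change of a defining family; and Katz realizes $\mathcal{F}\ast_{\mathrm{mid}}\mathcal{L}_\chi$ as a subquotient of the relative cohomology of an explicit family built from the family defining $\mathcal{F}$ and the Kummer cover of $\chi$, which, after the standard compactification/resolution of that cover, is a subquotient of a Gau\ss--Manin system of a smooth projective morphism in the sense of this paper. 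Running through the sequence therefore exhibits $\V$ over the generic point of $U$ as such a subquotient.

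It then remains to identify the filtration, which is the easy half: a subquotient of a Gau\ss--Manin system carries a canonical polarized $\C$-variation of Hodge structure (a sub/quotient of a VHS being again a VHS), hence a canonical Hodge filtration independent of the chosen realization; by the uniqueness statement in Proposition \ref{CVHS on rigids} and Remark \ref{simpson filtration}, this filtration coincides with $(F_S)|_V$ up to a Tate twist, so $(V,\nabla,(F_S)|_V)$ is motivic in the sense of Definition \ref{motivic objects}. I expect the genuine obstacle to be hidden in step (c): one needs a de Rham / Hodge-module incarnation of Katz's middle convolution under which its output is literally the restriction of a Gau\ss--Manin system with the Hodge filtration above. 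This is provided by Saito's theory of mixed Hodge modules, which makes the operations used to define $MC_\chi$ --- pullback, proper pushforward, tensor with the motivic $\mathcal{L}_\chi$, and intermediate extension --- functorial on polarizable variations, or alternatively by the de Rham middle convolution; granting this, the remaining points --- that the prescribed Jordan forms of the residues and the torsion-determinant condition propagate correctly along the algorithm, and that at the end one lands in a smooth projective family --- are routine and are precisely the content of \cite{Kat96}. I would therefore present the argument as a reduction to \cite{Kat96} together with mixed-Hodge-module bookkeeping, rather than reproducing Katz's combinatorics.
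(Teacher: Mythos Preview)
Your proposal has a genuine gap: Katz's middle-convolution algorithm is a construction on an open subset of $\mathbb{P}^1$, whereas weakly physically semi-rigid local systems can also live over genus-one curves. By \cite[Corollary~1.2.4]{Kat96} the compactification $C$ has $g(C)\in\{0,1\}$, and your steps (a)--(c) only cover $g(C)=0$; the sentence ``an irreducible WPSR local system \dots\ is produced from a rank one local system by a finite iteration of (a), (b), (c)'' is simply false on an elliptic curve. The paper treats the two genera separately. For $g(C)=1$ it invokes Katz's explicit classification \cite[\S1.4]{Kat96}: after a torsion rank-one twist one may assume $U=C\setminus\{x_0\}$, and then $\V\cong \F_{r,\zeta}\otimes\mathcal{L}|_U$ with $\mathcal{L}$ torsion and $\F_{r,\zeta}\cong\pi_*\mathcal{L}_{r,\zeta}$ the pushforward of a torsion rank-one local system along a degree-$r$ isogeny $\pi:C\to C$. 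This is visibly motivic, and middle convolution plays no role.

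For $g(C)=0$ your approach is valid but takes a heavier route than the paper. Rather than tracking motivicity through each middle-convolution step via mixed Hodge modules, the paper simply quotes Katz's endpoint \cite[9.4.2]{Kat96}: over a smaller open $W\subset U$ there is a smooth proper $f':Y'\to W$ such that $(V,\nabla)|_W$ sits in the image of $H^i_{dR}(Y'|W)\to H^i_{dR}(Y|W)$, and Deligne semisimplicity upgrades ``subquotient'' to ``summand'' of $H^i_{dR}(Y'|W)$. No Saito theory is needed. Two smaller points: your operation (b), ``pulling back along a Kummer covering near a puncture'', is not part of Katz's algorithm on $\mathbb{P}^1$ (the operations are tensoring by rank one and middle convolution), so that line should be dropped; and your final paragraph on matching the Hodge filtration via uniqueness of the $\C$-VHS structure is correct and is exactly what the paper leaves implicit.
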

	\begin{proof}
		By \cite[Corollary 1.2.4]{Kat96}, we have $g(C)=0,1$. We divide the proof into two cases. Set $V=(V,\nabla)$.
		
		{\itshape $g(C)=0$.} The argument of \cite[9.4.2]{Kat96} (see p. 212-213, Ch. 9 \cite{Kat96}) implies that there exists an open subset $W\subset U$, a smooth affine hypersurface $f\colon Y\rightarrow W$ and a smooth partial compactification $Y\hookrightarrow Y'$ with a smooth proper extension $f'\colon Y'\rightarrow W$ such that $(V,\nabla)$ is a summand of:
		$$\text{Image}(H^i_{dR}(Y'|W)\rightarrow H^i_{dR}(Y|W)).$$
		(The smooth affine hypersurface in fact exists over $U$, but to guarantee a smooth normal crossings compactification one might need to shrink $U$ to a smaller Zariski open $W$.) By Deligne's semisimplicity theorem, it follows that $(V,\nabla)|_W$ is in fact a direct summand of $H^i_{dR}(Y'/W)$ and therefore $V$ is motivic.

		{\itshape $g(C)=1$.}  As in the proof of Lemma 1.4.2 loc. cit., we may tensor $V$ with a torsion rank one connection $L$ over $C$ such that the residues of $V\otimes L$ is nontrivial only at one puncture. As $L$ is motivic, we may assume $U=C-\{x_0\}$ in the following. It is easier for us to work directly with the corresponding local system $\V$ to $V$. Let $\zeta$ be an $r$-th primitive root of unity and $\F_{r,\zeta}$ the rank $r$ local system over $U$ defined in the proof of Lemma 1.4.4 loc. cit. By Proposition 1.4.6 loc. cit., there exists some local system $\L$ over $C$ such that $\V\cong \F_{r,\zeta}\otimes \L|_{U}$. As $\det \V$ is torsion by assumption and $\det \F_{r,\zeta}$ is torsion too by Lemma 1.4.8 loc. cit., so is $\L|_{U}$. Thus it suffices to show $\F_{r,\zeta}$ is motivic. Let $\pi: C\to C$ be a degree $r$ isogeny. Theorem 1.4.16 loc. cit. asserts that there exists a rank one local system $\L_{r,\zeta}$ over $C-\pi^{-1}\{x_0\}$ such that $\pi_*{\L_{r,\zeta}}\cong \F_{r,\zeta}$. It is not difficult to see that one may simply take $\L_{r,\zeta}$ to be the one which has trivial action on generators of $\pi_1(C)$ and whose local monodromy around each point in $\pi^{-1}\{x_0\}$ is the scalar multiplication by $\zeta$. In particular, $\L_{r,\zeta}$ is torsion. Consequently, $\F_{r,\zeta}$ is motivic. By Riemann-Hilbert, the connection $V$ is motivic.

	\end{proof}

\end{document}